\documentclass[11pt]{article}
\usepackage[left=2.7cm,right=2.7cm,top=2.2cm,bottom=2.2cm]{geometry}
\usepackage{amssymb}
\usepackage{amsmath}
\usepackage{amsthm}
\usepackage{enumerate}
\usepackage{mathrsfs}
\usepackage{stmaryrd}
\usepackage{bbm}
\usepackage{xcolor}
\usepackage[hyperfootnotes=false]{hyperref}

\theoremstyle{definition}
\newtheorem{definition}{Definition}[section]

\newtheorem{example}[definition]{Example}

\theoremstyle{plain}
\newtheorem{proposition}[definition]{Proposition}
\newtheorem{theorem}[definition]{Theorem}
\newtheorem{lemma}[definition]{Lemma}

\theoremstyle{remark}
\newtheorem{remark}[definition]{Remark}

\numberwithin{equation}{section}

\newcommand{\E}{\mathbb{E}}
\newcommand{\N}{\mathbb{N}}
\renewcommand{\P}{\mathbb{P}}

\newcommand{\R}{\mathbb{R}}

\newcommand{\X}{\mathbb{X}}

\newcommand{\cB}{\mathcal{B}}

\newcommand{\cF}{\mathcal{F}}
\newcommand{\cG}{\mathcal{G}}

\newcommand{\cI}{\mathcal{I}}

\newcommand{\cL}{\mathcal{L}}

\newcommand{\cP}{\mathcal{P}}
\newcommand{\cQ}{\mathcal{Q}}

\newcommand{\cV}{\mathcal{V}}

\newcommand{\bX}{\mathbf{X}}

\newcommand{\bZ}{\mathbf{Z}}

\newcommand{\1}{\mathbf{1}}
\renewcommand{\d}{\mathrm{d}}
\newcommand{\dd}{\,\mathrm{d}}

\renewcommand{\epsilon}{\varepsilon}

\newcommand{\edot}{\boldsymbol{\cdot}}
\newcommand{\tw}{\widetilde{w}}
\newcommand{\bw}{\bar{w}}
\newcommand{\tY}{\widetilde{Y}}
\newcommand{\tbX}{\widetilde{\mathbf{X}}}
\newcommand{\tM}{\widetilde{M}}
\newcommand{\tX}{\widetilde{X}}
\newcommand{\tbbX}{\widetilde{\mathbb{X}}}
\newcommand{\hw}{\hat{w}}
\newcommand{\D}{\mathrm{D}}
\newcommand{\sV}{\mathscr{V}}
\newcommand{\tZ}{\widetilde{Z}}
\newcommand{\ty}{\widetilde{y}}
\DeclareMathOperator*{\esssup}{ess\,sup}
\DeclareMathOperator*{\Sym}{Sym}
\DeclareMathOperator{\tr}{tr}

\begin{document}

\title{Rough Stochastic Analysis with Jumps}
\author{Andrew L.~Allan\footnote{andrew.l.allan{\fontfamily{ptm}\selectfont @}durham.ac.uk} \hspace{-2pt} and Jost Pieper\footnote{jost.pieper{\fontfamily{ptm}\selectfont @}durham.ac.uk}\\
~\vspace{-5pt}\\
{\normalsize Department of Mathematical Sciences}\\
{\normalsize Durham University}\\}
\date{\today}
\maketitle

\vspace{-12pt}

\begin{abstract}
We present a new version of the stochastic sewing lemma, capable of handling multiple discontinuous control functions. This is then used to develop a theory of rough stochastic analysis in a c\`adl\`ag setting. In particular, we define rough stochastic integration and establish solutions to rough stochastic differential equations with jump discontinuities in both sources of noise, along with an It\^o formula for stochastic controlled paths with jumps.
\end{abstract}

\vspace{2pt}

\noindent
Keywords: c\`adl\`ag rough paths, stochastic sewing lemma, rough stochastic differential equations.

\noindent
MSC 2020 classification:
60L20, 
60H10. 

\vspace{5pt}

\tableofcontents

\section{Introduction}

Over the last few decades, the theory of rough paths has both established itself as a cornerstone of modern stochastic analysis, and successfully branched out in a multitude of research directions, from abstract algebra and financial mathematics to statistics and machine learning. One of its fundamental contributions, which has been at the heart of the theory since its inception by Lyons in the seminal paper \cite{Lyons1998}, was to identify the precise informational structure of a potentially highly oscillatory path which is required to guarantee continuity of the solution to a nonlinear differential equation driven by such a path. In the context of stochastic analysis, this insight led naturally to a robust representation of a stochastic differential equation (SDE) $\d Y_t = b(Y_t) \dd t + f(Y_t) \dd X_t$ as a (random) rough differential equation (RDE)
\begin{equation*}
\d Y_t = b(Y_t) \dd t + f(Y_t) \dd \bX_t
\end{equation*}
driven by a rough path $\bX$, viewed as the process $X$ enhanced with its iterated integrals. Unlike its purely stochastic counterpart, such an RDE is well-defined for a fixed realization of the driving noise, and comes with strong local Lipschitz estimates for the solution with respect to suitable rough path topologies. For details and a full historical account, see one of the many monographs on the subject, such as \cite{FrizHairer2020} or \cite{FrizVictoir2010}.

Beyond classical SDEs, it is common to encounter differential equations with two (or more) stochastic drivers of the form
\begin{equation}\label{eq: doubly stochastic differential equation}
\dd Y_t = b(Y_t) \dd t + \sigma(Y_t) \dd M_t + f(Y_t) \dd X_t,
\end{equation}
where $M$ and $X$ are stochastic processes, which in particular may \emph{not} both be semimartingales. Moreover, an intricate and increasingly important class of equations are obtained by \emph{conditioning} on one of the sources of noise. A common example of such a scenario is that of conditional It\^o diffusions, in which the driving noises in \eqref{eq: doubly stochastic differential equation} are taken to be independent Brownian motions, though more general classes of noise are of increasing interest. From a rough path perspective, it is natural to fix a realization of the noise on which the solution has been conditioned, $X$ say, and to lift this to its robust representation as a (deterministic) rough path $\bX$. This leads to a so-called \emph{rough stochastic differential equation} (RSDE) of the form
\begin{equation}\label{eq: RSDE introduction}
\dd Y_t = b(Y_t) \dd t + \sigma(Y_t) \dd M_t + f(Y_t) \dd \bX_t,
\end{equation}
which is driven by both rough and stochastic noise.

One area in which such equations appear naturally is in the context of stochastic filtering, in which one is interested in the conditional distribution of a signal process given observations of another process whose dynamics depend on the signal. After a suitable Girsanov transformation and the lifting of the observation process to a rough path, the system dynamics turn into an RSDE, in which the rough (resp.~stochastic) noise represents the observed (resp.~unobserved) part of the system. The continuity of the solution with respect to the rough path then translates to robustness of the posterior distribution with respect to the observed data, which in general only holds in rough path topology; see, e.g., Crisan, Diehl, Friz and Oberhauser \cite{CrisanDiehlFrizOberhauser2013}, Diehl, Oberhauser and Riedel \cite{DiehlOberhauserRiedel2015} or Bugini, Friz, L{\^e} and Zhang \cite{BuginiFrizLeZhang2025}, or Allan, Pieper and Teichmann \cite{AllanPieperTeichmann2025} for the recent extension to robust filtering for jump-diffusions.

Equations of the form in \eqref{eq: doubly stochastic differential equation} also appear in the study of interacting particle systems with common noise. Consider, for instance, a swarm of interacting particles, each one with its own independent noise, and all of them subject to an additional universal noise. Under suitable assumptions it can be shown that a conditional propagation of chaos holds, such that, conditioned on the common noise, the empirical measure converges to a conditional law governed by suitable McKean--Vlasov dynamics. Cases with independent noise, e.g., Kurtz and Xiong \cite{KurzXiong1999}, as well as with correlated noise, e.g., Coghi and Flandoli \cite{CoghiFlandoli2016}, have been considered. Conditional McKean--Vlasov dynamics with jump discontinuities appear in particular in systemic risk modelling, e.g., Hambly and S\o{}jmark \cite{HamblySojmark2019}. Such particle systems also give rise to stochastic Fokker--Planck equations, as in Coghi and Gess \cite{CoghiGess2019}, and indeed to rough Fokker--Planck equations, as in Coghi and Nilssen \cite{CoghiNilssen2021}. They have moreover been applied in the filtering context, where the empirical measure provides a numerical approximation of the conditional density of the signal; see, e.g., Bain and Crisan \cite[Chapter~9]{BainCrisan2009} or Pathiraja, Reich and Stannat \cite{PathirajaReichStannat2021}. Recently, Coghi, Nilssen, N\"usken and Reich \cite{CoghiNilssenNuskenReich2023} studied an RSDE of McKean--Vlasov-type in the development of a rough path version of an ensemble Kalman filter, in which the continuity of solutions to the RSDE with respect to the rough driver leads to a propagation of chaos result for the associated particle system.

Another advantage of the rough path framework is its ability to handle general classes of stochastic noise beyond semimartingales. Correspondingly, RSDEs as in \eqref{eq: RSDE introduction} allow one to study systems driven by both semimartingale and non-semimartingale noise. In particular, dynamics involving both standard and fractional Brownian motion appear frequently, for instance in slow-fast systems (e.g., Hairer and Li \cite{HairerLi2020}), and in rough volatility models (e.g., Bayer, Friz and Gatheral \cite{BayerFrizGatheral2016}). Relatedly, Bank, Bayer, Friz and Pelizzari \cite{BankBayerFrizPelizzari2025} derived an RSDE from the conditional dynamics of a local stochastic volatility model, and used a rough Feynman--Kac formula to represent the corresponding pricing problem as a rough PDE. Recently, Malliavin differentiability of solutions, as well as existence of solutions to RSDEs with linear coefficients have been established by Bugini, Coghi and Nilssen \cite{BuginiCoghiNilssen2024}, and Li, Zhang and Zhang \cite{LiZhangZhang2024} have also considered rough BSDEs with reflection.

Several approaches have been proposed to make sense of solutions to RSDEs of the form in \eqref{eq: RSDE introduction}, at least for continuous driving noise. In \cite{CrisanDiehlFrizOberhauser2013}, solutions are constructed via a flow transformation which, in the spirit of a Doss--Sussmann transformation, effectively turns the RSDE into a classical SDE. Among several drawbacks of this approach, no intrinsic meaning is given to solutions of the RSDE, which in particular renders the approach suboptimal from a numerical analysis viewpoint. It is also a rigid approach, which requires excessive regularity conditions on the vector fields and does not cope well with jumps.

Another approach to RSDEs of the form in \eqref{eq: RSDE introduction} involves jointly lifting $M$ and $\bX$ to a new random rough path, $\bZ$ say, and treating the RSDE as a random RDE driven by $\bZ$, as in, e.g., Diehl, Oberhauser and Riedel \cite{DiehlOberhauserRiedel2015} and Friz and Hairer \cite[Chapter~12]{FrizHairer2020}. More recently, a significant generalization of this idea was developed by Friz and Zorin-Kranich \cite{FrizZorinKranich2023}. In order to construct such a joint lift $\bZ$, it is necessary to define the integral $\int M_u \dd \bX_u$, which does not exist as a (classical) rough integral since $M$ is not controlled by $X$ in the rough path sense. This is overcome by imposing integration by parts, noting that $\int X_u \dd M_u$ is a well-defined It\^o integral. In the general setting of \cite{FrizZorinKranich2023}, this requires the construction of a suitable bracket process $[M,\bX]$ to capture the quadratic covariation between $M$ and $\bX$. The authors moreover identify the novel class of \emph{rough semimartingales} as a suitable solution space for RSDEs driven by a general class of c\`adl\`ag martingales and rough paths. On the other hand, all such \emph{random rough path}-style approaches require excessive regularity assumptions on the vector field $\sigma$, and construct solutions to RSDEs in a fundamentally pathwise manner through predominantly analytical tools, which do not fully exploit the martingale structure of the stochastic noise.

A new insight on, among other fields, rough stochastic analysis was provided by the introduction of the stochastic sewing lemma by L\^e \cite{Le2020}. This allowed for the development by Friz, Hocquet and L\^e \cite{FrizHocquetLe2021} of a unified theory of RSDEs driven by Brownian motion and a H\"older continuous rough path, providing intrinsic solutions under optimal regularity assumptions on the vector fields. In this setting, the notion of stochastic controlled rough paths was introduced to make sense of so-called \emph{rough stochastic integration} via a generalized stochastic sewing lemma.

Liang and Tang \cite{LiangTang2025} extended the notion of rough stochastic integration to the $p$-variation setting, though still restricted to continuous paths, in order to treat rough BSDEs. Due to the lack of a stochastic sewing lemma capable of handling multiple control functions, they rely on a decomposition of stochastic controlled paths into a martingale and a deterministic controlled path and treat these separate parts individually, via stochastic sewing and classical (deterministic) sewing respectively.

Beginning with the work of Friz and Shekhar \cite{FrizShekhar2017}, the theory of rough paths with jumps has developed over the past few years. In the spirit of Marcus canonical SDEs, Chevyrev and Friz \cite{ChevyrevFriz2019} developed a theory of Marcus RDEs, in which the solution is continuous with respect to both the driving rough path, and the so-called path function used over artificial time intervals inserted at the jump times to effectively remove the discontinuities. Another approach, developed by Friz and Zhang \cite{FrizZhang2018}, constructs ``forward'' rough integrals and solutions to RDEs driven by general rough paths, using a generalized sewing lemma to incorporate discontinuous control functions.

The main motivation of this paper is to provide a theory of rough SDEs of the form in \eqref{eq: RSDE introduction}, driven by a c\`adl\`ag martingale $M$ and a c\`adl\`ag rough path $\bX$. This amounts to a significant generalization of the results of \cite{FrizHocquetLe2021} in terms of the permissible classes of driving noise, whilst still imposing essentially optimal regularity assumptions on the vector fields (in particular with $\sigma$ only assumed to be Lipschitz and bounded).

Introduced by Gubinelli \cite{Gubinelli2004} and Feyel and de La Pradelle \cite{FeyelDeLaPradelle2006}, the sewing lemma provides a simple criterion to guarantee the convergence of some abstract Riemann sums $\sum_{[s,t] \in \cP} \Xi_{s,t}$ along sequences of partitions $\cP$, along with corresponding estimates on the limiting object. In particular, suitable versions of the sewing lemma provide a convenient means to construct Young and rough integrals, even in the discontinuous setting of \cite{FrizZhang2018}. The stochastic sewing lemma \cite{Le2020} is a powerful extension of the sewing lemma, which exploits the stochastic cancellation of martingales in order to weaken the regularity assumptions on the summands $\Xi_{s,t}$.

Beyond RSDEs, the stochastic sewing lemma has proved to be a powerful tool in stochastic analysis more widely, particularly for establishing results on regularization by noise. In particular, Harang and Perkowski \cite{HarangPerkowski2021} used stochastic sewing to show that strong solutions to ODEs with any Schwartz distribution as drift coefficient exist for a large class of Gaussian processes as regularizing noise, and Harang and Ling \cite{HarangLing2021} found similar results for Volterra--L\'evy noise. Based on the stochastic sewing lemma, Kremp and Perkowski \cite{KrempPerkowski2023} developed an alternative notion of rough stochastic sewing integral to show well-posedness of SDEs with Besov drift and $\alpha$-stable L\'evy noise. Athreya, Butkovsky, L\^e and Mytnik \cite{AthreyaButkovskyLeMytnik2024} also recently studied a regularization by noise problem in an SPDE setting, again crucially relying on stochastic sewing.

Butkovsky, Dareiotis and Gerencs\'er \cite{ButkovskyDareiotisGerencser2021} developed a stochastic sewing approach to the numerical approximation of SDEs, and have since expanded these results to the numerics of SPDEs \cite{ButkovskyDareiotisGerencser2023} and of SDEs with L\'evy noise \cite{ButkovskyDareiotisGerencser2024}. Further applications of stochastic sewing include in the study of Euler schemes for singular SDEs (L\^e and Ling \cite{LeLing2022} and Dareiotis, Gerencs\'er and L\^e \cite{DareiotisGerencserLe2022}), homogenization problems for slow-fast systems (Hairer and Li \cite{HairerLi2020}), a stochastic reconstruction theorem (Kern \cite{Kern2023}), and obtaining regularity estimates for the local time of stochastic fields and fractional Brownian motion (Bechtold, Harang and Kern \cite{BechtoldHarangKern2023}, Das, \L{}ochowski, Matsuda and Perkowski \cite{DasLochowskiMatsudaPerkowski2023} and Matsuda and Perkowski \cite{MatsudaPerkowski2024}).

Sewing lemmas are typically written in terms of a control function which captures the regularity of the underlying paths. It turns out that one of the most significant barriers to extending the stochastic sewing lemma to the c\`adl\`ag setting is to be able to incorporate more than one such control function. Accordingly, our first main result (Theorem~\ref{theorem: mild stochastic sewing lemma}) is a novel stochastic sewing lemma, which can simultaneously handle multiple control functions, and which can also incorporate jumps. It may be viewed as a generalization, either of the generalized sewing lemma in \cite[Theorem~2.5]{FrizZhang2018} to the stochastic setting, or of the stochastic sewing lemma in \cite[Theorem~2.9]{FrizHocquetLe2021} to the c\`adl\`ag setting.

With this new c\`adl\`ag version of the stochastic sewing lemma at hand, we introduce notions of stochastic controlled paths and rough stochastic integration, analogous to those in \cite{FrizHocquetLe2021}. This relies on the identification of suitable $p$-variation-type norms for stochastic processes. Our second main result then establishes the existence, uniqueness and stability of solutions to the RSDE in \eqref{eq: RSDE introduction}, driven by a c\`adl\`ag martingale and a c\`adl\`ag rough path. Moreover, we provide the fundamental results of It\^o calculus, including the existence of a bracket process and an associated It\^o formula for stochastic controlled paths with the canonical jump corrections.

We note that our approach to incorporating jumps follows a generalization of the ``forward'' rough integral of \cite{FrizZhang2018}. It is conceivable that the Marcus canonical RDE approach of \cite{ChevyrevFriz2019} may also be applicable, but this is beyond the scope of the present work.

The paper is organized as follows. In Section~\ref{section: Preliminaries} we introduce the relevant $p$-variation spaces for stochastic processes, along with the various properties thereof required for the subsequent analysis. In Section~\ref{section: stochastic sewing lemma} we present our stochastic sewing lemma, and then proceed to define rough stochastic integration in our c\`adl\`ag framework in Section~\ref{section: rough stochastic integration}. Our main result on solutions to RSDEs is then presented in Theorem~\ref{theorem: existence and estimates for solutions to RSDEs}. In Section~\ref{section: rough stochastic calculus} we include some results on the wider calculus of stochastic controlled paths, and the proof of the stochastic sewing lemma is then given in Section~\ref{section: proof of sewing lemma}.

\section{Preliminaries}\label{section: Preliminaries}

\subsection{Frequently used notation}

Given an interval $I \subset [0,\infty)$, we write $\Delta_I := \{(s,t) \in I^2 : s \leq t\}$. For $0 < T < \infty$, we call a function $w \colon \Delta_{[0,T]} \to [0,\infty)$ a \emph{control} if it is superadditive, in the sense that $w(s,u) + w(u,t) \leq w(s,t)$ for all $s \leq u \leq t$. We recall that, for any control $w$, the map $s \mapsto w(s,t)$ is non-increasing for every $t$, the map $t \mapsto w(s,t)$ is non-decreasing for every $s$, and $w(t,t) = 0$ for all $t$.

Given a control $w$, we write $w(s,t+) := \lim_{u \searrow t} w(s,u)$ and
\begin{equation*}
w(s,t-) := \begin{cases}
\lim_{u \nearrow t} w(s,u) & \text{if } \, s < t,\\
0 & \text{if } \, s = t,
\end{cases}
\end{equation*}
with $w(s-,t)$ and $w(s+,t)$ defined analogously, as well as
\begin{equation*}
w(s+,t-) := \begin{cases}
\lim_{v \nearrow t} \lim_{u \searrow s} w(u,v) & \text{if } \, s < t,\\
0 & \text{if } \, s = t.
\end{cases}
\end{equation*}

We say that a control $w$ is \emph{right-continuous in its first argument} if $w(s+,t) = w(s,t)$ for every $(s,t) \in \Delta_{[0,T]}$, and is \emph{right-continuous in its second argument} if $w(s,t+) = w(s,t)$ for every $(s,t) \in \Delta_{[0,T]}$. If $w$ is right-continuous in both its arguments, then we will simply say that it is \emph{right-continuous}. We also say that a control $w$ is \emph{continuous from the inside} if $w(s+,t) = w(s,t) = w(s,t-)$ for all $(s,t) \in \Delta_{[0,T]}$.

\smallskip

By a \emph{partition} $\cP$ of a given interval $[s,t]$, we mean a finite sequence of times $\cP = \{s = t_0 < t_1 < \cdots < t_n = t\}$. We also denote by $|\cP| := \max_{0 \leq i < n} |t_{i+1} - t_i|$ the mesh size of a partition $\cP$.

If $\cP_1$ and $\cP_2$ are two partitions such that $\cP_1 \subseteq \cP_2$ then we say that $\cP_2$ is a \emph{refinement} of $\cP_1$. We say that a sequence of partitions $(\cP^n)_{n \in \N}$ is \emph{nested} if $\cP^n \subseteq \cP^{n+1}$ for every $n \in \N$.

For $u < v$, we write $\cP|_{[u,v]} := \cP \cap [u,v]$ for the restriction of $\cP$ to those times which lie in the subinterval $[u,v]$. We will also sometimes abuse notation slightly by writing $[u,v] \in \cP$ to identify two consecutive times $u, v \in \cP$, i.e., when $u = t_i$ and $v = t_{i+1}$ for some $i$.

\smallskip

Given a path $A \colon [0,T] \to E$, taking values in any normed vector space $(E,|\cdot|)$, we write $\delta A$ for its increment function, so that
\begin{equation*}
\delta A_{s,t} = A_t - A_s
\end{equation*}
for all $(s,t) \in \Delta_{[0,T]}$.

For $p \in [1,\infty)$, given a two-parameter function $F \colon \Delta_{[0,T]} \to E$, the \emph{$p$-variation} of $F$ over the interval $[s,t] \subseteq [0,T]$ is defined as
\begin{equation*}
\|F\|_{p,[s,t]} := \bigg(\sup_{\cP \subset [s,t]} \sum_{[u,v] \in \cP} |F_{u,v}|^p\bigg)^{\hspace{-2pt}\frac{1}{p}},
\end{equation*}
where the supremum is taken over all possible partitions $\cP$ of the interval $[s,t]$. We say that $F$ has finite $p$-variation if $\|F\|_{p,[0,T]} < \infty$. We will sometimes also consider $p$-variation over half-open intervals, e.g., $\|F\|_{p,[s,t)} := \lim_{u \nearrow t} \|F\|_{p,[s,u]}$.

Given a path $A \colon [0,T] \to E$, the $p$-variation of $A$ over $[s,t]$ is defined as the $p$-variation of its increment process, i.e., $\|A\|_{p,[s,t]} := \|\delta A\|_{p,[s,t]}$.

For $p \in [1,\infty)$, we write $V^p = V^p([0,T];E)$ for the space of (deterministic) c\`adl\`ag paths with finite $p$-variation.

\smallskip

For $k \in \N$ and a function $f$, we write $\D^k f$ for the $k$-th order Fr\'echet derivative of $f$. We write $C^n_b$ for the space of functions $f$ which are $(n-1)$-times continuously differentiable, such that $f$ and all its derivatives up to order $n-1$ are bounded, and such that the $(n-1)$-th order derivative $\D^{n-1} f$ is Lipschitz continuous. We define a corresponding norm by
\begin{equation*}
\|f\|_{C^n_b} := \sum_{k=0}^{n-1} \sup_x \|\D^k f(x)\| + \sup_{x \neq y} \frac{\|\D^{n-1} f(x) - \D^{n-1} f(y)\|}{|x - y|}.
\end{equation*}
We will also write, e.g., $\cL(\R^\ell;\R^m)$ for the space of linear maps from $\R^\ell \to \R^m$.

\smallskip

During proofs, we will often use the symbol $\lesssim$ to indicate inequality up to a multiplicative constant. When deriving estimates, this implicit constant will depend on the same variables as the constant specified in the statement of the corresponding estimate.

\subsection{Mixed moment norms for random variables}

We will assume throughout that we have a fixed finite time horizon $T > 0$, and an underlying filtered probability space $(\Omega,\cF,(\cF_t)_{t \in [0,T]},\P)$. Except where stated otherwise, the filtration $(\cF_t)_{t \in [0,T]}$ will \emph{not} be assumed to satisfy the usual conditions. All random variables $Y$ in this section will be assumed to take values in some normed vector space $(E,|\cdot|)$.

\smallskip

For $r \in [1,\infty]$, we write $\|\cdot\|_{L^r}$ for the standard Lebesgue norm on $(\Omega,\cF,\P)$, i.e., $\|Y\|_{L^r} = \E[|Y|^r]^{\frac{1}{r}}$ when $r \in [1,\infty)$, and $\|Y\|_{L^\infty} = \esssup |Y|$.

We write $\E_s[\, \cdot \,] := \E[\, \cdot \,|\,\cF_s]$ for the conditional expectation at time $s \in [0,T]$.

\begin{definition}
For $q \in [1,\infty)$, $r \in [q,\infty]$, $s \in [0,T]$ and any random variable $Y$, we define
\begin{equation*}
\|Y\|_{q,r,s} := \big\|\E_s[|Y|^q]^{\frac{1}{q}}\big\|_{L^r}
\end{equation*}
and write $L^{q,r}_s$ for the space of random variables $Y$ such that $\|Y\|_{q,r,s} < \infty$.
\end{definition}

In particular, we see that $\|\cdot\|_{q,q,s} = \|\cdot\|_{L^q}$, so that $L^{q,q}_s = L^q$. The following proposition recalls various useful properties of the $\|\cdot\|_{q,r,s}$ norm, the proofs of which are either straightforward or may be found (in slightly different notation) in \cite[Proposition~2.3 and Lemma~2.4]{FrizHocquetLe2021}.

\begin{proposition}\label{prop: Lqrs is Banach space}
Let $q \in [1,\infty)$, $r \in [q,\infty]$, $s, t \in [0,T]$, and let $Y, Z$ be random variables.
\begin{enumerate}[(i)]
\item We have that
\begin{equation}\label{eq:q,r,s embeds into L^q}
\|Y\|_{L^q} \leq \|Y\|_{q,r,s} \leq \|Y\|_{L^{r}}.
\end{equation}

\item If $s \leq t$, then
\begin{equation*}
\|Y\|_{q,r,s} \leq \|Y\|_{q,r,t}.
\end{equation*}

\item We have that
\begin{equation}\label{eq: L^qr norm of E_s bounded by q,r,s}
\big\| \E_s[Y] \big\|_{L^{r}} \leq \|Y\|_{q,r,s}.
\end{equation}

\item The following version of H\"older's inequality,
\begin{equation}\label{eq:Hoelder's inequality for q,r,s norm}
\big\|\E_s[|Y||Z|]\big\|_{L^\ell} \leq \big\|\E_s[|Y|^p]^{\frac{1}{p}}\big\|_{L^{p\ell}} \big\|\E_s[|Z|^q]^{\frac{1}{q}}\big\|_{L^{q\ell}} = \|Y\|_{p,p\ell,s} \|Z\|_{q,q\ell,s},
\end{equation}
holds for any $\ell \in [1,\infty]$ whenever $p, q \in (1,\infty)$ with $\frac{1}{p} + \frac{1}{q} = 1$.

\item We have the following version of Fatou's lemma. If $(Y_n)_{n \in \N}$ is a sequence of non-negative random variables, then
\begin{equation*}
\big\|\liminf_{n \to \infty} Y_n\big\|_{q,r,s} \leq \liminf_{n \to \infty} \|Y_n\|_{q,r,s}.
\end{equation*}

\item The space $L^{q,r}_s$ becomes a Banach space when equipped with the norm $\|\cdot\|_{q,r,s}$ (where as usual we identify random variables as being the same if they are equal almost surely).
\end{enumerate}
\end{proposition}

We note, by Jensen's inequality and part~(ii) of Proposition~\ref{prop: Lqrs is Banach space}, that, for any random variable $Y$, the norm $\|Y\|_{q,r,s}$ is non-decreasing in each of the variables $q, r$ and $s$.

\begin{lemma}\label{lemma: closedness of set of bounded L^q,infinity random variables in L^q}
For any $q \in [1,\infty)$, $s \in [0,T]$ and $\nu > 0$, the set
$$\cB^{q,\infty}_s(\nu) := \{Y \in L^{q,\infty}_s : \|Y\|_{q,\infty,s} \leq \nu\}$$
is a closed subset of $L^q$.
\end{lemma}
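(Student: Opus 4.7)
The plan is to verify closedness directly: take an $L^q$-convergent sequence from $\cB^{q,\infty}_s(\nu)$ and show its limit still satisfies the $\|\cdot\|_{q,\infty,s}$-bound by $\nu$. The key ingredient is the Fatou-type inequality \eqref{eq: Fatou's lemma for qrs} proved in Proposition~\ref{prop: Lqrs is Banach space}(v), which was stated precisely with the limsup case $r = \infty$ in mind.

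Concretely, I would let $(Y_n)_{n \geq 1} \subset \cB^{q,\infty}_s(\nu)$ with $Y_n \to Y$ in $L^q$. By a standard result, there exists a subsequence $(Y_{n_k})_{k \geq 1}$ such that $Y_{n_k} \to Y$ almost surely, and therefore $|Y_{n_k}| \to |Y|$ almost surely as well. In particular $|Y| = \liminf_{k \to \infty} |Y_{n_k}|$ a.s. Applying \eqref{eq: Fatou's lemma for qrs} to the non-negative random variables $(|Y_{n_k}|)_{k \geq 1}$ then gives
\begin{equation*}
\|Y\|_{q,\infty,s} = \big\|\liminf_{k \to \infty} |Y_{n_k}|\big\|_{q,\infty,s} \leq \liminf_{k \to \infty} \|Y_{n_k}\|_{q,\infty,s} \leq \nu,
\end{equation*}
where the last inequality uses that each $Y_{n_k}$ lies in $\cB^{q,\infty}_s(\nu)$. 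Hence $Y \in \cB^{q,\infty}_s(\nu)$, so the set is closed in $L^q$.

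There is no real obstacle here once one has Fatou's lemma for $\|\cdot\|_{q,\infty,s}$ in hand: the only subtle point is that $L^q$-convergence does not by itself give a.s. convergence, which is why one passes to a subsequence before invoking \eqref{eq: Fatou's lemma for qrs}. The choice of $r = \infty$ in the definition of $\cB^{q,\infty}_s(\nu)$ is not actually essential for this argument; the same proof shows that $\{Y \in L^{q,r}_s : \|Y\|_{q,r,s} \leq \nu\}$ is closed in $L^q$ for every $r \in [1,\infty]$.
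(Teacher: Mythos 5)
Your proof is correct, but it takes a genuinely different route from the paper's. The paper does not invoke the Fatou-type inequality \eqref{eq: Fatou's lemma for qrs} here at all; instead, it works directly: using the elementary bound $|b^q - a^q| \leq q\,|b-a|\,(a^{q-1}+b^{q-1})$ together with H\"older's inequality, it shows that $\E_s[|Y^n|^q] \to \E_s[|Y|^q]$ in $L^1$ and hence in probability, then combines this with the almost-sure bounds $\E_s[|Y^n|^q] \leq \nu^q$ to deduce $\P(\E_s[|Y|^q] > \nu^q + \epsilon) = 0$ for every $\epsilon > 0$. Your argument---pass to an a.s.\ convergent subsequence, note $|Y| = \liminf_k |Y_{n_k}|$ a.s., and apply Proposition~\ref{prop: Lqrs is Banach space}(v)---is shorter and cleaner, since it outsources the analytic work to a result the paper has already proved. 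It also has a tangible advantage that you correctly point out: the paper's argument is tailored to $r=\infty$ (it relies on the a.s.\ pointwise bound $\E_s[|Y^n|^q]\leq\nu^q$ that the $L^\infty$-norm provides), whereas your Fatou-based argument applies verbatim to show $\{Y \in L^{q,r}_s : \|Y\|_{q,r,s}\leq\nu\}$ is closed in $L^q$ for every $r \in [1,\infty]$. One small point worth making explicit: you should note that $\|\,\cdot\,\|_{q,r,s}$ depends only on the absolute value of its argument, so that $\|\,|Y_{n_k}|\,\|_{q,\infty,s} = \|Y_{n_k}\|_{q,\infty,s}$ and the application of \eqref{eq: Fatou's lemma for qrs} to the non-negative sequence $(|Y_{n_k}|)$ really does deliver the stated conclusion about $Y$.
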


\begin{proof}
Let $Y \in L^q$, and let $(Y^n)_{n \in \N}$ be a sequence in $\cB^{q,\infty}_s(\nu)$ such that $Y^n \to Y$ in $L^q$. Using the fact that $|b^q - a^q| = |\int_a^b q x^{q-1} \dd x| \leq q |b - a| (a^{q-1} + b^{q-1})$ for any $a, b \geq 0$, and H{\"o}lder's inequality, we have that
\begin{align*}
\E \big[\big| \E_s[|Y^n|^q] - \E_s[|Y|^q] \big|\big] &\leq q \E \big[\big|\E_s\big[|Y^n - Y| \big(|Y^n|^{q-1} + |Y|^{q-1}\big)\big]\big|\big]\\
&\leq q \E \big[ |Y^n - Y| \big(|Y^n|^{q-1} + |Y|^{q-1}\big) \big]\\
&\leq q \|Y^n - Y\|_{L^q} \big(\|Y^n\|^{q-1}_{L^q} + \|Y\|^{q-1}_{L^q}\big).
\end{align*}
Since $Y^n \to Y$ in $L^q$, we see that $\E_s[|Y^n|^q] \to \E_s[|Y|^q]$ in $L^1$, and hence also in probability.

For each $n \in \N$, we have that, almost surely, $\E_s[|Y^n|^q] \leq \|Y^n\|_{q,\infty,s}^q \leq \nu^q$. It follows that, for each $\epsilon > 0$,
\begin{equation*}
\P\big(\E_s[|Y|^q] > \nu^q + \epsilon\big) \leq \P\big(\big| \E_s[|Y^n|^q] - \E_s[|Y|^q] \big| > \epsilon\big) \, \longrightarrow \, 0 \qquad \text{as} \quad n \, \longrightarrow \, \infty.
\end{equation*}
Since then $\P(\E_s[|Y|^q] > \nu^q + \epsilon) = 0$ for every $\epsilon > 0$, we have that $\P(\E_s[|Y|^q] \leq \nu^q) = 1$, which means precisely that $Y \in \cB^{q,\infty}_s(\nu)$.
\end{proof}

\subsection{Variation norms for stochastic processes}

We say that a stochastic process $Y = (Y_t)_{t \in [0,T]}$ is \emph{right-continuous in probability}, if $Y_t \to Y_s$ in probability as $t \searrow s$, for every (deterministic) time $s \in [0,T)$.

\begin{definition}\label{defn: V^pL^q and V^pL^q,r spaces}
Let $p, q \in [1,\infty)$ and $r \in [q,\infty]$. Given a two-parameter stochastic process $F = (F_{s,t})_{(s,t) \in \Delta_{[0,T]}}$, we define
\begin{equation*}
\|F\|_{p,q,[s,t]} := \bigg(\sup_{\cP \subset [s,t]} \sum_{[u,v] \in \cP} \|F_{u,v}\|_{L^q}^p\bigg)^{\hspace{-2pt}\frac{1}{p}} = \bigg(\sup_{\cP \subset [s,t]} \sum_{[u,v] \in \cP} \E[|F_{u,v}|^q]^\frac{p}{q}\bigg)^{\hspace{-2pt}\frac{1}{p}}
\end{equation*}
and
\begin{equation*}
\|F\|_{p,q,r,[s,t]} := \bigg(\sup_{\cP \subset [s,t]} \sum_{[u,v] \in \cP} \|F_{u,v}\|_{q,r,u}^p\bigg)^{\hspace{-2pt}\frac{1}{p}} = \bigg(\sup_{\cP \subset [s,t]} \sum_{[u,v] \in \cP} \big\|\E_u[|F_{u,v}|^q]^{\frac{1}{q}}\big\|_{L^r}^{p}\bigg)^{\hspace{-2pt}\frac{1}{p}}
\end{equation*}
for $(s,t) \in \Delta_{[0,T]}$. For a stochastic process $Y = (Y_t)_{t \in [0,T]}$, we then let
\begin{equation*}
\|Y\|_{p,q,[s,t]} := \|\delta Y\|_{p,q,[s,t]} \qquad \text{and} \qquad \|Y\|_{p,q,r,[s,t]} := \|\delta Y\|_{p,q,r,[s,t]}.
\end{equation*}
We also write $\|Y\|_{p,q,[s,t)} := \lim_{u \nearrow t} \|Y\|_{p,q,[s,u]}$ and $\|Y\|_{p,q,r,[s,t)} := \lim_{u \nearrow t} \|Y\|_{p,q,r,[s,u]}$.

We write $V^p L^q = V^p L^q([0,T];E)$ for the space of stochastic processes $Y \colon \Omega \times [0,T] \to E$ which are right-continuous in probability and are such that $Y_0 \in L^q$ and $\|Y\|_{p,q,[0,T]} < \infty$. Similarly, we write $V^p L^{q,r} = V^p L^{q,r}([0,T];E)$ for the space of stochastic processes $Y \colon \Omega \times [0,T] \to E$ which are right-continuous in probability and are such that $Y_0 \in L^q$ and $\|Y\|_{p,q,r,[0,T]} < \infty$.
\end{definition}

In the case when $r = q$, we simply have that $\|Y\|_{p,q,q,[s,t]} = \|Y\|_{p,q,[s,t]}$, so that $V^p L^{q,q} = V^p L^q$. For general $r \in [q,\infty]$, it follows from \eqref{eq:q,r,s embeds into L^q}, that
\begin{equation}\label{eq: p,q norm less than p,q,r}
\|Y\|_{p,q,[s,t]} \leq \|Y\|_{p,q,r,[s,t]},
\end{equation}
so that $V^p L^{q,r} \subseteq V^p L^q$. Moreover, the map $Y \mapsto \|Y_0\|_{L^q} + \|Y\|_{p,q,[0,T]}$ (resp.~$Y \mapsto \|Y_0\|_{L^q} + \|Y\|_{p,q,r,[0,T]}$) is a norm on $V^p L^q$ (resp.~on $V^p L^{q,r}$), where we identify processes as being the same if they are a modification of each other.

\begin{remark}
It is sufficient to consider only $V^p L^q$ spaces if one is interested in rough stochastic differential equations with linear vector fields; see, e.g., \cite{BuginiCoghiNilssen2024} or \cite{LiangTang2025}. However, as we will see in detail later, the composition of processes with nonlinear vector fields leads to a loss of integrability, which renders the use of the mixed moment spaces $V^p L^{q,r}$ essential.
\end{remark}

\begin{example}\label{examples: BM in mixed moment spaces}
For Brownian motion $W = (W_t)_{t \in [0,T]}$, we have that $\|\delta W_{s,t}\|_{q,\infty,s} = \|\delta W_{s,t}\|_{L^q} \lesssim (t - s)^{\frac{1}{2}}$, so that $W \in V^2 L^{q,\infty}$ for any $q \in [1,\infty)$.

More generally, for any L\'evy process $L = (L_t)_{t \in [0,T]}$ with $\E [L_1^2] < \infty$, we have (from, e.g., \cite[Proposition~2.3]{BrockwellSchlemm2012}) that $\|\delta L_{s,t}\|_{2,\infty,s} = \|\delta L_{s,t}\|_{L^2} \lesssim (t - s)^{\frac{1}{2}}$, so that $L \in V^2 L^{2,\infty}$.
\end{example}

We will discuss further classes of processes which live in $V^p L^{q,r}$ spaces in Section~\ref{section: martingales in mixed moment spaces}.

\begin{lemma}\label{lemma: V^pL^q implies L^q-regulated}
Let $p, q \in [1,\infty)$ and $r \in [q,\infty]$, and let $Y$ be a stochastic process such that $\|Y\|_{p,q,r,[0,T]} < \infty$. Then $Y$ is \emph{$L^{q,r}$-regulated}, by which we mean that, for every $s \in [0,T)$, the map
$$[s,T] \ni t \mapsto \delta Y_{s,t}$$
admits left and right limits in $L^{q,r}_s$.
\end{lemma}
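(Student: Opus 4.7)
The plan is to recognize this as the classical fact that a path of finite $p$-variation with values in a Banach space admits one-sided limits at every point. Fix $s \in [0,T)$ and consider the map $f \colon [s,T] \to L^{q,r}_s$ defined by $f(t) := \delta Y_{s,t}$. As a first sanity check, one needs that $f$ really takes values in $L^{q,r}_s$: evaluating the definition of $\|Y\|_{p,q,r,[s,t]}$ on the trivial partition $\{s,t\}$ gives
\begin{equation*}
\|\delta Y_{s,t}\|_{q,r,s} \leq \|Y\|_{p,q,r,[s,t]} \leq \|Y\|_{p,q,r,[0,T]} < \infty.
\end{equation*}

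Next I would show that $f$ has finite $p$-variation as a path in the Banach space $(L^{q,r}_s, \|\cdot\|_{q,r,s})$. The key observation is that for any $[u,v] \subseteq [s,T]$, the telescoping identity $f(v) - f(u) = \delta Y_{u,v}$, combined with the monotonicity $\|\cdot\|_{q,r,s} \leq \|\cdot\|_{q,r,u}$ provided by Proposition~\ref{prop: Lqrs is Banach space}(ii), yields, for any partition $\cP$ of $[s,T]$,
\begin{equation*}
\sum_{[u,v] \in \cP} \|f(v) - f(u)\|_{q,r,s}^p \leq \sum_{[u,v] \in \cP} \|\delta Y_{u,v}\|_{q,r,u}^p \leq \|Y\|_{p,q,r,[s,T]}^p < \infty.
\end{equation*}
Since $L^{q,r}_s$ is a Banach space by Proposition~\ref{prop: Lqrs is Banach space}(vi), $f$ is a path of finite $p$-variation taking values in a complete normed space.

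It then remains to invoke the standard fact that any such path admits left and right limits at every point where such limits are meaningful. Briefly: if $t_n \searrow t^* \in [s,T)$ and $(f(t_n))$ were not Cauchy in $L^{q,r}_s$, one could extract indices $n_1 < m_1 < n_2 < m_2 < \cdots$ with $\|f(t_{n_j}) - f(t_{m_j})\|_{q,r,s} \geq \epsilon$; since $t_n$ is decreasing in $n$, the partition $t_{m_N} < t_{n_N} < t_{m_{N-1}} < t_{n_{N-1}} < \cdots < t_{n_1}$ contributes at least $N \epsilon^p$ to the $p$-variation sum of $f$, contradicting the finiteness just established. The analogous argument delivers left limits at every $t^* \in (s,T]$.

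The main input is therefore the monotonicity of $\|\cdot\|_{q,r,\cdot}$ in its third argument, which is exactly what lets us dominate the Banach-valued $p$-variation of $f \colon [s,T] \to L^{q,r}_s$ by the finite quantity $\|Y\|_{p,q,r,[s,T]}$; everything else is routine finite-$p$-variation theory in a Banach space, and no right-continuity in probability of $Y$ is needed.
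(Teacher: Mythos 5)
Your proof is correct and follows essentially the same route as the paper's: the key step in both is to use the monotonicity of Proposition~\ref{prop: Lqrs is Banach space}(ii) so that the partition sums built from $\|\cdot\|_{q,r,s}$ are dominated by $\|Y\|_{p,q,r,[0,T]}^p$, after which completeness of $L^{q,r}_s$ and a Cauchy-by-contradiction argument with disjoint subintervals deliver the one-sided limits. Your reframing of the intermediate step as ``$t \mapsto \delta Y_{s,t}$ has finite $p$-variation as a path into the fixed Banach space $L^{q,r}_s$'' is a tidy way to package what the paper does directly inside the contradiction argument.
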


\begin{proof}
We recall from part~(vi) of Proposition~\ref{prop: Lqrs is Banach space} that $L^{q,r}_s$ is a Banach space. To show the existence of right limits, it thus suffices to show that for any $0 \leq s \leq t < T$, and any sequence $(u_k)_{k \in \N}$ of times with $u_k \searrow t$ as $k \to \infty$, the sequence of random variables $(\delta Y_{s,u_k})_{k \in \N}$ is a Cauchy sequence in $L^{q,r}_s$, and that the corresponding limit is independent of the choice of sequence $(u_k)_{k \in \N}$.

To this end, let us assume for a contradiction that there exists such a sequence $(u_k)_{k \in \N}$ with $u_k \searrow t$, such that $(\delta Y_{s,u_k})_{k \in \N}$ is not Cauchy in $L^{q,r}_s$. This implies that there exists an $\epsilon > 0$, and sequences $(m_j)_{j \in \N}$ and $(n_j)_{j \in \N}$, such that $m_j < n_j < m_{j+1}$ and
\begin{equation*}
\|\delta Y_{u_{n_j},u_{m_j}}\|_{q,r,s} > \epsilon
\end{equation*}
for each $j \in \N$. Then, recalling part~(ii) of Proposition~\ref{prop: Lqrs is Banach space}, we have that
\begin{equation*}
\|Y\|_{p,q,r,[0,T]}^p \geq \sum_{j=1}^\infty \|\delta Y_{u_{n_j},u_{m_j}}\|_{q,r,u_{n_j}}^p \geq \sum_{j=1}^\infty \|\delta Y_{u_{n_j},u_{m_j}}\|_{q,r,s}^p = \infty,
\end{equation*}
which gives the desired contradiction.

Thus, for every sequence $(u_k)_{k \in \N}$ with $u_k \searrow t$ as $k \to \infty$, the sequence $(\delta Y_{s,u_k})_{k \in \N}$ is Cauchy, and hence convergent in $L^{q,r}_s$. As we may always combine two such sequences of times to obtain a new sequence, it is immediate that the limit is independent of the choice of sequence $(u_k)_{k \in \N}$. The existence of left limits for $t \in (s,T]$ can be shown by an almost identical argument, by considering sequences of times such that $u_k \nearrow t$ as $k \to \infty$.
\end{proof}

\begin{remark}\label{remark: V^p L^q,r processes are cadlag in L^q}
Since $\|\cdot\|_{L^q} = \|\cdot\|_{q,q,s} \leq \|\cdot\|_{q,r,s}$, the result of Lemma~\ref{lemma: V^pL^q implies L^q-regulated} also implies that, for any $p, q \in [1,\infty)$, $r \in [q,\infty]$ and any $Y \in V^p L^{q,r}$, the process $Y$ is \emph{$L^q$-regulated}, by which we mean that the map $t \mapsto Y_t$ admits left and right limits in $L^q$.
\end{remark}

\begin{lemma}\label{lemma: V^pL^q is a Banach space}
For any $p, q \in [1,\infty)$ and $r \in [q,\infty]$, the space $V^p L^{q,r}$ (as defined in Definition~\ref{defn: V^pL^q and V^pL^q,r spaces}) is a Banach space, with the norm given by
\begin{equation}\label{eq: norm on V^p L^q}
Y \, \mapsto \, \|Y_0\|_{L^q} + \|Y\|_{p,q,r,[0,T]}.
\end{equation}
Moreover, any process $Y \in V^p L^{q,r}$ is \emph{c\`adl\`ag in $L^q$}, by which we mean that the map $t \mapsto Y_t$ is a c\`adl\`ag path from $[0,T] \to L^q$.
\end{lemma}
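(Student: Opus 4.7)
The plan is to verify the norm axioms, establish completeness via a direct Cauchy argument, and then deduce the c\`adl\`ag-in-$L^q$ property from the $L^q$-regulation result of Lemma~\ref{lemma: V^pL^q implies L^q-regulated} combined with Remark~\ref{remark: V^p L^q,r processes are cadlag in L^q}. For the norm axioms, homogeneity is immediate. Definiteness follows because if the norm vanishes, then $Y_0 = 0$ a.s., and for any $t \in (0,T]$ the partition $\{0,t,T\}$ (or $\{0,T\}$ if $t=T$) forces $\|\delta Y_{0,t}\|_{q,r,0} \leq \|Y\|_{p,q,r,[0,T]} = 0$, so $Y_t = Y_0$ a.s., giving that $Y$ is a modification of the zero process. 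The triangle inequality combines the triangle inequality for $\|\cdot\|_{q,r,u}$ (Proposition~\ref{prop: Lqrs is Banach space}(vi)) with Minkowski's inequality in $\ell^p$ on the sum over a fixed partition, before taking the supremum.

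For completeness, I would let $(Y^n)_{n \in \N}$ be Cauchy in \eqref{eq: norm on V^p L^q}. Then $(Y^n_0)_n$ converges in $L^q$ to some $Y_0$, and for any $t \in (0,T]$, including $t$ in a partition of $[0,T]$ yields
\[
\|\delta Y^n_{0,t} - \delta Y^m_{0,t}\|_{q,r,0} \leq \|Y^n - Y^m\|_{p,q,r,[0,T]},
\]
so $(\delta Y^n_{0,t})_n$ is Cauchy in $L^{q,r}_0$ and, by Proposition~\ref{prop: Lqrs is Banach space}(vi), converges to some $Z_t \in L^{q,r}_0$; I set $Y_t := Y_0 + Z_t$. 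Using \eqref{eq:q,r,s embeds into L^q}, this gives $Y^n_t \to Y_t$ in $L^q$ for every $t$, hence $\delta Y^n_{s,t} \to \delta Y_{s,t}$ in $L^q$ for every $(s,t)$. Applying Fatou's lemma (Proposition~\ref{prop: Lqrs is Banach space}(v)) to an a.s.\ convergent subsequence of $\delta(Y^n - Y^m)_{u,v}$ as $m \to \infty$, together with the elementary finite-sum bound $\sum_{[u,v]\in\cP}\liminf_m a^m_{u,v} \leq \liminf_m \sum_{[u,v]\in\cP} a^m_{u,v}$ and taking the supremum over $\cP$, I obtain $\|Y^n - Y\|_{p,q,r,[0,T]} \leq \liminf_m \|Y^n - Y^m\|_{p,q,r,[0,T]}$. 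Since $(Y^n)$ is Cauchy, this tends to $0$ as $n \to \infty$, establishing both $\|Y\|_{p,q,r,[0,T]} < \infty$ and $Y^n \to Y$ in the norm \eqref{eq: norm on V^p L^q}.

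The main obstacle is then verifying that $Y$ is right-continuous in probability (so that indeed $Y \in V^p L^{q,r}$), and this is also the mechanism for the stated c\`adl\`ag-in-$L^q$ property. The key observation is that Lemma~\ref{lemma: V^pL^q implies L^q-regulated} and Remark~\ref{remark: V^p L^q,r processes are cadlag in L^q} ensure that each $Y^n$ is $L^q$-regulated; the $L^q$-right-limit of $Y^n$ at any $s \in [0,T)$ must then coincide with $Y^n_s$ itself, since convergence in $L^q$ implies convergence in probability and $Y^n$ is by assumption right-continuous in probability. Thus each $Y^n$ is actually right-continuous in $L^q$. The bound
\[
\|\delta Y_{s,t}\|_{L^q} \leq \|\delta Y^n_{s,t}\|_{L^q} + \|Y - Y^n\|_{p,q,r,[0,T]},
\]
combined with the norm convergence $Y^n \to Y$ just established, then gives right-continuity of $Y$ in $L^q$, and hence in probability. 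This completes the verification that $V^p L^{q,r}$ is Banach, and the same reconciliation argument (existence of left/right $L^q$-limits from $L^q$-regulation, plus right-continuity in probability forcing the right limit at $s$ to equal $Y_s$) proves the final c\`adl\`ag-in-$L^q$ assertion for any $Y \in V^p L^{q,r}$.
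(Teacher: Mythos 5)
Your argument is correct and follows the same overall structure as the paper's proof (construct $Y$ from the pointwise $L^{q,r}_s$-limits, prove convergence in the $p$-variation norm, establish right-continuity, deduce c\`adl\`ag-ness in $L^q$ via $L^q$-regulation), but two sub-arguments are carried out differently. For $\|Y^n - Y\|_{p,q,r,[0,T]} \to 0$, the paper invokes the lower semicontinuity of the $p$-variation norm, citing~\cite{ChistyakovGalkin1998}, whereas you give a self-contained argument via Fatou's lemma for the $\|\cdot\|_{q,r,u}$-norms (Proposition~\ref{prop: Lqrs is Banach space}(v)) applied along a subsequence, combined with the finite-sum $\liminf$ bound and a supremum over partitions; this mirrors the completeness proof in Proposition~\ref{prop: Lqrs is Banach space}(vi) and avoids the external citation. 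One small point to add: you should first fix a single subsequence $(m_j)$ along which $\delta(Y^n - Y^{m_j})_{u,v}$ converges a.s.\ \emph{simultaneously} for all the finitely many $[u,v] \in \cP$ before applying Fatou termwise (easy, since $\cP$ is finite and each limit is in $L^q$). For right-continuity, the paper argues directly in probability via a three-$\epsilon$ bound on $\P(|\delta Y_{s,t}| > 2\eta)$, and only afterwards upgrades to right-continuity in $L^q$ using $L^q$-regulation of the limit $Y$. You instead note that each $Y^n$ is already right-continuous in $L^q$---its $L^q$-right-limit (which exists by Remark~\ref{remark: V^p L^q,r processes are cadlag in L^q}) must coincide with the probability-right-limit $Y^n_s$---and then transfer this to $Y$ via $\|\delta Y_{s,t}\|_{L^q} \leq \|\delta Y^n_{s,t}\|_{L^q} + \|Y - Y^n\|_{p,q,r,[0,T]}$. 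Your route delivers right-continuity of $Y$ in $L^q$ in one step; the paper's route stays closer to the defining condition of the space (right-continuity in probability). Both are sound, and the final c\`adl\`ag-in-$L^q$ deduction is identical.
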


\begin{proof}
It is straightforward to check that the map defined in \eqref{eq: norm on V^p L^q} is indeed a norm on $V^p L^{q,r}$, where we identify processes as being the same if they are a modification of each other.

Let $(Y^n)_{n \in \N}$ be a Cauchy sequence with respect to the norm in \eqref{eq: norm on V^p L^q}. We then have that the sequence $(Y^n_0)_{n \in \N}$ is Cauchy in $L^q$, and, for any $(s,t) \in \Delta_{[0,T]}$, the sequence $(\delta Y^n_{s,t})_{n \in \N}$ is Cauchy in $L^{q,r}_s$. We can then define a new process $Y$, by letting
$$Y_t := \lim_{n \to \infty} Y^n_0 + \lim_{n \to \infty} \delta Y^n_{0,t}.$$
It is then also straightforward to see that $\delta Y_{s,t} = \lim_{n \to \infty} \delta Y^n_{s,t}$ as a limit in $L^{q,r}_s$, for every pair $(s,t) \in \Delta_{[0,T]}$.

The seminorm $\|\cdot\|_{p,q,r,[0,T]}$ is, by definition, the composition of the $p$-variation norm with the time-parameterized family of norms $L^{q,r}_{\edot} = (L^{q,r}_s)_{s \in [0,T]}$. Using the lower semi-continuity of the $p$-variation norm (see, e.g., \cite[(P7) in Sect.~2]{ChistyakovGalkin1998}\footnote{Strictly speaking, here we use a slightly more general version of this lower semi-continuity result, since we have a time-parameterized family of norms, but the same proof works in this case almost verbatim.}), we then have that
\begin{align*}
\|Y^n - Y\|_{p,q,r,[0,T]} &= \big\| \|(\delta Y^n - \delta Y)_{(\cdot,\cdot)}\|_{L^{q,r}_{\edot}} \big\|_{p,[0,T]} = \big\| \lim_{m \to \infty} \|(\delta Y^n - \delta Y^m)_{(\cdot,\cdot)}\|_{L^{q,r}_{\edot}} \big\|_{p,[0,T]}\\
&\leq \liminf_{m \to \infty} \big\| \|(\delta Y^n - \delta Y^m)_{(\cdot,\cdot)}\|_{L^{q,r}_{\edot}} \big\|_{p,[0,T]}\\
&= \liminf_{m \to \infty} \|Y^n - Y^m\|_{p,q,r,[0,T]} \, \longrightarrow \, 0 \qquad \text{as} \quad n \, \longrightarrow \, \infty,
\end{align*}
from which it follows, both that $\|Y\|_{p,q,r,[0,T]} < \infty$, and that $Y^n \to Y$ with respect to the norm in \eqref{eq: norm on V^p L^q}.

It remains to check that the limiting process $Y$ is also right-continuous in probability. Let $\eta > 0$ and $\epsilon > 0$. Since
\begin{equation*}
\sup_{(s,t) \in \Delta_{[0,T]}} \eta^q \P \big(|\delta Y^n_{s,t} - \delta Y_{s,t}| > \eta\big) \leq \sup_{(s,t) \in \Delta_{[0,T]}} \|\delta Y^n_{s,t} - \delta Y_{s,t}\|_{L^q}^q \leq \|Y^n - Y\|_{p,q,r,[0,T]}^q,
\end{equation*}
we can fix an $n \in \N$ sufficiently large such that
\begin{equation*}
\sup_{(s,t) \in \Delta_{[0,T]}} \P \big(|\delta Y^n_{s,t} - \delta Y_{s,t}| > \eta\big) < \frac{\epsilon}{2}.
\end{equation*}
Since $Y^n$ is right-continuous in probability, for any $s \in [0,T)$, there exists a $v \in (s,T]$ such that
\begin{equation*}
\P \big(|\delta Y^n_{s,t}| > \eta\big) < \frac{\epsilon}{2}
\end{equation*}
for every $t \in (s,v]$. Then, for any such $t \in (s,v]$, we have that
\begin{equation*}
\P \big(|\delta Y_{s,t}| > 2\eta\big) \leq \P \big(|\delta Y^n_{s,t}| > \eta\big) + \P \big(|\delta Y^n_{s,t} - \delta Y_{s,t}| > \eta\big) < \epsilon.
\end{equation*}
We have thus established that $\P (|Y_t - Y_s| > 2\eta) \to 0$ as $t \searrow s$ for every $s \in [0,T)$ and every $\eta > 0$, which means precisely that $Y$ is right-continuous in probability. It follows that $V^p L^{q,r}$ is indeed a Banach space.

Finally, we recall from Remark~\ref{remark: V^p L^q,r processes are cadlag in L^q} that any process $Y \in V^p L^{q,r}$ is $L^q$-regulated, so that in particular $\lim_{t \searrow s} Y_t$ exists as a limit in $L^q$ for every $s \in [0,T)$. Since $Y$ is right-continuous in probability, it follows from the uniqueness of limits that actually $Y_t \to Y_s$ in $L^q$ as $t \searrow s$. Combined with the existence of left limits in $L^q$, this implies that $Y$ is c\`adl\`ag in $L^q$.
\end{proof}

\begin{lemma}\label{lemma: closedness of set of bounded paths in V^pL^q}
For any $p, q \in [1,\infty)$ and $\nu > 0$, the set
$$\cB^{p,q,\infty}(\nu) := \{Y \in V^p L^{q,\infty} : \|Y\|_{p,q,\infty,[0,T]} \leq \nu\}$$
is a closed subset of $V^p L^q$.
\end{lemma}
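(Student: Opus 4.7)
The plan is to take any sequence $(Y^n)_{n \in \N}$ in $\cB^{p,q,\infty}(\nu)$ converging in $V^p L^q$ to some process $Y \in V^p L^q$, and to show that $Y \in \cB^{p,q,\infty}(\nu)$. The membership $Y \in V^p L^{q,\infty}$ requires three ingredients: right-continuity in probability, $Y_0 \in L^q$, and $\|Y\|_{p,q,\infty,[0,T]} < \infty$; the first two are immediate from $Y \in V^p L^q$, so the real task is to prove the quantitative bound $\|Y\|_{p,q,\infty,[0,T]} \leq \nu$. The starting point is that $V^p L^q$ convergence gives pointwise convergence of increments: using the partition $\{0,s,t,T\}$ in the definition of $\|\cdot\|_{p,q,[0,T]}$, we obtain
\begin{equation*}
\|\delta Y^n_{s,t} - \delta Y_{s,t}\|_{L^q} \leq \|Y^n - Y\|_{p,q,[0,T]} \longrightarrow 0
\end{equation*}
for every $(s,t) \in \Delta_{[0,T]}$, and hence $\delta Y^n_{s,t} \to \delta Y_{s,t}$ in $L^q$.

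The crucial step, and the only non-routine one, is to upgrade this $L^q$-convergence to pointwise lower semicontinuity of the conditional norm, namely
\begin{equation*}
\|\delta Y_{s,t}\|_{q,\infty,s} \leq \liminf_{n \to \infty} \|\delta Y^n_{s,t}\|_{q,\infty,s}.
\end{equation*}
Setting $a := \liminf_n \|\delta Y^n_{s,t}\|_{q,\infty,s}$ and choosing a subsequence $(n_k)$ realizing the $\liminf$, for any fixed $\epsilon > 0$ we eventually have $\delta Y^{n_k}_{s,t} \in \cB^{q,\infty}_s(a + \epsilon)$. Since $\delta Y^{n_k}_{s,t} \to \delta Y_{s,t}$ in $L^q$, and since the ball $\cB^{q,\infty}_s(a+\epsilon)$ is closed in $L^q$ by Lemma~\ref{lemma: closedness of set of bounded L^q,infinity random variables in L^q}, the limit $\delta Y_{s,t}$ lies in the same ball. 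Letting $\epsilon \searrow 0$ yields the claim.

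With the pointwise lower semicontinuity in hand, I would fix an arbitrary partition $\cP$ of $[0,T]$ and apply Fatou's inequality for finite sums (the elementary inequality $\sum_i \liminf_n a_i^n \leq \liminf_n \sum_i a_i^n$ for nonnegative sequences over a finite index set) to get
\begin{equation*}
\sum_{[u,v] \in \cP} \|\delta Y_{u,v}\|_{q,\infty,u}^p \leq \liminf_{n \to \infty} \sum_{[u,v] \in \cP} \|\delta Y^n_{u,v}\|_{q,\infty,u}^p \leq \liminf_{n \to \infty} \|Y^n\|_{p,q,\infty,[0,T]}^p \leq \nu^p.
\end{equation*}
Taking the supremum over all partitions $\cP$ then yields $\|Y\|_{p,q,\infty,[0,T]} \leq \nu$, so that $Y \in \cB^{p,q,\infty}(\nu)$ as required. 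The only real obstacle is the pointwise lower semicontinuity, but this is essentially a direct consequence of the previous lemma; the remainder of the argument is bookkeeping.
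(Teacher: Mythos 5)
Your proof is correct, and it follows a cleaner, more modular route than the paper's. The paper argues by contradiction: it assumes $\|Y\|_{p,q,\infty,[0,T]}^p > \nu^p + \epsilon$, fixes a witnessing partition $\cP$ of size $N$, and notes that for each $n$ there must be some interval $[u,v]\in\cP$ where $\|\delta Y_{u,v}\|_{q,\infty,u}^p - \|\delta Y^n_{u,v}\|_{q,\infty,u}^p > \epsilon/N$; the pigeonhole principle then fixes a single interval and a subsequence along which this persists, and Lemma~\ref{lemma: closedness of set of bounded L^q,infinity random variables in L^q} applied to the sup $\widetilde{\nu}$ yields the contradiction. You instead distil the content of that contradiction into a clean, direct statement of pointwise lower semicontinuity, $\|\delta Y_{s,t}\|_{q,\infty,s} \leq \liminf_n \|\delta Y^n_{s,t}\|_{q,\infty,s}$, obtained from the same closedness lemma by a standard subsequence-and-$\epsilon$ argument, and then close the argument with finite Fatou plus the supremum over partitions. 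Both proofs use Lemma~\ref{lemma: closedness of set of bounded L^q,infinity random variables in L^q} as the only nontrivial input, but your direct lower-semicontinuity formulation avoids the pigeonhole bookkeeping and makes the mechanism transparent: closedness of the conditional $L^{q,\infty}$-balls in $L^q$ is exactly the statement that $\|\cdot\|_{q,\infty,s}$ is lower semicontinuous along $L^q$-convergent sequences, and the rest is routine. Your version would arguably have been the more natural one to write in the paper.
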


\begin{proof}
It is immediate from \eqref{eq: p,q norm less than p,q,r} that $\cB^{p,q,\infty}(\nu)$ is a subset of $V^p L^q$. Let $Y \in V^p L^q$, and let $(Y^n)_{n \in \N}$ be a sequence in $\cB^{p,q,\infty}(\nu)$ such that $\lim_{n \to \infty} \|Y^n - Y\|_{p,q,[0,T]} = 0$.

It follows from the definition of the seminorm $\|\cdot\|_{p,q,[0,T]}$ that, for any interval $[u,v] \subseteq [0,T]$,
\begin{equation}\label{eq: L^q convergence of V^p L^q processes}
\lim_{n \to \infty} \|\delta Y^n_{u,v} - \delta Y_{u,v}\|_{L^q} = 0.
\end{equation}
Assume for a contradiction that $\|Y\|_{p,q,\infty,[0,T]}^p > \nu^p + \epsilon$ for some $\epsilon > 0$. Then there exists a partition $\cP$ of $[0,T]$ such that $\sum_{[u,v] \in \cP} \|\delta Y_{u,v}\|_{q,\infty,u}^p > \nu^p + \epsilon$. For each $n \in \N$, since
\begin{equation*}
\sum_{[u,v] \in \cP} \|\delta Y^n_{u,v}\|_{q,\infty,u}^p \leq \|Y^n\|_{p,q,\infty,[0,T]}^p \leq \nu^p,
\end{equation*}
we have that
\begin{equation*}
\sum_{[u,v] \in \cP} \Big(\|\delta Y_{u,v}\|_{q,\infty,u}^p - \|\delta Y^n_{u,v}\|_{q,\infty,u}^p\Big) > \epsilon.
\end{equation*}
Thus, for each $n \in \N$, there exists at least one interval $[u,v] \in \cP$ such that
\begin{equation*}
\|\delta Y_{u,v}\|_{q,\infty,u}^p - \|\delta Y^n_{u,v}\|_{q,\infty,u}^p > \frac{\epsilon}{N},
\end{equation*}
where $N$ is the number of intervals in the partition $\cP$. By the pigeon hole principle, we can fix a single interval $[u,v] \in \cP$ and a subsequence $(n_k)_{k \in \N}$, such that
\begin{equation*}
\|\delta Y_{u,v}\|_{q,\infty,u}^p - \|\delta Y^{n_k}_{u,v}\|_{q,\infty,u}^p > \frac{\epsilon}{N}
\end{equation*}
for every $k \in \N$. Setting $\widetilde{\nu} := \sup_{k \in \N} \|\delta Y^{n_k}_{u,v}\|_{q,\infty,u}$, it follows that
\begin{equation*}
\|\delta Y_{u,v}\|^p_{q,\infty,u} \geq \widetilde{\nu}^p + \frac{\epsilon}{N}.
\end{equation*}
We have from \eqref{eq: L^q convergence of V^p L^q processes} that $\delta Y^{n_k}_{u,v} \to \delta Y_{u,v}$ in $L^q$ as $k \to \infty$. Since $\|\delta Y^{n_k}_{u,v}\|_{q,\infty,u} \leq \widetilde{\nu}$ for every $k \in \N$, it then follows from Lemma~\ref{lemma: closedness of set of bounded L^q,infinity random variables in L^q} that
$$\Big(\widetilde{\nu}^p + \frac{\epsilon}{N}\Big)^{\hspace{-1pt}\frac{1}{p}} \leq \|\delta Y_{u,v}\|_{q,\infty,u} \leq \widetilde{\nu},$$
which gives the desired contradiction.
\end{proof}

\begin{lemma}\label{lemma: right-continuity of controls}
Let $Y \in V^p L^{q,r}$ for any $p, q \in [1,\infty)$ and $r \in [q,\infty]$, and let $w$ be the control defined by $w(s,t) = \|Y\|_{p,q,r,[s,t]}^p$ for $(s,t) \in \Delta_{[0,T]}$. Then $w$ is right-continuous.
\end{lemma}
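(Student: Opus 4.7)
The plan is to write $w(s,t) := \|Y\|_{p,q,r,[s,t]}^p$ and verify the two conditions $w(s,t+) = w(s,t)$ and $w(s+,t) = w(s,t)$ separately. The central input for both is the fact that, for every $t \in [0,T)$,
\begin{equation*}
\lim_{v \searrow t} \|\delta Y_{t,v}\|_{q,r,t} = 0. \qquad (\star)
\end{equation*}
This follows by combining Lemma~\ref{lemma: V^pL^q implies L^q-regulated}, which produces a right-hand limit of $\delta Y_{t,v}$ in $L^{q,r}_t$ as $v \searrow t$, with Lemma~\ref{lemma: V^pL^q is a Banach space}, which gives that $Y$ is c\`adl\`ag in $L^q$ so that $\delta Y_{t,v} \to 0$ in $L^q$. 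Since $\|\cdot\|_{L^q} \leq \|\cdot\|_{q,r,t}$ by \eqref{eq:q,r,s embeds into L^q}, uniqueness of limits forces the $L^{q,r}_t$-limit to be $0$ as well. In particular, $\beta_u := \sup_{v \in (t,u]} \|\delta Y_{t,v}\|_{q,r,t} \to 0$ as $u \searrow t$.

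For right-continuity in the first argument, monotonicity of $w(\cdot,t)$ gives $\limsup_{u \searrow s} w(u,t) \leq w(s,t)$ for free. For the matching $\liminf$, given $\epsilon > 0$, I pick a partition $\cP^* = \{s = s_0 < s_1 < \cdots < s_n = t\}$ of $[s,t]$ with sum exceeding $w(s,t) - \epsilon$. For $u \in (s, s_1)$, the shifted partition $\{u, s_1, \ldots, s_n\}$ of $[u,t]$ differs from $\cP^*$ only in its first term, with $\|\delta Y_{s,s_1}\|_{q,r,s}^p$ being replaced by $\|\delta Y_{u,s_1}\|_{q,r,u}^p$. Proposition~\ref{prop: Lqrs is Banach space}(ii) gives $\|\delta Y_{u,s_1}\|_{q,r,u} \geq \|\delta Y_{u,s_1}\|_{q,r,s}$, and combining $\delta Y_{u,s_1} = \delta Y_{s,s_1} - \delta Y_{s,u}$ with $(\star)$ applied at $s$ yields $\|\delta Y_{u,s_1}\|_{q,r,s} \to \|\delta Y_{s,s_1}\|_{q,r,s}$. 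Passing to the liminf of the shifted sums gives $\liminf_{u \searrow s} w(u,t) \geq w(s,t) - \epsilon$, and sending $\epsilon \to 0$ concludes this case.

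For right-continuity in the second argument, the inequality $\liminf_{u \searrow t} w(s,u) \geq w(s,t)$ is again automatic. The reverse rests on two pieces. The first is a refinement estimate: for any partition $\cP_u$ of $[s,u]$, the refinement $\cP_u \cup \{t\}$ differs from $\cP_u$ only in the interval $[t_k,t_{k+1}]$ containing $t$ (if any); applying the triangle inequality for $\|\cdot\|_{q,r,\cdot}$, Proposition~\ref{prop: Lqrs is Banach space}(ii), and the elementary bound $(A+B)^p - A^p - B^p \leq p(A+B)^{p-1} B$ with $A := \|\delta Y_{t_k,t}\|_{q,r,t_k}$ and $B := \|\delta Y_{t,t_{k+1}}\|_{q,r,t}$ yields
\begin{equation*}
w(s,u) \leq w(s,t) + w(t,u) + p(2M)^{p-1} \beta_u, \qquad M := \|Y\|_{p,q,r,[0,T]}.
\end{equation*}
The second, and this is the main obstacle, is the diagonal decay $\lim_{u \searrow t} w(t,u) = 0$. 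I establish this by contradiction: assuming $\alpha := \inf_{u > t} w(t,u) > 0$, I inductively choose times $u_1 > u_2 > u_3 > t$ and partitions $\cP_n$ of $[t,u_n]$ with sum $> 3\alpha/4$, arranging that $u_{n+1}$ is strictly less than the first point of $\cP_n$ after $t$ and satisfies $\beta_{u_{n+1}}^p < \alpha/4$, while $u_1$ is additionally taken so that $w(t,u_1) < 5\alpha/4$. Deleting $t$ from each $\cP_n$ gives partitions of three pairwise disjoint subintervals of $[t,u_1]$, each with sum $> \alpha/2$; concatenating them produces a single partition of a subinterval of $[t,u_1]$ with total sum $> 3\alpha/2$, forcing the contradiction $3\alpha/2 \leq w(t,u_1) < 5\alpha/4$. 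Combining this diagonal decay with the refinement estimate and $(\star)$ yields $\limsup_{u \searrow t} w(s,u) \leq w(s,t)$, completing the proof.
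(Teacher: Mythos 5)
Your proof is correct, but for right-continuity in the second argument it takes a genuinely different and more elaborate route than the paper. The paper fixes a near-optimal partition of $[s,t+\eta]$, inserts the point $t$ into it, and uses the superadditivity of the control to absorb the tail $[t+h,t+\eta]$; this yields the bound directly, with no separate treatment of the diagonal. You instead first establish the diagonal decay $\lim_{u\searrow t} w(t,u)=0$ as a standalone fact and then feed it into a refinement estimate; since that decay is exactly the $s=t$ case of the statement, your argument is a bootstrap (diagonal case first, then reduce the general case to it). Your three-piece contradiction argument for the decay is essentially sound but, as written, has a small slip: the constraint $\beta_{u_{n+1}}^p < \alpha/4$ is imposed only on $u_2$ and $u_3$, yet to make the deleted-$t$ sum of $\cP_1$ exceed $\alpha/2$ you also need $\beta_{u_1}^p < \alpha/4$. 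This is easily repaired, either by imposing $\beta_{u_1}^p < \alpha/4$ as well (compatible with $w(t,u_1)<5\alpha/4$, since both hold for $u_1$ close enough to $t$), or by using one additional piece $\cP_4$ (then three of the four deleted sums exceed $\alpha/2$ and the conclusion $3\alpha/2 \le w(t,u_1) < 5\alpha/4$ still follows). For right-continuity in the first argument, your approach of shifting the left endpoint of a near-optimal partition and passing to the $\liminf$ is equivalent to, and perhaps a touch more elementary than, the paper's, which instead inserts an interior point $v$ near $s$ and applies the Minkowski inequality for $l^p$; both rest on the same ingredients, namely $(\star)$ applied at $s$ and Proposition~\ref{prop: Lqrs is Banach space}(ii).
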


\begin{proof}
We will first show right-continuity in the second argument, arguing similarly to the proof of \cite[Lemma~7.1]{FrizZhang2018}. Let $0 \leq s \leq t < T$ and $\epsilon \in (0,1]$.

By Lemma~\ref{lemma: V^pL^q implies L^q-regulated}, we have that $\lim_{v \searrow t} \delta Y_{t,v}$ exists as a limit in $L^{q,r}_t$, and, since $Y$ is right-continuous in probability (by the definition of the space $V^p L^{q,r}$), this limit is equal to $0$. We can thus find an $\eta > 0$ such that
\begin{equation}\label{eq: delta Y t,v q,r,t less epsilon}
\|\delta Y_{t,v}\|_{q,r,t} < \epsilon
\end{equation}
for all $v \in [t,t + \eta]$. We now fix a partition $\cP$ of the interval $[s,t + \eta]$ such that
\begin{equation}\label{eq: approx partition for Y p,q,r,[s,t+delta]}
\sum_{[u,v] \in \cP} \|\delta Y_{u,v}\|_{q,r,u}^p > \|Y\|_{p,q,r,[s,t + \eta]}^p - \epsilon.
\end{equation}
We now consider adding time $t$ into the partition. Let $t' \in \cP$ be the last time in $\cP$ such that $t' < t$, and let $t''$ be the first time in $\cP$ such that $t'' > t$. Then, using part~(ii) of Proposition~\ref{prop: Lqrs is Banach space}, we have that
\begin{equation*}
\|\delta Y_{t',t''}\|_{q,r,t'}^p = \|\delta Y_{t',t} + \delta Y_{t,t''}\|_{q,r,t'}^p \leq \big(\|\delta Y_{t',t}\|_{q,r,t'} + \|\delta Y_{t,t''}\|_{q,r,t}\big)^p \leq \|\delta Y_{t',t}\|_{q,r,t'}^p + C \epsilon,
\end{equation*}
where in the last inequality we used \eqref{eq: delta Y t,v q,r,t less epsilon}, as well as the bound $(a + b)^p \leq a^p + C \epsilon$, which holds whenever $0 \leq a \leq \|Y\|_{p,q,r,[0,T]}$ and $0 \leq b \leq \epsilon$, and the constant $C$ depends only on $p$ and $\|Y\|_{p,q,r,[0,T]}$. This then allows us to estimate
\begin{equation}\label{eq: bound after inserting time t into partition}
\sum_{[u,v] \in \cP} \|\delta Y_{u,v}\|_{q,r,u}^p \leq \sum_{[u,v] \in \cP \cup \{t\}|_{[s,t]}} \|\delta Y_{u,v}\|_{q,r,u}^p + C \epsilon + \sum_{[u,v] \in \cP|_{[t'',t + \eta]}} \|\delta Y_{u,v}\|_{q,r,u}^p.
\end{equation}
Using the superadditivity of the control $w$, and the inequalities in \eqref{eq: approx partition for Y p,q,r,[s,t+delta]} and \eqref{eq: bound after inserting time t into partition}, we then have, for any $0 < h < \eta$, that
\begin{align*}
\|Y&\|_{p,q,r,[s,t + h]}^p \leq \|Y\|_{p,q,r,[s,t + \eta]}^p - \|Y\|_{p,q,r,[t + h,t + \eta]}^p\\
&< \sum_{[u,v] \in \cP} \|\delta Y_{u,v}\|_{q,r,u}^p + \epsilon - \|Y\|_{p,q,r,[t + h,t + \eta]}^p\\
&\leq \sum_{[u,v] \in \cP \cup \{t\}|_{[s,t]}} \|\delta Y_{u,v}\|_{q,r,u}^p + (C + 1) \epsilon + \sum_{[u,v] \in \cP|_{[t'',t + \eta]}} \|\delta Y_{u,v}\|_{q,r,u}^p - \|Y\|_{p,q,r,[t + h,t + \eta]}^p\\
&\leq \|Y\|_{p,q,r,[s,t]}^p + (C + 1) \epsilon + \|Y\|_{p,q,r,[t'',t + \eta]}^p - \|Y\|_{p,q,r,[t + h,t + \eta]}^p.
\end{align*}
We thus infer that $\lim_{h \searrow 0} \|Y\|_{p,q,r,[s,t + h]}^p \leq \|Y\|_{p,q,r,[s,t]}^p + (C + 1) \epsilon$, and by taking $\epsilon \to 0$ we obtain $w(s,t+) \leq w(s,t)$, with the reverse inequality being clear.

We now turn to showing right-continuity in the first argument. Let us fix $0 \leq s < t \leq T$. It is straightforward to see that
\begin{equation*}
\|Y\|_{p,q,r,(s,t]} := \lim_{u \searrow s} \|Y\|_{p,q,r,[u,t]} = \bigg(\sup_{\cP \subset (s,t]} \sum_{[u,v] \in \cP} \|\delta Y_{u,v}\|_{q,r,u}^p\bigg)^{\hspace{-2pt}\frac{1}{p}},
\end{equation*}
where the supremum is taken over all possible finite partitions $\cP = \{t_0 < t_1 < \cdots < t_N\}$ with $t_0 > s$ and $t_N = t$.

Let $\{s = u_0 < u_1 < \cdots < u_N = t\}$ be a partition of the interval $[s,t]$, and let $v$ be another point such that $u_0 < v < u_1$. Using part~(ii) of Proposition~\ref{prop: Lqrs is Banach space}, we have that
\begin{equation*}
\bigg(\sum_{i=0}^{N-1} \|\delta Y_{u_i,u_{i+1}}\|_{q,r,u_i}^p\bigg)^{\hspace{-2pt}\frac{1}{p}} \leq \bigg(\big(\|\delta Y_{u_0,v}\|_{q,r,u_0} + \|\delta Y_{v,u_1}\|_{q,r,v}\big)^p + \sum_{i=1}^{N-1} \|\delta Y_{u_i,u_{i+1}}\|_{q,r,u_i}^p\bigg)^{\hspace{-2pt}\frac{1}{p}}.
\end{equation*}
It then follows from the Minkowski inequality for the sequence space $l^p$ that
\begin{align*}
\bigg(\sum_{i=0}^{N-1} \|\delta Y_{u_i,u_{i+1}}\|_{q,r,u_i}^p\bigg)^{\hspace{-2pt}\frac{1}{p}} &\leq \|\delta Y_{u_0,v}\|_{q,r,u_0} + \bigg(\|\delta Y_{v,u_1}\|_{q,r,v}^p + \sum_{i=1}^{N-1} \|\delta Y_{u_i,u_{i+1}}\|_{q,r,u_i}^p\bigg)^{\hspace{-2pt}\frac{1}{p}}\\
&\leq \|\delta Y_{u_0,v}\|_{q,r,u_0} + \|Y\|_{p,q,r,(s,t]}.
\end{align*}
Since $\|\delta Y_{u_0,v}\|_{q,r,u_0} \to 0$ as $v \searrow u_0$ (by Lemma~\ref{lemma: V^pL^q implies L^q-regulated} and the fact that $Y$ is right-continuous in probability), we deduce that
\begin{equation*}
\bigg(\sum_{i=0}^{N-1} \|\delta Y_{u_i,u_{i+1}}\|_{q,r,u_i}^p\bigg)^{\hspace{-2pt}\frac{1}{p}} \leq \|Y\|_{p,q,r,(s,t]},
\end{equation*}
and thus $w(s,t) = \|Y\|_{p,q,r,[s,t]}^p \leq \|Y\|_{p,q,r,(s,t]}^p = w(s+,t)$, with the reverse inequality being clear.
\end{proof}

\subsection{Martingales in mixed moment spaces}\label{section: martingales in mixed moment spaces}

In the setting of \cite{FrizHocquetLe2021}, the regularity of processes is considered with respect to H\"older spaces $C^\alpha L^{q,r}$ for $\alpha \in (0,1]$, with corresponding norm given by
\[ Y \mapsto \sup_{0 \leq s < t \leq T} \frac{\|\delta Y_{s,t}\|_{q,r,s}}{|t - s|^\alpha}. \]
In Example~\ref{examples: BM in mixed moment spaces} we showed that any square integrable L\'evy process is an element of $V^2 L^{2,\infty}$, and even $C^{\frac{1}{2}} L^{2,\infty}$. However, as we will see below, the space $V^p L^{q,r}$ includes elements beyond L\'evy processes and those in $C^{\frac{1}{p}} L^{q,r}$.

\begin{lemma}\label{lemma: quadratic variation is all that matters for p,q,r}
Let $p \in [2,\infty)$, $q \in [2,\infty)$ and $r \in [q,\infty]$. The maps $M \mapsto \|M\|_{p,q,r,[0,T]}$ and $M \mapsto \|[M]\|_{\frac{p}{2},\frac{q}{2},\frac{r}{2},[0,T]}^{\frac{1}{2}}$ are equivalent seminorms on the corresponding space of c\`adl\`ag martingales.

In particular, if $M$ is a c\`adl\`ag martingale with $M_0 \in L^q$, then $M \in V^p L^{q,r}$ if and only if its quadratic variation satisfies $[M] \in V^{\frac{p}{2}} L^{\frac{q}{2},\frac{r}{2}}$.
\end{lemma}

\begin{proof}
The conditional BDG inequality implies that
\[ \|\delta M_{s,t}\|_{q,r,s} = \big\| \E_s \big[|\delta M_{s,t}|^q\big]^{\frac{1}{q}} \big\|_{L^r} \lesssim \Big\| \E_s \Big[ \big| \delta [M]_{s,t} \big|^{\frac{q}{2}} \Big]^{\frac{1}{q}} \Big\|_{L^r} = \big\| \delta [M]_{s,t} \big\|_{\frac{q}{2},\frac{r}{2},s}^{\frac{1}{2}} \]
for all $(s,t) \in \Delta_{[0,T]}$, so that $\|M\|_{p,q,r,[0,T]} \lesssim \|[M]\|_{\frac{p}{2},\frac{q}{2},\frac{r}{2},[0,T]}^{\frac{1}{2}}$. On the other hand, combining the conditional BDG inequality with the conditional Doob $L^p$ inequality, we have that
\[ \big\| \delta [M]_{s,t} \big\|_{\frac{q}{2},\frac{r}{2},s}^{\frac{1}{2}} \lesssim \Big\| \E_s \Big[ \sup_{u \in [s,t]} |\delta M_{s,u}|^q \Big]^{\frac{1}{q}} \Big\|_{L^r} \lesssim \big\| \E_s \big[ |\delta M_{s,t}|^q \big]^{\frac{1}{q}} \big\|_{L^r} = \| \delta M_{s,t} \|_{q,r,s} \]
for all $(s,t) \in \Delta_{[0,T]}$, so that $\|[M]\|_{\frac{p}{2},\frac{q}{2},\frac{r}{2},[0,T]}^{\frac{1}{2}} \lesssim \|M\|_{p,q,r,[0,T]}$.
\end{proof}

\begin{example}\label{example: ito integral in mixed moment spaces}
It follows easily from Lemma~\ref{lemma: quadratic variation is all that matters for p,q,r} that, given any bounded predictable process $H$ and any c\`adl\`ag martingale $M \in V^p L^{q,r}$, we also have that $\int_0^\cdot H_s \dd M_s \in V^p L^{q,r}$.
\end{example}

\begin{remark}
It was shown in \cite[Lemma~4.4]{LiangTang2025} that if $M = (M_t)_{t \in [0,T]}$ is a continuous martingale with $M_T \in L^p$ for some $p \geq 2$, then $M \in V^p L^p$. This is also true for c\`adl\`ag martingales $M$. Indeed, for any partition $\cP$ of $[0,T]$, we have that
\begin{equation*}
\sum_{[s,t] \in \cP} \big\| \delta [M]_{s,t} \big\|^{\frac{p}{2}}_{L^{\frac{p}{2}}} = \bigg\| \sum_{[s,t] \in \cP} \big| \delta [M]_{s,t} \big|^{\frac{p}{2}} \bigg\|_{L^1} \leq \bigg\| \bigg( \sum_{[s,t] \in \cP} \delta [M]_{s,t} \bigg)^{\hspace{-2pt}\frac{p}{2}} \bigg\|_{L^1} = \big\| [M]_T \big\|_{L^{\frac{p}{2}}}^{\frac{p}{2}},
\end{equation*}
so that $[M] \in V^{\frac{p}{2}} L^{\frac{p}{2}}$, and hence $M \in V^p L^p$ by Lemma~\ref{lemma: quadratic variation is all that matters for p,q,r}.
\end{remark}

In the next result we give a characterization of all martingales in $V^p L^{q,r}$ in terms of their characteristics.\footnote{See, e.g., \cite[II.~Definition~2.6]{JacodShiryaev2006} for a definition and detailed exposition of semimartingale characteristics.}

\begin{proposition}\label{proposition: a criterium for martingales in mixed moment spaces}
Let $p \in [2,\infty)$, $q \in [2,\infty)$ and $r \in [q,\infty]$, and let $M$ be an $\R^d$-valued c\`adl\`ag martingale. Then $M \in V^p L^{q,r}$ if and only if the characteristics $(B,C,\nu)$ of $M$ satisfy $\tr(C) + \int_0^{\cdot} \int_{\R^d} |u|^2 \, \nu(\d s,\d u) \in V^{\frac{p}{2}} L^{\frac{q}{2},\frac{r}{2}}$ and $\int_0^{\cdot} \int_{\R^d} |u|^q \, \nu(\d s,\d u) \in V^{\frac{p}{q}} L^{1,\frac{r}{q}}$.
\end{proposition}

\begin{proof}
By the conditional version (\cite[Lemma~A.2]{AllanPieperTeichmann2025}) of the generalized BDG inequality of \cite{HernandezHernandezJackal2022}, for any $(s,t) \in \Delta_{[0,T]}$, we have that
\begin{equation*}
\| \delta M_{s,t} \|_{q,r,s} \lesssim \Big\|\E_s \Big[ |\delta \langle M \rangle_{s,t}|^{\frac{q}{2}} \vee \delta A^{(\frac{q}{2})}_{s,t} \Big]^{\frac{1}{q}} \Big\|_{L^r} = \Big\|\E_s \Big[ \big|\delta \langle M^c \rangle_{s,t} + \delta A^{(1)}_{s,t} \big|^{\frac{q}{2}} \vee \delta A^{(\frac{q}{2})}_{s,t} \Big]^{\frac{1}{q}}\Big\|_{L^r},
\end{equation*}
where $\langle M \rangle$ denotes the predictable quadratic variation of $M$, and $M^c$ denotes the continuous martingale part of $M$, and, in the notation of \cite{HernandezHernandezJackal2022}, we write
\begin{equation*}
A^{(\ell)} := \Pi^\ast_p \bigg( \sum_{s \leq \cdot} |\Delta M_s|^{2\ell} \bigg),
\end{equation*}
where $\Pi^\ast_p$ denotes the dual predictable projection. By, e.g., \cite[Theorem~13.2.21]{CohenElliott2015}, writing $\mu$ for the jump measure associated with $M$, we have that
\begin{equation*}
A^{(\ell)} = \Pi^\ast_p \bigg( \sum_{s \leq \cdot} |\Delta M_s|^{2\ell} \bigg) = \Pi^\ast_p \bigg( \int_0^\cdot \int_{\R^d} |u|^{2\ell} \, \mu(\d s,\d u) \bigg) = \int_0^{\cdot} \int_{\R^d} |u|^{2\ell} \, \nu(\d s,\d u).
\end{equation*}
Thus, we find that
\begin{equation*}
\| \delta M_{s,t} \|_{q,r,s} \lesssim \bigg\| |\delta \langle M^c \rangle_{s,t}| + \int_s^t \int_{\R^d} |u|^2 \, \nu(\d v,\d u) \bigg\|^{\frac{1}{2}}_{\frac{q}{2},\frac{r}{2},s} + \bigg\| \int_s^t \int_{\R^d} |u|^q \, \nu(\d v,\d u) \bigg\|_{1,\frac{r}{q},s}^{\frac{1}{q}}.
\end{equation*}

Since we may reverse the inequality in the (generalized) BDG inequality, we may also reverse the inequality above, up to a change of the multiplicative constant. It then just remains to note that, by allowing the multiplicative constant to depend on the dimension $d$, these inequalities are also valid when we replace $|\delta \langle M^c \rangle_{s,t}|$ by $\tr(\delta C_{s,t})$.
\end{proof}

\begin{example}
For some $q \geq 2$, let $L$ be an $\R^d$-valued L\'evy process such that $\E[|L_1|^q] < \infty$, so that in particular its L\'evy measure $\nu$ satisfies $\int_{\R^d} |u|^q \, \nu(\d u) < \infty$. We infer from Proposition~\ref{proposition: a criterium for martingales in mixed moment spaces} that $L \in V^p L^{q,\infty}$ for every $p \geq q$, but not in general for $p < q$.
\end{example}

The following lemma is a generalization of \cite[Proposition~5.14]{FrizVictoir2010} to the mixed moment setting. It implies that c\`adl\`ag processes which almost surely do not jump at deterministic times may be viewed as continuous paths taking values in mixed moment spaces. Since the proof is essentially the same as in the deterministic case, it is omitted here for brevity.

\begin{lemma}
Let $p, q \in [1,\infty)$ and $r \in [q,\infty]$, and suppose that $Y$ is a c\`adl\`ag stochastic process such that $\Delta Y_t = 0$ almost surely for every (deterministic) time $t \in (0,T]$. Then $Y \in V^p L^{q,r}$ if and only if there exists a deterministic continuous non-decreasing function $\phi \colon [0,T] \to [0,1]$ such that $Y \circ \phi^{-1} \in C^{\frac{1}{p}} L^{q,r}$.
\end{lemma}

Of course, if the process $Y$ is adapted to $(\cF_t)_{t \in [0,T]}$, then the time-changed process $Y \circ \phi^{-1}$ is in general only adapted to the time-changed filtration $(\cF_{\phi^{-1}(t)})_{t \in [0,1]}$. Nevertheless, the previous lemma suggests that the processes in $V^p L^{q,r}$ which truly lie outside the scope of the H\"older space $C^{\frac{1}{p}} L^{q,r}$ are those which jump with positive probability at deterministic times. Such processes (in particular those with jumps at predictable times) are of particular interest in credit default risk modelling, in which one typically wishes to incorporate fixed or announced dates of shocks to the market; see, e.g., \cite{BandiniCalviaColaneri2022}, \cite{FontanaSchmidt2018} or \cite{JiaoLi2015}.

\begin{example}
Let $W = (W_t)_{t \in [0,T]}$ be a Brownian motion, let $\lambda \colon [0,T] \to [0,T]$ be a non-decreasing c\`adl\`ag function, and let $M_t = W_{\lambda(t)}$ for each $t \in [0,T]$. Then the process $M = (M_t)_{t \in [0,T]}$ is an $(\cF_{\lambda(t)})_{t \in [0,T]}$-adapted martingale with $[M] = \lambda$. In particular, we have by Lemma~\ref{lemma: quadratic variation is all that matters for p,q,r} that $M \in V^2 L^{q,\infty}$ for every $q \in [2,\infty)$. However, if the time-change $\lambda$ is discontinuous, then $M \notin C^{\frac{1}{p}} L^{q,\infty}$.
\end{example}

\begin{example}
Let $N$ be a counting process with compensator $A$. (For instance, $N$ could be an extended Poisson process, in the sense of \cite[I.~Definition~3.26]{JacodShiryaev2006}, with potentially discontinuous intensity $t \mapsto A_t = \E [N_t]$.) Considering the martingale $M = N - A$, if $A \in V^1 L^{2,\infty}$, then we have by Proposition~\ref{proposition: a criterium for martingales in mixed moment spaces} that in particular $M \in V^2 L^{2,\infty}$, so that also $N \in V^2 L^{2,\infty}$. However, since $\E [\Delta N_t \,|\, \cF_{t-} ] = \Delta A_t$ for every $t \in (0,T]$, we note that if $A$ has a jump at time $t$ (with positive probability), then $N$ will also have a jump at time $t$, which means that in general $N \notin C^{\frac{1}{2}} L^{2,\infty}$.
\end{example}

\section{A stochastic sewing lemma for discontinuous controls}\label{section: stochastic sewing lemma}

We are now ready to state our main technical result, namely, a new stochastic sewing lemma. Although more of a curiosity than the focus of this work, we then show how the It\^o stochastic integral against a c\`adl\`ag martingale in $V^p L^{q,\infty}$ can be constructed using stochastic sewing, providing a novel perspective on this central object of stochastic analysis.

\subsection{The stochastic sewing lemma}

Given a two-parameter process $\Xi = (\Xi_{s,t})_{(s,t) \in \Delta_{[0,T]}}$ taking values in some Banach space, we write $\delta \Xi_{s,u,t} := \Xi_{s,t} - \Xi_{s,u} - \Xi_{u,t}$ whenever $s \leq u \leq t$. Recall that we say that a control $w$ is \emph{continuous from the inside} if $w(s+,t) = w(s,t) = w(s,t-)$ for all $(s,t) \in \Delta_{[0,T]}$.\footnote{Recall that we define $w(t+,t) = 0$ and $w(t,t-) = 0$ for all $t \in [0,T]$.} We also recall from \cite[Definition~1.1]{FrizZhang2018} the following distinct notions of convergence.

\begin{definition}
Writing $\cP$ for a partition of the interval $[0,T]$, we say that the Riemann sum $\sum_{[u,v] \in \cP} \Xi_{u,v}$ converges to a limit $\cI$ in the sense of
\begin{itemize}
\item Mesh Riemann--Stieltjes (MRS) if for every $\epsilon > 0$ there exists a $\delta > 0$ such that, for any partition $\cP$ of $[0,T]$ with mesh size $|\cP| < \delta$, we have that $\|\sum_{[u,v] \in \cP} \Xi_{u,v} - \cI\| < \epsilon$,
\item Refinement Riemann--Stieltjes (RRS) if for any $\epsilon > 0$ there exists a partition $\cP^{\epsilon}$ of $[0,T]$ such that, for any refinement $\cP \supseteq \cP^{\epsilon}$, we have that $\|\sum_{[u,v] \in \cP} \Xi_{u,v} - \cI\| < \epsilon$.
\end{itemize}
\end{definition}

\begin{theorem}\label{theorem: mild stochastic sewing lemma}
Let $q \in [2,\infty)$, $r \in [q,\infty]$, and let $\Xi = (\Xi_{s,t})_{(s,t) \in \Delta_{[0,T]}}$ be a two-parameter process, taking values in a finite-dimensional Banach space, such that $\Xi_{s,t}$ is $\cF_t$-measurable for every $(s,t) \in \Delta_{[0,T]}$. Let $(w_{1,i}, w_{2,i})_{i = 1, \ldots, N}$ and $(\bw_{1,j}, \bw_{2,j})_{j = 1, \ldots, M}$ be controls, and suppose that
\begin{equation}\label{eq: stoch sewing E_s Lqr bound}
\big\| \E_s [\delta \Xi_{s,u,t}] \big\|_{L^r} \leq \sum_{i=1}^N w_{1,i}(s,u)^{\alpha_{1,i}} w_{2,i}(u,t)^{\alpha_{2,i}}
\end{equation}
and
\begin{equation}\label{eq: stoch sewing qrs bound}
\|\delta \Xi_{s,u,t}\|_{q,r,s} \leq \sum_{j=1}^M \bw_{1,j}(s,u)^{\beta_{1,j}} \bw_{2,j}(u,t)^{\beta_{2,j}}
\end{equation}
for all $0 \leq s \leq u \leq t \leq T$, where $\alpha_{1,i}, \alpha_{2,i}, \beta_{1,j}, \beta_{2,j} > 0$ are such that $\alpha_{1,i} + \alpha_{2,i} > 1$ for every $i = 1, \ldots, N$, and $\beta_{1,j} + \beta_{2,j} > \frac{1}{2}$ for every $j = 1, \ldots, M$.

Then there exists an adapted process $\cI = (\cI_t)_{t \in [0,T]}$, with $\cI_0 = 0$, which is unique up to modification, such that
\begin{equation}\label{eq: L^qr bound full generality sewing}
\big\| \E_s [\delta \cI_{s,t} - \Xi_{s,t}] \big\|_{L^r} \leq C \sum_{i=1}^N w_{1,i}(s,t-)^{\alpha_{1,i}} w_{2,i}(s+,t)^{\alpha_{2,i}}
\end{equation}
and
\begin{equation}\label{eq: qrs bound full generality sewing}
\|\delta \cI_{s,t} - \Xi_{s,t}\|_{q,r,s} \leq C \bigg(\sum_{i=1}^N w_{1,i}(s,t-)^{\alpha_{1,i}} w_{2,i}(s+,t)^{\alpha_{2,i}} + \sum_{j=1}^M \bw_{1,j}(s,t-)^{\beta_{1,j}} \bw_{2,j}(s+,t)^{\beta_{2,j}}\bigg).
\end{equation}
for all $(s,t) \in \Delta_{[0,T]}$, where the constant $C$ depends only on $q, N, M$, $\min_{i=1,\ldots,N} (\alpha_{1,i} + \alpha_{2,i})$ and $\min_{j=1,\ldots,M} (\beta_{1,j} + \beta_{2,j})$. Moreover, for each $(s,t) \in \Delta_{[0,T]}$, we have that
\begin{equation}\label{eq: convergence in stochastic sewing lemma}
\bigg\|\delta \cI_{s,t} - \sum_{[u,v] \in \cP} \Xi_{u,v}\bigg\|_{q,r,s} \, \longrightarrow \, 0
\end{equation}
in the sense of RRS convergence, over partitions $\cP$ of the interval $[s,t]$.

Furthermore, if for every $i$ and $j$, either $w_{1,i}$ is left-continuous in its second argument or $w_{2,i}$ is right-continuous in its first argument, and similarly for $\bw_{1,j}$ and $\bw_{2,j}$, then the convergence in \eqref{eq: convergence in stochastic sewing lemma} also holds in the sense of MRS convergence.
\end{theorem}

The proof of Theorem~\ref{theorem: mild stochastic sewing lemma} will be presented in Section~\ref{section: proof of sewing lemma}.

\begin{remark}\label{remark: stochastic sewing can be generalised}
In Theorem~\ref{theorem: mild stochastic sewing lemma} we could have been slightly more general. Namely, in place of the bounds in \eqref{eq: stoch sewing E_s Lqr bound} and \eqref{eq: stoch sewing qrs bound}, the theorem would still be true under bounds of the form
\begin{equation*}
\begin{split}
\big\| \E_s [\delta \Xi_{s,u,t}] \big\|_{L^r} &\leq \sum_{i=1}^N \sigma_i(s,u)^{\delta_{1,i}} w_{1,i}(s,t)^{\alpha_{1,i} - \delta_{1,i}} w_{2,i}(u,t)^{\alpha_{2,i}},\\
\|\delta \Xi_{s,u,t}\|_{q,r,s} &\leq \sum_{j=1}^M \bar{\sigma}_j(s,u)^{\delta_{2,j}} \bw_{1,j}(s,t)^{\beta_{1,j} - \delta_{2,j}} \bw_{2,j}(u,t)^{\beta_{2,j}},
\end{split}
\end{equation*}
where $(\sigma_i)_{i = 1, \ldots, N}$ and $(\bar{\sigma}_j)_{j = 1, \ldots, M}$ are mild controls\footnote{This is a weaker notion of control which does not need to be superadditive; for details, see \cite[Section~1.1]{FrizZhang2018}.}, $(w_{1,i}, w_{2,i})_{i=1,\ldots,N}$ and $(\bw_{1,j}, \bw_{2,j})_{j=1,\ldots,M}$ are controls, and the exponents $\alpha_{1,i}, \alpha_{2,i}, \beta_{1,j}, \beta_{2,j} > 0$ and $0 < \delta_{1,i} \leq \alpha_{1,i}$, $0 < \delta_{2,j} \leq \beta_{1,j}$ are such that $\alpha_{1,i} + \alpha_{2,i} - \delta_{1,i} > 1$ for every $i = 1, \ldots, N$, and $\beta_{1,j} + \beta_{2,j} - \delta_{2,j} > \frac{1}{2}$ for every $j = 1, \ldots, M$. In particular, under these assumptions, one may choose $\delta_{1,i} = \alpha_{1,i}$ whenever $\alpha_{2,i} > 1$ and $\delta_{2,j} = \beta_{1,j}$ whenever $\beta_{2,j} > \frac{1}{2}$, thus extending the mild sewing lemma of \cite[Theorem~1.7]{FrizZhang2018} to the stochastic setting.
\end{remark}

\subsection{A uniform estimate}

So far we have only considered processes which are right-continuous in probability. The following theorem provides sample path regularity, as well as a uniform estimate, for the process $\cI$ in Theorem~\ref{theorem: mild stochastic sewing lemma}. We defer the proof of Theorem~\ref{theorem: sample path regularity sewing} to Appendix~\ref{section: proof of uniform estimate}.

\begin{theorem}\label{theorem: sample path regularity sewing}
Let $q \in [2,\infty)$, $r \in [q,\infty]$, and let $\Xi = (\Xi_{s,t})_{(s,t) \in \Delta_{[0,T]}}$ be a two-parameter process, taking values in a finite-dimensional Banach space, such that $\Xi_{s,t}$ is $\cF_t$-measurable for every $(s,t) \in \Delta_{[0,T]}$. Suppose that the filtration $(\cF_t)_{t \in [0,T]}$ is complete, and that, for every $s \in [0,T)$, the process $t \mapsto \Xi_{s,t}$ has almost surely c\`adl\`ag sample paths.

Let $(w_{1,i},w_{2,i})_{i = 1, \ldots, N}$, $(\bw_{1,j},\bw_{2,j})_{j = 1, \ldots, M}$ and $(\hw_{1,k},\hw_{2,k})_{k = 1, \ldots, K}$ be controls, and suppose that $\Xi$ satisfies the bounds in \eqref{eq: stoch sewing E_s Lqr bound} and \eqref{eq: stoch sewing qrs bound}, so that the hypotheses of Theorem~\ref{theorem: mild stochastic sewing lemma} are satisfied. Assume further that
\begin{equation}\label{eq: uniform q,r,s bound for uniform convergence}
\Big\| \sup_{v \in [u,t]} |\delta \Xi_{s,u,v}| \Big\|_{q,r,s} \leq \sum_{k=1}^K \hw_{1,k}(s,u)^{\gamma_{1,k}} \hw_{2,k}(u,t)^{\gamma_{2,k}}
\end{equation}
holds for all $0 \leq s \leq u \leq t \leq T$, where $\gamma_{1,k}, \gamma_{2,k} > 0$ satisfy $\gamma_{1,k} + \gamma_{2,k} > \frac{1}{q}$ for every $k$.

Then the process $\cI$ given in Theorem~\ref{theorem: mild stochastic sewing lemma} has a version $\widetilde{\cI}$ with c\`adl\`ag sample paths. Moreover, there exists an $\eta > 0$ and a nested sequence of partitions $(\cP^h)_{h \in \N_0}$, with $\cP^0 = \{0,T\}$ and with vanishing mesh size as $h \to \infty$, such that, for every $h \in \N_0$,
\begin{equation}\label{eq: uniform estimate in full generality sewing}
\begin{split}
\Big\|\sup_{t \in [0,T]} \big|&\widetilde{\cI}_t - \Xi^{\cP^h}_t \big|\Big\|_{q,r,0} \leq C 2^{-\eta h} \bigg(\sum_{i=1}^N w_{1,i}(0,T-)^{\alpha_{1,i}} w_{2,i}(0+,T)^{\alpha_{2,i}}\\
&+ \sum_{j=1}^M \bw_{1,j}(0,T-)^{\beta_{1,j}} \bw_{2,j}(0+,T)^{\beta_{2,j}} + \sum_{k=1}^K \hw_{1,k}(0,T-)^{\gamma_{1,k}} \hw_{2,k}(0+,T)^{\gamma_{2,k}}\bigg),
\end{split}
\end{equation}
where the constants $\eta$ and $C$ depend only on $q, N, M, K$ and the exponents $(\alpha_{1,i}, \alpha_{2,i})_{i=1,\ldots,N}$, $(\beta_{1,j}, \beta_{2,j})_{j=1,\ldots,M}$ and $(\gamma_{1,k}, \gamma_{2,k})_{k=1,\ldots,K}$, and where, for a given partition $\cP$, we write
\begin{equation}\label{eq: defn Xi^cP}
\Xi^{\cP}_t := \sum_{[u,v] \in \cP} \Xi_{u \wedge t,v \wedge t}.
\end{equation}
\end{theorem}

For the remainder of the paper, we shall assume that all the processes we consider take values in finite-dimensional Banach spaces.

\subsection{It\^o integration}

\begin{lemma}\label{lemma: existence of integrals against martingales}
Let $p \in [1,4)$, $q \in [2,\infty)$ and $r \in [q,\infty]$. Suppose that $Y \in V^p L^{q,r}$ is adapted, and that $M \in V^p L^{q,\infty}$ is a martingale. Then there exists a unique (up to modification) $L^q$-integrable martingale $(\int_0^t Y_u \dd M_u)_{t \in [0,T]}$, started at zero, which satisfies
\begin{equation}\label{eq: int against martingale estimate}
\bigg\|\int_s^t Y_u \dd M_u - Y_s \delta M_{s,t}\bigg\|_{q,r,s} \leq C \|Y\|_{p,q,r,[s,t)} \|M\|_{p,q,\infty,[s,t]}
\end{equation}
for all $(s,t) \in \Delta_{[0,T]}$, where the constant $C$ depends only on $p$ and $q$. Moreover, for every $(s,t) \in \Delta_{[0,T]}$, we have that
\begin{equation}\label{eq: Ito integral convergence}
\lim_{|\cP| \to 0} \bigg\|\int_s^t Y_u \dd M_u - \sum_{[u,v] \in \cP} Y_u \delta M_{u,v}\bigg\|_{q,r,s} = 0,
\end{equation}
where the limit holds along partitions $\cP$ of the interval $[s,t]$ as the mesh size tends to zero.
\end{lemma}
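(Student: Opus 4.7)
The plan is to apply the stochastic sewing lemma (Theorem~\ref{theorem: stochastic sewing lemma}) to the germ
\begin{equation*}
\Xi_{s,t} := Y_s \, \delta M_{s,t}, \qquad (s,t) \in \Delta_{[0,T]},
\end{equation*}
which is $\cF_t$-measurable since $Y$ is adapted and $M$ is a martingale (hence adapted). A direct calculation gives $\delta \Xi_{s,u,t} = -\delta Y_{s,u} \, \delta M_{u,t}$ for $s \leq u \leq t$. The key point is that $\delta Y_{s,u}$ is $\cF_u$-measurable, so by the tower property and the martingale property of $M$,
\begin{equation*}
\E_s[\delta \Xi_{s,u,t}] = -\E_s\big[\delta Y_{s,u}\, \E_u[\delta M_{u,t}]\big] = 0.
\end{equation*}
Thus the first sewing hypothesis \eqref{eq: stoch sewing E_s Lqr bound} is satisfied trivially (take, e.g., a single vanishing control).

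For the second hypothesis \eqref{eq: stoch sewing qrs bound}, let $w_Y(s,t) := \|Y\|_{p,q,r,[s,t]}^p$ and $w_M(s,t) := \|M\|_{p,q,\infty,[s,t]}^p$. These are superadditive by construction and, by Lemma~\ref{lemma: right-continuity of controls}, right-continuous in both arguments. Using the $\cF_u$-measurability of $\delta Y_{s,u}$ together with the tower property,
\begin{equation*}
\E_s\big[|\delta Y_{s,u}|^q |\delta M_{u,t}|^q\big] = \E_s\big[|\delta Y_{s,u}|^q \, \E_u[|\delta M_{u,t}|^q]\big] \leq \|\delta M_{u,t}\|_{q,\infty,u}^q \, \E_s\big[|\delta Y_{s,u}|^q\big],
\end{equation*}
so that, taking the $L^r$-norm and a $q$-th root,
\begin{equation*}
\|\delta \Xi_{s,u,t}\|_{q,r,s} \leq \|\delta M_{u,t}\|_{q,\infty,u} \, \|\delta Y_{s,u}\|_{q,r,s} \leq w_Y(s,u)^{\frac{1}{p}} w_M(u,t)^{\frac{1}{p}},
\end{equation*}
where we also used part~(ii) of Proposition~\ref{prop: Lqrs is Banach space}. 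The exponents $\beta_1 = \beta_2 = 1/p$ satisfy $\beta_1 + \beta_2 = 2/p > 1/2$ precisely because $p < 4$; this is the point where the restriction on $p$ enters and is the only place where any genuine choice is made.

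Theorem~\ref{theorem: stochastic sewing lemma} then yields a unique (up to modification) adapted process $\cI$ with $\cI_0 = 0$ satisfying
\begin{equation*}
\|\delta \cI_{s,t} - \Xi_{s,t}\|_{q,r,s} \leq C w_Y(s,t-)^{\frac{1}{p}} w_M(s,t)^{\frac{1}{p}} = C \|Y\|_{p,q,r,[s,t)} \|M\|_{p,q,\infty,[s,t]},
\end{equation*}
and $\|\E_s[\delta \cI_{s,t} - \Xi_{s,t}]\|_{L^{qr}} = 0$. Setting $\int_0^t Y_u \dd M_u := \cI_t$ gives \eqref{eq: int against martingale estimate} directly, and the Riemann-sum convergence \eqref{eq: Ito integral convergence} is exactly \eqref{eq: convergence in stochastic sewing lemma}. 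The martingale property of $\cI$ follows from the vanishing conditional bound: $\E_s[\delta \cI_{s,t}] = \E_s[\Xi_{s,t}] = Y_s \E_s[\delta M_{s,t}] = 0$. The $L^q$-integrability follows from the bound $\|\delta \cI_{0,t}\|_{q,r,0} \leq \|Y_0\, \delta M_{0,t}\|_{q,r,0} + C\|Y\|_{p,q,r,[0,T)} \|M\|_{p,q,\infty,[0,T]}$, combined with the embedding \eqref{eq:q,r,s embeds into L^q}. Uniqueness is inherited from the uniqueness clause in Theorem~\ref{theorem: stochastic sewing lemma}: any other candidate process with the bound \eqref{eq: int against martingale estimate} provides another sewing of $\Xi$ with the same exponents, and must therefore agree with $\cI$ up to modification.

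The only subtlety is bookkeeping: one must check that $\|Y\|_{p,q,r,[s,t)}$ (rather than $\|Y\|_{p,q,r,[s,t]}$) appears on the right-hand side of \eqref{eq: int against martingale estimate}. This is built into the sewing lemma via the left-limit $w_{1,\cdot}(s,t-)$ and is compatible with our choice $w_Y(s,t-) = \|Y\|_{p,q,r,[s,t)}^p$. No other step requires more than a routine verification.
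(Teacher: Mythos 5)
Your proposal follows exactly the same route as the paper: the germ $\Xi_{s,t} = Y_s\,\delta M_{s,t}$, the identity $\delta\Xi_{s,u,t} = -\delta Y_{s,u}\,\delta M_{u,t}$, the vanishing conditional bound, the same product-of-controls estimate with exponents $\tfrac1p + \tfrac1p > \tfrac12$, and an application of Theorem~\ref{theorem: stochastic sewing lemma}. The main line of argument is correct.

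There is one small but genuine flaw in the $L^q$-integrability step. You bound
$\|\delta\cI_{0,t}\|_{q,r,0} \leq \|Y_0\,\delta M_{0,t}\|_{q,r,0} + C\|Y\|_{p,q,r,[0,T)}\|M\|_{p,q,\infty,[0,T]}$
and then invoke the embedding $\|\cdot\|_{L^q} \leq \|\cdot\|_{q,r,0}$. But the first term on the right need not be finite: since $Y_0$ is $\cF_0$-measurable,
$\|Y_0\,\delta M_{0,t}\|_{q,r,0} = \big\| |Y_0|^q\,\E_0[|\delta M_{0,t}|^q] \big\|_{L^r}^{1/q} \leq \|Y_0\|_{L^{qr}}\,\|\delta M_{0,t}\|_{q,\infty,0}$,
and for $r>1$ the standing hypotheses give you only $Y_0 \in L^q$, not $Y_0 \in L^{qr}$. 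So as written the displayed inequality may read $\infty \leq \infty$, and nothing follows. The remedy is exactly what the paper does: bound the germ directly in $L^q$ rather than in $L^{q,r}_0$. Using the tower property, $\|Y_s\,\delta M_{s,t}\|_{L^q} = \E\big[|Y_s|^q\,\E_s[|\delta M_{s,t}|^q]\big]^{1/q} \leq \|Y_s\|_{L^q}\,\|\delta M_{s,t}\|_{q,\infty,s} < \infty$, and then the triangle inequality in $L^q$ combined with the embedding applied only to the sewing \emph{difference} $\delta\cI_{s,t} - \Xi_{s,t}$ (which \emph{is} controlled in $L^{q,r}_s$ by \eqref{eq: int against martingale estimate}) gives $\int_s^t Y_u\,\d M_u \in L^q$. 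Everything else in your argument is sound.
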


\begin{proof}
Letting $\Xi_{s,t} = Y_s \delta M_{s,t}$, it is easy to see that $\delta \Xi_{s,u,t} = -\delta Y_{s,u} \delta M_{u,t}$. We thus have that $\E_s [\delta \Xi_{s,u,t}] = -\E_s [\delta Y_{s,u} \E_u [\delta M_{u,t}]] = 0$, and that
\begin{align*}
\|\delta \Xi_{s,u,t}\|_{q,r,s} &= \big\| \E_s \big[|\delta Y_{s,u}|^q \E_u \big[ |\delta M_{u,t}|^q \big] \big]^{\frac{1}{q}} \big\|_{L^r} \leq \|\delta Y_{s,u}\|_{q,r,s} \|\delta M_{u,t}\|_{q,\infty,u}\\
&\leq \|Y\|_{p,q,r,[s,u]} \|M\|_{p,q,\infty,[u,t]} = w_1(s,u)^{\frac{1}{p}} w_2(u,t)^{\frac{1}{p}},
\end{align*}
where $w_1(s,u) := \|Y\|_{p,q,r,[s,u]}^p$ and $w_2(u,t) := \|M\|_{p,q,\infty,[u,t]}^p$ are controls, which are both right-continuous by Lemma~\ref{lemma: right-continuity of controls}.

Since $p < 4$, we have that $\frac{1}{p} + \frac{1}{p} > \frac{1}{2}$. It then follows from Theorem~\ref{theorem: mild stochastic sewing lemma} that there exists a unique adapted process $\cI =: \int_0^\cdot Y_u \dd M_u$ which satisfies
\begin{equation*}
\E_s \bigg[\int_s^t Y_u \dd M_u - Y_s \delta M_{s,t}\bigg] = \E_s [\delta \cI_{s,t} - \Xi_{s,t}] = 0,
\end{equation*}
which implies that $\int_0^\cdot Y_u \dd M_u$ is a martingale, as well as the bound in \eqref{eq: int against martingale estimate}, and which additionally satisfies the convergence in \eqref{eq: Ito integral convergence}.

Since $\|Y_s \delta M_{s,t}\|_{L^q} = \E [|Y_s|^q \E_s [|\delta M_{s,t}|^q]]^{\frac{1}{q}} \leq \|Y_s\|_{L^q} \|\delta M_{s,t}\|_{q,\infty,s} < \infty$, it follows from \eqref{eq: int against martingale estimate} that $\int_s^t Y_u \dd M_u \in L^q$ for all $(s,t) \in \Delta_{[0,T]}$, so that the process $\int_0^\cdot Y_u \dd M_u$ is $L^q$-integrable.
\end{proof}

\begin{lemma}
Recall the setting of Lemma~\ref{lemma: existence of integrals against martingales}, and assume in addition that the filtration $(\cF_t)_{t \in [0,T]}$ satisfies the usual conditions, and that the sample paths of $Y$ are almost surely c\`adl\`ag. Then the integral $\int_0^\cdot Y_u \dd M_u$ defined in Lemma~\ref{lemma: existence of integrals against martingales} is a version of the classical It\^o integral of $Y_-$ with respect to $M$.
\end{lemma}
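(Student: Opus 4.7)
The plan is to identify both the sewing integral $\cI = \int_0^\cdot Y_u \dd M_u$ and the classical It\^o integral $J = \int_0^\cdot Y_{u-} \dd M_u$ as limits, in probability, of the same sequence of Riemann sums, and then to conclude by uniqueness of the limit. Since the filtration now satisfies the usual conditions and $Y$ is adapted with a.s.~c\`adl\`ag paths, the left-limit process $Y_-$ is predictable and, for each $\omega$, locally bounded, so the classical It\^o integral $J$ is well-defined as a (local) martingale in the standard way.

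For any partition $\cP = \{0 = t_0 < t_1 < \cdots < t_N = T\}$ of $[0,T]$, consider the simple predictable process $Y^\cP_t := \sum_{[u,v] \in \cP} Y_u \1_{(u,v]}(t)$. By construction, its classical It\^o integral against $M$ coincides with the Riemann sum,
\begin{equation*}
\int_0^t Y^\cP_u \dd M_u = \sum_{[u,v] \in \cP} Y_u \, \delta M_{u \wedge t, v \wedge t} =: S^\cP_t.
\end{equation*}
As $|\cP| \to 0$, the c\`adl\`ag property of $Y$ gives $Y^\cP_t(\omega) \to Y_{t-}(\omega)$ for every $t \in (0,T]$ and almost every $\omega$, with $|Y^\cP_t(\omega)| \leq \sup_{u \in [0,T]} |Y_u(\omega)| < \infty$ almost surely. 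Localizing via the stopping times $\tau_n := \inf\{t : |Y_t| \vee |Y_{t-}| > n\} \wedge T$, which increase to $T$ almost surely, the stopped approximants $Y^\cP \1_{[0,\tau_n]}$ are uniformly bounded (by a constant depending on $n$) and converge pointwise to $Y_- \1_{[0,\tau_n]}$. The dominated convergence theorem for stochastic integrals against the semimartingale $M$ (e.g.\ Protter, Theorem~IV.32) then yields $S^\cP_{\cdot \wedge \tau_n} \to J_{\cdot \wedge \tau_n}$ in probability, uniformly on $[0,T]$, as $|\cP| \to 0$. Sending $n \to \infty$ removes the localization and gives $S^\cP_t \to J_t$ in probability, for each $t \in [0,T]$.

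On the other hand, applying \eqref{eq: Ito integral convergence} with $s = 0$, together with \eqref{eq:q,r,s embeds into L^q} from Proposition~\ref{prop: Lqrs is Banach space}, shows that $S^\cP_t \to \cI_t$ in $L^q$, and hence in probability. By uniqueness of limits in probability, $\cI_t = J_t$ almost surely for each $t \in [0,T]$, so $\cI$ is a version of the classical It\^o integral $\int_0^\cdot Y_{u-} \dd M_u$. The only mildly delicate point is the dominated convergence step for the stochastic integral, which is a standard result once localization along $(\tau_n)$ handles the lack of an a priori integrable dominating process.
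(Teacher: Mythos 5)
Correct, and essentially the same approach as the paper: both proofs identify the sewing integral and the classical It\^o integral as limits in probability of the same Riemann sums and then invoke uniqueness of limits in probability. The only difference is that you re-derive the Riemann-sum convergence to $\int_0^\cdot Y_{u-}\,\dd M_u$ from scratch (via simple predictable approximants, localization, and stochastic dominated convergence), whereas the paper simply cites Protter, Chapter~II, Theorem~21, which already states precisely that result.
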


\begin{proof}
This is an immediate consequence of the convergence in \eqref{eq: Ito integral convergence} and, e.g., \cite[Chapter II, Theorem 21]{Protter2005}, by the almost sure uniqueness of limits in probability.
\end{proof}

We saw above that the It\^o integral against a suitably integrable martingale can be constructed using stochastic sewing. In the next lemma, we see that we can also use Theorem~\ref{theorem: sample path regularity sewing} to obtain the uniform approximation of It\^o integrals by Riemann sums.

\begin{lemma}
Let $p \in [1,4)$, $q \in [2,\infty)$ and $r \in [q,\infty]$, and assume that the filtration $(\cF_t)_{t \in [0,T]}$ is complete. Suppose that $Y \in V^p L^{q,r}$ is adapted, and that $M \in V^p L^{q,\infty}$ is a c\`adl\`ag martingale. Then there exists a nested sequence of partitions $(\cP^n)_{n \in \N}$ such that
\begin{align*}
\lim_{n \to \infty} \bigg\| \sup_{t \in [0,T]} \bigg| \int_0^t Y_s \dd M_s - \sum_{[u,v] \in \cP^n} Y_u \delta M_{u \wedge t,v \wedge t} \bigg| \bigg\|_{q,r,0} = 0.
\end{align*}
\end{lemma}

\begin{proof}
Letting $\Xi_{s,t} = Y_s \delta M_{s,t}$, we have that $\delta \Xi_{s,u,t} = -\delta Y_{s,u} \delta M_{u,t}$. Using the conditional Doob $L^p$ inequality, we then have that
\begin{equation*}
\begin{split}
&\Big\| \sup_{v \in [u,t]} |\delta \Xi_{s,u,v}| \Big\|_{q,r,s} = \Big\| \E_s \Big[ |\delta Y_{s,u}|^q \sup_{v \in [u,t]} |\delta M_{u,v}|^q \Big]^{\frac{1}{q}} \Big\|_{L^r}\\
&= \Big\| \E_s \Big[ |\delta Y_{s,u}|^q \E_u \Big[ \sup_{v \in [u,t]} |\delta M_{u,v}|^q \Big] \Big]^{\frac{1}{q}} \Big\|_{L^r} \lesssim \Big\| \E_s \Big[ |\delta Y_{s,u}|^q \E_u \big[ |\delta M_{u,t}|^q \big] \Big]^{\frac{1}{q}} \Big\|_{L^r}\\
&\leq \|\delta Y_{s,u}\|_{q,r,s} \|\delta M_{u,t}\|_{q,\infty,u} \leq \|Y\|_{p,q,r,[s,u]} \|M\|_{p,q,\infty,[u,t]},
\end{split}
\end{equation*}
and the result then follows by an application of Theorem~\ref{theorem: sample path regularity sewing}.
\end{proof}

\section{Rough stochastic integration}\label{section: rough stochastic integration}

In a similar spirit to \cite{FrizHocquetLe2021}, in this section we proceed to define the notion of stochastic controlled paths in our c\`adl\`ag setting. We establish the corresponding notion of rough stochastic integration, along with stability of stochastic controlled paths under rough stochastic integration and under the application of sufficiently regular functions.

\subsection{Stochastic controlled paths}

We consider pairs $\bX = (X,\X)$, consisting of a c\`adl\`ag path $X \colon [0,T] \to \R^d$ and a c\`adl\`ag two-parameter function $\X \colon \Delta_{[0,T]} \to \R^{d \times d}$, such that $\|X\|_{p,[0,T]} < \infty$ and $\|\X\|_{\frac{p}{2},[0,T]} < \infty$. For such pairs, and any $(s,t) \in \Delta_{[0,T]}$, we use the natural seminorm
\begin{equation*}
\|\bX\|_{p,[s,t]} := \|X\|_{p,[s,t]} + \|\X\|_{\frac{p}{2},[s,t]},
\end{equation*}
which induces the pseudometric
\begin{equation*}
(\bX,\tbX) \, \mapsto \, \|\bX - \tbX\|_{p,[s,t]} = \|X - \tX\|_{p,[s,t]} + \|\X - \tbbX\|_{\frac{p}{2},[s,t]}
\end{equation*}
for pairs $\bX = (X,\X)$ and $\tbX = (\tX,\tbbX)$.

For a given $p \in [2,3)$, a \emph{c\`adl\`ag rough path} is such a pair $\bX = (X,\X)$, such that $\|\bX\|_{p,[0,T]} < \infty$, and such that Chen's relation
\begin{equation*}
\X_{s,t} = \X_{s,u} + \X_{u,t} + \delta X_{s,u} \otimes \delta X_{u,t}
\end{equation*}
holds for all $0 \leq s \leq u \leq t \leq T$.

We write $\sV^p = \sV^p([0,T];\R^d)$ for the (complete metric) space of c\`adl\`ag rough paths.

\smallskip

Given a two-parameter process $A = (A_{s,t})_{(s,t) \in \Delta_{[0,T]}}$, we write $\E_{\edot} A$ for the two-parameter process given by
\begin{equation*}
(\E_{\edot} A)_{s,t} := \E_s [A_{s,t}]
\end{equation*}
for every $(s,t) \in \Delta_{[0,T]}$.

\smallskip

In the following, we consider the norm $\|\cdot\|_{\frac{p}{2},r,[0,T]}$. Strictly speaking, in Definition~\ref{defn: V^pL^q and V^pL^q,r spaces} we only defined the norm $\|\cdot\|_{p,q,[s,t]}$ for $q < \infty$, but we can extend this definition to include $q = \infty$ in the natural way, by setting $\|F\|_{p,\infty,[s,t]} := (\sup_{\cP \subset [s,t]} \sum_{[u,v] \in \cP} \|F_{u,v}\|_{L^\infty}^p)^{\frac{1}{p}}$.

\begin{definition}
Let $p \in [2,3)$, $q \in [2,\infty)$ and $r \in [q,\infty]$, and let $X \in V^p$. We call a pair of processes $(Y,Y')$ a \emph{stochastic controlled path} (relative to $X$), if
\begin{enumerate}[(i)]
\item $Y$ and $Y'$ are both adapted,
\item $Y \in V^p L^{q,r}$, 
\item $Y' \in V^p L^{q,r}$ and $\sup_{s \in [0,T]} \|Y'_s\|_{L^{r}} < \infty$, and
\item $\|\E_{\edot} R^Y\|_{\frac{p}{2},r,[0,T]} < \infty$, where the two-parameter process $R^Y = (R^Y_{s,t})_{(s,t) \in \Delta_{[0,T]}}$ is defined by
\begin{equation}\label{eq: defn R^Y_s,t}
\delta Y_{s,t} = Y'_{s} \delta X_{s,t} + R^Y_{s,t}
\end{equation}
for every $(s,t) \in \Delta_{[0,T]}$.
\end{enumerate}
We write $\cV^{p,q,r}_X$ for the space of stochastic controlled paths relative to $X$. For brevity, in the case when $r = q$, we will simply write $\cV^{p,q}_X = \cV^{p,q,q}_X$.
\end{definition}

\begin{remark}\label{remark: right continuous control for remainder}
Let $p \in [2,3)$, $q \in [2,\infty)$ and $r \in [q,\infty]$, and let $X \in V^p$ and $(Y,Y') \in \cV^{p,q,r}_X$. It follows from \eqref{eq: defn R^Y_s,t}, part~(iii) of Proposition~\ref{prop: Lqrs is Banach space}, and Lemma~\ref{lemma: V^pL^q implies L^q-regulated} that, for any $s \in [0,T)$,
\begin{equation*}
\lim_{t \searrow s} \big\|\E_s [R^Y_{s,t}]\big\|_{L^{r}} \leq \lim_{t \searrow s} \|\delta Y_{s,t}\|_{q,r,s} + \sup_{u \in [0,T]} \|Y'_u\|_{L^{r}} \lim_{t \searrow s} |\delta X_{s,t}| = 0,
\end{equation*}
and similarly that $\lim_{t \searrow s} \|R^Y_{s,t}\|_{q,r,s} = 0$. It is then straightforward to adapt the proof of Lemma~\ref{lemma: right-continuity of controls} to show that $w_1(s,t) := \|\E_{\edot} R^Y\|_{\frac{p}{2},r,[s,t]}^{\frac{p}{2}}$ and $w_2(s,t) := \|R^Y\|_{p,q,r,[s,t]}^p$ define right-continuous controls $w_1, w_2$.
\end{remark}

\subsection{Rough stochastic integration}

\begin{lemma}\label{lemma: existence of rough stochastic integrals}
Let $p \in [2,3)$, $q \in [2,\infty)$ and $r \in [q,\infty]$. Let $\bX = (X,\X) \in \sV^p$ be a c\`adl\`ag rough path, and let $(Y,Y') \in \cV^{p,q,r}_X$ be a stochastic controlled path. Then there exists an $L^q$-integrable adapted process $(\int_0^t Y_u \dd \bX_u)_{t \in [0,T]}$, such that, for every $(s,t) \in \Delta_{[0,T]}$,
\begin{equation*}
\lim_{|\cP| \to 0} \bigg\|\int_s^t Y_u \dd \bX_u - \sum_{[u,v] \in \cP} \big(Y_u \delta X_{u,v} + Y'_u \X_{u,v}\big)\bigg\|_{q,r,s} = 0,
\end{equation*}
where the limit holds along partitions $\cP$ of the interval $[s,t]$ as the mesh size tends to zero. Moreover, we have that
\begin{equation}\label{eq: Lqr- estimate for rough stochastic integral}
\bigg\|\E_s \bigg[\int_s^t Y_u \dd \bX_u - Y_s \delta X_{s,t} - Y'_s \X_{s,t}\bigg]\bigg\|_{L^{r}} \leq C\Big(\|\E_{\edot} R^Y\|_{\frac{p}{2},r,[s,t)} \|X\|_{p,[s,t]} + \|Y'\|_{p,q,r,[s,t)} \|\X\|_{\frac{p}{2},[s,t]}\Big)
\end{equation}
and
\begin{equation}\label{eq: p,q,r,s- estimate for rough stochastic integral}
\begin{split}
&\bigg\|\int_s^t Y_u \dd \bX_u - Y_s \delta X_{s,t} - Y'_s \X_{s,t}\bigg\|_{q,r,s}\\
&\leq C \Big(\|Y\|_{p,q,r,[s,t)} + \|Y'\|_{p,q,r,[s,t)} + \|\E_{\edot} R^Y\|_{\frac{p}{2},r,[s,t)} + \|X\|_{p,[s,t)} \sup_{u \in [0,T)} \|Y'_u\|_{L^{r}}\Big) \|\bX\|_{p,[s,t]}
\end{split}
\end{equation}
for every $(s,t) \in \Delta_{[0,T]}$, where the constant $C$ depends only on $p$ and $q$.
\end{lemma}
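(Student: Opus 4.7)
The plan is to apply the stochastic sewing lemma (Theorem~\ref{theorem: stochastic sewing lemma}) to the standard ``compensated Riemann sum'' germ
\begin{equation*}
\Xi_{s,t} := Y_s \delta X_{s,t} + Y'_s \X_{s,t}, \qquad (s,t) \in \Delta_{[0,T]},
\end{equation*}
which is $\cF_t$-measurable thanks to the adaptedness of $(Y,Y')$. A direct computation, substituting Chen's relation $\X_{s,t} = \X_{s,u} + \X_{u,t} + \delta X_{s,u} \otimes \delta X_{u,t}$ and the defining identity $\delta Y_{s,u} = Y'_s \delta X_{s,u} + R^Y_{s,u}$, yields the key algebraic identity
\begin{equation*}
\delta \Xi_{s,u,t} = -R^Y_{s,u}\, \delta X_{u,t} - \delta Y'_{s,u}\, \X_{u,t}.
\end{equation*}
This is the $L^0$-analogue of the familiar decomposition used for deterministic rough integration, and isolates the two ``second-order'' contributions that must be controlled by the remainder and the derivative path respectively.

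Next I would verify the two hypotheses of Theorem~\ref{theorem: stochastic sewing lemma}. Since $\delta X_{u,t}$ and $\X_{u,t}$ are deterministic, pulling them outside the norms gives
\begin{equation*}
\big\|\E_s[\delta \Xi_{s,u,t}]\big\|_{L^{qr}} \leq \big\|\E_s[R^Y_{s,u}]\big\|_{L^{qr}}\, |\delta X_{u,t}| + \big\|\E_s[\delta Y'_{s,u}]\big\|_{L^{qr}}\, |\X_{u,t}|,
\end{equation*}
and the bound $\|\E_s[A_{s,u}]\|_{L^{qr}} \leq \|\E_{\edot} A\|_{\frac{p}{2},qr,[s,u]}$ (from the trivial partition $\{s,u\}$) together with part~(iii) of Proposition~\ref{prop: Lqrs is Banach space} bounds the two conditional factors by $\|\E_{\edot} R^Y\|_{\frac{p}{2},qr,[s,u]}$ and $\|Y'\|_{p,q,r,[s,u]}$ respectively. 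This puts \eqref{eq: stoch sewing E_s Lqr bound} in the form required, with controls $w_{1,i}(s,u)^{2/p} w_{2,i}(u,t)^{1/p}$ and $w_{1,i}(s,u)^{1/p} w_{2,i}(u,t)^{2/p}$; the relevant exponent sum is $3/p > 1$ precisely because $p < 3$. For the pathwise bound \eqref{eq: stoch sewing qrs bound}, I apply the same logic with $\|R^Y_{s,u}\|_{q,r,s} \leq \|R^Y\|_{p,q,r,[s,u]}$ and $\|\delta Y'_{s,u}\|_{q,r,s} \leq \|Y'\|_{p,q,r,[s,u]}$, giving exponent sums $2/p$ and $3/p$, both strictly greater than $1/2$ since $p < 4$. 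Right-continuity in the first argument of all four controls follows from Lemma~\ref{lemma: right-continuity of controls} (together with Remark~\ref{remark: right continuous control for remainder} for the remainder control) and from the analogous deterministic fact for $\|X\|_p$ and $\|\X\|_{p/2}$.

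Theorem~\ref{theorem: stochastic sewing lemma} then produces an adapted process $\cI =: \int_0^{\edot} Y_u \dd \bX_u$ (unique up to modification, with $\cI_0 = 0$) satisfying the convergence of compensated Riemann sums \eqref{eq: convergence in stochastic sewing lemma}. The estimate \eqref{eq: L^qr- estimate for rough stochastic integral} is then exactly \eqref{eq: L^qr bound full generality sewing} read off with the two controls identified above. For \eqref{eq: p,q,r,s- estimate for rough stochastic integral}, the only non-trivial point is to reformulate the $\|R^Y\|_{p,q,r,[s,t)}$ term produced by \eqref{eq: qrs bound full generality sewing}: since $R^Y_{a,b} = \delta Y_{a,b} - Y'_a \delta X_{a,b}$ and $Y'_a$ is $\cF_a$-measurable, we have $\|Y'_a \delta X_{a,b}\|_{q,r,a} = \|Y'_a\|_{L^{qr}} |\delta X_{a,b}|$, and Minkowski's inequality in $\ell^p$ gives
\begin{equation*}
\|R^Y\|_{p,q,r,[s,t)} \leq \|Y\|_{p,q,r,[s,t)} + \sup_{u \in [0,T)} \|Y'_u\|_{L^{qr}}\, \|X\|_{p,[s,t)},
\end{equation*}
after which bounding $\|X\|_{p,[s,t]}$ and $\|\X\|_{\frac{p}{2},[s,t]}$ by $\|\bX\|_{p,[s,t]}$ gives \eqref{eq: p,q,r,s- estimate for rough stochastic integral}. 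Finally, $L^q$-integrability of $\cI_t$ follows by applying \eqref{eq: p,q,r,s- estimate for rough stochastic integral} on $[0,t]$ together with $\|Y_0 \delta X_{0,t} + Y'_0 \X_{0,t}\|_{L^q} < \infty$. The main obstacle is essentially bookkeeping: correctly matching the two sums of mixed-control terms required by Theorem~\ref{theorem: stochastic sewing lemma} to the two ``rough'' and ``Young'' contributions of $\delta \Xi_{s,u,t}$, and checking that the exponent conditions $\alpha_{1,i}+\alpha_{2,i}>1$ and $\beta_{1,j}+\beta_{2,j}>1/2$ are saturated exactly at the expected thresholds $p<3$ and $p<4$.
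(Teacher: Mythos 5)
Your proposal is correct and follows essentially the same route as the paper: the same germ $\Xi_{s,t} = Y_s \delta X_{s,t} + Y'_s \X_{s,t}$, the same algebraic identity $\delta\Xi_{s,u,t} = -R^Y_{s,u}\delta X_{u,t} - \delta Y'_{s,u}\X_{u,t}$, the same bounds fed into Theorem~\ref{theorem: stochastic sewing lemma}, and the same right-continuity checks via Lemma~\ref{lemma: right-continuity of controls}, Remark~\ref{remark: right continuous control for remainder} and \cite[Lemma~7.1]{FrizZhang2018}. The only (inconsequential) bookkeeping difference is that you keep $\|R^Y\|_{p,q,r,[s,u]}^p$ as a single control and expand $R^Y = \delta Y - Y'\delta X$ after applying the sewing lemma, whereas the paper expands it beforehand, yielding three controls in \eqref{eq: stoch sewing qrs bound}; both produce \eqref{eq: p,q,r,s- estimate for rough stochastic integral}.
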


We call the process $\int_0^\cdot Y_u \dd \bX_u$ constructed in Lemma~\ref{lemma: existence of rough stochastic integrals} the \emph{rough stochastic integral} of $(Y,Y')$ with respect to $\bX$.

\begin{proof}
Letting $\Xi_{s,t} = Y_s \delta X_{s,t} + Y'_s \X_{s,t}$, a short calculation using Chen's relation shows that
\begin{align*}
\delta \Xi_{s,u,t} = -R^Y_{s,u} \delta X_{u,t} - \delta Y'_{s,u} \X_{u,t}
\end{align*}
for any $s \leq u \leq t$. Recalling \eqref{eq: L^qr norm of E_s bounded by q,r,s}, then we have that
\begin{align*}
\big\|\E_s [\delta \Xi_{s,u,t}]\big\|_{L^{r}} &\leq \big\|\E_s[R^Y_{s,u}]\big\|_{L^{r}} |\delta X_{u,t}| + \|\delta Y'_{s,u}\|_{q,r,s} |\X_{u,t}|\\
&\leq \|\E_{\edot} R^Y\|_{\frac{p}{2},r,[s,u]} \|X\|_{p,[u,t]} + \|Y'\|_{p,q,r,[s,u]} \|\X\|_{\frac{p}{2},[u,t]}\\
&=: w_{1,1}(s,u)^{\frac{2}{p}} w_{2,1}(u,t)^{\frac{1}{p}} + w_{1,2}(s,u)^{\frac{1}{p}} w_{2,2}(u,t)^{\frac{2}{p}},
\end{align*}
where $w_{1,1}, w_{2,1}, w_{1,2}$ and $w_{2,2}$ are controls, which are all right-continuous by \cite[Lemma~7.1]{FrizZhang2018}, Lemma~\ref{lemma: right-continuity of controls} and Remark~\ref{remark: right continuous control for remainder}. We see from \eqref{eq: defn R^Y_s,t} that
\begin{equation*}
\|R^Y\|_{p,q,r,[s,u]} \leq \|Y\|_{p,q,r,[s,u]} + \sup_{v \in [s,u)} \|Y'_v\|_{L^{r}} \|X\|_{p,[s,u]},
\end{equation*}
and we then also have that
\begin{align*}
\|&\delta \Xi_{s,u,t}\|_{q,r,s} \leq \|R^Y_{s,u}\|_{q,r,s} |\delta X_{u,t}| + \|\delta Y'_{s,u}\|_{q,r,s} |\X_{u,t}|\\
&\leq \|R^Y\|_{p,q,r,[s,u]} \|X\|_{p,[u,t]} + \|Y'\|_{p,q,r,[s,u]} \|\X\|_{\frac{p}{2},[u,t]}\\
&\leq \|Y\|_{p,q,r,[s,u]} \|X\|_{p,[u,t]} + \sup_{v \in [0,T)} \|Y'_v\|_{L^{r}} \|X\|_{p,[s,u]} \|X\|_{p,[u,t]} + \|Y'\|_{p,q,r,[s,u]} \|\X\|_{\frac{p}{2},[u,t]}\\
&=: \bw_{1,1}(s,u)^{\frac{1}{p}} \bw_{2,1}(u,t)^{\frac{1}{p}} + \bw_{1,2}(s,u)^{\frac{1}{p}} \bw_{2,2}(u,t)^{\frac{1}{p}} + \bw_{1,3}(s,u)^{\frac{1}{p}} \bw_{2,3}(u,t)^{\frac{2}{p}},
\end{align*}
where $\bw_{1,1}, \bw_{2,1}, \bw_{1,2}, \bw_{2,2}, \bw_{1,3}$ and $\bw_{2,3}$ are right-continuous controls. The result then follows from Theorem~\ref{theorem: mild stochastic sewing lemma}.
\end{proof}

\begin{remark}
For our purposes, it is sufficient in Lemma~\ref{lemma: existence of rough stochastic integrals} and in what follows to consider c\`adl\`ag rough paths and stochastic controlled paths which are c\`adl\`ag in $L^q$. However, we note that the stochastic sewing lemma in Theorem~\ref{theorem: mild stochastic sewing lemma} recovers the full scope of generality of the sewing results obtained in \cite{FrizZhang2018}. In particular, no continuity assumptions are required on the controls (except to obtain MRS convergence).

Consequently, Theorem~\ref{theorem: mild stochastic sewing lemma} can in principle be used to define rough stochastic integration against rough paths $\bX$ that are not necessarily c\`adl\`ag, as well as for integrands that are merely $L^q$-regulated. Indeed, the discussion in \cite{FrizZhang2018} concerning such more general classes of integrators and integrands, as well as the invariance of (rough) integrals under left- and right-point modifications of the integrands, carries over to the present stochastic framework. While we do not pursue such generality here, we note that the pure jump sewing lemma in \cite[Theorem~2.11]{FrizZhang2018} admits a straightforward stochastic extension; see Lemma~\ref{lemma: pure jump stochastic sewing lemma}.
\end{remark}

\begin{remark}
An alternative (though, in light of the stochastic sewing lemma established in Theorem~\ref{theorem: mild stochastic sewing lemma}, unnecessarily involved) approach to rough stochastic integration was presented in \cite[Theorem~4.9]{LiangTang2025}. There, the authors employed a Doob--Meyer-type decomposition for stochastic controlled paths (first introduced in \cite[Section~3.1]{FrizHocquetLe2021}), and defined the rough stochastic integral as the sum of an $L^q$-valued rough integral (constructed via a standard sewing lemma) and the integral of a martingale against a rough path. In this setting, only a single (continuous) control appears in the stochastic sewing lemma.

Although such a Doob--Meyer decomposition is not required for the construction of rough stochastic integrals in the present framework, it remains an interesting open question whether an analogous decomposition holds in the general c\`adl\`ag setting.
\end{remark}

\begin{lemma}\label{lemma: rough stochastic integral cadlag sample paths}
Let $p \in [2,3)$, $q \in [2,\infty)$ and $r \in [q,\infty]$. Let $\bX = (X,\X) \in \sV^p$ be a c\`adl\`ag rough path, let $(Y,Y') \in \cV^{p,q,r}_X$ be a stochastic controlled path, and suppose that the filtration $(\cF_t)_{t \in [0,T]}$ is complete. Then the rough stochastic integral $\int_0^\cdot Y_u \dd \bX_u$ has a version with c\`adl\`ag sample paths. Moreover, we have that
\begin{equation}\label{eq: uniform bound for rough stochastic integral}
\begin{split}
&\bigg\|\sup_{v \in [s,t]} \bigg|\int_s^v Y_u \dd \bX_u - \big(Y_s \delta X_{s,v} + Y'_s \X_{s,v}\big)\bigg|\bigg\|_{q,r,s}\\
&\leq C \Big(\|Y\|_{p,q,r,[s,t)} + \|Y'\|_{p,q,r,[s,t)} + \|\E_{\edot} R^Y\|_{\frac{p}{2},r,[s,t)} + \|X\|_{p,[s,t)} \sup_{u \in [0,T)} \|Y'_u\|_{L^{r}}\Big) \|\bX\|_{p,[s,t]}
\end{split}
\end{equation}
for every $(s,t) \in \Delta_{[0,T]}$, where the constant $C$ depends only on $p$ and $q$.
\end{lemma}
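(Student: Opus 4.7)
The proof will be a direct application of Theorem~\ref{theorem: sample path regularity sewing} to the same two-parameter process $\Xi_{s,t} = Y_s \delta X_{s,t} + Y'_s \X_{s,t}$ used in the proof of Lemma~\ref{lemma: existence of rough stochastic integrals}. First I would note that $\Xi_{s,t}$ is $\cF_t$-measurable (since $Y_s, Y'_s$ are $\cF_s$-measurable and $X, \X$ are deterministic), and that for each fixed $s$ the path $t \mapsto \Xi_{s,t}$ is almost surely c\`adl\`ag, since $X$ and $\X$ are c\`adl\`ag in $t$. All of the controls appearing in the proof of Lemma~\ref{lemma: existence of rough stochastic integrals} are right-continuous in both arguments, by Lemma~\ref{lemma: right-continuity of controls}, Remark~\ref{remark: right continuous control for remainder}, and \cite[Lemma~7.1]{FrizZhang2018}, so in particular the right-continuity-in-the-second-argument hypothesis of Theorem~\ref{theorem: sample path regularity sewing} is automatic.

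The new verification is the uniform-in-$v$ bound \eqref{eq: uniform q,r,s bound for uniform convergence}. Starting from the identity $\delta \Xi_{s,u,v} = -R^Y_{s,u} \delta X_{u,v} - \delta Y'_{s,u} \X_{u,v}$ (already derived in the proof of Lemma~\ref{lemma: existence of rough stochastic integrals}), I would pass to the pathwise supremum over $v \in [u,t]$ using $|\delta X_{u,v}| \leq \|X\|_{p,[u,t]}$ and $|\X_{u,v}| \leq \|\X\|_{\frac{p}{2},[u,t]}$, take the $\|\cdot\|_{q,r,s}$-norm, and then expand $\|R^Y_{s,u}\|_{q,r,s}$ via \eqref{eq: defn R^Y_s,t} exactly as in the proof of Lemma~\ref{lemma: existence of rough stochastic integrals}. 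This yields a sum of three control-product terms with exponent sums $\tfrac{2}{p}, \tfrac{2}{p}$ and $\tfrac{3}{p}$. Since $p < 3$ and $q \geq 2$, each of these exceeds $\tfrac{1}{q}$, so hypothesis \eqref{eq: uniform q,r,s bound for uniform convergence} is satisfied with right-continuous controls $\hw_{1,k}, \hw_{2,k}$. Theorem~\ref{theorem: sample path regularity sewing} then immediately provides the c\`adl\`ag version of $\int_0^\cdot Y_u \dd \bX_u$.

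For the uniform estimate \eqref{eq: uniform bound for rough stochastic integral} on a general subinterval $[s,t]$, I would apply Theorem~\ref{theorem: sample path regularity sewing} to the restriction of $\Xi$ to $\Delta_{[s,t]}$ (all hypotheses transfer verbatim, with $s$ playing the role of $0$). By the uniqueness assertion of Theorem~\ref{theorem: stochastic sewing lemma}, the resulting process coincides on $[s,t]$ with $\widetilde{\cI}_\edot - \widetilde{\cI}_s$. Taking $h = 0$ in \eqref{eq: uniform estimate in full generality sewing}, so that the trivial partition $\cP^0 = \{s,t\}$ gives $\Xi^{\cP^0}_v = \Xi_{s,v}$ for $v \in [s,t]$, and inserting the explicit forms of the controls $w_{\cdot,\cdot}$, $\bw_{\cdot,\cdot}$, $\hw_{\cdot,\cdot}$ identified in the previous paragraph (which all collapse to the product of one of $\|Y\|_{p,q,r,[s,t)}$, $\|Y'\|_{p,q,r,[s,t)}$, $\|\E_\edot R^Y\|_{\frac{p}{2},qr,[s,t)}$, $\|X\|_{p,[s,t)} \sup_u \|Y'_u\|_{L^{qr}}$ with $\|X\|_{p,[s,t]}$ or $\|\X\|_{\frac{p}{2},[s,t]}$, each of which is dominated by the corresponding factor times $\|\bX\|_{p,[s,t]}$), delivers exactly \eqref{eq: uniform bound for rough stochastic integral}.

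The only mildly delicate point is the transfer of the uniform bound from the distinguished interval $[0,T]$ in the statement of Theorem~\ref{theorem: sample path regularity sewing} to an arbitrary $[s,t]$; this is handled by the observation above that the sewing construction is local and that the uniqueness in Theorem~\ref{theorem: stochastic sewing lemma} forces the local and global versions to agree. Beyond that, the argument is essentially bookkeeping, mirroring the proof of Lemma~\ref{lemma: existence of rough stochastic integrals} with the $\|\E_s[\delta \Xi_{s,u,t}]\|_{L^{qr}}$-type bounds replaced by pathwise-sup analogues.
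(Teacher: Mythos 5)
Your argument follows the paper's own proof essentially line for line: the same $\Xi_{s,t} = Y_s\delta X_{s,t} + Y'_s\X_{s,t}$, the same identity $\delta\Xi_{s,u,v} = -R^Y_{s,u}\delta X_{u,v} - \delta Y'_{s,u}\X_{u,v}$, passing the pathwise supremum in $v$ onto the deterministic factors to get $\|\sup_{v\in[u,t]}|\delta\Xi_{s,u,v}|\|_{q,r,s} \le \|R^Y\|_{p,q,r,[s,u]}\|X\|_{p,[u,t]} + \|Y'\|_{p,q,r,[s,u]}\|\X\|_{\frac p2,[u,t]}$, and then Theorem~\ref{theorem: sample path regularity sewing} with the trivial partition $\cP^0 = \{s,t\}$. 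The only cosmetic difference is that you expand $\|R^Y\|_{p,q,r,[s,u]}$ via \eqref{eq: defn R^Y_s,t} into three control products (giving $K=3$) while the paper keeps it as a single right-continuous control (giving $K=2$, right-continuity being supplied by Remark~\ref{remark: right continuous control for remainder}); both lead to the same estimate, since the paper absorbs the expansion afterwards when matching \eqref{eq: uniform estimate in full generality sewing} to \eqref{eq: uniform bound for rough stochastic integral}.
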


\begin{proof}
Letting $\Xi_{s,t} = Y_s \delta X_{s,t} + Y'_s \X_{s,t}$, we have, for any $s \leq u \leq t$ and any $v \in [u,t]$, that $\delta \Xi_{s,u,v} = -R^Y_{s,u} \delta X_{u,v} - \delta Y'_{s,u} \X_{u,v}$, and hence that
\begin{align*}
\Big\|\sup_{v \in [u,t]} |\delta \Xi_{s,u,v}|\Big\|_{q,r,s} &\leq \|R^Y_{s,u}\|_{q,r,s} \sup_{v \in [u,t]} |\delta X_{u,v}| + \|Y'_{s,u}\|_{q,r,s} \sup_{v \in [u,t]} |\X_{u,v}|\\
&\leq \|R^Y\|_{p,q,r,[s,u]} \|X\|_{p,[u,t]} + \|Y'\|_{p,q,r,[s,u]} \|\X\|_{\frac{p}{2},[u,t]}\\
&=: \hw_{1,1}(s,u)^{\frac{1}{p}} \hw_{2,1}(u,t)^{\frac{1}{p}} + \hw_{1,2}(s,u)^{\frac{1}{p}} \hw_{2,2}(u,t)^{\frac{2}{p}},
\end{align*}
where $\hw_{1,1}, \hw_{2,1}, \hw_{1,2}$ and $\hw_{2,2}$ are right-continuous controls. We then have from Theorem~\ref{theorem: sample path regularity sewing} that $\int_0^\cdot Y_u \dd \bX_u$ has a version with c\`adl\`ag sample paths. Restricting to an arbitrary interval $[s,t]$, and taking the trivial partition $\cP^0 = \{s,t\}$, the estimate in \eqref{eq: uniform estimate in full generality sewing} implies the one in \eqref{eq: uniform bound for rough stochastic integral}.
\end{proof}

Given a process $Y$, we will write $Y_{t-} := \lim_{s \nearrow t} Y_s$ and $\Delta Y_t := \lim_{s \nearrow t} \delta Y_{s,t} = Y_t - Y_{t-}$, understood as limits in $L^q$. Of course, when the process considered has c\`adl\`ag sample paths, these objects then also exist as almost sure limits.

In the following, we will also denote $\Delta \X_t := \lim_{s \nearrow t} \X_{s,t}$.

\begin{lemma}\label{lemma: jump structure of rough stochastic integrals}
Recall the setting of Lemma~\ref{lemma: rough stochastic integral cadlag sample paths}. The rough stochastic integral $Z := \int_0^\cdot Y_u \dd \bX_u$ admits the canonical jump structure. That is, almost surely,
\begin{equation}\label{eq: jumps of rough stochastic integrals}
\Delta Z_t = Y_{t-} \Delta X_t + Y'_{t-} \Delta \X_t
\end{equation}
holds for every $t \in (0,T]$.
\end{lemma}

\begin{proof}
Since $Y_s, Y'_s \in L^q$, and the map $t \mapsto \|\bX\|_{p,[s,t]}$ is right-continuous, it follows from \eqref{eq: p,q,r,s- estimate for rough stochastic integral} that $\lim_{t \searrow s} \|\delta Z_{s,t}\|_{L^q} = 0$ for every $s \in [0,T)$, so that $Z$ is right-continuous in $L^q$.

It is straightforward to see that $\|X\|_{p,[s,t)}, \|Y\|_{p,q,r,[s,t)}, \|Y'\|_{p,q,r,[s,t)}, \|\E_{\edot} R^Y\|_{\frac{p}{2},r,[s,t)} \to 0$ as $s \nearrow t$, and it then also follows from \eqref{eq: p,q,r,s- estimate for rough stochastic integral} that
\begin{equation*}
\lim_{s \nearrow t} \big\| \delta Z_{s,t} - Y_s \delta X_{s,t} - Y'_s \X_{s,t} \big\|_{L^q} = 0.
\end{equation*}
We have from Lemma~\ref{lemma: V^pL^q is a Banach space} that the limits $Y_{t-}$ and $Y'_{t-}$ exist in $L^q$, and we thus deduce that the relation in \eqref{eq: jumps of rough stochastic integrals} holds almost surely for every fixed $t \in (0,T]$, and in particular that the left-limit $Z_{t-} = Z_t - \Delta Z_t$ exists in $L^q$.

Recalling the proof of Lemma~\ref{lemma: rough stochastic integral cadlag sample paths} above, we further have from Theorem~\ref{theorem: sample path regularity sewing} that
\begin{equation}\label{eq: uniform convergence for rough stochastic integral}
\Big\| \sup_{t \in [0,T]} \big| Z_t - \Xi^{\cP^h}_t \big| \Big\|_{q,r,0} \to 0
\end{equation}
as $h \to \infty$, for some sequence of partitions $(\cP^h)_{h \in \N}$ with vanishing mesh size, where
\begin{equation*}
\Xi^{\cP^h}_t := \sum_{[u,v] \in \cP^h} Y_{u \wedge t} \delta X_{u \wedge t,v \wedge t} + Y'_{u \wedge t} \X_{u \wedge t,v \wedge t},
\end{equation*}
which is clearly c\`adl\`ag, and only jumps at the jump times of $X$ and $\X$. The convergence in \eqref{eq: uniform convergence for rough stochastic integral} also implies that
\begin{equation*}
\sup_{t \in [0,T]} \big| \Delta Z_t - \Delta \Xi^{\cP^h}_t \big| \to 0
\end{equation*}
in $L^{q,r}_0$ as $h \to \infty$, and hence also almost surely along a subsequence. Thus, almost surely,
\[ \{t \in (0,T] \, : \, |\Delta Z_t| > 0\} \subset \{t \in (0,T] \, : \, |\Delta X_t| \vee |\Delta \X_t| > 0\} =: J_{\bX}, \]
where, crucially, the latter set of jump times $J_{\bX}$ of $\bX = (X,\X)$ is deterministic and countable. Since \eqref{eq: jumps of rough stochastic integrals} holds almost surely for each $t \in (0,T]$, it also holds simultaneously for every $t \in J_{\bX}$ almost surely, and the result follows.
\end{proof}

\begin{lemma}\label{lemma:  rough stochastic integrals as controlled path}
Let $p \in [2,3)$, $q \in [2,\infty)$, $r \in [q,\infty]$, $\bX = (X,\X) \in \sV^p$ be a c\`adl\`ag rough path, and let $(Y,Y') \in \cV^{p,q,r}_X$ be a stochastic controlled path such that $\sup_{u \in [0,T]} \|Y_u\|_{L^{r}} < \infty$. Then the pair
\begin{equation*}
(Z,Z') := \bigg(\int_0^\cdot Y_u \dd \bX_u, Y\bigg) \in \cV^{p,q,r}_X
\end{equation*}
is itself a stochastic controlled path. Moreover, we have the bounds
\begin{equation}\label{eq: bound for Z pqr}
\begin{split}
\|Z\|_{p,q,r,[0,T]} \leq C \Big(\|&Y\|_{p,q,r,[0,T)} + \|Y'\|_{p,q,r,[0,T)} + \|\E_{\edot} R^Y\|_{\frac{p}{2},r,[0,T)}\\
&+ \sup_{u \in [0,T)} \|Y_u\|_{L^{r}} + (1 + \|X\|_{p,[0,T)}) \sup_{u \in [0,T)} \|Y'_u\|_{L^{r}}\Big) \|\bX\|_{p,[0,T]}
\end{split}
\end{equation}
and
\begin{equation}\label{eq: bound for E R^Z}
\|\E_{\edot} R^Z\|_{\frac{p}{2},r,[0,T]} \leq C \Big(\|\E_{\edot} R^Y\|_{\frac{p}{2},r,[0,T)} \|X\|_{p,[0,T]} + \big(\|Y'\|_{p,q,r,[0,T)} + \sup_{u \in [0,T)} \|Y'_u\|_{L^{r}}\big) \|\X\|_{\frac{p}{2},[0,T]}\Big),
\end{equation}
where the constant $C$ depends only on $p$ and $q$.
\end{lemma}

\begin{proof}
Noting that $R^Z_{s,t} = (\int_s^t Y_u \dd \bX_u - Y_s \delta X_{s,t} - Y'_s \X_{s,t}) + Y'_s \X_{s,t}$, it follows from \eqref{eq: Lqr- estimate for rough stochastic integral} that
\begin{equation*}
\big\|\E_s R^Z_{s,t}\big\|_{L^{r}} \lesssim \|\E_{\edot} R^Y\|_{\frac{p}{2},r,[s,t)} \|X\|_{p,[s,t]} + \|Y'\|_{p,q,r,[s,t)} \|\X\|_{\frac{p}{2},[s,t]} + \|Y'_s\|_{L^{r}} \|\X\|_{\frac{p}{2},[s,t]}.
\end{equation*}
and we similarly have from \eqref{eq: p,q,r,s- estimate for rough stochastic integral} that
\begin{align*}
\|\delta Z_{s,t}\|_{q,r,s} \lesssim \Big(\|&Y\|_{p,q,r,[s,t)} + \|Y'\|_{p,q,r,[s,t)} + \|\E_{\edot} R^Y\|_{\frac{p}{2},r,[s,t)} + \|X\|_{p,[s,t)} \sup_{u \in [0,T)} \|Y'_u\|_{L^{r}}\Big) \|\bX\|_{p,[s,t]}\\
&+ \|Y_s\|_{L^{r}} \|X\|_{p,[s,t]} + \|Y'_s\|_{L^{r}} \|\X\|_{\frac{p}{2},[s,t]},
\end{align*}
from which we deduce the desired bounds.

We have that $Z' = Y \in V^p L^{q,r}$, and it follows from the bounds derived above that $(Z,Z') \in \cV^{p,q,r}_X$, noting that $Z$ is right-continuous in probability by Lemma~\ref{lemma: jump structure of rough stochastic integrals}.
\end{proof}

\begin{lemma}
Let $p \in [2,3)$, $q \in [2,\infty)$, $r \in [q,\infty]$, $\bX = (X,\X) \in \sV^p$, $\tbX =(\tX,\tbbX) \in \sV^p$, $(Y,Y') \in \cV^{p,q,r}_X$ and $(\tY,\tY') \in \cV^{p,q,r}_{\tX}$, such that $\sup_{u \in [0,T]} \|Y_u\|_{L^{r}} < \infty$ and $\sup_{u \in [0,T]} \|\tY_u\|_{L^{r}} < \infty$. Let $(Z,Z') = (\int_0^\cdot Y_u \dd \bX_u,Y)$ and $(\tZ,\tZ') = (\int_0^\cdot \tY_u \dd \tbX_u,\tY)$. Then we have the stability estimates
\begin{equation}\label{eq: p,q,r,s bound on Y-tY}
\begin{split}
\|Y - \tY\|_{p,q,r,[0,T]} &\leq \|R^Y - R^{\tY}\|_{p,q,r,[0,T]} + \sup_{u \in [0,T)} \|Y'_u - \tY'_u\|_{L^{r}} \|X\|_{p,[0,T]}\\
&\quad + \sup_{u \in [0,T)} \|\tY'_u\|_{L^{r}} \|X - \tX\|_{p,[0,T]},
\end{split}
\end{equation}
\begin{equation}\label{eq: stability remainder of rough stochstic integrals}
\begin{split}
\|\E_{\edot} (R^Z - R^{\tZ})\|_{\frac{p}{2},r,[0,T]} \leq C \Big(&\big(\|\E_{\edot} (R^Y - R^{\tY})\|_{\frac{p}{2},r,[0,T)} + \|Y' - \tY'\|_{p,q,r,[0,T)}\big) \|\bX\|_{p,[0,T]}\\
&+ \big(\|\E_{\edot} R^{\tY}\|_{\frac{p}{2},r,[0,T)} + \|\tY'\|_{p,q,r,[0,T)}\big) \|\bX - \tbX\|_{p,[0,T]}\\
&+ \sup_{u \in [0,T)} \|Y'_u - \tY'_u\|_{L^{r}} \|\X\|_{\frac{p}{2},[0,T]} + \sup_{u \in [0,T)} \|\tY'_u\|_{L^{r}} \|\X - \tbbX\|_{\frac{p}{2},[0,T]}\Big),
\end{split}
\end{equation}
and
\begin{equation}\label{eq: stability for rough stochastic integrals}
\begin{split}
\|Z - \tZ&\|_{p,q,r,[0,T]}\\
\leq C \Big(&\Big(\|Y - \tY\|_{p,q,r,[0,T)} + \|Y' - \tY'\|_{p,q,r,[0,T)} + \sup_{u \in [0,T)} \|Y'_u - \tY'_u\|_{L^{r}} (1 + \|X\|_{p,[0,T)})\\
&\quad + \|\E_{\edot} (R^Y - R^{\tY})\|_{\frac{p}{2},r,[0,T)} + \sup_{u \in [0,T)} \|Y_u - \tY_u\|_{L^{r}}\Big) \|\bX\|_{p,[0,T]}\\
&+ \Big(\|\tY\|_{p,q,r,[0,T)} + \|\tY'\|_{p,q,r,[0,T)} + \|\E_{\edot} R^{\tY}\|_{\frac{p}{2},r,[0,T)}\\
&\qquad + \sup_{u \in [0,T)} \|\tY'_u\|_{L^{r}} \big(1 + \|X\|_{p,[0,T]} + \|\tX\|_{p,[0,T]}\big) + \sup_{u \in [0,T)} \|\tY_u\|_{L^{r}}\Big) \|\bX - \tbX\|_{p,[0,T]}\Big),
\end{split}
\end{equation}
where the constant $C$ depends only on $p$ and $q$.
\end{lemma}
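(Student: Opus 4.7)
The plan is to apply the stochastic sewing lemma (Theorem~\ref{theorem: stochastic sewing lemma}) to the \emph{difference} of the two germ processes underlying $Z$ and $\tZ$, in direct parallel with the proofs of Lemmas~\ref{lemma: existence of rough stochastic integrals} and~\ref{lemma:  rough stochastic integrals as controlled path}.

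The bound \eqref{eq: p,q,r,s bound on Y-tY} requires no sewing at all. From the controlled path decompositions I write
\[
\delta(Y - \tY)_{s,t} = (Y'_s - \tY'_s)\,\delta X_{s,t} + \tY'_s\,\delta(X - \tX)_{s,t} + (R^Y - R^{\tY})_{s,t},
\]
take $\|\cdot\|_{q,r,s}$ (pulling out the deterministic rough-path increments), and apply the $\ell^p$ Minkowski inequality together with the supremum over partitions.

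For \eqref{eq: stability remainder of rough stochstic integrals} and \eqref{eq: stability for rough stochastic integrals}, I set
\[
\Xi_{s,t} := Y_s \delta X_{s,t} + Y'_s \X_{s,t}, \qquad \tilde{\Xi}_{s,t} := \tY_s \delta \tX_{s,t} + \tY'_s \tbbX_{s,t},
\]
and $D\Xi := \Xi - \tilde{\Xi}$. Subtracting the Chen's-relation computation already carried out in the proof of Lemma~\ref{lemma: existence of rough stochastic integrals} applied to each of $\Xi$ and $\tilde{\Xi}$, one finds
\begin{align*}
\delta D\Xi_{s,u,t} = -(R^Y - R^{\tY})_{s,u}\,\delta X_{u,t} &- R^{\tY}_{s,u}\,\delta(X - \tX)_{u,t}\\
&\quad - \delta(Y' - \tY')_{s,u}\,\X_{u,t} - \delta \tY'_{s,u}\,(\X - \tbbX)_{u,t}.
\end{align*}
Each of the four summands is bounded, in $L^{qr}$ after $\E_s$ and in $\|\cdot\|_{q,r,s}$, by a product of two right-continuous controls (right-continuity via \cite[Lemma~7.1]{FrizZhang2018}, Lemma~\ref{lemma: right-continuity of controls} and Remark~\ref{remark: right continuous control for remainder}), with exponent sums equal to $\tfrac{3}{p}$ in \eqref{eq: stoch sewing E_s Lqr bound} (so $>1$ since $p<3$) and to $\tfrac{2}{p}$ or $\tfrac{3}{p}$ in \eqref{eq: stoch sewing qrs bound} (so $>\tfrac{1}{2}$ since $p<4$). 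Theorem~\ref{theorem: stochastic sewing lemma} therefore applies, and by its uniqueness clause combined with Lemma~\ref{lemma: existence of rough stochastic integrals} the sewn process is $\delta(Z - \tZ)$, so that \eqref{eq: L^qr bound full generality sewing}--\eqref{eq: qrs bound full generality sewing} deliver bounds on $\delta(Z - \tZ)_{s,t} - D\Xi_{s,t}$.

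To conclude, using $(Z',\tZ') = (Y,\tY)$ I write
\[
R^Z_{s,t} - R^{\tZ}_{s,t} = \bigl(\delta(Z-\tZ)_{s,t} - D\Xi_{s,t}\bigr) + (Y'_s - \tY'_s)\,\X_{s,t} + \tY'_s\,(\X - \tbbX)_{s,t},
\]
then take $\|\E_s[\cdot]\|_{L^{qr}}$ followed by $p/2$-variation---splitting via $(a_1+\cdots+a_k)^{p/2} \leq k^{p/2-1}\sum a_i^{p/2}$ and using superadditivity of the four-term product controls---to obtain \eqref{eq: stability remainder of rough stochstic integrals}. For \eqref{eq: stability for rough stochastic integrals} I bound $\|\delta(Z-\tZ)_{s,t}\|_{q,r,s}$ by adding $\|D\Xi_{s,t}\|_{q,r,s}$ (estimated directly via its own four-term decomposition) to the sewing bound on $\|\delta(Z-\tZ)_{s,t} - D\Xi_{s,t}\|_{q,r,s}$, and then take $p$-variation; finally I use
\[
R^Y - R^{\tY} = \delta(Y-\tY) - (Y'-\tY')\,\delta X - \tY'\,\delta(X-\tX)
\]
(and the analogous estimate for $\|R^{\tY}\|_{p,q,r,[s,u]}$) to convert the residual $\|R^Y - R^{\tY}\|_{p,q,r}$ and $\|R^{\tY}\|_{p,q,r}$ terms that arose in the $\|\cdot\|_{q,r,s}$-estimate on $\delta D\Xi$ into quantities of the form displayed in the statement.

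The principal obstacle is bookkeeping rather than analysis: the four-term $\delta D\Xi$ produces eight right-continuous controls which must be paired correctly and verified to satisfy the sewing exponent conditions, and the $\X$-versus-$\tbbX$ mismatch has to be kept cleanly separated when unwinding $R^Z - R^{\tZ}$. Additionally, one must carefully translate between the stochastic $p$-variation norms of $R^Y - R^{\tY}$ and those of $Y - \tY$, $Y' - \tY'$ and $X - \tX$ when matching the sewing bound to the form asserted. No genuinely new analytical ingredient beyond those of Lemma~\ref{lemma:  rough stochastic integrals as controlled path} is needed.
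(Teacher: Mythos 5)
Your proposal takes essentially the same route as the paper: you use the identical four-term telescoping decomposition of $\delta D\Xi_{s,u,t}$ (the paper calls your $D\Xi$ simply $\Xi$), verify the same exponent conditions for Theorem~\ref{theorem: stochastic sewing lemma}, and unwind $R^Z - R^{\tZ}$ and $\delta(Z-\tZ)$ in the same way, with the further expansions of $\|R^Y - R^{\tY}\|_{p,q,r}$ and $\|R^{\tY}\|_{p,q,r}$ feeding back into the statement's right-hand side. The only difference is presentational: you make explicit the identification of the sewn process of the difference germ with $Z - \tZ$ and the splitting $R^Z - R^{\tZ} = (\delta(Z-\tZ) - D\Xi) + (Y'-\tY')\X + \tY'(\X-\tbbX)$, which the paper leaves implicit in the terse phrase that the estimates ``follow from'' \eqref{eq: L^qr bound full generality sewing} and \eqref{eq: qrs bound full generality sewing}.
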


\begin{proof}
Noting that
\begin{equation*}
\|\delta Y_{s,t} - \delta \tY_{s,t}\|_{q,r,s} \leq \|R^Y_{s,t} - R^{\tY}_{s,t}\|_{q,r,s} + \|Y'_s - \tY'_s\|_{L^{r}} |\delta X_{s,t}| + \|\tY'_s\|_{L^{r}} |\delta X_{s,t} - \delta \tX_{s,t}|,
\end{equation*}
we immediately deduce the estimate in \eqref{eq: p,q,r,s bound on Y-tY}. Let
\begin{equation*}
\Xi_{s,t} := Y_s \delta X_{s,t} + Y'_s \X_{s,t} - \tY_s \delta \tX_{s,t} - \tY'_s \tbbX_{s,t}
\end{equation*}
for $(s,t) \in \Delta_{[0,T]}$. We then find that
\begin{equation*}
-\delta \Xi_{s,u,t} = (R^Y_{s,u} - R^{\tY}_{s,u}) \delta X_{u,t} + R^{\tY}_{s,u} (\delta X_{u,t} - \delta \tX_{u,t}) + (\delta Y'_{s,u} - \delta \tY'_{s,u}) \X_{u,t} + \delta \tY'_{s,u} (\X_{u,t} - \tbbX_{u,t}),
\end{equation*}
so that
\begin{align*}
\big\|\E_s [\delta \Xi_{s,u,t}]\big\|_{L^{r}} &\leq \|\E_{\edot} (R^Y - R^{\tY})\|_{\frac{p}{2},r,[s,u]} \|X\|_{p,[u,t]} + \|\E_{\edot} R^{\tY}\|_{\frac{p}{2},r,[s,u]} \|X - \tX\|_{p,[u,t]}\\
&\quad + \|Y' - \tY'\|_{p,q,r,[s,u]} \|\X\|_{\frac{p}{2},[u,t]} + \|\tY'\|_{p,q,r,[s,u]} \|\X - \tbbX\|_{\frac{p}{2},[u,t]},
\end{align*}
and
\begin{align*}
\|\delta \Xi_{s,u,t}\|_{q,r,s} &\leq \|R^Y - R^{\tY}\|_{p,q,r,[s,u]} \|X\|_{p,[u,t]} + \|R^{\tY}\|_{p,q,r,[s,u]} \|X - \tX\|_{p,[u,t]}\\
&\quad + \|Y' - \tY'\|_{p,q,r,[s,u]} \|\X\|_{\frac{p}{2},[u,t]} + \|\tY'\|_{p,q,r,[s,u]} \|\X - \tbbX\|_{\frac{p}{2},[u,t]}.
\end{align*}
Using the bounds $\|R^{\tY}\|_{p,q,r,[s,u]} \leq \|\tY\|_{p,q,r,[s,u]} + \sup_{v \in [s,u)} \|\tY'_v\|_{L^{r}} \|\tX\|_{p,[s,u]}$ and
\begin{align*}
\|R^Y - R^{\tY}\|_{p,q,r,[s,u]} &\leq \|Y - \tY\|_{p,q,r,[s,u]} + \sup_{v \in [s,u)} \|Y'_v - \tY'_v\|_{L^{r}} \|X\|_{p,[s,u]}\\
&\quad + \sup_{v \in [s,u)} \|\tY'_v\|_{L^{r}} \|X - \tX\|_{p,[s,u]},
\end{align*}
we obtain
\begin{align*}
&\|\delta \Xi_{s,u,t}\|_{q,r,s}\\
&\leq \Big(\|Y - \tY\|_{p,q,r,[s,u]} + \sup_{v \in [s,u)} \|Y'_v - \tY'_v\|_{L^{r}} \|X\|_{p,[s,u]} + \sup_{v \in [s,u)} \|\tY'_v\|_{L^{r}} \|X - \tX\|_{p,[s,u]}\Big) \|X\|_{p,[u,t]}\\
&\quad + \Big(\|\tY\|_{p,q,r,[s,u]} + \sup_{v \in [s,u)} \|\tY'_v\|_{L^{r}} \|\tX\|_{p,[s,u]}\Big) \|X - \tX\|_{p,[u,t]}\\
&\quad + \|Y' - \tY'\|_{p,q,r,[s,u]} \|\X\|_{\frac{p}{2},[u,t]} + \|\tY'\|_{p,q,r,[s,u]} \|\X - \tbbX\|_{\frac{p}{2},[u,t]}.
\end{align*}
Applying Theorem~\ref{theorem: mild stochastic sewing lemma}, we see that \eqref{eq: stability remainder of rough stochstic integrals} follows from \eqref{eq: L^qr bound full generality sewing}, and \eqref{eq: stability for rough stochastic integrals} follows from \eqref{eq: qrs bound full generality sewing}.
\end{proof}

\subsection{Stochastic controlled vector fields}

It is well known that, given a vector field $f \in C^2_b$ and a (deterministic) controlled path $(Y,Y')$, one has that $(f(Y), \D f(Y) Y')$ is also a controlled path. In the context of rough stochastic control problems (e.g., \cite{FrizLeZhang2024}) and rough stochastic McKean--Vlasov equations (e.g., \cite{FrizHocquetLe2025}), one naturally encounters time-dependent and random vector fields. In \cite{FrizHocquetLe2021} the authors introduced the notion of a \emph{stochastic controlled vector field} as the correct notion to leverage the tradeoff between the spatial and temporal regularity, such that a stochastic controlled path composed with such a vector field gives another stochastic controlled path, along with suitable Lipschitz estimates.

The main results (and in particular their proofs) regarding stochastic controlled vector fields can be transferred from the H\"older setting to our $p$-variation setting almost verbatim. For brevity, we will therefore omit the proofs of the results in this subsection and refer the reader to \cite{FrizHocquetLe2021} for full details.

\begin{definition}\label{def stoch controlled vector field}
Let $p \in [2,3)$, $q \in [2,\infty)$, $r \in [q,\infty]$, $\gamma \in \{2,3\}$, let $E, W, \bar{W}$ be finite-dimensional Banach spaces, and let $X \in V^p$ be an $E$-valued path. We call $(f,f')$ a \emph{stochastic controlled vector field} if the following hold.
\begin{enumerate}[(i)]
\item \begin{equation*}
(f,f') \colon \Omega \times [0,T] \to C_b^{\gamma}(W;\bar{W}) \times C_b^{\gamma - 1}\big(W;\cL(E;\bar{W})\big)
\end{equation*}
are strongly\footnote{This essentially just means that $f$ and $f'$ take values in a separable subspace; see \cite[Sections~2.1 \& 3.3]{FrizHocquetLe2021} for a discussion of the necessity of the strong measurability assumption here.} progressively measurable, and
\[ \sup_{s \in [0,T]} \big\| \|f_s\|_{C^\gamma_b} \big\|_{L^r} + \sup_{s \in [0,T]} \big\| \|f'_s\|_{C^{\gamma-1}_b} \big\|_{L^r} < \infty. \]
\item Setting
\begin{equation}\label{eq: defintion bracket of vector field}
\llbracket g \rrbracket_{p,q,r,[s,t]} := \Big\| \sup_{x \in W} |\delta g(x)| \Big\|_{p,q,r,[s,t]},
\end{equation}
we have that $\llbracket f \rrbracket_{p,q,r,[0,T]}$, $\llbracket f' \rrbracket_{p,q,r,[0,T]}$ and $\llbracket \D f \rrbracket_{p,q,r,[0,T]}$ are finite.
\item The map $\E_{\edot} R^f$, given by $(s,t) \mapsto \E_s R^f_{s,t} := \E_s [\delta f_{s,t}] - f'_s \delta X_{s,t}$, satisfies
\begin{equation*}
\llbracket \E_{\edot} R^f \rrbracket_{\frac{p}{2},r,[0,T]} := \bigg( \sup_{\cP \subset [0,T]} \sum_{[s,t] \in \cP} \Big\| \sup_{x \in W} \big| \E_s \big[ R_{s,t}^f(x) \big] \big| \Big\|^{\frac{p}{2}}_{L^r} \bigg)^{\hspace{-2pt}\frac{2}{p}} < \infty.
\end{equation*}
\end{enumerate}
We denote the space of stochastic controlled vector fields by $\cV^{p,q,r}_X C^\gamma_b$.
\end{definition}

In the following we will also use the notation $|g|_\infty := \sup_{x} |g(x)|$.

\begin{lemma}\label{lemma: (f(Y), f'(Y)Y') is a controlled path}
Let $p \in [2,3)$, $q \in [2,\infty)$, $r \in [2q,\infty]$, $X \in V^p$, $(Y,Y') \in \cV^{p,q,r}_X$ and $(f,f') \in \cV^{p,q,\infty}_X C^2_b$. Then $(f(Y),\D f(Y) Y' + f'(Y)) \in \cV^{p,q,\frac{r}{2}}_X$ and, for every $(s,t) \in \Delta_{[0,T]}$, we have the estimates
\begin{align}
\| \delta f(Y)_{s,t} \|_{q,\frac{r}{2},s} &\leq \llbracket f \rrbracket_{p,q,\infty,[s,t]} + \big\| |\D f_s|_\infty \big\|_{L^\infty} \|Y\|_{p,q,r,[s,t]},\label{eq: path p,q,r/2,s bound for (f(Y), Df(Y)Y')}\\
\big\| \E_s \big[ R^{f(Y)}_{s,t} \big] \big\|_{L^{\frac{r}{2}}} &\leq \big\| \|\D f_s\|_{C^1_b} \big\|_{L^\infty} \|Y\|_{p,q,r,[s,t]}^2 + \llbracket \D f \rrbracket_{p,q,\infty,[s,t]} \|Y\|_{p,q,r,[s,t]}\nonumber\\
&\quad + \big\| |\D f_s|_\infty \big\|_{L^\infty} \| \E_{\edot} R^Y \|_{\frac{p}{2},r,[s,t]} + \llbracket \E_{\edot} R^f \rrbracket_{\frac{p}{2},\infty,[s,t]},\label{eq: remainder bound for (f(Y), Df(Y)Y')}\\
\big\| \delta (\D f(Y) Y')_{s,t} \big\|_{q,\frac{r}{2},s} &\leq \big\| \|\D f_s\|_{C^1_b} \big\|_{L^\infty} \|Y'_s\|_{L^r} \|Y\|_{p,q,r,[s,t]} + \big\| |\D f_t|_\infty \big\|_\infty \|Y'\|_{p,q,r,[s,t]}\nonumber\\
&\quad + \|Y'_s\|_{L^r} \llbracket \D f \rrbracket_{p,q,\infty,[s,t]},\label{eq: derivative bound for (f(Y), Df(Y)Y')}
\end{align}
and
\begin{equation*}
\big\| \delta f'(Y)_{s,t} \big\|_{q,\frac{r}{2},s} \leq \big\| \|f'_s\|_{C^1_b} \big\|_{L^\infty} \|Y\|_{p,q,r,[s,t]} + \llbracket f' \rrbracket_{p,q,\infty,[s,t]}.
\end{equation*}
\end{lemma}

\begin{remark}
In \cite{FrizHocquetLe2021} the assumption on the stochastic controlled vector field can be weakened to $(f,f') \in \cV^{p,q,\infty} C^{\gamma-1}_b$ for $\gamma \in (2,3]$. As is typical for the $p$-variation setting, this is not possible here (even when $f$ is deterministic and time-homogeneous). More precisely, if $f \in C^{\gamma-1}_b$ then, in place of \eqref{eq: derivative bound for (f(Y), Df(Y)Y')}, we could only hope to have
\begin{equation*}
\big\| \delta (\D f(Y) Y')_{s,t} \big\|_{q,\frac{r}{2},s} \lesssim \|Y'_s\|_{L^r} \|Y\|^{\gamma-2}_{p,q,r,[s,t]} + \|Y'\|_{p,q,r,[s,t]}
\end{equation*}
where $\gamma - 2 < 1$, which is insufficient to obtain $\D f(Y) Y' \in V^p L^{q,\frac{r}{2}}$.
\end{remark}

As previously observed in an analogous setting in \cite{FrizHocquetLe2021}, the reduction of integrability from $r$ to $\frac{r}{2}$ in Lemma~\ref{lemma: (f(Y), f'(Y)Y') is a controlled path} poses a problem for the construction of solutions to RSDEs. In particular, if $r \neq \frac{r}{2}$ then one cannot hope to obtain an invariance property of the solution map due to this loss of integrability. We will therefore restrict to the case $r = \infty$ for the rest of this and the next section.

\smallskip

For $\gamma \in \{2,3\}$, and a stochastic controlled vector field $(f,f') \in \cV^{p,q,\infty}_X C^\gamma_b$, we write
\[ \|f,f'\|_{\gamma,\infty} := \sup_{s \in [0,T]} \big\| \|f_s\|_{C^\gamma_b} \big\|_{L^\infty} + \sup_{s \in [0,T]} \big\| \|f'_s\|_{C^{\gamma-1}_b} \big\|_{L^\infty}, \]
and
\[ \llbracket f, f' \rrbracket_{p,q,\infty,[0,T]} := \llbracket f \rrbracket_{p,q,\infty,[0,T]} + \llbracket f' \rrbracket_{p,q,\infty,[0,T]} + \llbracket \D f \rrbracket_{p,q,\infty,[0,T]} + \llbracket \E_{\edot} R^f \rrbracket_{\frac{p}{2},\infty,[0,T]}. \]
For the norm in \eqref{eq: defintion bracket of vector field}, we also adopt the shorthand $\llbracket \hspace{1pt} \cdot \hspace{1pt} \rrbracket_{p,q,[0,T]} := \llbracket \hspace{1pt} \cdot \hspace{1pt} \rrbracket_{p,q,q,[0,T]}$.

\begin{lemma}\label{lemma: f(Y)- f(tY)}
Let $p \in [2,3)$, $q \in [2,\infty)$, $X, \tX \in V^p$, $(Y,Y') \in \cV^{p,q,\infty}_X$ and $(\tY,\tY') \in \cV^{p,q,\infty}_{\tX}$, and let $(f,f') \in \cV^{p,q,\infty}_X C^3_b$ and $(\tilde{f},\tilde{f}') \in \cV^{p,q,\infty}_{\tX} C^2_b$ be stochastic controlled vector fields such that $(\D f,\D f') \in \cV^{p,q,\infty}_X C^2_b$, and suppose that $\|Y\|_{p,q,\infty,[0,T]}$, $\|\tY\|_{p,q,\infty,[0,T]}$, $\|Y'\|_{p,q,\infty,[0,T]}$, $\|\tY'\|_{p,q,\infty,[0,T]}$, $\|\E_{\edot} R^Y\|_{\frac{p}{2},\infty,[0,T]}$, $\|\E_{\edot} R^{\tY}\|_{\frac{p}{2},\infty,[0,T]}$, $\sup_{s \in [0,T]} \|Y'_s\|_{L^\infty}$ and $\sup_{s \in [0,T]} \|\tY'_s\|_{L^\infty}$ are all bounded by some constant $L > 0$. Then, writing
\[ (Z,Z') := \big( f(Y), \D f(Y) Y' + f'(Y) \big) \text{~~and~~} (\tZ,\tZ') := \big( \tilde{f}(\tY), \D \tilde{f}(\tY) \tY' + \tilde{f}'(\tY) \big), \]
we have the estimates
\begin{equation}\label{eq: p,L^q-bound for f(Y)- f(tY)}
\begin{split}
\|Z - \tZ\|_{p,q,[0,T]} \leq C \Big( &\big\| 1 \wedge |Y_0 - \tY_0| \big\|_{L^q} + \|Y - \tY\|_{p,q,[0,T]}\\
&+ \llbracket f - \tilde{f} \rrbracket_{p,q,[0,T]} + \sup_{s \in [0,T]} \big\| \|f_s - \tilde{f}_s\|_{C^1_b} \big\|_{L^q} \Big),
\end{split}
\end{equation}
\begin{equation*}
\begin{split}
\| Z' - \tZ' \|_{p,q,[0,T]} \leq C \Big( &\big\| 1 \wedge |Y_0 - \tY_0| \big\|_{L^q} + \|Y'_0 - \tY'_0\|_{L^q} + \|Y - \tY\|_{p,q,[0,T]}\\
&+ \|Y' - \tY'\|_{p,q,[0,T]} + \llbracket \D f - \D \tilde{f} \rrbracket_{p,q,[0,T]} + \llbracket f' - \tilde{f}' \rrbracket_{p,q,[0,T]}\\
&+ \sup_{s \in [0,T]} \big\| \|\D f_s - \D \tilde{f}_s\|_{C^1_b} \big\|_{L^q} + \sup_{s \in [0,T]} \big\| \|f'_s - \tilde{f}'_s\|_{C^1_b} \big\|_{L^q} \Big),
\end{split}
\end{equation*}
and
\begin{equation*}
\begin{split}
&\big\| \E_{\edot} \big( R^Z - R^{\tZ} \big) \big\|_{\frac{p}{2},q,[0,T]}\\
&\leq C \Big( \big\| 1 \wedge |Y_0 - \tY_0| \big\|_{L^q} + \|Y - \tY\|_{p,q,[0,T]} + \big\| \E_{\edot} (R^Y - R^{\tY}) \big\|_{\frac{p}{2},q,[0,T]}\\
&\qquad + \sup_{s \in [0,T]} \big\| \|f_s - \tilde{f}_s\|_{C^2_b} \big\|_{L^q} + \llbracket \D f - \D \tilde{f} \rrbracket_{p,q,[0,T]} + \big\llbracket \E_{\edot} (R^f - R^{\tilde{f}}) \big\rrbracket_{\frac{p}{2},q,[0,T]} \Big),
\end{split}
\end{equation*}
where the constant $C$ depends only on $L$, $\|f,f'\|_{3,\infty}$, $\|\tilde{f},\tilde{f}'\|_{2,\infty}$, $\llbracket f, f' \rrbracket_{p,q,\infty,[0,T]}$, $\llbracket \tilde{f}, \tilde{f}' \rrbracket_{p,q,\infty,[0,T]}$ and $\llbracket \D f, \D f' \rrbracket_{p,q,\infty,[0,T]}$.
\end{lemma}

\section{Rough stochastic differential equations}\label{section: rough stochastic differential equations}

Given a c\`adl\`ag rough path $\bX$ and a sufficiently integrable c\`adl\`ag martingale $M$, we consider the RSDE given by
\begin{equation}\label{eq: dynamical description of a solution to RSDE}
Y_t = y_0 + \int_0^t b_s(Y_s) \dd s + \int_0^t \sigma_s(Y_{s-}) \dd M_s + \int_0^t f_s(Y_s) \dd \bX_s
\end{equation}
for $t \in [0,T]$. Here, $b$ and $\sigma$ are random bounded Lipschitz functions in the sense of Definition~\ref{def.randomcoef} below, and $(f,f')$ is a stochastic controlled vector field in the sense of Definition~\ref{def stoch controlled vector field}. In particular, we interpret the second integral as a classical It\^o integral against $M$, and we interpret the last integral as the rough stochastic integral of the stochastic controlled path $(f(Y),\D f(Y) Y' + f'(Y))$ with respect to $\bX$, in the sense of Lemma~\ref{lemma: existence of rough stochastic integrals}.

\begin{definition}\label{def.randomcoef}
Let $W, \bar{W}$ be finite dimensional Banach spaces, and fix a Borel set $S \subset W$. Let $(t,\omega) \mapsto g_t(\omega,\cdot)$ be a strongly measurable stochastic process from $\Omega \times [0,T] \to C(S;\bar{W})$ (in the sense of a family of strongly measurable random variables). We say that the function $g$ is \emph{random bounded Lipschitz} if it is progressively measurable from $\Omega \times [0,T] \to C_b^1(S;\bar{W})$, and is uniformly bounded and uniformly Lipschitz, in the sense that (with a slight abuse of notation)
\begin{equation}\label{eq: random Lipschitz constant}
\|g\|_{C^1_b} := \sup_{t \in [0,T]} \esssup_{\omega \in \Omega} \, \sup_{x \in S} |g_t(\omega,x)| + \sup_{t \in [0,T]} \esssup_{\omega \in \Omega} \, \sup_{x \neq \tilde{x}} \frac{|g_t(\omega,x) - g_t(\omega,\tilde{x})|}{|x - \tilde{x}|} < \infty.
\end{equation}
We say that $g$ is \emph{predictable bounded Lipschitz} if it is random bounded Lipschitz and is also predictable.
\end{definition}

\begin{theorem}\label{theorem: existence and estimates for solutions to RSDEs}
Let $p \in [2,3)$ and $q \in [2,\infty)$, and suppose that the filtration $(\cF_t)_{t \in [0,T]}$ satisfies the usual conditions. Let $b$ be random bounded Lipschitz, let $\sigma$ be predictable bounded Lipschitz, and let $(f,f') \in \cV^{p,q,\infty}_X C^3_b$ be a stochastic controlled vector field such that $(\D f,\D f') \in \cV^{p,q,\infty}_X C^2_b$. Let $y_0 \in L^q$ be $\cF_0$-measurable, let $M \in V^p L^{q,\infty}$ be a c\`adl\`ag martingale, and let $\bX = (X,\X) \in \sV^p$ be a c\`adl\`ag rough path.

Then there exists a process $Y$, which is unique up to indistinguishability, such that $Y$ has almost surely c\`adl\`ag sample paths, $(Y,Y') \in \cV^{p,q,\infty}_X$ is a stochastic controlled path, where $Y' = f(Y)$, and such that, almost surely, \eqref{eq: dynamical description of a solution to RSDE} holds for every $t \in [0,T]$.

Moreover, if $y_0, \ty_0 \in L^q$ are $\cF_0$-measurable, $M, \tM \in V^p L^{q,\infty}$ are two c\`adl\`ag martingales and $\bX, \tbX \in \sV^p$ are two c\`adl\`ag rough paths, such that the norms $\|M\|_{p,q,\infty,[0,T]}$, $\|\tM\|_{p,q,\infty,[0,T]}$, $\|\bX\|_{p,[0,T]}$ and $\|\tbX\|_{p,[0,T]}$ are all bounded by some constant $L > 0$, and if $Y, \tY$ are the solutions to \eqref{eq: dynamical description of a solution to RSDE} corresponding to the data $(y_0,M,\bX)$ and $(\ty_0,\tM,\tbX)$ respectively, then we have that
\begin{equation}\label{eq: Lipschitz continuity of solution map}
\begin{split}
\|&Y - \tY\|_{p,q,[0,T]} + \|Y' - \tY'\|_{p,q,[0,T]} + \|\E_{\edot} (R^Y - R^{\tY})\|_{\frac{p}{2},q,[0,T]}\\
&\leq C \big( \|y_0 - \ty_0\|_{L^q} + \|M - \tM\|_{p,q,[0,T]} + \|\bX - \tbX\|_{p,[0,T]} \big),
\end{split}
\end{equation}
where the constant $C$ depends only on $p, q, T, L$, $\|b\|_{C^1_b}$, $\|\sigma\|_{C^1_b}$, $\|f,f'\|_{3,\infty}$, $\llbracket f, f' \rrbracket_{p,q,\infty,[0,T]}$ and $\llbracket \D f, \D f' \rrbracket_{p,q,\infty,[0,T]}$.
\end{theorem}

Due to its substantial length, below we will only present the proof in the case when the vector fields $b, \sigma \in C^1_b$ and $f \in C^3_b$ are deterministic and time-homogeneous. This leads to a significant simplification of the bounds in Section~\ref{section: rough stochastic integration}, which are used extensively. The interested reader can nonetheless convince themself that one can straightforwardly extend the proof to the general setting using the full generality of the estimates provided in Section~\ref{section: rough stochastic integration}.

\begin{proof}[Proof of Theorem~\ref{theorem: existence and estimates for solutions to RSDEs}]
The basic strategy of the proof will be to establish a contraction mapping on a subset of a suitable Banach space, in order to apply the Banach fixed-point theorem. A non-trivial step therein is the identification of the correct Banach space. To this end, we let $\mathbf{B}_T$ denote the space of stochastic controlled paths $(Y,Y') \in \cV^{p,q,\infty}_X$ such that $Y$ has almost surely c\`adl\`ag sample paths, $Y_0 = y_0$ and $Y'_0 = f(y_0)$, and such that
\begin{equation}\label{eq: bounds defining B_T}
\|Y'\|_{p,q,\infty,[0,T]} \vee \sup_{s \in [0,T]} \|Y'_s\|_{L^\infty} \leq \|f\|_{C^3_b} \quad \text{and} \quad \|Y\|_{p,q,\infty,[0,T]} \vee \|\E_{\edot} R^Y\|_{\frac{p}{2},\infty,[0,T]} \leq 1.
\end{equation}

Setting $Y_t = y_0 + f(y_0) \delta X_{0,t}$ and $Y'_t = f(y_0)$ for all $t \in [0,T]$, we see that, for sufficiently small $T > 0$, we have $(Y,Y') \in \mathbf{B}_T$, so that in particular $\mathbf{B}_T \neq \emptyset$.

For each $(Y,Y') \in \mathbf{B}_T$, we define
\begin{equation*}
\Phi(Y,Y') := \big(\phi(Y),\phi(Y)'\big) := \bigg(y_0 + \int_0^\cdot b(Y_u) \dd u + \int_0^\cdot \sigma(Y_{u-}) \dd M_u + \int_0^\cdot f(Y_u) \dd \bX_u, f(Y)\bigg),
\end{equation*}
where $\int_0^\cdot f(Y_u) \dd \bX_u$ is the rough stochastic integral of $(f(Y),\D f(Y) Y')$---which is a stochastic controlled path in $\cV^{p,q,\infty}_X$ by Lemma~\ref{lemma: (f(Y), f'(Y)Y') is a controlled path}---with respect to $\bX$.

For $\eta > 1$, we also define the metric
\begin{equation*}
\begin{split}
d^\eta\big((Y,Y')&,(\tY,\tY')\big) := \|Y' - \tY'\|_{p,q,[0,T]}\\
&+ \eta \Big(\|Y - \tY\|_{p,q,[0,T]} + \|\E_{\edot} (R^Y - R^{\tY})\|_{\frac{p}{2},q,[0,T]} + \Big\|1 \wedge \sup_{u \in [0,T]} |Y_u - \tY_u|\Big\|_{L^q}\Big).
\end{split}
\end{equation*}
It follows from a straightforward extension of Lemma~\ref{lemma: V^pL^q is a Banach space} that $(\mathbf{B}_T,d^\eta)$ is a complete metric space, where in particular we can verify that limit points satisfy the bounds in \eqref{eq: bounds defining B_T} using Lemma~\ref{lemma: closedness of set of bounded paths in V^pL^q}. We note that the final term in the definition of the metric $d^\eta$ is required to ensure that limits under this metric inherit c\`adl\`ag sample paths.

\emph{Step 1. (Invariance)}
Let $(Y,Y') \in \mathbf{B}_T$. Using the bound in \eqref{eq: path p,q,r/2,s bound for (f(Y), Df(Y)Y')}, we note that
\begin{equation*}
\|\phi(Y)'\|_{p,q,\infty,[0,T]} \vee \sup_{s \in [0,T]} \|\phi(Y)'_s\|_{L^\infty} \leq \|f\|_{C^3_b}.
\end{equation*}
Applying the bound in \eqref{eq: bound for E R^Z} from Lemma~\ref{lemma:  rough stochastic integrals as controlled path}, followed by those in \eqref{eq: remainder bound for (f(Y), Df(Y)Y')} and \eqref{eq: derivative bound for (f(Y), Df(Y)Y')}, we find that $\|\E_{\edot} R^{\int_0^\cdot f(Y_u) \dd \bX_u}\|_{\frac{p}{2},\infty,[0,T]} \lesssim \|\bX\|_{p,[0,T]}$, so that
\begin{equation*}
\big\| \E_{\edot} R^{\phi(Y)} \big\|_{\frac{p}{2},\infty,[0,T]} \leq \bigg\|\int_0^\cdot b(Y_u) \dd u\bigg\|_{\frac{p}{2},\infty,[0,T]} + \big\| \E_{\edot} R^{\int_0^\cdot f(Y_u) \dd \bX_u} \big\|_{\frac{p}{2},\infty,[0,T]} \lesssim T + \|\bX\|_{p,[0,T]}.
\end{equation*}

By the bound in \eqref{eq: bound for Z pqr} and those in Lemma~\ref{lemma: (f(Y), f'(Y)Y') is a controlled path}, we obtain $\|\int_0^\cdot f(Y_u) \dd \bX_u\|_{p,q,\infty,[0,T]} \lesssim (1 + \|X\|_{p,[0,T]}) \|\bX\|_{p,[0,T]}$, and we have from the (conditional) BDG inequality that
\begin{equation*}
\bigg\|\int_s^t \sigma(Y_{u-}) \dd M_u\bigg\|_{q,\infty,s} \lesssim \bigg\|\int_s^t \sigma(Y_{u-})^{\otimes 2} \dd [M]_u\bigg\|_{\frac{q}{2},\infty,s}^{\frac{1}{2}} \leq \|\sigma\|_{C^1_b} \|[M]\|_{\frac{p}{2},\frac{q}{2},\infty,[s,t]}^{\frac{1}{2}},
\end{equation*}
so that, recalling Lemma~\ref{lemma: quadratic variation is all that matters for p,q,r}, $\|\int_0^\cdot \sigma(Y_{u-}) \dd M_u\|_{p,q,\infty,[0,T]} \lesssim \|M\|_{p,q,\infty,[0,T]}$. Thus,
\begin{equation*}
\|\phi(Y)\|_{p,q,\infty,[0,T]} \vee \big\| \E_{\edot} R^{\phi(Y)} \big\|_{\frac{p}{2},\infty,[0,T]} \leq C_1 \big(T + \|M\|_{p,q,\infty,[0,T]} + \|\bX\|_{p,[0,T]}\big),
\end{equation*}
where the constant $C_1 > 0$ depends only on $p, q, \|b\|_{C^1_b}, \|\sigma\|_{C^1_b}, \|f\|_{C^3_b}$ and $\|X\|_{p,[0,T]}$.

By Lemma~\ref{lemma: right-continuity of controls}, there exists a $T_1 > 0$ sufficiently small that
\begin{equation*}
T_1 + \|M\|_{p,q,\infty,[0,T_1]} + \|\bX\|_{p,[0,T_1]} \leq \frac{1}{C_1}.
\end{equation*}
It is clear that $\phi(Y)_0 = y_0$ and $\phi(Y)'_0 = f(y_0)$, and, by Lemma~\ref{lemma: rough stochastic integral cadlag sample paths}, we may take $\phi(Y)$ to have c\`adl\`ag sample paths. It follows that $\Phi(Y,Y') \in \mathbf{B}_T$ for every $T \leq T_1$. That is, $\mathbf{B}_T$ is invariant under $\Phi$.

\emph{Step 2. (Contraction)}
Let $\eta > 1$ and $T \in (0,T_1]$. Let $(Y,Y'), (\tY,\tY') \in \mathbf{B}_T$. We have that
\begin{align*}
\|&\phi(Y) - \phi(\tY)\|_{p,q,[0,T]} \leq \bigg\|\int_0^\cdot \big(b(Y_u) - b(\tY_u)\big) \dd u\bigg\|_{p,q,[0,T]}\\
&+ \bigg\|\int_0^\cdot \big(\sigma(Y_{u-}) - \sigma(\tY_{u-})\big) \dd M_u\bigg\|_{p,q,[0,T]} + \bigg\|\int_0^\cdot \big(f(Y_u) - f(\tY_u)\big) \dd \bX_u\bigg\|_{p,q,[0,T]}\\
&=: I_1 + I_2 + I_3,
\end{align*}
where $I_1 \lesssim T \sup_{u \in [0,T]} \|Y_u - \tY_u\|_{L^q} \leq T \|Y - \tY\|_{p,q,[0,T]}$. Using the BDG inequality, and then applying Lemma~\ref{lemma: contraction for integral against quad. variation for Ito approach}, we have that\footnote{One may wonder why we do not bound the martingale term using the bound provided by the stochastic sewing lemma, namely \eqref{eq: int against martingale estimate}. However, this would require $\sigma \in C^2_b$ (similarly to Lemma~\ref{lemma: (f(Y), f'(Y)Y') is a controlled path}). Instead, we use the BDG inequality to switch to quadratic variation and apply Lemma~\ref{lemma: contraction for integral against quad. variation for Ito approach}, which only requires $\sigma \in C^1_b$.}
\begin{align*}
&\bigg\|\int_s^t \big(\sigma(Y_{u-}) - \sigma(\tY_{u-})\big) \dd M_u\bigg\|_{L^q} \lesssim \bigg\|\int_s^t \big(\sigma(Y_{u-}) - \sigma(\tY_{u-})\big)^{\otimes 2} \dd [M]_u\bigg\|_{L^{\frac{q}{2}}}^{\frac{1}{2}}\\
&\lesssim \|Y - \tY\|_{p,q,[0,T]} \|[M]\|_{\frac{p}{2},\frac{q}{2},\infty,[s,t]}^{\frac{1}{2}} \lesssim \|Y - \tY\|_{p,q,[0,T]} \|M\|_{p,q,\infty,[s,t]},
\end{align*}
so that $I_2 \lesssim \|Y - \tY\|_{p,q,[0,T]} \|M\|_{p,q,\infty,[0,T]}$.

Using the estimate in \eqref{eq: stability for rough stochastic integrals} with $r = q$ and $\tbX = \bX$, combined with the bounds in Lemma~\ref{lemma: f(Y)- f(tY)}, we find that
\begin{equation*}
I_3 \lesssim \big(\|Y - \tY\|_{p,q,[0,T]} + \|Y' - \tY'\|_{p,q,[0,T]} + \|\E_{\edot} (R^Y - R^{\tY})\|_{\frac{p}{2},q,[0,T]}\big) (1 + \|X\|_{p,[0,T]}) \|\bX\|_{p,[0,T]}.
\end{equation*}
We thus have that
\begin{equation}\label{eq: contraction  |phi(Y)-phi(tY)| bound}
\begin{split}
\|\phi(Y) - \phi(\tY)\|_{p,q,[0,T]} \lesssim \big(&\|Y - \tY\|_{p,q,[0,T]} + \|Y' - \tY'\|_{p,q,[0,T]} + \|\E_{\edot} (R^Y - R^{\tY})\|_{\frac{p}{2},q,[0,T]}\big)\\
&\quad \times (1 + \|X\|_{p,[0,T]}) \big(T + \|M\|_{p,q,\infty,[0,T]} + \|\bX\|_{p,[0,T]}\big).
\end{split}
\end{equation}

Similarly, applying the estimate in \eqref{eq: stability remainder of rough stochstic integrals} with $r = q$ and $\tbX = \bX$, combined with the bounds in Lemma~\ref{lemma: f(Y)- f(tY)}, we find that
\begin{align*}
\big\|&\E_{\edot} \big(R^{\int_0^\cdot f(Y_u) \dd \bX_u} - R^{\int_0^\cdot f(\tY_u) \dd \bX_u}\big)\big\|_{\frac{p}{2},q,[0,T]}\\
&\lesssim \big(\|Y - \tY\|_{p,q,[0,T]} + \|Y' - \tY'\|_{p,q,[0,T]} + \|\E_{\edot} (R^Y - R^{\tY})\|_{\frac{p}{2},q,[0,T]}\big) \|\bX\|_{p,[0,T]}.
\end{align*}
Since
\begin{equation*}
\E_s \big[ R^{\phi(Y)}_{s,t} - R^{\phi(\tY)}_{s,t} \big] = \E_s \bigg[\int_s^t \big(b(Y_u) - b(\tY_u)\big) \dd u\bigg] + \E_s \Big[R^{\int_0^\cdot f(Y_u) \dd \bX_u}_{s,t} - R^{\int_0^\cdot f(\tY_u) \dd \bX_u}_{s,t}\Big],
\end{equation*}
we obtain
\begin{equation}\label{eq: contraction remainder bound}
\begin{split}
\big\|&\E_{\edot} \big(R^{\phi(Y)} - R^{\phi(\tY)}\big)\big\|_{\frac{p}{2},q,[0,T]}\\
&\lesssim \big(\|Y - \tY\|_{p,q,[0,T]} + \|Y' - \tY'\|_{p,q,[0,T]} + \|\E_{\edot} (R^Y - R^{\tY})\|_{\frac{p}{2},q,[0,T]}\big) \big(T + \|\bX\|_{p,[0,T]}\big).
\end{split}
\end{equation}
It also follows from the bound in \eqref{eq: p,L^q-bound for f(Y)- f(tY)} that
\begin{equation}\label{eq: contraction derivative bound}
\|\phi(Y)' - \phi(\tY)'\|_{p,q,[0,T]} = \|f(Y) - f(\tY)\|_{p,q,[0,T]} \lesssim \|Y - \tY\|_{p,q,[0,T]}.
\end{equation}

It is straightforward to see that
\begin{equation*}
\bigg\|\sup_{t \in [0,T]} \bigg|\int_0^t \big(b(Y_u) - b(\tY_u)\big) \dd u\bigg|\bigg\|_{L^q} \lesssim T \Big\|1 \wedge \sup_{u \in [0,T]} |Y_u - \tY_u|\Big\|_{L^q}.
\end{equation*}
By the BDG inequality and Lemma~\ref{lemma: contraction for integral against quad. variation for Ito approach}, we have that
\begin{equation*}
\begin{split}
&\bigg\|\sup_{t \in [0,T]} \bigg|\int_0^t \big(\sigma(Y_{u-}) - \sigma(\tY_{u-})\big) \dd M_u\bigg|\bigg\|_{L^q} \lesssim \bigg\|\int_0^T \big(\sigma(Y_{u-}) - \sigma(\tY_{u-})\big)^{\otimes 2} \dd [M]_u\bigg\|_{L^\frac{q}{2}}^{\frac{1}{2}}\\
&\lesssim \|Y - \tY\|_{p,q,[0,T]} \|[M]\|_{\frac{p}{2},\frac{q}{2},\infty,[0,T]}^{\frac{1}{2}} \lesssim \|Y - \tY\|_{p,q,[0,T]} \|M\|_{p,q,\infty,[0,T]}.
\end{split}
\end{equation*}
Applying the estimate in \eqref{eq: uniform bound for rough stochastic integral} with $r = q$, combined with the bounds in Lemma~\ref{lemma: f(Y)- f(tY)}, we find that
\begin{equation*}
\begin{split}
&\bigg\|\sup_{t \in [0,T]} \bigg|\int_0^t \big(f(Y_u) - f(\tY_u)\big) \dd \bX_u\bigg|\bigg\|_{L^q}\\
&\lesssim \big(\|Y - \tY\|_{p,q,[0,T]} + \|Y' - \tY'\|_{p,q,[0,T]} + \|\E_{\edot} (R^Y - R^{\tY})\|_{\frac{p}{2},q,[0,T]}\big) (1 + \|X\|_{p,[0,T]}) \|\bX\|_{p,[0,T]}.
\end{split}
\end{equation*}

Combining the bounds in \eqref{eq: contraction  |phi(Y)-phi(tY)| bound}, \eqref{eq: contraction remainder bound} and \eqref{eq: contraction derivative bound} with the uniform bounds derived above, we have that there exists a constant $C_2 > \frac{1}{2}$, which depends only on $p, q, \|b\|_{C^1_b}, \|\sigma\|_{C^1_b}, \|f\|_{C^3_b}$ and $\|X\|_{p,[0,T]}$, such that
\begin{align*}
d^\eta&\big(\Phi(Y,Y'),\Phi(\tY,\tY')\big) \leq C_2 \Big(\|Y - \tY\|_{p,q,[0,T]}\\
&+ \eta \Big(\|Y - \tY\|_{p,q,[0,T]} + \|Y' - \tY'\|_{p,q,[0,T]} + \|\E_{\edot} (R^Y - R^{\tY})\|_{\frac{p}{2},q,[0,T]}\\
&\qquad \qquad + \Big\|1 \wedge \sup_{u \in [0,T]} |Y_u - \tY_u|\Big\|_{L^q}\Big) \times \big(T + \|M\|_{p,q,\infty,[0,T]} + \|\bX\|_{p,[0,T]}\big)\Big).
\end{align*}
Letting $\eta = 2 C_2 > 1$, we have that
\begin{align*}
d^{\eta}&\big(\Phi(Y,Y'),\Phi(\tY,\tY')\big) \leq \frac{\eta}{2} \|Y - \tY\|_{p,q,[0,T]}\\
&+ 2 C_2^2 \Big(\|Y - \tY\|_{p,q,[0,T]} + \|Y' - \tY'\|_{p,q,[0,T]} + \|\E_{\edot} (R^Y - R^{\tY})\|_{\frac{p}{2},q,[0,T]}\\
&\qquad \qquad + \Big\|1 \wedge \sup_{u \in [0,T]} |Y_u - \tY_u|\Big\|_{L^q}\Big) \times \big(T + \|M\|_{p,q,\infty,[0,T]} + \|\bX\|_{p,[0,T]}\big).
\end{align*}
By Lemma~\ref{lemma: right-continuity of controls}, there exists a $T_2 \in (0,T_1]$ sufficiently small such that
\begin{equation*}
T_2 + \|M\|_{p,q,\infty,[0,T_2]} + \|\bX\|_{p,[0,T_2]} \leq \frac{1}{4 C_2^2}.
\end{equation*}
Then, for any $T \leq T_2$, we obtain
\begin{align*}
&d^{\eta} \big(\Phi(Y,Y'),\Phi(\tY,\tY')\big)\\
&\leq \frac{\eta + 1}{2} \|Y - \tY\|_{p,q,[0,T]} + \frac{1}{2} \Big(\|Y' - \tY'\|_{p,q,[0,T]}\\
&\qquad + \|\E_{\edot} (R^Y - R^{\tY})\|_{\frac{p}{2},q,[0,T]} + \Big\|1 \wedge \sup_{u \in [0,T]} |Y_u - \tY_u|\Big\|_{L^q}\Big)\\
&\leq \frac{\eta + 1}{2 \eta} d^{\eta} \big((Y,Y'),(\tY,\tY')\big).
\end{align*}
Thus, by the Banach fixed point theorem, the map $\Phi$ has a unique fixed point, which is the unique solution to \eqref{eq: dynamical description of a solution to RSDE} over the time interval $[0,T]$, for $T \leq T_2$.

\emph{Step 3. (Local estimate)}
Let $(y_0,M,\bX)$ and $(\ty_0,\tM,\tbX)$ be data as in the statement of the theorem, and let $Y, \tY$ be the corresponding local solutions, which, by the previous step, exist on the time intervals $[0,T_2]$ and $[0,\widetilde{T}_2]$ respectively. Let $T \leq T_2 \wedge \widetilde{T}_2$.

Since $(Y,Y'), (\tY,\tY') \in \mathbf{B}_T$, we have that
\begin{equation*}
\|Y'\|_{p,q,\infty,[0,T]} \vee \|\tY'\|_{p,q,\infty,[0,T]} \vee \sup_{s \in [0,T]} \|Y'_s\|_{L^\infty} \vee \sup_{s \in [0,T]} \|\tY'_s\|_{L^\infty} \leq \|f\|_{C^3_b}
\end{equation*}
and
\begin{equation*}
\|Y\|_{p,q,\infty,[0,T]} \vee \|\tY\|_{p,q,\infty,[0,T]} \vee \|\E_{\edot} R^Y\|_{\frac{p}{2},\infty,[0,T]} \vee \|\E_{\edot} R^{\tY}\|_{\frac{p}{2},\infty,[0,T]} \leq 1.
\end{equation*}

We have from \eqref{eq: p,L^q-bound for f(Y)- f(tY)} that
\begin{equation*}
\|Y' - \tY'\|_{p,q,[0,T]} = \|f(Y) - f(\tY)\|_{p,q,[0,T]} \lesssim \|y_0 - \ty_0\|_{L^q} + \|Y - \tY\|_{p,q,[0,T]}.
\end{equation*}
Using the estimate in \eqref{eq: stability remainder of rough stochstic integrals} with $r = q$, combined with the bounds in Lemma~\ref{lemma: (f(Y), f'(Y)Y') is a controlled path} with $r = \infty$, and those in Lemma~\ref{lemma: f(Y)- f(tY)}, we find that
\begin{align*}
\|\E_{\edot} (R^Y - R^{\tY})\|_{\frac{p}{2},q,[0,T]} \lesssim \big(&\|y_0 - \ty_0\|_{L^q} + \|Y - \tY\|_{p,q,[0,T]} + \|Y' - \tY'\|_{p,q,[0,T]}\\
&+ \|\E_{\edot} (R^Y - R^{\tY})\|_{\frac{p}{2},q,[0,T]}\big) \big(T + \|\bX\|_{p,[0,T]}\big) + \|\bX - \tbX\|_{p,[0,T]}.
\end{align*}

It is straightforward to see that
\begin{equation*}
\bigg\|\int_0^\cdot b(Y_u) \dd u - \int_0^\cdot b(\tY_u) \dd u\bigg\|_{p,q,[0,T]} \lesssim T \big(\|y_0 - \ty_0\|_{L^q} + \|Y - \tY\|_{p,q,[0,T]}\big).
\end{equation*}
By the BDG inequality and Lemma~\ref{lemma: contraction for integral against quad. variation for Ito approach}, we have that
\begin{align*}
&\bigg\|\int_0^\cdot \sigma(Y_{u-}) \dd M_u - \int_0^\cdot \sigma(\tY_{u-}) \dd \tM_u\bigg\|_{p,q,[0,T]}\\
&\leq \bigg\|\int_0^\cdot \big(\sigma(Y_{u-}) - \sigma(\tY_{u-})\big) \dd M_u\bigg\|_{p,q,[0,T]} + \bigg\|\int_0^\cdot \sigma(\tY_{u-}) \dd (M - \tM)_u\bigg\|_{p,q,[0,T]}\\
&\lesssim \big(\|y_0 - \ty_0\|_{L^q} + \|Y - \tY\|_{p,q,[0,T]}\big) \|[M]\|_{\frac{p}{2},\frac{q}{2},\infty,[0,T]}^{\frac{1}{2}} + \big\|[M - \tM]\big\|_{\frac{p}{2},\frac{q}{2},[0,T]}^{\frac{1}{2}}\\
&\lesssim \big(\|y_0 - \ty_0\|_{L^q} + \|Y - \tY\|_{p,q,[0,T]}\big) \|M\|_{p,q,\infty,[0,T]} + \|M - \tM\|_{p,q,[0,T]}.
\end{align*}
Applying the estimate in \eqref{eq: stability for rough stochastic integrals} with $r = q$, combined with the bounds in Lemma~\ref{lemma: (f(Y), f'(Y)Y') is a controlled path} with $r = \infty$, and those in Lemma~\ref{lemma: f(Y)- f(tY)}, we obtain
\begin{align*}
&\bigg\|\int_0^\cdot f(Y_u) \dd \bX_u - \int_0^\cdot f(\tY_u) \dd \tbX_u\bigg\|_{p,q,[0,T]}\\
&\lesssim \big(\|y_0 - \ty_0\|_{L^q} + \|Y - \tY\|_{p,q,[0,T]} + \|Y' - \tY'\|_{p,q,[0,T]} + \|\E_{\edot} (R^Y - R^{\tY})\|_{\frac{p}{2},q,[0,T]}\big) \|\bX\|_{p,[0,T]}\\
&\quad + \|\bX - \tbX\|_{p,[s,t]}.
\end{align*}

Combining the previous estimates, we have that there exists a constant $C_3 > 0$, which depends only on $p, q, \|b\|_{C^1_b}, \|\sigma\|_{C^1_b}, \|f\|_{C^3_b}$ and $L$, such that
\begin{align*}
&\|Y - \tY\|_{p,q,[0,T]} + \|Y' - \tY'\|_{p,q,[0,T]} + \|\E_{\edot} (R^Y - R^{\tY})\|_{\frac{p}{2},q,[0,T]}\\
&\leq C_3 \Big(\big(\|y_0 - \ty_0\|_{L^q} + \|Y - \tY\|_{p,q,[0,T]} + \|Y' - \tY'\|_{p,q,[0,T]} + \|\E_{\edot} (R^Y - R^{\tY})\|_{\frac{p}{2},q,[0,T]}\big)\\
&\qquad \qquad \times \big(T + \|M\|_{p,q,\infty,[0,T]} + \|\bX\|_{p,[0,T]}\big)\\
&\qquad \quad + \|M - \tM\|_{p,q,[0,T]} + \|\bX - \tbX\|_{p,[0,T]}\Big).
\end{align*}
Letting $T_3 \in (0,T_2 \wedge \widetilde{T}_2]$ be sufficiently small such that
\begin{equation*}
T_3 + \|M\|_{p,q,\infty,[0,T_3]} + \|\bX\|_{p,[0,T_3]} \leq \frac{1}{2 C_3}
\end{equation*}
and rearranging, we deduce, for any $T \leq T_3$, the local estimate
\begin{equation}\label{eq: local estimate for RSDE}
\begin{split}
\|Y &- \tY\|_{p,q,[0,T]} + \|Y' - \tY'\|_{p,q,[0,T]} + \|\E_{\edot} (R^Y - R^{\tY})\|_{\frac{p}{2},q,[0,T]}\\
&\lesssim \big(\|y_0 - \ty_0\|_{L^q} + \|M - \tM\|_{p,q,[0,T]} + \|\bX - \tbX\|_{p,[0,T]}\big).
\end{split}
\end{equation}

\emph{Step 4. (Global solution)}
Let $w$ be the control given by
\begin{equation}\label{eq: control for global solution}
w(s,t) := (t - s) + \|M\|_{p,q,\infty,[s,t]}^p + \|X\|_{p,[s,t]}^p + \|\X\|_{\frac{p}{2},[s,t]}^{\frac{p}{2}}.
\end{equation}
The local solution found in Step~2 was obtained on the time interval $[0,T]$, for some $T$ chosen sufficiently small such that $w(0,T) \leq \epsilon$, where $\epsilon$ depends on the constants $C_1$ and $C_2$ above. However, since the estimates did not depend on the initial condition $y_0$, the same argument is valid starting from an arbitrary initial time $s$ and initial value $\xi_s$, giving in particular a unique solution on the interval $[s,t]$, for any time $t$ such that $w(s,t) \leq \epsilon$.

Now let $T > 0$ be arbitrary. By \cite[Lemma~1.5]{FrizZhang2018}, there exists a partition $\{0 = t_0 < t_1 < \cdots < t_n = T\}$ of the interval $[0,T]$ such that $w(t_{i-1},t_i-) < \epsilon$ for every $i = 1, 2, \ldots, n$. Let $\xi_0, \xi_1, \ldots, \xi_{n-1} \in L^q$, such that $\xi_i$ is $\cF_{t_i}$-measurable for each $i$.

For a given $i = 1, 2, \ldots, n$, we let
\begin{equation*}
\overline{M}_t := \begin{cases}
M_t &\text{ for } t \in [t_{i-1},t_i),\\
M_{t_i-} &\text{ for } t = t_i,
\end{cases}
\end{equation*}
noting that $\overline{M}$ is a c\`adl\`ag martingale on $[t_{i-1},t_i]$. We similarly let $\overline{X}_t = X_t$ and $\overline{\X}_{s,t} = \X_{s,t}$ whenever $(s,t) \in \Delta_{[t_{i-1},t_i]}$ with $t < t_i$, and let $\overline{X}_{t_i} = X_{t_i-}$ and $\overline{\X}_{s,t_i} = \X_{s,t_i-}$ for $s \in [t_{i-1},t_i)$, noting that $\overline{\bX} := (\overline{X},\overline{\X})$ is a c\`adl\`ag rough path on $[t_{i-1},t_i]$. Moreover, we have that
\begin{equation*}
(t_i - t_{i-1}) + \|\overline{M}\|_{p,q,\infty,[t_{i-1},t_i]}^p + \|\overline{X}\|_{p,[t_{i-1},t_i]}^p + \|\overline{\X}\|_{\frac{p}{2},[t_{i-1},t_i]}^{\frac{p}{2}} = w(t_{i-1},t_i-) < \epsilon.
\end{equation*}
Hence, we know that there exists a unique solution $\overline{Y}$ to the RSDE \eqref{eq: dynamical description of a solution to RSDE} with data $(\xi_{i-1},\overline{M},\overline{\bX})$ on the interval $[t_{i-1},t_i]$. In particular, by construction, we have that the sample paths of the solution $\overline{Y}$ are almost surely c\`adl\`ag, and that $\overline{Y}_{t_i} = \overline{Y}_{t_i-}$.

We then define a process $Y$ on $[t_{i-1},t_i]$ by reintroducing the jump at time $t_i$. That is, we set $Y_t := \overline{Y}_t$ for $t \in [t_{i-1},t_i)$, and
\begin{align*}
Y_{t_i} &:= \overline{Y}_{t_i} + \sigma(\overline{Y}_{t_i}) \Delta M_{t_i} + f(\overline{Y}_{t_i}) \Delta X_{t_i} + \D f(\overline{Y}_{t_i}) f(\overline{Y}_{t_i}) \Delta \X_{t_i}\\
&= Y_{t_i-} + \sigma(Y_{t_i-}) \Delta M_{t_i} + f(Y_{t_i-}) \Delta X_{t_i} + \D f(Y_{t_i-}) f(Y_{t_i-}) \Delta \X_{t_i}.
\end{align*}
We then obtain a global solution $Y$ by simply matching each initial value $\xi_i$ with the terminal value from the previous subinterval. That is, we let $\xi_0 = y_0$, and for each $i = 1, \ldots, n-1$ we let $\xi_i = Y_{t_i}$, where $Y$ is the process obtained above on the interval $[t_{i-1},t_i]$.

Finally, for each $i = 1, \ldots, n$, we note that
\begin{align*}
Y_{t_i} &= Y_{t_{i-1}} + \lim_{s \nearrow t_i} \bigg(\int_{t_{i-1}}^s b(Y_u) \dd u + \int_{t_{i-1}}^s \sigma(Y_{u-}) \dd M_u + \int_{t_{i-1}}^s f(Y_u) \dd \bX_u\bigg)\\
&\quad + \sigma(Y_{t_i-}) \Delta M_{t_i} + f(Y_{t_i-}) \Delta X_{t_i} + \D f(Y_{t_i-}) f(Y_{t_i-}) \Delta \X_{t_i}\\
&= Y_{t_{i-1}} + \int_{t_{i-1}}^{t_i} b(Y_u) \dd u + \int_{t_{i-1}}^{t_i} \sigma(Y_{u-}) \dd M_u + \int_{t_{i-1}}^{t_i} f(Y_u) \dd \bX_u,
\end{align*}
where we used the canonical jump structure, both of the It\^o integral (see, e.g., \cite[Lemma~12.1.10]{CohenElliott2015}) and of the rough stochastic integral, as established in Lemma~\ref{lemma: jump structure of rough stochastic integrals}.

We thus see that, on each interval $[t_{i-1},t_i]$, the process $Y$ is the solution of the RSDE with data $(Y_{t_{i-1}},M,\bX)$, and hence that $Y$ is indeed the solution of the RSDE with data $(y_0,M,\bX)$ on the entire interval $[0,T]$. We note that $Y$ has almost surely c\`adl\`ag sample paths\footnote{We could alternatively have defined a local solution on the half-open interval $[t_{i-1},t_i)$ and then included the jump at time $t_i$ to obtain a solution on $[t_{i-1},t_i]$, but it would then not be clear that the sample paths of the solution have left-limits almost surely at time $t_i$. This is why we defined a local solution $[t_{i-1},t_i]$ before including the jump, since this immediately guarantees sample path regularity on the entire interval up to and including time $t_i$.}, and that uniqueness of the local solutions implies that the global solution $Y$ is also unique.

\emph{Step 5. (Global estimate)}
Let $Y$ and $\tY$ be the global solutions---as obtained in the previous step---corresponding to the data $(y_0,M,\bX)$ and $(\ty_0,\tM,\tbX)$ respectively. Let $\tw$ be the control defined as in \eqref{eq: control for global solution} but with $M$ and $\bX$ replaced by $\tM$ and $\tbX$. We have from Step~3 above that there exists an $\epsilon > 0$, which depends only on $p, q, \|b\|_{C^1_b}, \|\sigma\|_{C^1_b}, \|f\|_{C^3_b}$ and $L$, such that the local estimate in \eqref{eq: local estimate for RSDE} holds on an interval $[s,t]$ whenever $w(s,t) \vee \tw(s,t) \leq \epsilon$.

By \cite[Lemma~1.5]{FrizZhang2018}, there exists a partition $\{0 = t_0 < t_1 < \cdots < t_n = T\}$ of the interval $[0,T]$ such that $w(t_{i-1},t_i-) \vee \tw(t_{i-1},t_i-) < \epsilon$ for every $i = 1, 2, \ldots, n$. Moreover, by the superadditivity of the controls $w, \tw$, this partition may be chosen such that the number $n$ of subintervals comprising the partition is bounded by a constant which depends only on $T, L$ and $\epsilon$.

For each $i = 1, 2, \ldots, n$, we have the local estimate
\begin{equation}\label{eq: local stability equation}
\begin{split}
&\|Y - \tY\|_{p,q,[t_{i-1},t_i)} + \|Y' - \tY'\|_{p,q,[t_{i-1},t_i)} + \|\E_{\edot} (R^Y - R^{\tY})\|_{\frac{p}{2},q,[t_{i-1},t_i)}\\
&\leq C \big(\|Y_{t_{i-1}} - \tY_{t_{i-1}}\|_{L^q} + \|M - \tM\|_{p,q,[t_{i-1},t_i)} + \|\bX - \tbX\|_{p,[t_{i-1},t_i)}\big),
\end{split}
\end{equation}
where the constant $C$ depends only on $p, q, \|b\|_{C^1_b}, \|\sigma\|_{C^1_b}, \|f\|_{C^3_b}$ and $L$.

Let $s \in [t_{i-1},t_i)$. By the BDG inequality and Lemma~\ref{lemma: contraction for integral against quad. variation for Ito approach}, we have that
\begin{align*}
\bigg\|&\int_s^{t_i} \sigma(Y_{u-}) \dd M_u - \int_s^{t_i} \sigma(\tY_{u-}) \dd \tM_u\bigg\|_{L^q}\\
&\leq \bigg\|\int_s^{t_i} \big(\sigma(Y_{u-}) - \sigma(\tY_{u-})\big) \dd M_u\bigg\|_{L^q} + \bigg\|\int_s^{t_i} \sigma(\tY_{u-}) \dd (M - \tM)_u\bigg\|_{L^q}\\
&\lesssim \big(\|Y_s - \tY_s\|_{L^q} + \|Y - \tY\|_{p,q,[s,t_i)}\big) \|M\|_{p,q,\infty,[s,t_i]} + \|M - \tM\|_{p,q,[s,t_i]}\\
&\lesssim \|Y_{t_{i-1}} - \tY_{t_{i-1}}\|_{L^q} + \|Y - \tY\|_{p,q,[t_{i-1},t_i)} + \|M - \tM\|_{p,q,[t_{i-1},t_i]},
\end{align*}
where we have bounded $\|M\|_{p,q,\infty,[s,t_i]} \leq L \lesssim 1$. Letting $s \nearrow t_i$, we obtain
\begin{align*}
&\big\|\sigma(Y_{t_i-}) \Delta M_{t_i} - \sigma(\tY_{t_i-}) \Delta \tM_{t_i}\big\|_{L^q}\\
&\lesssim \|Y_{t_{i-1}} - \tY_{t_{i-1}}\|_{L^q} + \|Y - \tY\|_{p,q,[t_{i-1},t_i)} + \|M - \tM\|_{p,q,[t_{i-1},t_i]}.
\end{align*}
We can also bound
\begin{align*}
\big\|f(Y_{t_i-}) \Delta X_{t_i} - f(\tY_{t_i-}) \Delta \tX_{t_i}\big\|_{L^q} &\lesssim \|Y_{t_{i-1}} - \tY_{t_{i-1}}\|_{L^q} + \|Y - \tY\|_{p,q,[t_{i-1},t_i)} + \|X - \tX\|_{p,[t_{i-1},t_i]}
\end{align*}
and
\begin{align*}
&\big\|\D f(Y_{t_i-}) f(Y_{t_i-}) \Delta \X_{t_i} - \D f(\tY_{t_i-}) f(\tY_{t_i-}) \Delta \tbbX_{t_i}\big\|_{L^q}\\
&\lesssim \|Y_{t_{i-1}} - \tY_{t_{i-1}}\|_{L^q} + \|Y - \tY\|_{p,q,[t_{i-1},t_i)} + \|\X - \tbbX\|_{\frac{p}{2},[t_{i-1},t_i]},
\end{align*}
where we similarly bounded $\|X\|_{p,[t_{i-1},t_i]} \lesssim 1$ and $\|\X\|_{\frac{p}{2},[t_{i-1},t_i]} \lesssim 1$.

Combining these bounds with the local estimate in \eqref{eq: local stability equation}, we have that
\begin{align*}
\|&Y - \tY\|_{p,q,[t_{i-1},t_i]} \lesssim \lim_{s \nearrow t_i} \big(\|Y - \tY\|_{p,q,[t_{i-1},s]} + \|Y - \tY\|_{p,q,[s,t_i]}\big)\\
&= \|Y - \tY\|_{p,q,[t_{i-1},t_i)} + \|\Delta Y_{t_i} - \Delta \tY_{t_i}\|_{L^q}\\
&\leq \|Y - \tY\|_{p,q,[t_{i-1},t_i)} + \big\|\sigma(Y_{t_i-}) \Delta M_{t_i} - \sigma(\tY_{t_i-}) \Delta \tM_{t_i}\big\|_{L^q}\\
&\quad + \big\|f(Y_{t_i-}) \Delta X_{t_i} - f(\tY_{t_i-}) \Delta \tX_{t_i}\big\|_{L^q} + \big\|\D f(Y_{t_i-}) f(Y_{t_i-}) \Delta \X_{t_i} - \D f(\tY_{t_i-}) f(\tY_{t_i-}) \Delta \tbbX_{t_i}\big\|_{L^q}\\
&\lesssim \|Y_{t_{i-1}} - \tY_{t_{i-1}}\|_{L^q} + \|M - \tM\|_{p,q,[t_{i-1},t_i]} + \|\bX - \tbX\|_{p,[t_{i-1},t_i]}.
\end{align*}
Since $\|Y_{t_{i-1}} - \tY_{t_{i-1}}\|_{L^q} \leq \|Y_{t_{i-2}} - \tY_{t_{i-2}}\|_{L^q} + \|Y - \tY\|_{p,q,[t_{i-2},t_{i-1}]}$ for each $i$, we can combine these estimates to obtain
\begin{equation*}
\|Y - \tY\|_{p,q,[t_{i-1},t_i]} \lesssim \|y_0 - \ty_0\|_{L^q} + \|M - \tM\|_{p,q,[0,t_i]} + \|\bX - \tbX\|_{p,[0,t_i]},
\end{equation*}
where the implicit multiplicative constant depends on $i$. Recalling that the number $n$ of subintervals in our partition depends only on $p, q, \|b\|_{C^1_b}, \|\sigma\|_{C^1_b}, \|f\|_{C^3_b}, T$ and $L$, we then have that
\begin{align}
\|Y - \tY\|_{p,q,[0,T]} &\leq n^{\frac{p-1}{p}} \bigg(\sum_{i=1}^n \|Y - \tY\|_{p,q,[t_{i-1},t_i]}^p\bigg)^{\hspace{-2pt}\frac{1}{p}}\label{eq:split p-var over subintervals}\\
&\lesssim \|y_0 - \ty_0\|_{L^q} + \|M - \tM\|_{p,q,[0,T]} + \|\bX - \tbX\|_{p,[0,T]}.\nonumber
\end{align}
It then also follows from the estimate in \eqref{eq: p,L^q-bound for f(Y)- f(tY)} that
\begin{equation*}
\|Y' - \tY'\|_{p,q,[0,T]} \lesssim \|y_0 - \ty_0\|_{L^q} + \|M - \tM\|_{p,q,[0,T]} + \|\bX - \tbX\|_{p,[0,T]}.
\end{equation*}

The difference between the remainder terms $\|\E_{\edot} (R^Y - R^{\tY})\|_{\frac{p}{2},q,[t_{i-1},t_i]}$ may be bounded similarly, except that in this case, since the increments $(s,t) \mapsto \E_s R^Y_{s,t}$ are not additive, but rather determined by the relation $R^Y_{s,t} = R^Y_{s,u} + R^Y_{u,t} + \delta Y'_{s,u} \delta X_{u,t}$ for $s \leq u \leq t$, splitting into subintervals as in \eqref{eq:split p-var over subintervals} introduces additional terms involving $\delta Y'$ and $\delta X$. However, these additional terms are straightforward to bound, and estimates over each of the subintervals may then be combined to obtain an estimate for $\|\E_{\edot} (R^Y - R^{\tY})\|_{\frac{p}{2},q,[0,T]}$---we omit the full details here for brevity---and we thus derive the global estimate in \eqref{eq: Lipschitz continuity of solution map}.
\end{proof}

\begin{remark}
In Theorem~\ref{theorem: existence and estimates for solutions to RSDEs} above we considered driving martingales which satisfy $M \in V^p L^{q,\infty}$. However, one can actually define solutions to the RSDE in \eqref{eq: dynamical description of a solution to RSDE} for martingales $M$ which only satisfy this condition locally.

More precisely, recall the setting of Theorem~\ref{theorem: existence and estimates for solutions to RSDEs}, but suppose now that $M$ is a c\`adl\`ag martingale such that $M^{\tau_k} \in V^p L^{q,\infty}$ for every $k \in \N$, where $(\tau_k)_{k \in \N}$ is an increasing sequence of stopping times such that $\P(\tau_k = T) \to 1$ as $k \to \infty$. For each $k \in \N$ we can also define the stopped rough path $\bX^{\tau_k}$. Strictly speaking, $\bX^{\tau_k}$ is now a random rough path, but the results of the previous sections may be easily generalized to handle such random rough paths (provided that they are uniformly bounded, as in the case of stopped rough paths). We thus have that, for each $k \in \N$, there exists a unique solution $Y^{(\tau_k)}$ to the RSDE
\begin{equation*}
Y^{(\tau_k)}_t = y_0 + \int_0^{t \wedge \tau_k} b(Y^{(\tau_k)}_s) \dd s + \int_0^t \sigma(Y^{(\tau_k)}_s) \dd M^{\tau_k}_s + \int_0^t f(Y^{(\tau_k)}_s) \dd \bX^{\tau_k}_s.
\end{equation*}
Moreover, these solutions for different $k \in \N$ are consistent, in the sense that $Y^{(\tau_m)}_{\cdot \wedge \tau_n} = Y^{(\tau_n)}$ whenever $n \leq m$. To see this, it is necessary to verify the (nontrivial but nonetheless true) fact that $\int_0^{t \wedge \tau} Y_s \dd \bX_s = \int_0^t Y_{s \wedge \tau} \dd \bX^{\tau}_s$ for any rough path $\bX$ and stochastic controlled path $Y$.

It follows that the process $Y$ given by $Y_t := \lim_{k \to \infty} Y^{(\tau_k)}_t$ is well-defined, and is the natural solution to the RSDE in \eqref{eq: dynamical description of a solution to RSDE} with data $(y_0,M,\bX)$. Moreover, for each $k \in \N$, the stopped solution $Y_{\cdot \wedge \tau_k}$ satisfies the local Lipschitz estimate in \eqref{eq: Lipschitz continuity of solution map}.
\end{remark}

\begin{remark}
Recalling Example~\ref{example: ito integral in mixed moment spaces}, let $Z \in V^p L^{q,\infty}$ be a c\`adl\`ag martingale, and let $H$ be predictable and locally bounded, with localizing sequence $(\tau_k)_{k \in \N}$. It is then straightforward to see that the process $M$, defined by $M_t := \int_0^t H_u \dd Z_u$, is a c\`adl\`ag martingale with $M^{\tau_k} \in V^p L^{q,\infty}$ for every $k \in \N$. By the previous remark, we can then define a solution to the RSDE in \eqref{eq: dynamical description of a solution to RSDE}, along with local Lipschitz estimates, with $M = \int_0^\cdot H_u \dd Z_u$ as the driving martingale.
\end{remark}

\section{Rough stochastic calculus}\label{section: rough stochastic calculus}

In this section we introduce some of the wider calculus of stochastic controlled paths. On the one hand, the following results---inspired by Section~5 of the first arXiv version of \cite{FrizHocquetLe2021}---are a bit more restrictive in terms of the permissible class of stochastic noise. In particular, the It\^o formula in Theorem~\ref{theorem: Ito formula for controlled paths} is only valid for stochastic controlled paths in $V^p L^{q,r}$ with $p \in [2,3)$ and $q \in [4,\infty)$, whereas L\'evy-type jumps require $q \leq p$, and are thus excluded.

On the other hand, we will nonetheless be able to obtain novel results on the structure of stochastic controlled paths with jumps---in particular Lemma~\ref{lemma: quadratic variation martingale with controlled path} below---as well as a generalization of the It\^o formula of \cite[Theorem~2.12]{FrizZhang2018} to the rough stochastic setting.

\begin{lemma}\label{lemma: existence of quad. cov. between controlled paths}
Let $p \in [2,3)$, $q \in [4,\infty)$ and $q_1, q_2 \in (\frac{q}{2},\infty)$ such that $\frac{1}{q_1} + \frac{1}{q_2} = \frac{2}{q}$. Let $\bX = (X,\X) \in \sV^p$ be a rough path, and let $(Y,Y') \in \cV^{p,q_1}_X$ and $(Z,Z') \in \cV^{p,q_2}_X$ be stochastic controlled paths. Then there exists an adapted process $[Y,Z] \in V^{\frac{p}{2}} L^{\frac{q}{2}}$ such that, for each $t \in [0,T]$,
\begin{equation*}
[Y,Z]_t = \lim_{|\cP| \to 0} \sum_{[u,v] \in \cP} \delta Y_{u,v} \otimes \delta Z_{u,v} - 2 (Y'_u \otimes Z'_u) \Sym(\X_{u,v})
\end{equation*}
as a limit in $L^{\frac{q}{2}}$, along partitions $\cP$ of the interval $[0,t]$ as the mesh size tends to zero.
\end{lemma}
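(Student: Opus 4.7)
The plan is to apply the stochastic sewing lemma (Theorem~\ref{theorem: stochastic sewing lemma}) to the natural candidate
\begin{equation*}
\Xi_{s,t} := \delta Y_{s,t} \otimes \delta Z_{s,t} - 2(Y'_s \otimes Z'_s)\Sym(\X_{s,t}),
\end{equation*}
with SSL parameters $q_{\mathrm{SSL}} = q/2 \in [2,\infty)$ (which forces the hypothesis $q \geq 4$) and $r_{\mathrm{SSL}} = 1$, so that $\|\cdot\|_{q/2,1,s} = \|\cdot\|_{L^{q/2}}$ and convergence is automatic in $L^{q/2}$.

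First I would compute $\delta \Xi_{s,u,t}$. Using $\delta Y_{s,t} = \delta Y_{s,u} + \delta Y_{u,t}$, the analogous identity for $Z$, Chen's relation $\X_{s,t} = \X_{s,u} + \X_{u,t} + \delta X_{s,u} \otimes \delta X_{u,t}$, the controlled path decompositions $\delta Y_{s,u} = Y'_s \delta X_{s,u} + R^Y_{s,u}$ and $\delta Z_{s,u} = Z'_s \delta X_{s,u} + R^Z_{s,u}$, and the telescoping
\begin{equation*}
Y'_u \otimes Z'_u - Y'_s \otimes Z'_s = Y'_s \otimes \delta Z'_{s,u} + \delta Y'_{s,u} \otimes Z'_s + \delta Y'_{s,u} \otimes \delta Z'_{s,u},
\end{equation*}
one obtains $\delta \Xi_{s,u,t}$ as a finite sum of terms, each splitting as a factor depending on $[s,u]$ (built from $R^Y_{s,u}, R^Z_{s,u}, \delta Y'_{s,u}, \delta Z'_{s,u}$, and the $\F_s$-measurable factors $Y'_s, Z'_s$) times a factor depending on $[u,t]$ (built from $\delta X_{u,t}, R^Y_{u,t}, R^Z_{u,t}$, or $\Sym(\X_{u,t})$). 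The crucial algebraic point is that the pure ``$(Y'_s\delta X)\otimes(Z'_s\delta X)$'' cross-terms coming from $\delta Y_{s,t}\otimes \delta Z_{s,t}$ cancel exactly against the new contribution produced by applying Chen's relation to $2(Y'_s\otimes Z'_s)\Sym(\X_{s,t})$, leaving only terms already involving at least one of $R^Y,R^Z,\delta Y',\delta Z'$.

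Next I would verify the SSL hypotheses \eqref{eq: stoch sewing E_s Lqr bound} and \eqref{eq: stoch sewing qrs bound}. For each summand, I pull $\F_s$-measurable factors out of $\E_s$, apply part~(iii) of Proposition~\ref{prop: Lqrs is Banach space} to dominate $\|\E_s[\cdot]\|_{L^{qr/2}}$ by $\|\cdot\|_{q/2,r,s}$, and use the conditional Hölder inequality \eqref{eq:Hoelder's inequality for q,r,s norm} together with the bounds
\begin{equation*}
\|R^Y_{s,u}\|_{q,r,s} \leq \|R^Y\|_{p,q,r,[s,u]}, \qquad \|\delta Y'_{s,u}\|_{q,r,s} \leq \|Y'\|_{p,q,r,[s,u]}, \qquad \bigl\|\E_s[R^Y_{s,u}]\bigr\|_{L^{qr}} \leq \|\E_{\edot} R^Y\|_{\frac{p}{2},qr,[s,u]}^{\frac{2}{p}},
\end{equation*}
and similarly for $Z$, with $|\delta X_{u,t}| \leq \|X\|_{p,[u,t]}$ and $|\X_{u,t}|\leq \|\X\|_{\frac{p}{2},[u,t]}$. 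A Young-type combination of controls (of the form $a^{\alpha_1}b^{\alpha_2}\leq \alpha_1(\alpha_1+\alpha_2)^{-1}a^{\alpha_1+\alpha_2}+\alpha_2(\alpha_1+\alpha_2)^{-1}b^{\alpha_1+\alpha_2}$) then reduces products of different controls to admissible SSL terms. Every resulting exponent pair $(\alpha_1,\alpha_2)$ has $\alpha_1+\alpha_2 \geq 3/p > 1$ (since $p<3$), and every pair $(\beta_1,\beta_2)$ in the unconditional bound has $\beta_1+\beta_2 \geq 2/p > 1/2$ (since $p<4$). All the controls in question---$\|X\|_{p,[s,t]}^p$, $\|\X\|_{\frac{p}{2},[s,t]}^{p/2}$, $\|Y'\|_{p,q,r,[s,t]}^p$, $\|Z'\|_{p,q,r,[s,t]}^p$, $\|R^Y\|_{p,q,r,[s,t]}^p$, $\|R^Z\|_{p,q,r,[s,t]}^p$, $\|\E_{\edot}R^Y\|_{\frac{p}{2},qr,[s,t]}^{p/2}$, $\|\E_{\edot}R^Z\|_{\frac{p}{2},qr,[s,t]}^{p/2}$---are right-continuous in their first argument by Lemma~\ref{lemma: right-continuity of controls}, \cite[Lemma~7.1]{FrizZhang2018} and Remark~\ref{remark: right continuous control for remainder}. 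The SSL then yields a unique adapted process $[Y,Z]:=\cI$, with $\cI_0=0$, satisfying the claimed convergence \eqref{eq: convergence in stochastic sewing lemma}.

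To finish by showing $[Y,Z]\in V^{p/2}L^{q/2}$, I would bound $\|\delta[Y,Z]_{u,v}\|_{L^{q/2}} \leq \|\Xi_{u,v}\|_{L^{q/2}} + \|\delta[Y,Z]_{u,v}-\Xi_{u,v}\|_{L^{q/2}}$. The first piece gives $\sum_\cP \|\Xi_{u,v}\|_{L^{q/2}}^{p/2} \lesssim \|Y\|_{p,q,[0,T]}^{p/2}\|Z\|_{p,q,[0,T]}^{p/2} + \|\X\|_{\frac{p}{2},[0,T]}^{p/2}$ by Cauchy--Schwarz over the partition and super-additivity of the $p$-variation controls, while the second piece is controlled by the SSL estimate \eqref{eq: qrs bound full generality sewing}: since every term there is a product of controls raised to exponents whose sum multiplied by $p/2$ equals or exceeds $1$, standard telescoping via super-additivity (or Cauchy--Schwarz for the balanced $(1/p,1/p)$ case, as in the $R^Y\otimes R^Z$ term) gives a finite contribution. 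I expect the main obstacle to be purely bookkeeping: carefully organising the dozen or so terms in $\delta\Xi_{s,u,t}$ and matching each to the SSL template; conceptually, once the cancellation afforded by the $\Sym(\X)$ correction is recognised, the rest is a systematic application of the machinery already established.
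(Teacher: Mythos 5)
Your proposal follows the paper's proof in all essentials: same germ $\Xi_{s,t}=\delta Y_{s,t}\otimes\delta Z_{s,t}-2(Y'_s\otimes Z'_s)\Sym(\X_{s,t})$, same application of Theorem~\ref{theorem: stochastic sewing lemma} with $q_{\mathrm{SSL}}=q/2$ and $r_{\mathrm{SSL}}=1$ (which is indeed why $q\ge4$ is needed), the same Chen/controlled-path decomposition of $\delta\Xi_{s,u,t}$ (your telescoping of $Y'_u\otimes Z'_u-Y'_s\otimes Z'_s$ carries an extra harmless cross term $\delta Y'_{s,u}\otimes\delta Z'_{s,u}$; the paper writes the two-term version with $Y'_u$ in place of $Y'_s$), and the same route to $V^{p/2}L^{q/2}$ membership via Cauchy--Schwarz on the trivial part of $\Xi$ plus the sewing estimate.

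Two small remarks. First, the displayed bound $\|\E_s[R^Y_{s,u}]\|_{L^{qr}}\le\|\E_{\edot} R^Y\|_{\frac{p}{2},qr,[s,u]}^{2/p}$ should have exponent $1$, not $2/p$: by definition of the $(\tfrac{p}{2},qr)$-variation norm one has $\|\E_s[R^Y_{s,u}]\|_{L^{qr}}\le\|\E_{\edot} R^Y\|_{\frac{p}{2},qr,[s,u]}$, and the factor $2/p$ enters only when rewriting this as $w(s,u)^{2/p}$ with $w(s,u):=\|\E_{\edot} R^Y\|_{\frac{p}{2},qr,[s,u]}^{p/2}$. Second, your Young-type splitting of terms such as $\|Y\|_{p,q,r,[s,u]}\|Z'\|_{p,q,r,[s,u]}\|X\|_{p,[u,t]}$ to satisfy $\alpha_1+\alpha_2>1$ is valid, and you are right that this bookkeeping deserves to be made explicit (the paper merely says ``the result then follows by applying Theorem~3.1'' and leaves it tacit); a slightly cleaner alternative is to observe that $w_1(s,u):=\|Y\|_{p,q,r,[s,u]}^{p/2}\|Z'\|_{p,q,r,[s,u]}^{p/2}$ is already a control by Lemma~\ref{lemma: product of controls} (with exponents $\tfrac12+\tfrac12\ge1$), so that the product equals $w_1(s,u)^{2/p}\,w_X(u,t)^{1/p}$ with $w_X(u,t):=\|X\|_{p,[u,t]}^p$ and $\alpha_1+\alpha_2=3/p>1$ directly, no splitting required.
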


\begin{proof}
Letting $\Xi_{s,t} = \delta Y_{s,t} \otimes \delta Z_{s,t} - 2 (Y'_s \otimes Z'_s) \Sym(\X_{s,t})$, we find that
\begin{align*}
\delta \Xi_{s,u,t} &= \delta Y_{s,u} \otimes \delta Z_{u,t} - (Y'_s \otimes Z'_s) (\delta X_{s,u} \otimes \delta X_{u,t}) + \delta Y_{u,t} \otimes \delta Z_{s,u} - (Y'_s \otimes Z'_s) (\delta X_{u,t} \otimes \delta X_{s,u})\\
&\quad + 2 (Y'_u \otimes Z'_u - Y'_s \otimes Z'_s) \Sym(\X_{u,t}).
\end{align*}
Since
\begin{align*}
&\delta Y_{s,u} \otimes \delta Z_{u,t} - (Y'_s \otimes Z'_s) (\delta X_{s,u} \otimes \delta X_{u,t}) = \delta Y_{s,u} \otimes \delta Z_{u,t} - (Y'_s \delta X_{s,u}) \otimes (Z'_s \delta X_{u,t})\\
&= \delta Y_{s,u} \otimes (Z'_u \delta X_{u,t} + R^Z_{u,t}) - (\delta Y_{s,u} - R^Y_{s,u}) \otimes (Z'_s \delta X_{u,t})\\
&= \delta Y_{s,u} \otimes (\delta Z'_{s,u} \delta X_{u,t}) + \delta Y_{s,u} \otimes R^Z_{u,t} + R^Y_{s,u} \otimes Z'_s \delta X_{u,t},
\end{align*}
by H\"older's inequality, we obtain
\begin{equation}\label{eq: delta Y_su delta Z_ut bound}
\begin{split}
\big\|\delta Y_{s,u} \otimes \delta Z_{u,t} - (&Y'_s \otimes Z'_s) (\delta X_{s,u} \otimes \delta X_{u,t})\big\|_{L^{\frac{q}{2}}} \leq \|Y\|_{p,q_1,[s,u]} \|Z'\|_{p,q_2,[s,u]} \|X\|_{p,[u,t]}\\
&+ \|Y\|_{p,q_1,[s,u]} \|R^Z\|_{p,q_2,[u,t]} + \|R^Y\|_{p,q_1,[s,u]} \sup_{v \in [0,T]} \|Z'_v\|_{L^{q_2}} \|X\|_{p,[u,t]}
\end{split}
\end{equation}
and
\begin{equation}\label{eq: Es delta Y_su delta Z_ut bound}
\begin{split}
\big\|\E_s \big[\delta Y_{s,u} \otimes \delta Z_{u,t} &- (Y'_s \otimes Z'_s) (\delta X_{s,u} \otimes \delta X_{u,t})\big]\big\|_{L^{\frac{q}{2}}} \leq \|Y\|_{p,q_1,[s,u]} \|Z'\|_{p,q_2,[s,u]} \|X\|_{p,[u,t]}\\
&+ \|Y\|_{p,q_1,[s,u]} \|\E_{\edot} R^Z\|_{\frac{p}{2},q_2,[u,t]} + \|\E_{\edot} R^Y\|_{\frac{p}{2},q_1,[s,u]} \sup_{v \in [0,T]} \|Z'_v\|_{L^{q_2}} \|X\|_{p,[u,t]}.
\end{split}
\end{equation}
Analogous bounds also hold for the quantity $\delta Y_{u,t} \otimes \delta Z_{s,u} - (Y'_s \otimes Z'_s) (\delta X_{u,t} \otimes \delta X_{s,u})$, and it is moreover clear that
\begin{equation}\label{eq: Y'u Z'u - Y's Z's bound}
\begin{split}
\big\|&(Y'_u \otimes Z'_u - Y'_s \otimes Z'_s) \Sym(\X_{u,t})\big\|_{L^{\frac{q}{2}}} = \big\|(Y'_u \otimes \delta Z'_{s,u} + \delta Y'_{s,u} \otimes Z'_s) \Sym(\X_{u,t})\big\|_{L^{\frac{q}{2}}}\\
&\leq \Big(\sup_{v \in [0,T]} \|Y'_v\|_{L^{q_1}} \|Z'\|_{p,q_2,[s,u]} + \sup_{v \in [0,T]} \|Z'_v\|_{L^{q_2}} \|Y'\|_{p,q_1,[s,u]}\Big) \|\X\|_{\frac{p}{2},[u,t]}.
\end{split}
\end{equation}
The result then follows by applying Theorem~\ref{theorem: mild stochastic sewing lemma} with $r = \frac{q}{2} \in [2,\infty)$. In particular, it follows from the estimates (which we omit here for brevity) provided by the stochastic sewing lemma that $[Y,Z] \in V^{\frac{p}{2}} L^{\frac{q}{2}}$.
\end{proof}

For $p \in [2,3)$ and $q \in [4,\infty)$, given a rough path $\bX = (X,\X) \in \sV^p$ and a stochastic controlled path $(Y,Y') \in \cV^{p,q}_X$, we write $[Y] := [Y,Y]$, with $[Y,Y]$ defined as in Lemma~\ref{lemma: existence of quad. cov. between controlled paths}.

If $M \in V^p L^q$ is a martingale, so that $(M,0) \in \cV^{p,q}_X$, then we have that
\begin{equation*}
[M]_t = \lim_{|\cP| \to 0} \sum_{[u,v] \in \cP} \delta M_{u,v} \otimes \delta M_{u,v},
\end{equation*}
so that $[M]$ coincides with the quadratic variation of $M$.

On the other hand, if $(Y,Y') = (X,1)$, then we see that $[Y]$ coincides with the bracket of the rough path $\bX$, which is defined as the path $[\bX] \in V^{\frac{p}{2}}$ with $[\bX]_0 = 0$ and increments given by
\begin{equation*}
\delta [\bX]_{s,t} = \delta X_{s,t} \otimes \delta X_{s,t} - 2 \Sym(\X_{s,t})
\end{equation*}
for every $(s,t) \in \Delta_{[0,T]}$; see, e.g., \cite[Section~2.4]{FrizZhang2018}.

\begin{lemma}\label{lemma: bracket of stochastic controlled path}
Let $p \in [2,3)$ and $q \in [4,\infty)$. Let $\bX = (X,\X) \in \sV^p$ be a rough path, and let $(Z,Z') \in \cV^{p,q}_X$ be a stochastic controlled path such that its remainder satisfies $\|R^Z\|_{\frac{p}{2},q,[0,T]} < \infty$. Then
\begin{equation*}
[Z]_t = \int_0^t (Z'_s \otimes Z'_s) \dd [\bX]_s
\end{equation*}
for every $t \in [0,T]$, where the right-hand side is defined as an $L^{\frac{q}{2}}$-valued Young integral.
\end{lemma}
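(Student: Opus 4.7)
The plan is to identify both sides of the claimed identity as $L^{q/2}$-limits of Riemann sums over partitions of $[0,t]$, and to show that the summand-by-summand difference between them tends to zero in $L^{q/2}$ as $|\cP| \to 0$. On the left, Lemma~\ref{lemma: existence of quad. cov. between controlled paths} directly gives
\begin{equation*}
[Z]_t = \lim_{|\cP|\to 0} \sum_{[u,v]\in\cP} \big(\delta Z_{u,v}\otimes\delta Z_{u,v} - 2(Z'_u\otimes Z'_u)\Sym(\X_{u,v})\big)
\end{equation*}
in $L^{q/2}$. On the right, one first verifies that the Young integral $\int_0^t (Z'_s\otimes Z'_s)\dd [\bX]_s$ is well-defined as an $L^{q/2}$-valued path: by bilinearity and the uniform bound $\sup_u \|Z'_u\|_{L^{qr}} < \infty$, the integrand $Z'\otimes Z'$ lies in $V^p L^{q/2}$, while $[\bX]\in V^{p/2}$, so that $1/p + 2/p = 3/p > 1$ (using $p<3$) gives existence via the deterministic sewing lemma in the Banach space $L^{q/2}$, with the integral realized as the limit of $\sum_{[u,v]\in\cP}(Z'_u\otimes Z'_u)\delta[\bX]_{u,v}$ along partitions of vanishing mesh.

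Next, using $\delta Z_{u,v} = Z'_u \delta X_{u,v} + R^Z_{u,v}$ together with the definition $\delta[\bX]_{u,v} = \delta X_{u,v}\otimes \delta X_{u,v} - 2\Sym(\X_{u,v})$, a purely algebraic expansion shows that the two summands differ by exactly
\begin{equation*}
Z'_u \delta X_{u,v} \otimes R^Z_{u,v} + R^Z_{u,v} \otimes Z'_u \delta X_{u,v} + R^Z_{u,v}\otimes R^Z_{u,v}.
\end{equation*}
Hence the claim reduces to showing that the Riemann sum of this three-term expression vanishes in $L^{q/2}$ as $|\cP|\to 0$.

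To this end, I would bound each summand in $L^{q/2}$ using the Cauchy--Schwarz inequality, the uniform bound on $Z'$, and the hypothesis $R^Z \in V^{\frac{p}{2}} L^q$, writing the resulting two-parameter controls as $w_X(s,t) := \|X\|_{p,[s,t]}^p$ and $w_R(s,t) := \|R^Z\|_{\frac{p}{2},q,[s,t]}^{p/2}$, both of which are right-continuous (superadditive) by Lemma~\ref{lemma: right-continuity of controls}. The cross terms are dominated by $C\,w_X(u,v)^{1/p} w_R(u,v)^{2/p}$ and the remainder-squared term by $w_R(u,v)^{4/p}$, and since $p<3$ both exponent sums $3/p$ and $4/p$ exceed $1$. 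A standard superadditivity argument for sums against right-continuous controls (in the spirit of \cite[Lemma~1.5]{FrizZhang2018}) then yields that these sums vanish as $|\cP|\to 0$.

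The main obstacle is the c\`adl\`ag nature of the problem: unlike in the continuous case, one cannot directly conclude $\max_{[u,v]\in\cP} w(u,v)\to 0$, so a naive bound $\sum w^\gamma \leq \max w^{\gamma-1}\cdot w(0,T)$ is not immediate. I would handle this exactly as in \cite{FrizZhang2018}, by approximating from the left (controlling $w(u,v{-})$ instead of $w(u,v)$) and separately accounting for the contribution from the finitely many large jumps; alternatively, since the first two Riemann-sum representations are already known to converge, their difference must converge as well, and the a priori bound above, combined with right-continuity of the controls in their second argument, forces this common limit to be zero. Assembling the three pieces yields $[Z]_t = \int_0^t (Z'_s\otimes Z'_s)\dd[\bX]_s$ in $L^{q/2}$ for every $t\in[0,T]$.
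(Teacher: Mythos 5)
Your reduction is sound: with $\delta Z_{u,v}=Z'_u\delta X_{u,v}+R^Z_{u,v}$, the difference of the two Riemann summands is exactly $(Z'_u\delta X_{u,v})\otimes R^Z_{u,v}+R^Z_{u,v}\otimes(Z'_u\delta X_{u,v})+R^Z_{u,v}\otimes R^Z_{u,v}$, and your $L^{q/2}$-bounds of the form $w_X(u,v)^{1/p}w_R(u,v)^{2/p}$ and $w_R(u,v)^{4/p}$, with exponent sums $3/p$, $4/p>1$, are correct.

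The genuine gap is the concluding claim that these Riemann sums vanish as $|\cP|\to 0$. The superadditivity estimate $\sum_{[u,v]\in\cP}w(u,v)^\gamma\leq\big(\max_{[u,v]\in\cP}w(u,v)\big)^{\gamma-1}w(0,T)$ needs $\max_{[u,v]\in\cP}w(u,v)\to 0$, which fails when the controls have jumps. Concretely, if $[u,v]\in\cP$ straddles a jump time $\tau$ of $X$, then as $|\cP|\to 0$ (so $u\nearrow\tau$, $v\searrow\tau$) the summand converges in $L^{q/2}$ to $2\Sym\big((Z'_{\tau-}\Delta X_\tau)\otimes(\Delta Z_\tau-Z'_{\tau-}\Delta X_\tau)\big)+(\Delta Z_\tau-Z'_{\tau-}\Delta X_\tau)^{\otimes 2}$, which is nonzero unless $\Delta Z_\tau=Z'_{\tau-}\Delta X_\tau$ --- and nothing in the hypotheses (in particular not $R^Z\in V^{\frac{p}{2}}L^q$) enforces that. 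Neither of your proposed fixes closes this: the Friz--Zhang-style isolation of finitely many large jumps (as deployed in the proof of Lemma~\ref{lemma: quadratic variation martingale with controlled path} via Lemma~\ref{lemma: pure jump stochastic sewing lemma}) relies on bounding the summands by $\delta(v)^{\alpha_1}w(u,v)^{\alpha_2}$ with $w(u,v)\to 0$ as $u\nearrow v$, which is exactly the structure that is absent here; and your ``alternative'' shows only that the difference of the Riemann sums converges, not that its limit is zero. In fact the identity itself fails for discontinuous data: take the deterministic $\bX=(\1_{[1/2,1]},0)$ and $(Z,Z')=(X^2/2,X)$, so that $R^Z_{s,t}=(\delta X_{s,t})^2/2\in V^{\frac{p}{2}}L^q$; a direct computation gives $[Z]_1=1/4$ but $\int_0^1(Z'_s)^2\dd[\bX]_s=0$. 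So a jump-correction term is needed and your proof inherits the same obstruction as the statement.
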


In particular, if $(Y,Y') \in \cV^{p,q}_X$ is a stochastic controlled path, and if $(Z,Z') = (\int_0^\cdot Y_u \dd \bX_u, Y)$, which is itself a stochastic controlled path by Lemma~\ref{lemma:  rough stochastic integrals as controlled path}, then we see from the bound in \eqref{eq: p,q,r,s- estimate for rough stochastic integral} (with $r = q$) that $\|R^Z\|_{\frac{p}{2},q,[0,T]} < \infty$, so that
\begin{equation*}
[Z]_t = \int_0^t (Y_s \otimes Y_s) \dd [\bX]_s
\end{equation*}
for every $t \in [0,T]$. The proof of Lemma~\ref{lemma: bracket of stochastic controlled path} is similar to that of Proposition~5.6 in the first arXiv version of \cite{FrizHocquetLe2021}, and is therefore omitted for brevity.

\begin{lemma}\label{lemma: quadratic variation martingale with controlled path}
Let $p \in [2,3)$, $q \in [4,\infty)$ and $q_1, q_2 \in (\frac{q}{2},\infty)$ such that $\frac{1}{q_1} + \frac{1}{q_2} = \frac{2}{q}$, and let $\bX = (X,\X) \in \sV^p$ be a c\`adl\`ag rough path. Let $M \in V^p L^{q_1}$ be a c\`adl\`ag martingale, and let $(Z,Z') \in \cV^{p,q_2}_X$ be a stochastic controlled path such that $\|R^Z\|_{\frac{p}{2},q_2,[0,T]} < \infty$. Then, for each $t \in [0,T]$,
\begin{equation*}
[M,Z]_t = \sum_{0 < s \leq t} \Delta M_s \otimes \Delta Z_s,
\end{equation*}
where the sum is taken over all (deterministic) times $s$ such that $\|\Delta M_s \otimes \Delta Z_s\|_{L^{\frac{q}{2}}} \neq 0$.
\end{lemma}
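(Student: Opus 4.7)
Plan.

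The strategy is to reduce the claim to a Riemann-sum limit via Lemma~\ref{lemma: existence of quad. cov. between controlled paths}, then decompose along the controlled path structure and identify the limits as pure-jump sums.

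First observe that $(M,0) \in \cV^{p,q,r}_X$: the only non-trivial condition is $\|\E_{\edot} R^M\|_{\frac{p}{2},qr,[0,T]} < \infty$, which is trivial since $R^M_{s,t} = \delta M_{s,t}$ and the martingale property gives $\E_s[\delta M_{s,t}] = 0$. Applying Lemma~\ref{lemma: existence of quad. cov. between controlled paths} to the pair $(M,0)$ and $(Z,Z')$, the symmetric rough-path term $2(M'_u \otimes Z'_u)\Sym(\X_{u,v})$ vanishes because $M'=0$, leaving
\begin{equation*}
[M,Z]_t \;=\; \lim_{|\cP| \to 0} \sum_{[u,v] \in \cP} \delta M_{u,v} \otimes \delta Z_{u,v}
\end{equation*}
as an $L^{q/2}$ limit along partitions of $[0,t]$.

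Substituting $\delta Z_{u,v} = Z'_u \delta X_{u,v} + R^Z_{u,v}$ splits the sum into $S^1_\cP + S^2_\cP$, with $S^1_\cP := \sum \delta M_{u,v} \otimes Z'_u \delta X_{u,v}$ and $S^2_\cP := \sum \delta M_{u,v} \otimes R^Z_{u,v}$. For $S^2_\cP$, H\"older's inequality gives $\|\delta M_{u,v} \otimes R^Z_{u,v}\|_{L^{q/2}} \leq \|\delta M_{u,v}\|_{L^q} \|R^Z_{u,v}\|_{L^q}$; since $M \in V^p L^q$ and $R^Z \in V^{p/2} L^q$ with combined regularity exponents $\frac{1}{p} + \frac{2}{p} = \frac{3}{p} > 1$ (as $p<3$), we are in a generalized Young/sewing regime for c\`adl\`ag two-parameter sums (analogous to \cite[Theorem~2.5]{FrizZhang2018}), which forces $S^2_\cP$ to converge in $L^{q/2}$ to the pure-jump sum $\sum_{0<s\leq t} \Delta M_s \otimes \Delta R^Z_s$, where $\Delta R^Z_s := \lim_{r \nearrow s} R^Z_{r,s}$ exists in $L^q$ by the $p/2$-regularity of $R^Z$. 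For $S^1_\cP$, the tower property combined with $\E_u[\delta M_{u,v}] = 0$ implies $\E_s[\delta M_{u,v} \otimes Z'_u \delta X_{u,v}] = 0$ for $s \leq u$, so that $S^1_\cP$ is an orthogonal sum of martingale increments weighted by the deterministic increments $\delta X_{u,v}$; a conditional $L^2$-isometry combined with the $p$-variation of $X$ shows that over intervals not containing a jump of $X$ the contributions vanish in the limit, leaving the pure-jump limit $\sum_{0<s\leq t} \Delta M_s \otimes Z'_{s-} \Delta X_s$.

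Combining the two limits via the identity $\Delta Z_s = Z'_{s-} \Delta X_s + \Delta R^Z_s$, obtained by passing $r \nearrow s$ in the defining relation $\delta Z_{r,s} = Z'_r \delta X_{r,s} + R^Z_{r,s}$ with convergence in $L^q$, yields
\begin{equation*}
[M,Z]_t \;=\; \sum_{0<s\leq t} \Delta M_s \otimes (Z'_{s-} \Delta X_s + \Delta R^Z_s) \;=\; \sum_{0<s\leq t} \Delta M_s \otimes \Delta Z_s,
\end{equation*}
as claimed. The main obstacle is establishing rigorously that each of $S^1_\cP$ and $S^2_\cP$ converges to a pure-jump limit rather than retaining a continuous contribution; for $S^2_\cP$ this crucially uses the enhanced regularity $R^Z \in V^{p/2} L^q$ with $p/2 < 2$, which rules out a continuous covariation between $M$ and $R^Z$, while for $S^1_\cP$ it relies on the deterministic nature of $\bX$ together with the orthogonality of the martingale increments of $M$.
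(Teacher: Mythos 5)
Your decomposition differs genuinely from the paper's, and the final algebra does check out: the identity $\Delta Z_s = Z'_{s-}\Delta X_s + \Delta R^Z_s$ recombines your two pure-jump limits into $\sum \Delta M_s \otimes \Delta Z_s$. The paper instead first isolates the jump at the right endpoint, writing $\delta M_{s,t}\otimes\delta Z_{s,t} = \delta M_{s,t-}\otimes\delta Z_{s,t} + \Delta M_t \otimes \delta Z_{s,t-} + \Delta M_t \otimes \Delta Z_t$, and shows that the first two sums vanish in $L^{q/2}$ along partitions by applying Lemma~\ref{lemma: convergence to 0 of two param processes} (using the martingale property and the $R^Z$-structure to get $\|\E_s[\delta M_{s,t-}\otimes\delta Z_{s,t}]\|_{L^{q/2}}\leq w_1(s,t-)^{1/p}w_2(s,t)^{2/p}$) and Lemma~\ref{lemma: pure jump stochastic sewing lemma} (introducing the jump ``control'' $\delta(t) = \|\Delta M_t\|_{L^q}^p$).

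The gap in your version is that the convergence of each of $S^1_\cP$, $S^2_\cP$ to its claimed pure-jump limit is asserted rather than proved, and neither of the tools you invoke establishes it. For $S^1_\cP$, the conditional BDG inequality (your ``conditional $L^2$-isometry'') only yields a bound of the form $\|\sum\delta M_{u,v}\otimes Z'_u\delta X_{u,v}\|_{L^{q/2}} \lesssim (\sum w_1(u,v)^{2/p}w_X(u,v)^{2/p})^{1/2}$, which is uniformly bounded but not zero; it does not distinguish the limit from the pure-jump sum, nor does it justify that ``contributions over intervals not containing a jump of $X$ vanish.'' For $S^2_\cP$, the generalized sewing lemma of \cite[Theorem~2.5]{FrizZhang2018} constructs an abstract integral from germs $\Xi_{s,t}$ whose defect $\delta\Xi_{s,u,t}$ is small; a covariation sum $\sum\delta M_{u,v}\otimes R^Z_{u,v}$ is not of that form, and the combined exponent $3/p>1$ merely shows the sum of $L^{q/2}$-norms is controlled, not that the limit is $\sum\Delta M_s\otimes\Delta R^Z_s$. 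To make either piece rigorous you would have to further decompose $\delta M_{u,v}$ and $\delta X_{u,v}$ (resp.\ $R^Z_{u,v}$) around the jump at $v$, show the ``no-jump'' residuals vanish via exactly the lemmas the paper already uses, and re-identify the jump part---at which point you have essentially reproduced the paper's argument twice. The paper's ordering of the decomposition (jump first, then the controlled/martingale structure) is what lets the auxiliary lemmas apply directly, and is the step your outline is missing.
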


\begin{proof}
For each $(s,t) \in \Delta_{[0,T]}$, we have that $\delta M_{s,t} \otimes \delta Z_{s,t} = R_{s,t} + \Xi_{s,t} + \Delta M_t \otimes \Delta Z_t$, where
\begin{equation*}
R_{s,t} := \delta M_{s,t-} \otimes \delta Z_{s,t} \qquad \text{and} \qquad \Xi_{s,t} := \Delta M_t \otimes \delta Z_{s,t-}.
\end{equation*}
It is straightforward to see that $\E_s [\Xi_{s,t}] = 0$. By H\"older's inequality, we also have that
\begin{equation*}
\|\Xi_{s,t}\|_{L^{\frac{q}{2}}} \leq \|\Delta M_t\|_{L^{q_1}} \|\delta Z_{s,t-}\|_{L^{q_2}} \leq \delta(t)^{\frac{1}{p}} w(s,t)^{\frac{1}{p}},
\end{equation*}
where $\delta(t) := \|\Delta M_t\|_{L^{q_1}}^p$ and $w(s,t) := \|Z\|_{p,q_2,[s,t)}^p$ is a control.

Since $\sum_{t \in [0,T]} \delta(t) \leq \|M\|_{p,q_1,[0,T]}^p < \infty$, it then follows from Lemma~\ref{lemma: pure jump stochastic sewing lemma} that
\begin{equation*}
\lim_{|\cP| \to 0} \bigg\|\sum_{[s,t] \in \cP} \Xi_{s,t}\bigg\|_{L^{\frac{q}{2}}} = 0.
\end{equation*}

Since $\delta Z_{s,t} = Z'_s \delta X_{s,t} + R^Z_{s,t}$, we have that
\begin{equation*}
\big\|\E_s [R_{s,t}]\big\|_{L^{\frac{q}{2}}} = \big\|\E_s [\delta M_{s,t-} \otimes R^Z_{s,t}]\big\|_{L^{\frac{q}{2}}} \leq \|\delta M_{s,t-}\|_{L^{q_1}} \|R^Z_{s,t}\|_{L^{q_2}} \leq w_1(s,t-)^{\frac{1}{p}} w_2(s,t)^{\frac{2}{p}},
\end{equation*}
where $w_1(s,t) := \|M\|_{p,q_1,[s,t]}^p$ and $w_2(s,t) := \|R^Z\|_{\frac{p}{2},q_2,[s,t]}^{\frac{p}{2}}$ are controls. We also have that
\begin{equation*}
\|R_{s,t}\|_{L^{\frac{q}{2}}} \leq \|\delta M_{s,t-}\|_{L^{q_1}} \|\delta Z_{s,t}\|_{L^{q_2}} = \bw_1(s,t-)^{\frac{1}{p}} \bw_2(s,t)^{\frac{1}{p}},
\end{equation*}
where $\bw_1(s,t) := \|M\|_{p,q_1,[s,t]}^p$ and $\bw_2(s,t) := \|Z\|_{p,q_2,[s,t]}^p$ are right-continuous controls. It then follows from Lemma~\ref{lemma: convergence to 0 of two param processes} that $\|\sum_{[s,t] \in \cP} R_{s,t}\|_{L^{\frac{q}{2}}} \to 0$ in the sense of RRS convergence.

Considering the stochastic controlled paths $(M,0)$ and $(Z,Z')$, it follows from Lemma~\ref{lemma: existence of quad. cov. between controlled paths}, and the convergence established above, that
\begin{equation*}
\sum_{[s,t] \in \cP} \Delta M_t \otimes \Delta Z_t \, \longrightarrow \, [M,Z]_T
\end{equation*}
in $L^{\frac{q}{2}}$ in the sense of RRS convergence. Given that $\sum_{[s,t] \in \cP} \Delta M_t \otimes \Delta Z_t$ converges in the RRS sense, it is straightforward to see that the limit must also be equal to $\sum_{0 < t \leq T} \Delta M_t \otimes \Delta Z_t$.
\end{proof}

\begin{remark}
Many relevant cases of Lemma~\ref{lemma: quadratic variation martingale with controlled path} require quite strong integrability of the stochastic controlled path $(Z,Z')$. In particular, with Proposition~\ref{proposition: a criterium for martingales in mixed moment spaces} in mind, we recall that for general c\`adl\`ag martingales to live in $V^p L^{q_1}$ typically requires $q_1 \leq p$. Since we consider $p \in [2,3)$, this means needing $q_1 < 3$, which requires at least $q_2 > 6$.
\end{remark}

\begin{lemma}\label{lemma: integration of controlled paths against each other}
Let $p \in [2,3)$, $q \in [4,\infty)$ and $q_1, q_2 \in (\frac{q}{2},\infty)$ such that $\frac{1}{q_1} + \frac{1}{q_2} = \frac{2}{q}$. Let $\bX = (X,\X) \in \sV^p$ be a rough path, and let $(Y,Y') \in \cV^{p,q_1}_X$ and $(Z,Z') \in \cV^{p,q_2}_X$ be stochastic controlled paths. Then there exists an adapted process $\int_0^\cdot Y_u \otimes \d Z_u \in V^p L^{\frac{q}{2}}$, such that, for every $(s,t) \in \Delta_{[0,T]}$,
\begin{equation*}
\int_s^t Y_u \otimes \d Z_u = \lim_{|\cP| \to 0} \sum_{[u,v] \in \cP} Y_u \otimes \delta Z_{u,v} + (Y'_u \otimes Z'_u) \X_{u,v}
\end{equation*}
as a limit in $L^{\frac{q}{2}}$ along partitions $\cP$ of the interval $[s,t]$ with vanishing mesh size.
\end{lemma}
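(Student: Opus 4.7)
The plan is to apply the stochastic sewing lemma (Theorem~\ref{theorem: stochastic sewing lemma}), with the integrability index $q$ there replaced by $q/2 \geq 2$---this being where the standing assumption $q \geq 4$ is used---and with $r = 1$, to the two-parameter process
\begin{equation*}
\Xi_{s,t} := Y_s \otimes \delta Z_{s,t} + (Y'_s \otimes Z'_s) \X_{s,t}.
\end{equation*}
Note that, with $q$ replaced by $q/2$ and $r = 1$, the norm $\|\cdot\|_{\frac{q}{2},1,s}$ coincides with $\|\cdot\|_{L^{q/2}}$, so the argument closely mirrors that of Lemma~\ref{lemma: existence of rough stochastic integrals}, but with $Z$ also playing the role of a stochastic controlled path rather than merely a rough path.

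A direct calculation using Chen's relation, together with $\delta Y_{s,u} = Y'_s \delta X_{s,u} + R^Y_{s,u}$, $\delta Z_{u,t} - Z'_s \delta X_{u,t} = \delta Z'_{s,u} \delta X_{u,t} + R^Z_{u,t}$, and $Y'_u \otimes Z'_u - Y'_s \otimes Z'_s = \delta Y'_{s,u} \otimes Z'_s + Y'_s \otimes \delta Z'_{s,u} + \delta Y'_{s,u} \otimes \delta Z'_{s,u}$ (so that the two ``double-$X$'' terms cancel), yields
\begin{align*}
-\delta \Xi_{s,u,t} = {}& (Y'_s \delta X_{s,u}) \otimes (\delta Z'_{s,u} \delta X_{u,t}) + (Y'_s \delta X_{s,u}) \otimes R^Z_{u,t} + R^Y_{s,u} \otimes \delta Z_{u,t} \\
&+ (\delta Y'_{s,u} \otimes Z'_s + Y'_s \otimes \delta Z'_{s,u} + \delta Y'_{s,u} \otimes \delta Z'_{s,u}) \X_{u,t}.
\end{align*}
The $L^{q/2}$-bound required for \eqref{eq: stoch sewing qrs bound} is then routine: for the representative summand, H\"older gives $\|R^Y_{s,u} \otimes \delta Z_{u,t}\|_{L^{q/2}} \leq \|R^Y\|_{p,q,r,[s,u]} \|Z\|_{p,q,r,[u,t]}$, of exponent sum $\frac{2}{p} > \frac{1}{2}$ (using $\|R^Y\|_{p,q,r,[s,u]} \leq \|Y\|_{p,q,r,[s,u]} + \sup_v \|Y'_v\|_{L^{qr}} \|X\|_{p,[s,u]}$ as in the proof of Lemma~\ref{lemma: existence of rough stochastic integrals}); each remaining term admits an analogous estimate with exponent sum at least $\frac{2}{p} > \frac{1}{2}$.

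The main obstacle is the conditional-expectation bound \eqref{eq: stoch sewing E_s Lqr bound}, whose exponents must sum to strictly more than $1$: a naive H\"older estimate of $\|\E_s[R^Y_{s,u} \otimes \delta Z_{u,t}]\|_{L^{q/2}}$ only yields exponent $\frac{2}{p}$, which is insufficient. The fix is to exploit that $R^Y_{s,u}$ is $\cF_u$-measurable and apply the tower property with $\E_u[\delta Z_{u,t}] = Z'_u \delta X_{u,t} + \E_u[R^Z_{u,t}]$, splitting $Z'_u = Z'_s + \delta Z'_{s,u}$ to extract the $\cF_s$-measurable piece:
\begin{equation*}
\E_s[R^Y_{s,u} \otimes \delta Z_{u,t}] = \E_s[R^Y_{s,u}] \otimes Z'_s \, \delta X_{u,t} + \E_s[R^Y_{s,u} \otimes \delta Z'_{s,u}] \, \delta X_{u,t} + \E_s\bigl[R^Y_{s,u} \otimes \E_u[R^Z_{u,t}]\bigr].
\end{equation*}
Each piece is then controlled using the gained regularity $\|\E_{\edot} R^Y\|_{\frac{p}{2},qr,\cdot}$ and $\|\E_{\edot} R^Z\|_{\frac{p}{2},qr,\cdot}$, together with the uniform bounds $\sup_v \|Y'_v\|_{L^{qr}}, \sup_v \|Z'_v\|_{L^{qr}} < \infty$ built into the definition of a stochastic controlled path, each producing an exponent sum of $\frac{3}{p} > 1$. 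The symmetric manipulation, extracting $\E_s[R^Z_{u,t}]$, handles $\E_s[(Y'_s \delta X_{s,u}) \otimes R^Z_{u,t}]$; the remaining terms containing no remainder factor admit direct H\"older estimates with exponent sum at least $\frac{3}{p} > 1$, since $\X$ has finite $p/2$-variation.

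All the controls that appear---namely $\|Y\|_{p,q,r,\cdot}^p$, $\|Y'\|_{p,q,r,\cdot}^p$, $\|\E_{\edot} R^Y\|_{\frac{p}{2},qr,\cdot}^{p/2}$, $\|X\|_{p,\cdot}^p$, $\|\X\|_{\frac{p}{2},\cdot}^{p/2}$ and their $Z$-counterparts---are right-continuous in their first argument by Lemma~\ref{lemma: right-continuity of controls}, Remark~\ref{remark: right continuous control for remainder} and \cite[Lemma~7.1]{FrizZhang2018}. Theorem~\ref{theorem: stochastic sewing lemma} thus produces a unique adapted process $\cI$ with $\cI_0 = 0$, which we denote $\int_0^\cdot Y_u \otimes \d Z_u$, and the claimed convergence of compensated Riemann sums in $L^{q/2}$ follows from \eqref{eq: convergence in stochastic sewing lemma}. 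Combining the sewing estimate \eqref{eq: qrs bound full generality sewing} with the trivial bound $\|\Xi_{s,t}\|_{L^{q/2}} \lesssim \|\delta Z_{s,t}\|_{L^q} + |\X_{s,t}|$ and super-additivity of the resulting controls yields $\cI \in V^p L^{q/2}$, where right-continuity in probability of $\cI$ follows from the fact that each of the bounding controls vanishes as $t \searrow s$.
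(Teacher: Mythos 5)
Your proof is correct and takes the paper's own route: apply Theorem~\ref{theorem: stochastic sewing lemma} with $q$ replaced by $q/2$ and $r=1$ to the germ $\Xi_{s,t} = Y_s \otimes \delta Z_{s,t} + (Y'_s \otimes Z'_s)\X_{s,t}$, reusing precisely the estimates \eqref{eq: delta Y_su delta Z_ut bound}, \eqref{eq: Es delta Y_su delta Z_ut bound} and \eqref{eq: Y'u Z'u - Y's Z's bound} from the proof of Lemma~\ref{lemma: existence of quad. cov. between controlled paths}. Your decomposition of $-\delta\Xi_{s,u,t}$ pivots on $Y'_s\delta X_{s,u}$ rather than on $\delta Y_{s,u}$, so it leaves the raw term $R^Y_{s,u}\otimes\delta Z_{u,t}$ whose conditional bound forces the tower-property splitting you describe; after collecting terms the two decompositions agree, the paper's form merely presenting the pieces already in the ready-to-estimate shape.
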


\begin{proof}
Letting $\Xi_{s,t} = Y_s \otimes \delta Z_{s,t} + (Y'_s \otimes Z'_s) \X_{s,t}$, we have that
\begin{equation*}
-\delta \Xi_{s,u,t} = \delta Y_{s,u} \otimes \delta Z_{u,t} - (Y'_s \otimes Z'_s) (\delta X_{s,u} \otimes \delta X_{u,t}) + (Y'_u \otimes Z'_u - Y'_s \otimes Z'_s) \X_{u,t}.
\end{equation*}
Recalling the bounds in \eqref{eq: delta Y_su delta Z_ut bound}, \eqref{eq: Es delta Y_su delta Z_ut bound} and \eqref{eq: Y'u Z'u - Y's Z's bound}, the result follows by applying Theorem~\ref{theorem: mild stochastic sewing lemma} with $r = \frac{q}{2}$.
\end{proof}

Of course, the stochastic sewing lemma also provides corresponding estimates for the difference $\int_s^t Y_u \otimes \d Z_u - Y_s \otimes \delta Z_{s,t} - (Y'_s \otimes Z'_s) \X_{s,t}$, which we omit here for brevity.

\begin{lemma}
Let $p \in [2,3)$, $q \in [4,\infty)$ and $q_1, q_2 \in (\frac{q}{2},\infty)$ such that $\frac{1}{q_1} + \frac{1}{q_2} = \frac{2}{q}$. Let $\bX = (X,\X) \in \sV^p$ be a rough path, and let $(Y,Y') \in \cV^{p,q_1}_X$ and $(Z,Z') \in \cV^{p,q_2}_X$ be stochastic controlled paths. Then we have the following integration by parts formula
\begin{equation*}
Y_t \otimes Z_t = Y_0 \otimes Z_0 + \int_0^t Y_u \otimes \d Z_u + \int_0^t \dd Y_u \otimes Z_u + [Y,Z]_t
\end{equation*}
for every $t \in [0,T]$, where the integrals are defined in the sense of Lemma~\ref{lemma: integration of controlled paths against each other}.
\end{lemma}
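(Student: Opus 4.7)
The plan is the classical ``add-and-subtract'' reduction: starting from a telescoping identity for the tensor product $Y \otimes Z$, rewrite each summand in exactly the form that one of our three convergence results recognises, and check that the resulting compensations cancel algebraically.

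Fixing $t \in [0,T]$ and a partition $\cP$ of $[0,t]$, I would first invoke the tensor-product chain rule
\[
Y_v \otimes Z_v - Y_u \otimes Z_u \,=\, Y_u \otimes \delta Z_{u,v} + \delta Y_{u,v} \otimes Z_u + \delta Y_{u,v} \otimes \delta Z_{u,v}
\]
to telescope the left-hand side into
\[
Y_t \otimes Z_t - Y_0 \otimes Z_0 \,=\, \sum_{[u,v] \in \cP} \bigl( Y_u \otimes \delta Z_{u,v} + \delta Y_{u,v} \otimes Z_u + \delta Y_{u,v} \otimes \delta Z_{u,v} \bigr).
\]
Then, in the three summands I would add and subtract, respectively, $(Y'_u \otimes Z'_u)\X_{u,v}$, $(Y'_u \otimes Z'_u)\X^\top_{u,v}$ and $-2(Y'_u \otimes Z'_u)\Sym(\X_{u,v})$. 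The three bracketed expressions one is left with are precisely the summands appearing in the definitions of $\int_0^t Y_u \otimes \d Z_u$ and $[Y,Z]_t$ from Lemmas~\ref{lemma: integration of controlled paths against each other} and~\ref{lemma: existence of quad. cov. between controlled paths}, and of $\int_0^t \d Y_u \otimes Z_u$ from the symmetric variant of Lemma~\ref{lemma: integration of controlled paths against each other} (which uses the germ $\delta Y_{s,t} \otimes Z_s + (Y'_s \otimes Z'_s)\X^\top_{s,t}$ together with the ``transposed'' form of Chen's relation). Crucially, the sum of the three remaining ``leftover'' corrections is
\[
-(Y'_u \otimes Z'_u)\bigl(\X_{u,v} + \X^\top_{u,v} - 2\Sym(\X_{u,v})\bigr) \,=\, 0
\]
by the very definition of $\Sym$, so nothing genuine is discarded.

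Passing to the limit $|\cP| \to 0$ along a single sequence of partitions, the $L^{q/2}$-convergence statements of the three lemmas then identify the three sums with the corresponding objects on the right-hand side of the claimed identity, establishing it at the fixed time $t$. The identity extends to all $t \in [0,T]$ simultaneously by the right-continuity in probability of all four processes (and their c\`adl\`ag representatives, when available). The only obstacle I foresee is essentially bookkeeping: spelling out the symmetric variant of Lemma~\ref{lemma: integration of controlled paths against each other} so that the correction for $\int \d Y \otimes Z$ is indeed $\X^\top$ (and not $\X$) so that the three compensations cancel cleanly; the analytic estimates needed for this variant are identical to those already carried out, mutatis mutandis.
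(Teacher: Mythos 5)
Your proposal is correct and takes essentially the same approach as the paper: the paper writes the same add-and-subtract identity for $\delta(Y\otimes Z)_{u,v}$ in a single display (with the $\X$, $\X^\top$ and $-2\Sym(\X)$ corrections already inserted), observes the cancellation, and passes to the limit using Lemmas~\ref{lemma: existence of quad. cov. between controlled paths} and~\ref{lemma: integration of controlled paths against each other}. Your remark about the transposed germ for $\int \dd Y \otimes Z$ is also the right reading of how the paper uses Lemma~\ref{lemma: integration of controlled paths against each other} for that term.
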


\begin{proof}
Simply note that
\begin{align*}
Y_v \otimes Z_v - Y_u \otimes Z_u &= Y_u \otimes \delta Z_{u,v} + (Y'_u \otimes Z'_u) \X_{u,v} + \delta Y_{u,v} \otimes Z_u + (Y'_u \otimes Z'_u) (\X_{u,v})^\top\\
&\quad + \delta Y_{u,v} \otimes \delta Z_{u,v} - 2 (Y'_u \otimes Z'_u) \Sym(\X_{u,v}),
\end{align*}
and take $\lim_{|\cP| \to 0} \sum_{[u,v] \in \cP}$ on each side, using the convergence established in Lemmas~\ref{lemma: existence of quad. cov. between controlled paths} and \ref{lemma: integration of controlled paths against each other}.
\end{proof}

\begin{theorem}\label{theorem: Ito formula for controlled paths}
Let $p \in [2,3)$, $q \in [4,\infty)$, $r \in [2q,\infty]$ and $f \in C^3_b$. Let $\bX = (X,\X) \in \sV^p$ and $(Y,Y') \in \cV^{p,q,r}_X$. Then, for each $t \in [0,T]$, we have that
\begin{equation*}
\begin{split}
f(Y_t) &= f(Y_0) + \int_0^t \D f(Y_s) \dd Y_s + \frac{1}{2} \int_0^t \D^2 f(Y_s) \dd [Y]_s\\
&\quad + \sum_{0 < s \leq t} \Big(\Delta f(Y)_s - \D f(Y_{s-}) \Delta Y_s - \frac{1}{2} \D^2 f(Y_{s-}) (\Delta Y_s \otimes \Delta Y_s)\Big),
\end{split}
\end{equation*}
where the first integral is defined in the sense of Lemma~\ref{lemma: integration of controlled paths against each other}, the second integral is defined as an $L^{\frac{q}{2}}$-valued Young integral, and where the sum is taken over all (deterministic) times $s$ such that $\|\Delta Y_s\|_{L^q} \neq 0$.
\end{theorem}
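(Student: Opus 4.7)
The plan is to mirror the deterministic c\`adl\`ag It\^o formula of \cite[Theorem~2.12]{FrizZhang2018}: telescope $f(Y_t) - f(Y_0) = \sum_{[u,v] \in \cP}\delta f(Y)_{u,v}$ over a partition $\cP$ of $[0,t]$, apply a second-order Taylor expansion to each increment, and identify each of the three resulting terms with one of the objects on the right-hand side of the formula. For $f \in C^3_b$, Taylor's theorem with integral remainder yields
\begin{equation*}
\delta f(Y)_{u,v} = \D f(Y_u)\delta Y_{u,v} + \tfrac{1}{2}\D^2 f(Y_u)(\delta Y_{u,v})^{\otimes 2} + \rho_{u,v}, \qquad |\rho_{u,v}| \leq C\|f\|_{C^3_b}|\delta Y_{u,v}|^3.
\end{equation*}
Using the symmetry of $\D^2 f$, so that $(\D^2 f(Y_u) Y'_u \otimes Y'_u)(\X_{u,v} - \Sym(\X_{u,v})) = 0$, the Taylor-expanded sum splits as $I^1_\cP + I^2_\cP$ with
\begin{align*}
I^1_\cP &= \sum_{[u,v] \in \cP}\big[\D f(Y_u)\delta Y_{u,v} + (\D^2 f(Y_u) Y'_u \otimes Y'_u)\X_{u,v}\big],\\
I^2_\cP &= \tfrac{1}{2}\sum_{[u,v] \in \cP}\D^2 f(Y_u)\big[(\delta Y_{u,v})^{\otimes 2} - 2(Y'_u \otimes Y'_u)\Sym(\X_{u,v})\big].
\end{align*}

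The next step is to identify the two limits. Applying Lemma~\ref{lemma: (f(Y), f'(Y)Y') is a controlled path} to $\D f$ (licit since $r \geq 2$) gives $(\D f(Y), \D^2 f(Y) Y') \in \cV^{p,q,r/2}_X$, and Lemma~\ref{lemma: integration of controlled paths against each other} then yields $I^1_\cP \to \int_0^t \D f(Y_s)\dd Y_s$ in $L^{q/2}$. For $I^2_\cP$, its summands $G_{u,v} := (\delta Y_{u,v})^{\otimes 2} - 2(Y'_u \otimes Y'_u)\Sym(\X_{u,v})$ satisfy $\sum_\cP G_{u,v} \to [Y]_t$ in $L^{q/2}$ by Lemma~\ref{lemma: existence of quad. cov. between controlled paths}. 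Combining this with the bounded, Lipschitz nature of $\D^2 f$ (so that $\D^2 f(Y)$ has finite $p$-variation in $L^q$ and is uniformly bounded), and applying the stochastic sewing lemma to $\tfrac{1}{2}\D^2 f(Y_s) G_{s,t}$---whose sewing defect reduces, via Chen's relation and the controlled-path structure of $Y$, to cross terms of order $|\delta Y_{s,u}|^2|\delta Y_{u,t}|$ and $|\delta Y_{s,u}||\delta Y_{u,t}|^2$, summable thanks to $3/p > 1$---yields $I^2_\cP \to \tfrac{1}{2}\int_0^t \D^2 f(Y_s)\dd [Y]_s$.

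The main obstacle is to show that $\sum \rho_{u,v}$ converges to $\sum_{0 < s \leq t} \rho^\Delta_s$, where $\rho^\Delta_s := \Delta f(Y)_s - \D f(Y_{s-})\Delta Y_s - \tfrac{1}{2}\D^2 f(Y_{s-})(\Delta Y_s)^{\otimes 2}$. For $\eta > 0$, the set $J_\eta := \{s \in (0,t] : \|\Delta Y_s\|_{L^q} > \eta\}$ is finite because $\sum_s \|\Delta Y_s\|^p_{L^q} \leq \|Y\|^p_{p,q,[0,t]}$. The plan is to restrict to partitions containing $J_\eta$ as partition points and with vanishing mesh on $[0,t] \setminus J_\eta$. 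On intervals $[u,v] \in \cP$ disjoint from $J_\eta$, the $L^q$-c\`adl\`ag regularity of $Y$ (Lemma~\ref{lemma: V^pL^q is a Banach space}) gives $\max \|\delta Y_{u,v}\|_{L^q} \leq \eta + o(1)$, so that the contribution is bounded in $L^{q/3}$ by $(\eta + o(1))^{3-p} \|Y\|^p_{p,q,[0,t]}$, vanishing as $\eta \to 0$ (using $p < 3$). On the finitely many intervals collapsing onto $s \in J_\eta$, $\delta Y_{u,v} \to \Delta Y_s$ in $L^q$ and hence $\rho_{u,v} \to \rho^\Delta_s$ in $L^{q/3}$ by continuity of $\D f$ and $\D^2 f$. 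Taking $\eta \to 0$, using $\sum_s \|\rho^\Delta_s\|_{L^{q/3}} \lesssim \sum_s \|\Delta Y_s\|^3_{L^q} < \infty$ (from uniform boundedness of the jump sizes and $p \leq 3$), yields the required convergence. Combining the three $L^{q/3}$ limits, and invoking uniqueness of limits in probability, gives the It\^o formula.
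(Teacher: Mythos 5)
Your overall strategy — Taylor-expand $\delta f(Y)_{u,v}$ to second order, identify the first- and second-order sums with the rough stochastic integral and the bracket integral respectively, and show the Taylor error sums to the canonical jump correction via a large-jump/small-jump cutoff — is the same in spirit as the paper's, but the bookkeeping differs. The paper decomposes $f(Y_t)-f(Y_0) = S_1 + S_2 + S_3 + S_4$ by first splitting each increment $\delta Y_{t_i,t_{i+1}} = \delta Y_{t_i,t_{i+1}-} + \Delta Y_{t_{i+1}}$, so the third-order remainder $S_3$ is over \emph{open} intervals and tends to zero immediately, and $S_4$ collects the Taylor--jump cross terms (killed by a symmetric--antisymmetric cancellation). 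Your single remainder $\rho_{u,v}$ over the \emph{closed} interval merges $S_2$, $S_3$, $S_4$ into one object, and the $\eta$-cutoff you sketch is the right tool, though to make it watertight you would need to handle the intervals whose right endpoint is a large-jump time separately from those whose interior merely \emph{contains} a small jump (for the latter, $\|\delta Y_{u,v}\|_{L^q}$ is only controlled by $\|Y\|_{p,q,r,[u,v)} + \|\Delta Y_v\|_{L^q} \lesssim \epsilon + \eta$, not by $\eta$ alone — the paper's open-interval Taylor term $S_3$ avoids this issue entirely).

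The genuine gap is in the identification of the $I^2_\cP$ limit. Stochastic sewing applied to $\Xi_{s,t} = \tfrac{1}{2}\D^2 f(Y_s) G_{s,t}$ gives \emph{existence and uniqueness} of a limiting process $\cI$, but does not by itself tell you that $\cI_t = \tfrac{1}{2}\int_0^t \D^2 f(Y_s)\dd[Y]_s$. The Young integral is defined via Riemann sums of $\D^2 f(Y_u)\,\delta[Y]_{u,v}$, whereas your $I^2_\cP$ uses $\D^2 f(Y_u)\,G_{u,v}$; writing $G_{u,v} = \delta[Y]_{u,v} - \Gamma_{u,v}$ with
\[
\Gamma_{u,v} := \delta[Y]_{u,v} - (\delta Y_{u,v})^{\otimes 2} + 2(Y'_u\otimes Y'_u)\Sym(\X_{u,v}),
\]
one must show $\sum_{\cP} \D^2 f(Y_u)\Gamma_{u,v} \to 0$. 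This is precisely the step the paper carries out (via Lemma~\ref{lemma: convergence to 0 of two param processes} applied to $R_{s,t} = \D^2 f(Y_s)\Gamma_{s,t}$, using the estimates on $\Gamma$ that fall out of the stochastic sewing construction of $[Y]$). Without this — or an equivalent argument invoking the sewing-lemma uniqueness together with the bound $\|\D^2 f(Y_s)\Gamma_{s,t}\|_{q,r,s} \lesssim \|\Gamma_{s,t}\|_{q,r,s}$ — the asserted convergence $I^2_\cP \to \tfrac{1}{2}\int_0^t \D^2 f(Y_s)\dd[Y]_s$ is not established. The rest of your argument (the treatment of $I^1_\cP$ via Lemmas~\ref{lemma: (f(Y), f'(Y)Y') is a controlled path} and \ref{lemma: integration of controlled paths against each other}, the drop to $r/2$, and the summability $\sum_s\|\rho^\Delta_s\|_{L^{q/3}} < \infty$) is sound.
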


\begin{proof}
Let $\cP = \{0 = t_0 < t_1 < \cdots < t_n = t\}$ be a partition of the interval $[0,t]$. We note that
\begin{equation*}
f(Y_t) - f(Y_0) = S_1(\cP) + S_2(\cP) + S_3(\cP) + S_4(\cP),
\end{equation*}
where
\begin{align*}
S_1(\cP) &= \sum_{i=0}^{n-1} \D f(Y_{t_i}) \delta Y_{t_i,t_{i+1}} + \frac{1}{2} \D^2 f(Y_{t_i}) (\delta Y_{t_i,t_{i+1}} \otimes \delta Y_{t_i,t_{i+1}}),\\
S_2(\cP) &= \sum_{i=0}^{n-1} \Delta f(Y)_{t_{i+1}} - \D f(Y_{t_{i+1}-}) \Delta Y_{t_{i+1}} - \frac{1}{2} \D^2 f(Y_{t_{i+1}-}) (\Delta Y_{t_{i+1}} \otimes \Delta Y_{t_{i+1}}),\\
S_3(\cP) &= \sum_{i=0}^{n-1} f(Y_{t_{i+1}-}) - f(Y_{t_i}) - \D f(Y_{t_i}) \delta Y_{t_i,t_{i+1}-} - \frac{1}{2} \D^2 f(Y_{t_i}) (\delta Y_{t_i,t_{i+1}-} \otimes \delta Y_{t_i,t_{i+1}-})\\
S_4(\cP) &= \sum_{i=0}^{n-1} \big(\D f(Y_{t_{i+1}-}) - \D f(Y_{t_i}) - \D^2 f(Y_{t_i}) \delta Y_{t_i,t_{i+1}-}\big) \Delta Y_{t_{i+1}}\\
&\qquad \quad + \frac{1}{2} \big(\D^2 f(Y_{t_{i+1}-}) - \D^2 f(Y_{t_i})\big) (\Delta Y_{t_{i+1}} \otimes \Delta Y_{t_{i+1}})
\end{align*}
and we used that, since the contraction of a symmetric tensor with an antisymmetric tensor is zero,
\begin{align*}
&\frac{1}{2} \big(\D^2 f(Y_{t_i}) (\Delta Y_{t_{i+1}} \otimes \delta Y_{t_i,t_{i+1}-}) + \D^2 f(Y_{t_i}) (\delta Y_{t_i,t_{i+1}-} \otimes \Delta Y_{t_{i+1}})\big)\\
&= \D^2 f(Y_{t_i}) \Sym(\delta Y_{t_i,t_{i+1}-} \otimes \Delta Y_{t_{i+1}}) = \D^2 f(Y_{t_i}) (\delta Y_{t_i,t_{i+1}-} \otimes \Delta Y_{t_{i+1}})\\
&= (\D^2 f(Y_{t_i}) \delta Y_{t_i,t_{i+1}-}) \Delta Y_{t_{i+1}}.
\end{align*}

Letting $\Gamma_{u,v} := \delta [Y]_{u,v} - \delta Y_{u,v} \otimes \delta Y_{u,v} + 2 (Y'_u \otimes Y'_u) \Sym(\X_{u,v})$, we have that
\begin{align*}
S_1(\cP) &= \sum_{i=0}^{n-1} \big(\D f(Y_{t_i}) \delta Y_{t_i,t_{i+1}} + \D^2 f(Y_{t_i}) (Y'_{t_i} \otimes Y'_{t_i}) \X_{t_i,t_{i+1}}\big)\\
&\quad + \frac{1}{2} \sum_{i=0}^{n-1} \D^2 f(Y_{t_i}) \delta[Y]_{t_i,t_{i+1}} - \frac{1}{2} \sum_{i=0}^{n-1} \D^2 f(Y_{t_i}) \Gamma_{t_i,t_{i+1}}.
\end{align*}
Applying Lemma~\ref{lemma: convergence to 0 of two param processes} with $R_{s,t} = \D^2 f(Y_s) \Gamma_{s,t}$, we have that
\begin{equation*}
\bigg\|\sum_{i=0}^{n-1} \D^2 f(Y_{t_i}) \Gamma_{t_i,t_{i+1}}\bigg\|_{L^{\frac{q}{2}}} \, \longrightarrow \, 0
\end{equation*}
in the sense of RRS convergence, where the required bounds come from the stochastic sewing lemma (Theorem~\ref{theorem: mild stochastic sewing lemma}), as in the proof of Lemma~\ref{lemma: existence of quad. cov. between controlled paths}. We then have that
\begin{equation*}
S_1(\cP) \, \longrightarrow \, \int_0^t \D f(Y_s) \dd Y_s + \frac{1}{2} \int_0^t \D^2 f(Y_s) \dd [Y]_s
\end{equation*}
in $L^{\frac{q}{2}}$ in the RRS sense. In particular, since $(Y,Y') \in \cV^{p,q,r}_X$, we know from Lemma~\ref{lemma: (f(Y), f'(Y)Y') is a controlled path} that $(\D f(Y),\D^2 f(Y) Y') \in \cV^{p,q,\frac{r}{2}}_X$, so that the first integral above exists in the sense of Lemma~\ref{lemma: integration of controlled paths against each other}. Moreover, noting that $\D^2 f(Y) \in V^p L^{\frac{q}{2}}$, and that $[Y] \in V^{\frac{p}{2}} L^{\frac{q}{2}}$ by Lemma~\ref{lemma: existence of quad. cov. between controlled paths}, we have that the second integral exists as an $L^{\frac{q}{2}}$-valued Young integral.

By \cite[Lemma~1.5]{FrizZhang2018}, for any $\epsilon > 0$, there exists a partition $\cP^\epsilon$ of the interval $[0,t]$ such that, for any refinement $\cP \supseteq \cP^\epsilon$ and any $[s,t] \in \cP$, we have that $\|Y\|_{p,q,r,[s,t)} < \epsilon$.

Since $f \in C^3_b$, we have from Taylor's formula that
\begin{equation*}
\Big|f(Y_t) - f(Y_s) - \D f(Y_s) \delta Y_{s,t} - \frac{1}{2} \D^2 f(Y_s) (\delta Y_{s,t} \otimes \delta Y_{s,t})\Big| \leq \frac{1}{2} \|f\|_{C^3_b} |\delta Y_{s,t}|^3,
\end{equation*}
and hence, for any refinement $\cP \supseteq \cP^\epsilon$,
\begin{equation*}
\|S_3(\cP)\|_{L^1} \lesssim \sum_{i=0}^{n-1} \|\delta Y_{t_i,t_{i+1}-}\|_{L^3}^3 \leq \sum_{i=0}^{n-1} \|Y\|_{p,q,r,[t_i,t_{i+1})}^3 \leq \epsilon^{3-p} \|Y\|_{p,q,r,[0,t]}^p,
\end{equation*}
so that $S_3(\cP) \to 0$ in $L^1$ in the sense of RRS convergence.

Similarly, since
\begin{equation*}
\big|\D f(Y_t) - \D f(Y_s) - \D^2 f(Y_s) \delta Y_{s,t}\big| \leq \|f\|_{C^3_b} |\delta Y_{s,t}|^2,
\end{equation*}
for any refinement $\cP \supseteq \cP^\epsilon$, using H\"older's inequality, we have that
\begin{align*}
\|S_4(\cP)\|_{L^1} &\lesssim \sum_{i=0}^{n-1} \|\delta Y_{t_i,t_{i+1}-}\|_{L^4}^2 \|\Delta Y_{t_{i+1}}\|_{L^2} + \sum_{i=0}^{n-1} \|\delta Y_{t_i,t_{i+1}-}\|_{L^2} \|\Delta Y_{t_{i+1}}\|_{L^4}^2\\
&\leq \sum_{i=0}^{n-1} \|Y\|_{p,q,r,[t_i,t_{i+1})}^2 \|Y\|_{p,q,r,[t_i,t_{i+1}]} + \sum_{i=0}^{n-1} \|Y\|_{p,q,r,[t_i,t_{i+1})} \|Y\|_{p,q,r,[t_i,t_{i+1}]}^2\\
&\leq 2 \epsilon^{3-p} \|\delta Y\|^p_{p,q,r,[0,t]},
\end{align*}
so that $S_4(\cP) \to 0$ in $L^1$ in the RRS sense.

Let $J = \{s \in (0,T] : \|\Delta Y_s\|_{L^1} > 0\}$. Again by Taylor's formula, we have that
\begin{equation*}
\sum_{s \in J} \Big\|\Delta f(Y)_s - \D f(Y_{s-}) \Delta Y_s - \frac{1}{2} \D^2 f(Y_{s-}) (\Delta Y_s \otimes \Delta Y_s)\Big\|_{L^1} \lesssim \sum_{s \in J} \|\Delta Y_s\|_{L^3}^3 \leq \|Y\|_{p,q,r,[0,t]}^3 < \infty.
\end{equation*}
Thus, for any $\epsilon > 0$, there exists a finite subset $J^\epsilon \subseteq J$ such that
\begin{equation*}
\bigg\|\sum_{s \in J \setminus J^\epsilon} \Delta f(Y)_s - \D f(Y_{s-}) \Delta Y_s - \frac{1}{2} \D^2 f(Y_{s-}) (\Delta Y_s \otimes \Delta Y_s)\bigg\|_{L^1} < \epsilon.
\end{equation*}
Then, for any partition $\cP$ which is a refinement of $J^\epsilon$, we have that
\begin{align*}
&\bigg\|S_2(\cP) - \sum_{s \in J} \Big(\Delta f(Y)_s - \D f(Y_{s-}) \Delta Y_s - \frac{1}{2} \D^2 f(Y_{s-}) (\Delta Y_s \otimes \Delta Y_s)\Big)\bigg\|_{L^1}\\
&\leq \bigg\|\sum_{s \in J \setminus J^\epsilon} \Delta f(Y)_s - \D f(Y_{s-}) \Delta Y_s - \frac{1}{2} \D^2 f(Y_{s-}) (\Delta Y_s \otimes \Delta Y_s)\bigg\|_{L^1} < \epsilon,
\end{align*}
which implies the desired RRS convergence of $S_2(\cP)$ in $L^1$.
\end{proof}

\section{Proof of the stochastic sewing lemma}\label{section: proof of sewing lemma}

This section is dedicated to the proof of Theorem~\ref{theorem: mild stochastic sewing lemma}. We largely follow the proof strategies of the mild and generalized sewing lemmas presented in \cite{FrizZhang2018}. However, in the present stochastic setting, the classical argument of Young used therein is not sufficient, as we need to rely on martingale cancellation estimates. As first observed in \cite{Le2020}, Young's argument must therefore be replaced by the allocation argument of Yaskov \cite{Yaskov2018}.

The allocation argument relies crucially on the existence of a sequence of refinements of partitions $(\cP^h)_{h \in \N}$ such that the growth of the number of partition points is dominated by the decay of the corresponding increment bounds. In the continuous setting, if
\[ \big\| \E_s [\delta \Xi_{s,u,t}] \big\|_{L^r} \sim |t-s|^{1 + \epsilon_1} \qquad \text{and} \qquad \|\delta \Xi_{s,u,t}\|_{q,r,s} \sim |t-s|^{\frac{1}{2} + \epsilon_2} \]
for some $\epsilon_1, \epsilon_2 > 0$, then the dyadic partitions $\cP^h = \{\frac{iT}{2^h}\}_{i=0}^{2^h}$ provide a suitable sequence. However, in the present discontinuous setting, we assume estimates of the form
\[ \big\| \E_s [\delta \Xi_{s,u,t}] \big\|_{L^r} \sim w(s,t)^{1 + \epsilon_1} \qquad \text{and} \qquad \|\delta \Xi_{s,u,t}\|_{q,r,s} \sim \bw(s,t)^{\frac{1}{2} + \epsilon_2}, \]
for \emph{distinct} discontinuous controls $w, \bw$. Thus, a dyadic construction with respect to a single control is not sufficient.

In \cite{Le2023}, L\^e introduced the notion of $w$-midpoints as an analogue of dyadic partitions for a single control which is continuous from the inside. While in principle one could define partitions using the sum of controls $w + \bw$ and proceed with L\^e's argument, this would only yield bounds for $\| \E_s [\delta \cI_{s,t} - \Xi_{s,t}] \|_{L^r}$ and $\| \delta \cI_{s,t} - \Xi_{s,t} \|_{q,r,s}$ in terms of this sum of controls, and, as also noted in \cite[Remark~4.10]{LiangTang2025}, such bounds are insufficient to obtain the required estimates for rough stochastic integrals.

To overcome this difficulty, in Definition~\ref{definition: alternating midpoints} we introduce the notion of \emph{alternating midpoints}. This provides a dyadic-type refinement scheme which simultaneously respects (an arbitrary number of) distinct controls.

A second technical obstacle is that midpoint constructions are only available for controls which are continuous from the inside, whereas the controls in our assumptions need not satisfy any continuity properties. To overcome this, in Lemma~\ref{lemma: alternating midpoints allocation} we establish a novel variant of the allocation argument.

The key observation is that, given a dyadic-type sequence of partitions $\{d^h_i\}_{i=0}^{2^h}$, $h \in \N$, we can allocate any given collection of times $t_0 < t_1 < \cdots < t_n$ into dyadic intervals such that $\sum_{i=0}^{n-1} \Xi_{t_i,t_{i+1}} - \Xi_{t_0,t_n}$ can be bounded in terms of the modified controls $w(\cdot+,\cdot-)$ and $\bw(\cdot+,\cdot-)$, which are continuous from the inside by construction. Moreover, we can isolate the midpoints at each level $h$, such that each midpoint $d^h_i$ only appears once in our estimates, which is crucial to avoid repeatedly counting the jump at $d^h_i$ at every subsequent level.

With these two new technical ingredients, the remainder of the proof proceeds as a hybrid of arguments from \cite{FrizZhang2018} and \cite{Le2023}.

\begin{lemma}\label{lemma: application of conditional BDG}
Let $q \in [2,\infty)$ and $r \in [q,\infty]$, and let $(\mathcal{G}_k)_{0 \leq k \leq N} = (\cF_{s_k})_{0 \leq k \leq N}$ for some $N \in \N$ and an increasing sequence of times $(s_k)_{0 \leq k \leq N}$. Suppose that $(z_k)_{1 \leq k \leq N}$ is a $(\mathcal{G}_k)$-adapted sequence of integrable random variables such that
\begin{equation*}
\E[z_k \,|\, \mathcal{G}_{k-1}] = 0
\end{equation*}
for every $k = 1, \ldots, N$. Then, for any $s \leq s_0$, we have that
\begin{equation*}
\bigg\|\sum_{k=1}^N z_k\bigg\|_{q,r,s} \leq C \bigg(\sum_{k=1}^N \|z_k\|^2_{q,r,s_{k-1}}\bigg)^{\hspace{-2pt}\frac{1}{2}},
\end{equation*}
where the constant $C$ depends only on $q$.
\end{lemma}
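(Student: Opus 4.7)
The plan is to apply the conditional Burkholder--Davis--Gundy (BDG) inequality to the discrete-time martingale $M_n := \sum_{k=1}^n z_k$ (with $M_0 = 0$), adapted to $(\mathcal{G}_k)_{k \geq 0}$, and then reduce back to the norms $\|\cdot\|_{q,r,s_{k-1}}$ via two successive applications of Minkowski's inequality and the tower property. Since $s \leq s_0$, the $\sigma$-algebra $\cF_s$ is contained in $\mathcal{G}_0$, so conditioning on $\cF_s$ preserves the martingale structure, and the conditional BDG inequality (valid for $q \geq 2$) yields
$$\E_s\big[|M_N|^q\big] \leq C_q \, \E_s\Big[\big(\textstyle \sum_{k=1}^N |z_k|^2 \big)^{q/2}\Big]$$
almost surely, where $C_q$ depends only on $q$.

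The next step is to disentangle the sum inside $\E_s$. Because $q/2 \geq 1$, the conditional Minkowski inequality gives $\E_s[(\sum_k |z_k|^2)^{q/2}]^{2/q} \leq \sum_k \E_s[|z_k|^q]^{2/q}$ almost surely. Raising to the power $q/2$, taking the $L^r$ norm, and using the identity $\|f^{q/2}\|_{L^r} = \|f\|_{L^{rq/2}}^{q/2}$ for non-negative $f$, I would then apply the ordinary Minkowski inequality in $L^{rq/2}$ (whose exponent is at least $1$, and which extends to $r = \infty$) to extract the sum, obtaining
$$\|M_N\|_{q,r,s}^q = \big\|\E_s[|M_N|^q]\big\|_{L^r} \leq C_q \Big(\sum_{k=1}^N \big\|\E_s[|z_k|^q]\big\|_{L^r}^{2/q}\Big)^{\hspace{-2pt}q/2}.$$

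The final step is to upgrade the conditioning from $\E_s$ to $\E_{s_{k-1}}$ in each summand. Since $s \leq s_0 \leq s_{k-1}$, the tower property yields $\E_s[|z_k|^q] = \E_s[\E_{s_{k-1}}[|z_k|^q]]$, and then the fact that $\E_s$ is a contraction on $L^r$ for every $r \in [1,\infty]$ (applied to the non-negative random variable $\E_{s_{k-1}}[|z_k|^q]$) gives $\|\E_s[|z_k|^q]\|_{L^r} \leq \|\E_{s_{k-1}}[|z_k|^q]\|_{L^r} = \|z_k\|_{q,r,s_{k-1}}^q$. Substituting and taking the $q$-th root delivers the claim with $C = C_q^{1/q}$. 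No substantive obstacle is anticipated: the only points requiring care are checking that each Minkowski step sits above its critical exponent (valid since $q \geq 2$ and $r \geq 1$) and that the argument covers vector-valued $z_k$ (the conditional BDG inequality applies to martingales in any finite-dimensional Banach space, which is the standing setting fixed in the previous subsection) together with the endpoint $r = \infty$.
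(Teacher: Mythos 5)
Your proof is correct and takes essentially the same route as the paper's: apply the conditional BDG inequality to the discrete martingale $\sum_{k\le n} z_k$ with respect to $(\mathcal{G}_n)$, using $\cF_s \subseteq \mathcal{G}_0$, and then pass the sum of $|z_k|^2$ outside the norm. The only cosmetic difference is that you unpack the triangle inequality for $\|\cdot\|_{\frac{q}{2},r,s}$ (Proposition~\ref{prop: Lqrs is Banach space}(vi)) into its two Minkowski ingredients (conditional Minkowski inside $\E_s$, then Minkowski in $L^{rq/2}$), and similarly unpack the monotonicity in $s$ (Proposition~\ref{prop: Lqrs is Banach space}(ii)) into the tower property plus the $L^r$-contractivity of conditional expectation, whereas the paper simply cites these norm properties.
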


\begin{proof}
Since the process $(\sum_{k=1}^n z_k)_{0 \leq n \leq N}$ is a martingale with respect to the filtration $(\mathcal{G}_n)_{0 \leq n \leq N}$, and $\cF_s \subseteq \mathcal{G}_0$, it follows from the conditional BDG inequality (see Proposition~A.1 in the first arXiv version of \cite{FrizHocquetLe2021}) that
\begin{equation*}
\E\bigg[\bigg|\sum_{k=1}^N z_k\bigg|^q \,\bigg|\, \cF_s\bigg] \leq \E\bigg[\bigg(\sup_{n \leq N} \bigg|\sum_{k=1}^n z_k\bigg| \bigg)^{\hspace{-2pt}q} \,\bigg|\, \cF_s\bigg] \lesssim \E\bigg[\bigg(\sum_{k=1}^N |z_k|^2\bigg)^{\hspace{-2pt}\frac{q}{2}} \,\bigg|\, \cF_s\bigg].
\end{equation*}
We then have that
\begin{align*}
&\bigg\| \E \bigg[\bigg|\sum_{k=1}^N z_k\bigg|^q \,\bigg|\, \cF_s\bigg]^{\frac{1}{q}} \bigg\|_{L^r} \lesssim \bigg\|\E\bigg[\bigg(\sum_{k=1}^N |z_k|^2\bigg)^{\hspace{-2pt}\frac{q}{2}} \,\bigg|\, \cF_s\bigg]^{\frac{1}{q}}\bigg\|_{L^r} = \bigg\| \sum_{k=1}^N |z_k|^2 \bigg\|_{\frac{q}{2},\frac{r}{2},s}^{\frac{1}{2}}\\
&\leq \bigg( \sum_{k=1}^N \big\| |z_k|^2 \big\|_{\frac{q}{2},\frac{r}{2},s} \bigg)^{\hspace{-2pt}\frac{1}{2}} = \bigg( \sum_{k=1}^N \|z_k\|_{q,r,s}^2 \bigg)^{\hspace{-2pt}\frac{1}{2}} \leq \bigg( \sum_{k=1}^N \|z_k\|_{q,r,s_{k-1}}^2 \bigg)^{\hspace{-2pt}\frac{1}{2}},
\end{align*}
where in the last inequality we used part (ii) of Proposition~\ref{prop: Lqrs is Banach space}.
\end{proof}

\begin{remark}\label{remark: the conditional BDG including sup}
It is clear from the proof of Lemma~\ref{lemma: application of conditional BDG} that the following stronger statement also holds:
\begin{equation*}
\bigg\|\sup_{n \leq N} \bigg|\sum_{k=1}^n z_k\bigg| \bigg\|_{q,r,s} \leq C \bigg(\sum_{k=1}^N \|z_k\|_{q,r,s_{k-1}}^2\bigg)^{\hspace{-2pt}\frac{1}{2}}.
\end{equation*}
\end{remark}

\begin{lemma}\label{lemma: continuity from the inside of tw}
Let $w \colon \Delta_{[0,T]} \to [0,\infty)$ be a control. Then the function $\tw$, defined by $\tw(s,t) := w(s+,t-) := \lim_{v \nearrow t} \lim_{u \searrow s} w(u,v)$ for $(s,t) \in \Delta_{[0,T]}$, is a control which is continuous from the inside.
\end{lemma}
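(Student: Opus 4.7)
The plan is to verify two things: that $\tw$ is superadditive, and that it is continuous from the inside in both arguments. Both follow from simple monotonicity considerations plus the right-continuity of $w$ in its first argument, but one needs to be careful with the order of the limits.

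For superadditivity, I would fix $s \leq u \leq t$ with $s < u < t$ (the boundary cases are trivial) and $\epsilon > 0$, and choose $v_1 \in [s,u)$ and $v_2 \in [u,t)$ such that $w(s,v_1) \geq \tw(s,u) - \epsilon$ and $w(u,v_2) \geq \tw(u,t) - \epsilon$. Since $v_1 \leq u \leq v_2$ and $s \mapsto w(s,v_2)$ is non-increasing,
\[
\tw(s,u) + \tw(u,t) - 2\epsilon \;\leq\; w(s,v_1) + w(u,v_2) \;\leq\; w(s,v_1) + w(v_1,v_2) \;\leq\; w(s,v_2) \;\leq\; w(s,t-) = \tw(s,t),
\]
where the second-to-last inequality uses superadditivity of $w$, and the last inequality uses that $v_2 < t$ and $v \mapsto w(s,v)$ is non-decreasing. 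Letting $\epsilon \to 0$ gives $\tw(s,u) + \tw(u,t) \leq \tw(s,t)$.

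For continuity in the second argument, I would use that $v \mapsto w(s,v)$ is non-decreasing, so $v \mapsto w(s,v-) = \sup_{v' < v} w(s,v')$ is also non-decreasing. For $v < t$, $w(s,v-) \leq w(s,v) \leq w(s,t-)$, giving $\lim_{v \nearrow t} \tw(s,v) \leq \tw(s,t)$. Conversely, for any $u < t$ pick $v \in (u,t)$; then $w(s,u) \leq w(s,v-) = \tw(s,v) \leq \lim_{v \nearrow t} \tw(s,v)$, and taking $\sup$ over $u < t$ yields $\tw(s,t) = w(s,t-) \leq \lim_{v \nearrow t} \tw(s,v)$.

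For continuity in the first argument, I would first note that $u \mapsto w(u,t-) = \lim_{v \nearrow t} w(u,v)$ is non-increasing (since each $w(\cdot,v)$ is), so the limit $\lim_{u \searrow s} \tw(u,t)$ exists and is bounded above by $\tw(s,t)$. For the reverse inequality, the key step — and the only place where the hypothesis that $w$ is right-continuous in its first argument actually enters — is to swap the order of the inner and outer limits. For each fixed $v < t$, right-continuity of $w$ in its first argument gives $w(s,v) = \lim_{u \searrow s} w(u,v)$, and since $w(u,v) \leq w(u,t-)$ for $v < t$, we obtain $w(s,v) \leq \lim_{u \searrow s} w(u,t-) = \lim_{u \searrow s} \tw(u,t)$. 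Taking $\sup$ over $v < t$ then gives $\tw(s,t) = w(s,t-) \leq \lim_{u \searrow s} \tw(u,t)$, completing the proof. The only genuine subtlety is that this last step is really a limit exchange, which is legitimate here because monotonicity allows us to replace limits by suprema.
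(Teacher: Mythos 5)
Your proof is correct and follows the same basic strategy as the paper's (exploit superadditivity and monotonicity of $w$, invoke right-continuity of $w$ in its first argument once, at the right moment). The only genuine difference is in the proof of right-continuity of $\tw$ in its first argument: the paper argues by contradiction, assuming $\tw(s+,t) < \tw(s,t) - \epsilon$ and then choosing $v$ close to $t$ to force $\tw(v,t) < 0$, whereas you argue directly by a limit exchange, observing that for each fixed $v < t$ the inequality $w(u,v) \leq w(u,t-)$ passes to the limit $u \searrow s$ by monotonicity, and then taking the supremum over $v < t$. Both are valid; your direct argument is arguably more transparent, since it makes explicit that the one use of right-continuity of $w$ in its first argument is to pin down $\lim_{u \searrow s} w(u,v)$ for fixed $v$, with monotonicity doing the rest. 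Your superadditivity argument via approximating points $v_1, v_2$ is essentially a rephrasing of the paper's, which instead passes to the limit in the chain $w(s,v) + w(v,u) + w(u,\tau) \leq w(s,\tau)$ with $v \nearrow u$ and $\tau \nearrow t$.
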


\begin{proof}
We let $s < \tau_1 < \tau_2 < u < \tau_3 < \tau_4 < t$. Since $w$ is non-negative and superadditive, we have that
\begin{equation*}
w(\tau_1,\tau_2) + w(\tau_3,\tau_4) \leq w(\tau_1,\tau_2) + w(\tau_2,\tau_3) + w(\tau_3,\tau_4) \leq w(\tau_1,\tau_4).
\end{equation*}
Letting $\tau_1 \searrow s$, $\tau_2 \nearrow u$, $\tau_3 \searrow u$ and $\tau_4 \nearrow t$, we obtain $\tw(s,u) + \tw(u,t) \leq \tw(s,t)$, so that $\tw$ is itself superadditive, and hence a control.

Let us write $\bw(s,t) := w(s+,t)$ (which is clearly right-continuous in $s$). We then have that $\tw(s,t) = \bw(s,t-)$, which is clearly left-continuous in $t$. It remains to show that, for any fixed $t$, the map $s \mapsto \tw(s,t) = \bw(s,t-)$ is still right-continuous. To this end, we assume for a contradiction that
\begin{equation*}
\tw(s+,t) < \tw(s,t) - \epsilon
\end{equation*}
for some $s < t$ and $\epsilon > 0$. Then, for any $u, v$ such that $s < u < v < t$, we have that
\begin{equation*}
\bw(u,v) + \bw(v,t-) \leq \bw(u,t-) = \tw(u,t) \leq \tw(s+,t) < \tw(s,t) - \epsilon = \bw(s,t-) - \epsilon,
\end{equation*}
and letting $u \searrow s$, we obtain
\begin{equation*}
\bw(s,v) + \tw(v,t) \leq \bw(s,t-) - \epsilon.
\end{equation*}
By choosing $v$ sufficiently close to $t$, we can ensure that $|\bw(s,v) - \bw(s,t-)| < \epsilon$, and the above then implies that $\tw(v,t) < 0$, which gives the required contradiction.
\end{proof}

For convenience, we recall the following useful observation. For a proof, see, e.g., \cite[Exercise~1.9]{FrizVictoir2010}.

\begin{lemma}\label{lemma: product of controls}
Let $w_1$ and $w_2$ be controls, and let $\alpha, \beta > 0$ such that $\alpha + \beta \geq 1$. Then $w_1^\alpha w_2^\beta$ is also a control.
\end{lemma}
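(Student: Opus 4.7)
The plan is to verify superadditivity of $F := w_1^\alpha w_2^\beta$ directly (nonnegativity is immediate), and to split into the boundary case $\alpha+\beta = 1$ and the reduction of the general case $\alpha+\beta \geq 1$ to it.

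First I would treat $\alpha+\beta=1$. Fix $s \leq u \leq t$ and apply Hölder's inequality for two-term sums with conjugate exponents $1/\alpha$ and $1/\beta$, namely
\begin{equation*}
a_1 b_1 + a_2 b_2 \leq \bigl(a_1^{1/\alpha} + a_2^{1/\alpha}\bigr)^\alpha \bigl(b_1^{1/\beta} + b_2^{1/\beta}\bigr)^\beta,
\end{equation*}
to the data $(a_i) = (w_1(s,u)^\alpha, w_1(u,t)^\alpha)$ and $(b_i) = (w_2(s,u)^\beta, w_2(u,t)^\beta)$. This yields
\begin{equation*}
F(s,u) + F(u,t) \leq \bigl(w_1(s,u) + w_1(u,t)\bigr)^\alpha \bigl(w_2(s,u) + w_2(u,t)\bigr)^\beta,
\end{equation*}
and then superadditivity of $w_1$ and $w_2$ individually bounds each bracket by $w_1(s,t)$, respectively $w_2(s,t)$, giving $F(s,u) + F(u,t) \leq F(s,t)$ as required.

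Next I would reduce general $\gamma := \alpha + \beta \geq 1$ to the previous step. Setting $\alpha' := \alpha/\gamma$ and $\beta' := \beta/\gamma$ (so that $\alpha' + \beta' = 1$), the first step shows that $\tw := w_1^{\alpha'} w_2^{\beta'}$ is a control, and by construction $F = \tw^\gamma$. It therefore suffices to record the auxiliary fact that \emph{any $\gamma$-th power of a control is itself a control whenever $\gamma \geq 1$}: this follows from the elementary inequality $a^\gamma + b^\gamma \leq (a+b)^\gamma$ for $a,b \geq 0$, applied to $a = \tw(s,u)$, $b = \tw(u,t)$, together with superadditivity of $\tw$.

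There is no real obstacle in this argument; the entire content is the classical Hölder-for-two-term-sums trick combined with standard exponent manipulation. The only point to verify carefully is that the hypotheses $\alpha, \beta > 0$ ensure the Hölder exponents $1/\alpha, 1/\beta$ lie in $(1,\infty)$, which is built into the statement.
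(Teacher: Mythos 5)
Your proof is correct and follows the standard approach (the paper itself cites Exercise~1.9 of Friz--Victoir rather than giving a proof): Hölder's inequality for two-term sums in the boundary case $\alpha+\beta=1$, then reducing the case $\gamma=\alpha+\beta>1$ to the boundary case by raising the control $w_1^{\alpha/\gamma}w_2^{\beta/\gamma}$ to the power $\gamma$ and using superadditivity of $x\mapsto x^\gamma$ on $[0,\infty)$ for $\gamma\geq 1$. All the exponent bookkeeping checks out.
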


We recall from \cite{Le2023} the following notion of a $w$-midpoint.

\begin{definition}
Let $w$ be a control. We define the \emph{$w$-midpoint} of an arbitrary interval $[s,t]$ as the number
$$d^w(s,t) := \inf \Big\{u \in [s,t] : w(s,u) \geq \frac{1}{2} w(s,t)\Big\}.$$
\end{definition}

As shown in \cite[p.~10]{Le2023}, if a control $w$ is continuous from the inside, then the $w$-midpoint $d^w = d^w(s,t)$ of an interval $[s,t]$ satisfies
\begin{equation}\label{eq: property of w-midpoints}
w(s,d^w) \leq \frac{1}{2} w(s,t) \qquad \text{and} \qquad w(d^w,t) \leq \frac{1}{2} w(s,t).
\end{equation}

For our purposes, we also introduce the following notion of alternating midpoints.

\begin{definition}\label{definition: alternating midpoints}
Let $w_1, \ldots, w_N$ be controls, for some $N \geq 2$. We define the \emph{$(w_1, \ldots, w_N)$-alternating midpoints} of an arbitrary interval $[s,t]$, denoted by $d^h_i(s,t)$ for each $h \in \N_0$ and $i = 0, 1, \ldots, 2^h$, as follows. Let $d^0_0(s,t) = s$ and $d^0_1(s,t) = t$. For $h \geq 1$, if $i$ is even, then we let $d^h_i(s,t)= d^{h-1}_{\frac{i}{2}}(s,t)$, and if $i$ is odd, then we let
\begin{equation*}
d^h_{i}(s,t) := d^{w_j}\Big(d^{h-1}_{\frac{i-1}{2}}(s,t), d^{h-1}_{\frac{i+1}{2}}(s,t)\Big) \qquad \text{if} \quad h = j \mod N,
\end{equation*}
that is, the $w_j$-midpoint of the interval $[d^{h-1}_{(i-1)/2}(s,t), d^{h-1}_{(i+1)/2}(s,t)]$.
\end{definition}

As in \cite{Le2023}, one can use $w$-midpoints to recursively split intervals into subintervals in a way which is consistent with the control $w$. The idea behind alternating midpoints is that we can alternate which control is used to perform the splitting at each level $h$.

\begin{lemma}\label{lemma: alternating-midpoints may replace dyadics}
Let $w_1, \ldots, w_N$ be controls which are all continuous from the inside, and let $d^h_i = d^h_i(s,t)$ for $h \in \N_0$ and $i = 0, 1, \ldots, 2^h$ be the $(w_1, \ldots, w_N)$-alternating midpoints of an interval $[s,t]$. Then, for every $h \in \N_0$, $i = 0, 1, \ldots, 2^h - 1$, and every $j = 1, 2, \ldots, N$, we have that
\begin{equation*}
w_j(d^h_i, d^h_{i+1}) \leq 2^{-\lfloor \frac{h}{N} \rfloor} w_j(s,t).
\end{equation*}
\end{lemma}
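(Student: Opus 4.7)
The plan is to induct in \emph{blocks of $N$ levels}, rather than level-by-level. The motivation for this: a naive induction on $h$ would break down at $h \equiv 0 \pmod N$, because the target bound $2^{-\lfloor h/N \rfloor} w_j(s,t)$ strengthens simultaneously for every $j$, yet the $h$-th bisection only uses the single control $w_{j_0}$ with $h \equiv j_0 \pmod N$; for indices $j \neq j_0$ one gains no new halving at that step. Grouping $N$ consecutive levels together circumvents this, because within any full block each $w_j$ is used as the midpoint control exactly once, so a halving for $w_j$ is guaranteed to occur somewhere in the block.

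First, I would reduce to the case $h = kN$. For general $h = kN + r$ with $0 \leq r < N$, the sequence of partitions produced by Definition~\ref{definition: alternating midpoints} is refining, so $[d^h_i, d^h_{i+1}] \subseteq [d^{kN}_{i'}, d^{kN}_{i'+1}]$ for some $i'$. Since every control is monotone with respect to interval inclusion (an immediate consequence of superadditivity together with non-negativity), it follows that $w_j(d^h_i, d^h_{i+1}) \leq w_j(d^{kN}_{i'}, d^{kN}_{i'+1})$, and the desired bound transfers. This reduces the statement to proving $w_j(d^{kN}_i, d^{kN}_{i+1}) \leq 2^{-k} w_j(s,t)$ for all $k \geq 0$, $i$ and $j$.

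Next, I would induct on $k$. The case $k=0$ is immediate. For the inductive step, I fix a parent interval $[a,b] := [d^{(k-1)N}_{i'}, d^{(k-1)N}_{i'+1}]$, for which the inductive hypothesis yields $w_j(a,b) \leq 2^{-(k-1)} w_j(s,t)$ for every $j$. By the definition of the alternating midpoints, the block of levels $(k-1)N + 1, \ldots, kN$ applies the $w_1$-bisection, then the $w_2$-bisection, up to the $w_N$-bisection successively inside $[a,b]$, producing $2^N$ sub-intervals at level $kN$. The crux is then to show that every such sub-interval $K$ satisfies $w_j(K) \leq \tfrac{1}{2} w_j(a,b)$ for every $j \in \{1,\ldots,N\}$: at the $j$-th bisection step of the block, each of the $2^{j-1}$ intervals $J$ present is split at its $w_j$-midpoint, so by \eqref{eq: property of w-midpoints}---applicable since each $w_j$ is continuous from the inside by hypothesis---both halves have $w_j$-value at most $\tfrac{1}{2} w_j(J) \leq \tfrac{1}{2} w_j(a,b)$ (the last step using monotonicity of $w_j$). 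Subsequent bisection steps only further refine, and monotonicity preserves this bound. Combining with the inductive hypothesis closes the induction.

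The main obstacle is conceptual rather than technical: recognising that the claim resists a direct level-by-level induction (because of the simultaneous improvement for all $j$ at levels $h$ divisible by $N$) and that the right organisation is to bundle $N$ consecutive levels into a single inductive step. Once this reformulation is adopted, the remaining ingredients---the $w$-midpoint property \eqref{eq: property of w-midpoints}, the monotonicity of controls under inclusion, and the explicit cyclic ordering $w_1, \ldots, w_N$ of the midpoint operations within each block---combine without further difficulty.
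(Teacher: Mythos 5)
Your proof is correct and follows essentially the same strategy as the paper's: both proceed by an induction organised over blocks of $N$ consecutive levels, exploiting that each control $w_j$ is used exactly once per block so that the $w$-midpoint property \eqref{eq: property of w-midpoints} (together with monotonicity of controls under interval inclusion) supplies a factor of $\tfrac{1}{2}$ for every $j$ simultaneously. The only cosmetic difference is that you first reduce to $h$ a multiple of $N$ before inducting, whereas the paper phrases the inductive statement directly over all $h$ in the block $mN \leq h < (m+1)N$ and locates the unique level $\ell \in (h-N, h]$ with $\ell \equiv j \pmod N$ at which the $w_j$-bisection occurs.
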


\begin{proof}
Let us fix an arbitrary $j = 1, 2, \ldots, N$. We need to verify that $w_j(d^h_i, d^h_{i+1}) \leq 2^{-m} w_j(s,t)$ whenever $mN \leq h < (m+1)N$, for every $m \in \N$. We will show this by induction on $m$, noting that the result is trivial when $m = 0$.

Let us assume that $w_j(d^h_i, d^h_{i+1}) \leq 2^{-(m-1)} w_j(s,t)$ whenever $(m-1)N \leq h < mN$, for some $m \geq 1$, and then take some $h$ such that $mN \leq h < (m+1)N$. There exists a unique $\ell$ such that $h - N < \ell \leq h$ and $\ell = j \mod N$. Then, for any given $i$, there is a unique $a$ such that $[d^h_i, d^h_{i+1}] \subseteq [d^{\ell}_a, d^{\ell}_{a+1}]$. Since one of the points $d^{\ell}_a, d^{\ell}_{a+1}$ is the $w_j$-midpoint of some interval $[d^{\ell-1}_b, d^{\ell-1}_{b+1}]$, we have from \eqref{eq: property of w-midpoints} that
\begin{equation*}
w_j(d^h_i, d^h_{i+1}) \leq w_j(d^{\ell}_a, d^{\ell}_{a+1}) \leq \frac{1}{2} w_j(d^{\ell-1}_b, d^{\ell-1}_{b+1}) \leq \frac{1}{2} w_j(d^{h-N}_c, d^{h-N}_{c+1})
\end{equation*}
for some $c$. We have from the inductive hypothesis that $w_j(d^{h-N}_c, d^{h-N}_{c+1}) \leq 2^{-(m-1)} w_j(s,t)$, and, substituting this into the above, we obtain $w_j(d^h_i, d^h_{i+1}) \leq 2^{-m} w_j(s,t)$.
\end{proof}

\begin{lemma}\label{lemma: summable bounds for the allocation w1,...,wN}
Let $w_1, \ldots, w_N$ be controls which are all continuous from the inside, and let $d^h_i = d^h_i(s,t)$ for $h \in \N_0$ and $i = 0, 1, \ldots, 2^h$ be the $(w_1, \ldots, w_N)$-alternating midpoints of an interval $[s,t]$. Let $\theta > 1$. Then, for every $h \in \N_0$ and every $j = 1, 2, \ldots, N$, we have that
\begin{equation*}
\sum_{i=0}^{2^h - 1} w_j(d^h_i, d^h_{i+1})^\theta \leq 2^{-(\theta - 1) \lfloor \frac{h}{N} \rfloor} w_j(s,t)^\theta.
\end{equation*}
\end{lemma}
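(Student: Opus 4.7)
The plan is to combine the previous lemma (which gives a uniform per-interval bound on $w_j(d^h_i, d^h_{i+1})$) with the superadditivity of the control $w_j$, via the standard trick of factoring out the maximum to reduce a sum of $\theta$-th powers to a sum of first powers.

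First I would write
\begin{equation*}
\sum_{i=0}^{2^h - 1} w_j(d^h_i, d^h_{i+1})^\theta \leq \Big(\max_{0 \leq i < 2^h} w_j(d^h_i, d^h_{i+1})\Big)^{\theta - 1} \sum_{i=0}^{2^h - 1} w_j(d^h_i, d^h_{i+1}),
\end{equation*}
which is valid because $\theta - 1 > 0$ and all terms are non-negative. Then I would apply Lemma~\ref{lemma: alternating-midpoints may replace dyadics} to the maximum, giving
\begin{equation*}
\max_{0 \leq i < 2^h} w_j(d^h_i, d^h_{i+1}) \leq 2^{-\lfloor \frac{h}{N} \rfloor} w_j(s,t),
\end{equation*}
and use the superadditivity of $w_j$ (iterated across the partition $\{d^h_0, d^h_1, \ldots, d^h_{2^h}\}$ of $[s,t]$) to obtain
\begin{equation*}
\sum_{i=0}^{2^h - 1} w_j(d^h_i, d^h_{i+1}) \leq w_j(s,t).
\end{equation*}

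Substituting both estimates yields
\begin{equation*}
\sum_{i=0}^{2^h - 1} w_j(d^h_i, d^h_{i+1})^\theta \leq \big(2^{-\lfloor \frac{h}{N} \rfloor} w_j(s,t)\big)^{\theta - 1} w_j(s,t) = 2^{-(\theta - 1) \lfloor \frac{h}{N} \rfloor} w_j(s,t)^\theta,
\end{equation*}
which is the claimed bound. There is no real obstacle here: the whole content is already packaged in the previous lemma, and the only new ingredient is the elementary $\ell^\theta$-vs-$\ell^1$ inequality combined with superadditivity. I would not expect to need continuity from the inside of the $w_j$ directly in this step, since it has already been used upstream to justify the $w$-midpoint inequalities \eqref{eq: property of w-midpoints} that underlie Lemma~\ref{lemma: alternating-midpoints may replace dyadics}.
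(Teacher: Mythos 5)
Your proof is correct, and it takes a genuinely cleaner route than the paper's. The paper proves this bound by a fresh induction on $m$ (where $mN \leq h < (m+1)N$): the base case uses superadditivity of $w_j^\theta$, and the inductive step first collapses the level-$h$ sum to the level-$\ell$ sum (where $\ell \equiv j \bmod N$ is chosen with $h-N < \ell \leq h$) via superadditivity of $w_j^\theta$, then applies the $w_j$-midpoint inequality \eqref{eq: property of w-midpoints} to extract a factor $2^{-(\theta-1)}$, and finally invokes the inductive hypothesis at level $h-N$. In effect the paper re-runs the same midpoint-halving induction that already underlies Lemma~\ref{lemma: alternating-midpoints may replace dyadics}, just tracking the $\theta$-th power. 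Your argument instead treats Lemma~\ref{lemma: alternating-midpoints may replace dyadics} as a black box for the $\ell^\infty$ bound, pairs it with the trivial $\ell^1$ bound $\sum_i w_j(d^h_i,d^h_{i+1}) \leq w_j(s,t)$ from superadditivity, and interpolates via $\sum a_i^\theta \leq (\max_i a_i)^{\theta-1}\sum_i a_i$ (valid since $\theta > 1$ and $a_i \geq 0$). This gives exactly $2^{-(\theta-1)\lfloor h/N\rfloor}w_j(s,t)^\theta$ with no additional induction, and correctly identifies that continuity from the inside enters only upstream through \eqref{eq: property of w-midpoints}. The trade-off is minimal: the paper's inductive argument is self-contained and does not need Lemma~\ref{lemma: alternating-midpoints may replace dyadics} as a prerequisite, but since that lemma is established immediately beforehand and is needed elsewhere anyway, your factorization is the more economical derivation.
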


\begin{proof}
Let us fix an arbitrary $j = 1, 2, \ldots, N$. We need to verify that $\sum_{i=0}^{2^h - 1} w_j(d^h_i, d^h_{i+1})^\theta \leq 2^{-(\theta - 1) m} w_j(s,t)^\theta$ whenever $mN \leq h < (m+1)N$, for every $m \in \N$. We will show this by induction on $m$. When $m = 0$, the result follows from the observation that $w_j^\theta$ is superadditive.

Let us assume that $\sum_{i=0}^{2^h - 1} w_j(d^h_i, d^h_{i+1})^\theta \leq 2^{-(\theta - 1) (m-1)} w_j(s,t)^\theta$ whenever $(m-1)N \leq h < mN$, for some $m \geq 1$, and then take some $h$ such that $mN \leq h < (m+1)N$. There exists a unique $\ell$ such that $h - N < \ell \leq h$ and $\ell = j \mod N$. By the superadditivity of $w_j^\theta$, we have that
\begin{equation*}
\sum_{i=0}^{2^h - 1} w_j(d^h_i, d^h_{i+1})^\theta \leq \sum_{k=0}^{2^\ell - 1} w_j(d^\ell_k, d^\ell_{k+1})^\theta = \sum_{n=0}^{2^{\ell - 1} - 1} \Big(w_j(d^\ell_{2n}, d^\ell_{2n+1})^\theta + w_j(d^\ell_{2n+1}, d^\ell_{2(n+1)})^\theta\Big).
\end{equation*}
Since $d^\ell_{2n+1}$ is the $w_j$-midpoint of the interval $[d^{\ell - 1}_n, d^{\ell - 1}_{n+1}]$, we then have from \eqref{eq: property of w-midpoints} that
\begin{equation*}
\sum_{i=0}^{2^h - 1} w_j(d^h_i, d^h_{i+1})^\theta \leq \sum_{n=0}^{2^{\ell - 1} - 1} 2 \bigg(\frac{1}{2} w_j(d^{\ell - 1}_n, d^{\ell - 1}_{n+1})\bigg)^{\hspace{-2pt}\theta} \leq 2^{-(\theta-1)} \sum_{i=0}^{2^{h-N} - 1} w_j(d^{h-N}_i, d^{h-N}_{i+1})^\theta.
\end{equation*}
We have from the inductive hypothesis that
$$\sum_{i=0}^{2^{h-N} - 1} w_j(d^{h-N}_i, d^{h-N}_{i+1})^\theta \leq 2^{-(\theta - 1) (m-1)} w_j(s,t)^\theta,$$
and, substituting this into the above, we deduce the result.
\end{proof}

The following lemma is a variant of the $w$-allocation result of \cite[Lemma~3.6]{Le2023}.

\begin{lemma}\label{lemma: alternating midpoints allocation}
Let $(\Xi_{s,t})_{(s,t) \in \Delta_{[0,T]}}$ be a two-parameter process such that $\Xi_{s,s} = 0$ for every $s \in [0,T]$. Let $w_1, \ldots, w_N$ be controls which are all continuous from the inside, and let $w(s,t) = t - s$ for $(s,t) \in \Delta_{[0,T]}$, which defines a continuous control $w$. Let $d^h_i = d^h_i(s,t)$ for $h \in \N_0$ and $i = 0, 1, \ldots, 2^h$, be the $(w_1, \ldots, w_N, w)$-alternating midpoints of an interval $[s,t]$. Finally, let $\{t_0 < t_1 < \cdots < t_n\}$ be a finite set of points in $[s,t]$. Then there exists a positive integer $h_0$, and random variables $R^h_i$, $J^h_i$ for each $h \in \N_0$ and $i = 0, 1, \ldots, 2^h - 1$, such that the following hold.
\begin{enumerate}
\item[(i)] $R^h_i = J^h_i = 0$ for every $h \geq h_0$ and every $i$.
\item[(ii)] For each $h \in \N_0$ and $i = 0, 1, \ldots, 2^h - 1$ such that $d^h_i < d^h_{i+1}$, there exist four (not necessarily distinct) points $s_1^{h,i} \leq s_2^{h,i} \leq s_3^{h,i} \leq s_4^{h,i}$, which all lie in the open interval $(d^h_i,d^h_{i+1})$, such that
\begin{equation}\label{eq: R^h,2_i decomp in w-allocation general stoch sewing, w1,...,wN}
R^h_i = -\delta \Xi_{s_1^{h,i},s_2^{h,i},s_3^{h,i}} - \delta \Xi_{s_1^{h,i},s_3^{h,i},s_4^{h,i}}.
\end{equation}
\item[(iii)] For every $h \in \N$ and every $i$ such that $d^h_i < d^h_{i+1}$, there exists an additional point $s_0^{h,i} \in \{d^h_i,s_1^{h,i}\}$, such that
\begin{equation}\label{eq: R^h,1_i decomp in w-allocation general stoch sewing, w1,...,wN}
J^h_i = -\delta \Xi_{s_0^{h,i},s_1^{h,i},s_4^{h,i}},
\end{equation}
and whenever $i$ is even we have that $s^{h,i}_0 = s^{h,i}_1$ so that in particular $J^h_i = 0$. Moreover, we have that
\begin{equation}\label{eq: defn of R^0,1}
J^0_0 = -\delta \Xi_{t_0,s^{0,0}_1,t_n} - \delta \Xi_{s^{0,0}_1,s^{0,0}_4,t_n}.
\end{equation}
\item[(iv)] We have the equality
\begin{equation}\label{eq: decomposition I(cP) general stoch sewing, w1,...wN}
\sum_{i=0}^{n-1} \Xi_{t_i,t_{i+1}} - \Xi_{s,t} = \sum_{h=0}^\infty \sum_{i=0}^{2^h - 1} (R^h_i + J^h_i).
\end{equation}
\end{enumerate}
\end{lemma}

\begin{proof}
For any collection of times $\cQ = \{u_i\}_{i=0}^K$, we write
\[ I(\cQ) := \sum_{i=0}^{K-1} \Xi_{u_i,u_{i+1}} - \Xi_{u_0,u_K}, \]
with the convention that $I(\cQ) = 0$ whenever $\# \cQ \leq 1$. Writing $\cP = \{t_i\}_{i=0}^n$, we define, for every $h \in \N$ and $i = 0, 1, \ldots, 2^h - 1$ such that $d^h_i < d^h_{i+1}$,
\begin{equation*}
\cP^h_i = \begin{cases}
\cP \cap (d^h_i,d^h_{i+1}), &\text{if $i$ is even,}\\
\cP \cap [d^h_i,d^h_{i+1}), &\text{if $i$ is odd.}
\end{cases}
\end{equation*}

For any $h \in \N$ and $i = 0, \ldots, 2^h-1$ we then define a random variable $I^h_i$ by letting $I^h_i = 0$ if $\# \cP^h_i \leq 2$, and otherwise letting
\[ I^h_i := I(\cP^h_i) - I(\cP^{h+1}_{2i}) - I(\cP^{h+1}_{2i+1}). \]
We also let $I^0_0 := I(\cP) - I(\cP^1_0) - I(\cP^1_1)$.

If either $\cP^{h+1}_{2i}$ or $\cP^{h+1}_{2i+1}$ is empty, then we set $R^h_i = 0$ and \eqref{eq: R^h,2_i decomp in w-allocation general stoch sewing, w1,...,wN} is satisfied with $s^{h,i}_j = d^h_i$ for $j = 1, 2, 3, 4$. If $\cP^{h+1}_{2i}$ and $\cP^{h+1}_{2i+1}$ are not empty, we define $s_0^{h,i} \leq s_1^{h,i} \leq s_2^{h,i} \leq s_3^{h,i} \leq s_4^{h,i}$ by
\begin{align*}
s_1^{h,i} = \min \cP_{2i}^{h+1}, ~~ s_2^{h,i} = \max \cP_{2i}^{h+1}, ~~ s_3^{h,i} = \min \cP_{2i+1}^{h+1}, ~~ s_4^{h,i} = \max \cP_{2i+1}^{h+1},
\end{align*}
and additionally $s_0^{h,i} = \min \cP_i^h \cap [d_i^h,d_{2i+1}^{h+1})$.

We note that, if $i$ is even, then necessarily $d^h_i \notin \cP^h_i$, so we simply have that $s^{h,i}_0 = s^{h,i}_1$. It is only when $i$ is odd that we might have that $d^h_i \in \cP^h_i$, in which case we would have that $s^{h,i}_0 = d^h_i < s^{h,i}_1$.

It is straightforward to see that, for every $h \in \N$ and every $i$,
\begin{equation*}
I^h_i = -\delta \Xi_{s_0^{h,i},s_1^{h,i},s_4^{h,i}} - \delta \Xi_{s_1^{h,i},s_2^{h,i},s_3^{h,i}} - \delta \Xi_{s_1^{h,i},s_3^{h,i},s_4^{h,i}}.
\end{equation*}
In particular, we see that $I^h_i = R^h_i + J^h_i$, with $R^h_i$ and $J^h_i$ defined as in \eqref{eq: R^h,2_i decomp in w-allocation general stoch sewing, w1,...,wN} and \eqref{eq: R^h,1_i decomp in w-allocation general stoch sewing, w1,...,wN}.

In the case when $h = 0$, we may have that $t_0 = d^0_0$ or $t_n = d^0_1$ (or both), so we first separate these points using the expression in \eqref{eq: defn of R^0,1}. More precisely, we note that $I^0_0 = R^0_0 + J^0_0$.

Thus, by recursively applying the definition of $I^h_i$, we find that
\begin{equation*}
I(\cP) = I(\cP^1_0) + I(\cP^1_1) + I^0_0 = \sum_{i=0}^{2^k-1} I(\cP^k_i) + \sum_{h=0}^{k-1} \sum_{i=0}^{2^h-1} I^h_i
\end{equation*}
for each $k \in \N$. By the inclusion of the continuous control $w$ in the alternating midpoints, we ensure that\footnote{That the mesh size of this sequence of partitions tends to zero is implicitly used in the proof of \cite[Lemma~3.6]{Le2023}, even though it does not actually hold when $w$-allocation is performed with a general control $w$. However, a modification of the proof shows that the stochastic sewing lemma of \cite{Le2023} is nonetheless true.} $\max_i |\cP^h_i| \to 0$ as $h \to \infty$, which guarantees that $\# \cP^h_i \leq 1$ and hence $I(\cP^h_i) = 0$ for all sufficiently large $h$. Thus, taking the limit as $k \to \infty$, we obtain
\begin{align*}
I(\cP) = \sum_{h=0}^\infty \sum_{i=0}^{2^h-1} I^h_i = \sum_{h=0}^\infty \sum_{i=0}^{2^h-1} (R^h_i + J^h_i).
\end{align*}
\end{proof}

\begin{lemma}\label{lemma: application joint w-allocation}
Let $q \in [2,\infty)$, $r \in [q,\infty]$, let $w_1$ and $w_2$ be controls, and let $\theta > 1$. Let $(\Xi_{s,t})_{(s,t) \in \Delta_{[0,T]}}$ be a two-parameter process such that $\Xi_{s,t}$ is $\cF_t$-measurable for every $(s,t) \in \Delta_{[0,T]}$, and such that $\Xi_{s,s} = 0$ for every $s \in [0,T]$. Suppose that there exists a partition $\cP = \{0 = s_0 < s_1 < \cdots < s_n = T\}$ of $[0,T]$ such that, for any refinement $\cP' \supseteq \cP$, any interval $[u_1,u_3] \in \cP'$ and any $u_2 \in (u_1,u_3)$, we have that
\begin{equation}\label{eq: bound E_u_1 delta Xi u_1 u_2 u_3 Lqr}
\big\|\E_{u_1} [\delta \Xi_{u_1,u_2,u_3}]\big\|_{L^{r}} \leq w_1(u_1,u_3)^\theta
\end{equation}
and
\begin{equation}\label{eq: bound delta Xi u_1 u_2 u_3 q r u_1}
\|\delta \Xi_{u_1,u_2,u_3}\|_{q,r,u_1} \leq w_2(u_1,u_3)^{\frac{\theta}{2}}.
\end{equation}
Then there exists a constant $C$, which depends only on $q$ and $\theta$, such that we have the following estimates.
\begin{enumerate}
\item[(i)] For any interval $[s_j,s_{j+1}] \in \cP$, and any finite collection of times $\{t_0 < t_1 < \cdots < t_m\} \subset [s_j,s_{j+1}]$, we have that
\begin{equation}\label{eq: uniform Lqr - bound on E_s[I(cP)] general stoch sewing}
\bigg\|\E_{s_j} \bigg[ \sum_{i=0}^{m-1} \Xi_{t_i,t_{i+1}} - \Xi_{t_0,t_m} \bigg]\bigg\|_{L^{r}} \leq C w_1(s_j,s_{j+1})^\theta
\end{equation}
and
\begin{equation}\label{eq: uniform q,r,s - bound on I(cP) general stoch sewing}
\bigg\| \sum_{i=0}^{m-1} \Xi_{t_i,t_{i+1}} - \Xi_{t_0,t_m} \bigg\|_{q,r,s_j} \leq C \big(w_1(s_j,s_{j+1})^\theta + w_2(s_j,s_{j+1})^{\frac{\theta}{2}}\big).
\end{equation}
\item[(ii)] For each $j = 0, 1, \ldots, n - 1$, let $\cP^j$ be a partition of the interval $[s_j,s_{j+1}]$. Then
\begin{equation}\label{eq: Lqr bound for sum of E_s[I(cP)] for disjoint intervals general stoch sewing}
\bigg\| \E_0 \bigg[ \sum_{j=0}^{n-1} \bigg( \sum_{[u,v] \in \cP^j} \Xi_{u,v} - \Xi_{s_j,s_{j+1}} \bigg) \bigg] \bigg\|_{L^{r}} \leq C w_1(0,T)^\theta
\end{equation}
and
\begin{equation}\label{eq: bound for sum of I(cP) for disjoint intervals general stoch sewing}
\bigg\| \sum_{j=0}^{n-1} \bigg( \sum_{[u,v] \in \cP^j} \Xi_{u,v} - \Xi_{s_j,s_{j+1}} \bigg) \bigg\|_{q,r,0} \leq C \big(w_1(0,T)^\theta + w_2(0,T)^{\frac{\theta}{2}}\big).
\end{equation}
\end{enumerate}
\end{lemma}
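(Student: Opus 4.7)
The strategy is a two-layer decomposition. Inside each interval $[s_j,s_{j+1}]$ of $\cP$, apply the alternating-midpoints allocation of Lemma~\ref{lemma: alternating midpoints allocation} to decompose the sum into three-point differences $R^h_i = -\delta\Xi_{s_1,s_2,s_3} - \delta\Xi_{s_1,s_3,s_4}$; then split each $R^h_i$ into a conditionally centred part and a conditionally predictable part, treating the former via conditional BDG (Lemma~\ref{lemma: application of conditional BDG}) and the latter directly via hypothesis \eqref{eq: bound E_u_1 delta Xi u_1 u_2 u_3 Lqr}. For part (ii), iterate the same centred-plus-predictable decomposition across the intervals of $\cP$ themselves.

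Before applying Lemma~\ref{lemma: alternating midpoints allocation} pass from $w_1,w_2$ to the continuous-from-the-inside versions $\widetilde{w}_i(s,t) := w_i(s,t-)$ (Lemma~\ref{lemma: continuity from the inside of tw}), which satisfy $\widetilde{w}_i \leq w_i$. For part (i), fix $[s_j,s_{j+1}] \in \cP$ and $\{t_0 < \cdots < t_m\} \subseteq [s_j,s_{j+1}]$, and apply the allocation on $[t_0,t_m]$ with three controls $(\widetilde{w}_1,\widetilde{w}_2,w)$, $w(s,t)=t-s$, obtaining the (finite) decomposition $\sum_{i=0}^{m-1} \Xi_{t_i,t_{i+1}} - \Xi_{t_0,t_m} = \sum_{h \geq 0}\sum_{i=0}^{2^h-1} R^h_i$. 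Since each triple in $R^h_i$ lies in $[s_j,s_{j+1}]$, and hence between consecutive points of some refinement of $\cP$, the hypotheses \eqref{eq: bound E_u_1 delta Xi u_1 u_2 u_3 Lqr}--\eqref{eq: bound delta Xi u_1 u_2 u_3 q r u_1} apply to each. Write $R^h_i = \E_{s_1^{h,i}}[R^h_i] + z^h_i$. Conditional Jensen (cf.~\eqref{eq: L^qr norm of E_s bounded by q,r,s}) together with \eqref{eq: bound E_u_1 delta Xi u_1 u_2 u_3 Lqr} gives $\|\E_{s_j}[\E_{s_1^{h,i}}[R^h_i]]\|_{L^{qr}} \leq 2\widetilde{w}_1(d^h_i,d^h_{i+1})^\theta$, and Lemma~\ref{lemma: summable bounds for the allocation w1,...,wN} summed over a geometric series in $h$ produces \eqref{eq: uniform Lqr - bound on E_s[I(cP)] general stoch sewing}. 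For the centred terms, $z^h_i$ is $\cF_{d^h_{i+1}}$-measurable and satisfies $\E[z^h_i \mid \cF_{d^h_i}] = 0$ (as $s_1^{h,i} \geq d^h_i$), so $(z^h_i)_i$ is a martingale difference sequence; Lemma~\ref{lemma: application of conditional BDG} together with \eqref{eq: bound delta Xi u_1 u_2 u_3 q r u_1} and Lemma~\ref{lemma: summable bounds for the allocation w1,...,wN} yields a bound decaying geometrically in $h$, whose sum in $h$ is the $w_2(s_j,s_{j+1})^{\theta/2}$ contribution of \eqref{eq: uniform q,r,s - bound on I(cP) general stoch sewing}. Since $\E_{s_j}[z^h_i]=0$, the centred terms contribute nothing to the $L^{qr}$ bound, completing part (i).

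For part (ii), set $A_j := \sum_{[u,v] \in \cP^j} \Xi_{u,v} - \Xi_{s_j,s_{j+1}}$, for which part (i) supplies $\|\E_{s_j}[A_j]\|_{L^{qr}} \leq Cw_1(s_j,s_{j+1})^\theta$ and $\|A_j\|_{q,r,s_j} \leq C(w_1(s_j,s_{j+1})^\theta + w_2(s_j,s_{j+1})^{\theta/2})$. Decompose $A_j = \E_{s_j}[A_j] + (A_j - \E_{s_j}[A_j])$. The triangle inequality together with superadditivity of $w_1^\theta$ (Lemma~\ref{lemma: product of controls}, since $\theta \geq 1$) gives $\|\sum_j\E_{s_j}[A_j]\|_{L^{qr}} \leq Cw_1(0,T)^\theta$, hence \eqref{eq: Lqr bound for sum of E_s[I(cP)] for disjoint intervals general stoch sewing}. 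The centred family $(A_j - \E_{s_j}[A_j])_j$ is a martingale difference sequence for $(\cF_{s_j})_j$; Lemma~\ref{lemma: application of conditional BDG} combined with superadditivity of $w_1^{2\theta}$ and $w_2^\theta$ furnishes
\begin{equation*}
\Big\|\sum_j (A_j - \E_{s_j}[A_j])\Big\|_{q,r,0} \leq C(w_1(0,T)^\theta + w_2(0,T)^{\theta/2}),
\end{equation*}
which combined with the estimate for the predictable part yields \eqref{eq: bound for sum of I(cP) for disjoint intervals general stoch sewing}.

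The principal source of technical difficulty is the careful bookkeeping between $w_i$ and $\widetilde{w}_i$ at each step, particularly at the rightmost subinterval $[d^h_{2^h-1},t_m]$ at every level $h$, where the closed-versus-half-open convention of Lemma~\ref{lemma: alternating midpoints allocation} produces a boundary term not immediately absorbed by the geometric-decay framework and which must be accounted for separately. Aside from this, the argument is a direct marriage of the deterministic allocation (Lemma~\ref{lemma: alternating midpoints allocation}), the geometric-decay estimate of Lemma~\ref{lemma: summable bounds for the allocation w1,...,wN}, and the conditional BDG inequality of Lemma~\ref{lemma: application of conditional BDG}.
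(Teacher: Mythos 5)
Your overall strategy is the same as the paper's: allocation via alternating midpoints inside each $[s_j,s_{j+1}]$, a martingale-difference split of each $R^h_i$ treated by conditional BDG, geometric summation via Lemma~\ref{lemma: summable bounds for the allocation w1,...,wN}, and for part (ii) another martingale-difference split across $(\cF_{s_j})_j$ together with superadditivity of $w_1^{2\theta}$ and $w_2^\theta$. Your part (ii) is essentially identical to the paper's and is correct.

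However, your part (i) has a genuine gap, which you flag in your final paragraph but do not actually close. Applying the allocation on $[t_0,t_m]$ with the full point set $\{t_0,\ldots,t_m\}$ means that, at every level $h$, the rightmost subinterval is $[d^h_{2^h-1}, d^h_{2^h}]$ with $d^h_{2^h}=t_m$, and point~(ii) of Lemma~\ref{lemma: alternating midpoints allocation} only guarantees $s_4^{h,2^h-1} \leq d^h_{2^h}$, so $s_4^{h,2^h-1}=t_m$ is possible. In that case the estimates $w_1(s_1^{h,2^h-1},s_4^{h,2^h-1}) \leq \tw_1(d^h_{2^h-1},d^h_{2^h})$ and $w_2(s_1^{h,2^h-1},s_4^{h,2^h-1}) \leq \tw_2(d^h_{2^h-1},d^h_{2^h})$ fail, because the right-hand sides are left limits $w_i(\cdot,t_m-)$ while the left-hand sides see $w_i(\cdot,t_m)$. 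This is not a bookkeeping annoyance: if $w_i$ has a ``jump'' at $t_m$ (e.g., $w_i(u,t_m)-w_i(u,t_m-) \geq c > 0$ for all $u<t_m$), then the rightmost-subinterval contribution at each level $h < h_0$ is bounded below by a constant, and since $h_0$ depends on the partition $\{t_0,\ldots,t_m\}$ and can be arbitrarily large, the summed estimate is not uniform over partitions. The geometric-decay framework of Lemma~\ref{lemma: summable bounds for the allocation w1,...,wN} simply does not absorb this term.

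The paper resolves this by first peeling off the terminal increment,
\begin{equation*}
\sum_{i=0}^{m-1} \Xi_{t_i,t_{i+1}} - \Xi_{t_0,t_m} = \sum_{i=0}^{m-2} \Xi_{t_i,t_{i+1}} - \Xi_{t_0,t_{m-1}} - \delta\Xi_{t_0,t_{m-1},t_m},
\end{equation*}
bounding $\delta\Xi_{t_0,t_{m-1},t_m}$ directly by the hypotheses, and applying the allocation on $[s_j,s_{j+1}]$ only to the interior points $\{t_0,\ldots,t_{m-1}\}$. Since $t_{m-1}<t_m\leq s_{j+1}$, all allocated points then satisfy $s_4^{h,i}<d^h_{i+1}$ for every $i$ including $i=2^h-1$, so the $\tw_i$ bounds hold throughout and the geometric sum closes. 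This one-line reduction is essential to the proof; without it, the argument does not yield a constant independent of the partition.

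A smaller remark: you centre $R^h_i$ at $\cF_{s_1^{h,i}}$ rather than at $\cF_{d^h_i}$. This is fine (both give a martingale difference sequence for $(\cF_{d^h_i})_i$, since $d^h_i\leq s_1^{h,i}$), and it matches what the conditional BDG lemma needs; the paper centres at $\cF_{d^h_i}$, but the difference is immaterial.
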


\begin{proof}
\emph{Part~(i):}
Let $\tw_1(s,t) := w_1(s+,t-)$ and $\tw_2(s,t) := w_2(s+,t-)$ for $(s,t) \in \Delta_{[0,T]}$, which we know from Lemma~\ref{lemma: continuity from the inside of tw} defines controls $\tw_1, \tw_2$ which are continuous from the inside.

Applying Lemma~\ref{lemma: alternating midpoints allocation} on the interval $[s_j,s_{j+1}]$ with the controls $\tw_1, \tw_2$ and the set of points $\{t_0 < t_1 < \cdots < t_m\}$, we have from \eqref{eq: decomposition I(cP) general stoch sewing, w1,...wN}, that
\begin{equation}\label{eq: allocating points without last point}
\sum_{i=0}^{m-1} \Xi_{t_i,t_{i+1}} - \Xi_{t_0,t_m} = \sum_{h=0}^\infty \sum_{i=0}^{2^h-1} (R^h_i + J^h_i),
\end{equation}
where, $R^h_i$ and $J^h_i$ are defined as in \eqref{eq: R^h,2_i decomp in w-allocation general stoch sewing, w1,...,wN}, \eqref{eq: R^h,1_i decomp in w-allocation general stoch sewing, w1,...,wN} and \eqref{eq: defn of R^0,1}.

We recall from Lemma~\ref{lemma: alternating midpoints allocation} that $J^h_i = -\delta \Xi_{s_0^{h,i},s_1^{h,i},s_4^{h,i}}$ for $h \in \N$, and that if $i$ is even then $s_0^{h,i} = s_1^{h,i}$ so that $J^h_i = 0$. We also recall that the points $s_0^{h,i}, s_1^{h,i}, s_2^{h,i}, s_3^{h,i}, s_4^{h,i}$ lie in the interval $[d^h_i,d^h_{i+1})$. In particular, for any $h \in \N$, if $i$ is odd, then we have that $[d^h_i, d^h_{i+1}) \subset (d^{h-1}_{\frac{i-1}{2}}, d^{h-1}_{\frac{i+1}{2}})$ so that, using the bounds in \eqref{eq: bound E_u_1 delta Xi u_1 u_2 u_3 Lqr} and \eqref{eq: bound delta Xi u_1 u_2 u_3 q r u_1}, it follows that\footnote{A technical point for the careful reader: strictly speaking it might be that $d^{h-1}_{\frac{i-1}{2}} = d^h_i$, so that we don't actually have $d^{h-1}_{\frac{i-1}{2}} < s^{h,i}_0$. However, since we included the control $w(s,t) = t - s$ in Lemma~\ref{lemma: alternating midpoints allocation}, we will certainly have that $[s^{h,i}_0, s^{h,i}_4] \subset (d^{h-3}_k, d^{h-3}_{k+1})$ for some $k$, which is enough for our purposes. Alternatively, we could instead use the controls $\tw_1(s,t) + \epsilon (t - s)$ and $\tw_2(s,t) + \epsilon (t - s)$, which would ensure that $d^{h-1}_{\frac{i-1}{2}} < d^h_i$, and then take $\epsilon \to 0$ at the end of the proof.}
\begin{equation}\label{eq: bounds on q,r,s and E L^r for R^h,1}
\big\| \E_{d^h_i} [J^h_i] \big\|_{L^r} \leq \tw_1 \big( d^{h-1}_{\frac{i-1}{2}}, d^{h-1}_{\frac{i+1}{2}} \big)^\theta \qquad \text{and} \qquad \|J^h_i\|_{q,r,d^h_i} \leq \tw_2 \big (d^{h-1}_{\frac{i-1}{2}}, d^{h-1}_{\frac{i+1}{2}} \big)^{\frac{\theta}{2}}.
\end{equation}
It is also immediate that $\|\E_{s_j} [J^0_0]\|_{L^r} \leq w_1(s_j,s_{j+1})^\theta$ and $\|J^0_0\|_{q,r,s_j} \leq w_2(s_j,s_{j+1})^{\frac{\theta}{2}}$.

Applying Lemma~\ref{lemma: application of conditional BDG} with the filtration $(\cG_i)_{0 \leq i \leq 2^h}$, where $\cG_i := \cF_{d^h_i}$, and the random variables $(z_i)_{1 \leq i \leq 2^h}$, where $z_{i+1} := J^h_i - \E_{d^h_i}[J^h_i]$, we deduce, for each $h \in \N$, that
\begin{equation*}
\bigg\| \sum_{i=0}^{2^h-1} J^h_i \bigg\|_{q,r,s_j} \lesssim \sum_{i=0}^{2^h-1} \big\| \E_{d^h_i} [J^h_i] \big\|_{L^r} + \bigg( \sum_{i=0}^{2^h-1} \|J^h_i\|_{q,r,d^h_i}^2 \bigg)^{\hspace{-2pt}\frac{1}{2}}.
\end{equation*}
Then, using \eqref{eq: bounds on q,r,s and E L^r for R^h,1} and applying Lemma~\ref{lemma: summable bounds for the allocation w1,...,wN} with the three alternating controls $\tw_1, \tw_2$ and $w$ (with $w$ as defined in Lemma~\ref{lemma: alternating midpoints allocation}), we then have that
\begin{align*}
\bigg\| \sum_{h=0}^\infty \sum_{i=0}^{2^h-1} J^h_i \bigg\|_{q,r,s_j} 
&\lesssim \sum_{h=0}^\infty 2^{-(\theta-1) \lfloor \frac{h-1}{3} \rfloor} w_1(s_j,s_{j+1})^\theta + \sum_{h=0}^\infty 2^{-\frac{1}{2} (\theta-1) \lfloor \frac{h-1}{3} \rfloor} w_2(s_j,s_{j+1})^{\frac{\theta}{2}}\\
&\lesssim w_1(s_j,s_{j+1})^\theta + w_2(s_j,s_{j+1})^{\frac{\theta}{2}}.
\end{align*}

Recalling that the points $s_1^{h,i}, s_2^{h,i}, s_3^{h,i}, s_4^{h,i}$ lie in the interval $(d^h_i,d^h_{i+1})$, we see that
\begin{equation*}
\big\| \E_{d^h_i} [R^h_i] \big\|_{L^r} \leq \tw_1(d^h_i,d^h_{i+1})^\theta \qquad \text{and} \qquad \|R^h_i\|_{q,r,d^h_i} \leq \tw_2(d^h_i,d^h_{i+1})^{\frac{\theta}{2}},
\end{equation*}
and we can then similarly bound
\begin{align*}
\bigg\| \sum_{h=0}^\infty \sum_{i=0}^{2^h-1} R^h_i \bigg\|_{q,r,s_j} 
&\lesssim \sum_{h=0}^\infty 2^{-(\theta-1) \lfloor \frac{h}{3} \rfloor} w_1(s_j,s_{j+1})^\theta + \sum_{h=0}^\infty 2^{-\frac{1}{2} (\theta-1) \lfloor \frac{h}{3} \rfloor} w_2(s_j,s_{j+1})^{\frac{\theta}{2}}\\
&\lesssim w_1(s_j,s_{j+1})^\theta + w_2(s_j,s_{j+1})^{\frac{\theta}{2}}.
\end{align*}
Recalling \eqref{eq: allocating points without last point}, we thus deduce the estimate in \eqref{eq: uniform q,r,s - bound on I(cP) general stoch sewing}. We similarly have that
\begin{align*}
&\bigg\| \E_{s_j} \bigg[ \sum_{i=0}^{m-1} \Xi_{t_i,t_{i+1}} - \Xi_{t_0,t_m} \bigg] \bigg\|_{L^r} \leq \sum_{h=0}^\infty \sum_{i=0}^{2^h-1} \big\| \E_{d^h_i} [R^h_i] \big\|_{L^r} + \sum_{h=0}^\infty \sum_{i=0}^{2^h-1} \big\| \E_{d^h_i} [J^h_i] \big\|_{L^r}\\
&\lesssim \sum_{h=0}^\infty 2^{-(\theta-1) \lfloor \frac{h}{3} \rfloor} w_1(s_j,s_{j+1})^\theta + \sum_{h=0}^\infty 2^{-(\theta-1) \lfloor \frac{h-1}{3} \rfloor} w_1(s_j,s_{j+1})^\theta \lesssim w_1(s_j,s_{j+1})^\theta,
\end{align*}
which gives the estimate in \eqref{eq: uniform Lqr - bound on E_s[I(cP)] general stoch sewing}.

\smallskip

\emph{Part~(ii):}
As before let us introduce the shorthand $I(\cP^j) := \sum_{[u,v] \in \cP^j} \Xi_{u,v} - \Xi_{s_j,s_{j+1}}$ for each $j = 0, 1, \ldots, n - 1$. Applying Lemma~\ref{lemma: application of conditional BDG} with the filtration $(\cG_j)_{0 \leq j \leq n}$, where $\cG_j := \cF_{s_j}$, and the random variables $(z_j)_{1 \leq j \leq n}$, where $z_{j+1} := I(\cP^j) - \E_{s_j}[I(\cP^j)]$, we obtain
\begin{align*}
&\bigg\| \sum_{j=0}^{n-1} I(\cP^j) \bigg\|_{q,r,0} \lesssim \sum_{j=0}^{n-1} \big\|\E_{s_j} [I(\cP^j)]\big\|_{L^{r}} + \bigg(\sum_{j=0}^{n-1} \|I(\cP^j)\|_{q,r,s_j}^2\bigg)^{\hspace{-2pt}\frac{1}{2}}\\
&= \sum_{j=0}^{n-1} \bigg\|\E_{s_j} \bigg[\sum_{[u,v] \in \cP^j} \Xi_{u,v} - \Xi_{s_j,s_{j+1}}\bigg]\bigg\|_{L^{r}} + \bigg(\sum_{j=0}^{n-1} \bigg\|\sum_{[u,v] \in \cP^j} \Xi_{u,v} - \Xi_{s_j,s_{j+1}}\bigg\|_{q,r,s_j}^2\bigg)^{\hspace{-2pt}\frac{1}{2}}.
\end{align*}
Applying the bounds in \eqref{eq: uniform Lqr - bound on E_s[I(cP)] general stoch sewing} and \eqref{eq: uniform q,r,s - bound on I(cP) general stoch sewing} for each $j$, we then have that
\begin{align*}
\bigg\|\sum_{j=0}^{n-1} I(\cP^j)\bigg\|_{q,r,0} &\lesssim \sum_{j=0}^{n-1} w_1(s_j,s_{j+1})^\theta + \bigg(\sum_{j=0}^{n-1} \big(w_1(s_j,s_{j+1})^{2\theta} + w_2(s_j,s_{j+1})^\theta\big)\bigg)^{\frac{1}{2}}\\
&\lesssim w_1(0,T)^\theta + w_2(0,T)^{\frac{\theta}{2}},
\end{align*}
which gives the estimate in \eqref{eq: bound for sum of I(cP) for disjoint intervals general stoch sewing}. Inspecting the bounds above, we also deduce the estimate in \eqref{eq: Lqr bound for sum of E_s[I(cP)] for disjoint intervals general stoch sewing}.
\end{proof}

\begin{proof}[Proof of Theorem~\ref{theorem: mild stochastic sewing lemma}]
We first note that, by the estimate in \eqref{eq: stoch sewing qrs bound}, we have that $\|\Xi_{s,s}\|_{q,r,s} = \|\delta \Xi_{s,s,s}\|_{q,r,s} = 0$, so that $\Xi_{s,s} = 0$ for every $s \in [0,T]$.
We fix a constant $\delta > 0$ such that $\delta < \min_{1 \leq i \leq N} (\alpha_{1,i} \wedge \alpha_{2,i})$, $\delta < \min_{1 \leq j \leq M} (\beta_{1,j} \wedge \beta_{2,j})$, and
\[ 2\delta < \min_{1 \leq i \leq N} (\alpha_{1,i} + \alpha_{2,i} - 1) \qquad \text{and} \qquad 2\delta < \min_{1 \leq j \leq M} \Big( \beta_{1,j} + \beta_{2,j} - \frac{1}{2} \Big). \]

By \cite[Lemma~1.5]{FrizZhang2018}, for any $\epsilon > 0$, there exists a partition $\{s_k\}_{k=0}^K$ of the interval $[0,T]$ such that
\begin{equation}\label{eq: all mild controls are summably bounded for full generality sewing}
\begin{split}
w_{1,i}(s_k+,s_{k+1}-) &\leq \epsilon^{\frac{1}{\delta}} w_{1,i}(0,T-),\\
\bw_{1,j}(s_k+,s_{k+1}-) &\leq \epsilon^{\frac{1}{\delta}} \bw_{1,j}(0,T-)
\end{split}
\end{equation}
for every $k = 0, 1, \ldots, K-1$, and every $i = 1, \ldots, N$ and $j = 1, \ldots, M$.
Since $w(s+,t) \to 0$ as $t \searrow s$ for any control $w$, for each $k = 0, 1, \ldots, K-1$, there exists a time $v_k \in (s_k,s_{k+1})$ such that, for every $i$ and $j$,
\begin{equation}\label{eq: all controls are summably bounded for full generality sewing}
\begin{split}
w_{2,i}(s_k+,v_k) &\leq \epsilon^{\frac{1}{\delta}} w_{2,i}(0+,T),\\
\bw_{2,j}(s_k+,v_k) &\leq \epsilon^{\frac{1}{\delta}} \bw_{2,j}(0+,T).
\end{split}
\end{equation}
We then define a partition $\cP^\epsilon$ of the interval $[0,T]$ by $\cP^\epsilon = \{t_n\}_{n=0}^{2K} := \{s_k\}_{k=0}^K \cup \{v_k\}_{k=0}^{K-1}$.

We now let $\theta > 1$ such that
\[ \theta < \min_{1 \leq i \leq N} (\alpha_{1,i} + \alpha_{2,i}) - 2\delta \qquad \text{and} \qquad \frac{\theta}{2} < \min_{1 \leq j \leq M} (\beta_{1,j} + \beta_{2,j}) - 2\delta, \]
and let
\begin{align*}
\rho_1(s,t) &= \sum_{i=1}^N w_{1,i}(0,T-)^{\frac{\delta}{\theta}} w_{2,i}(0+,T)^{\frac{\delta}{\theta}} w_{1,i}(s,t-)^{\frac{\alpha_{1,i} - \delta}{\theta}}w_{2,i}(s+,t)^{\frac{\alpha_{2,i} - \delta}{\theta}},\\
\rho_2(s,t) &= \sum_{j=1}^M \bw_{1,j}(0,T-)^{\frac{2\delta}{\theta}} \bw_{2,j}(0+,T)^{\frac{2\delta}{\theta}} \bw_{1,j}(s,t-)^{\frac{2(\beta_{1,j} - \delta)}{\theta}} \bw_{2,j}(s+,t)^{\frac{2(\beta_{2,j} - \delta)}{\theta}}
\end{align*}
for $(s,t) \in \Delta_{[0,T]}$, which, by Lemma~\ref{lemma: product of controls}, defines controls $\rho_1, \rho_2$.

Let us take an arbitrary refinement $\cP' \supseteq \cP^{\epsilon}$, any $[u_1,u_3] \in \cP'$ and any $u_2 \in (u_1,u_3)$. If $[u_1,u_3] \subset (s_k, s_{k+1}]$ for some $k = 0, 1, \ldots, K-1$, then, by the bounds in \eqref{eq: stoch sewing E_s Lqr bound} and \eqref{eq: stoch sewing qrs bound}, and using \eqref{eq: all mild controls are summably bounded for full generality sewing}, we have that
\begin{equation}\label{eq: new sewing bound 1}
\begin{split}
&\big\| \E_{u_1} [\delta \Xi_{u_1,u_2,u_3}] \big\|_{L^r} \leq \sum_{i=1}^N w_{1,i}(u_1,u_2)^{\alpha_{1,i}} w_{2,i}(u_2,u_3)^{\alpha_{2,i}}\\
&\leq \epsilon \sum_{i=1}^N w_{1,i}(0,T-)^\delta w_{2,i}(0+,T)^\delta w_{1,i}(u_1,u_2)^{\alpha_{1,i}-\delta} w_{2,i}(u_2,u_3)^{\alpha_{2,i}-\delta}\\
&\leq \epsilon \rho_1(u_1,u_3)^\theta,
\end{split}
\end{equation}
where in the last inequality we used the fact that the $\ell^p$ norm is non-increasing in $p$. Similarly,
\begin{equation}\label{eq: new sewing bound 2}
\begin{split}
&\|\delta \Xi_{u_1,u_2,u_3}\|_{q,r,u_1} \leq \sum_{j=1}^M \bw_{1,j}(u_1,u_2)^{\beta_{1,j}} \bw_{2,j}(u_2,u_3)^{\beta_{2,j}}\\
&\leq \epsilon \sum_{j=1}^M \bw_{1,j}(0,T-)^\delta \bw_{2,j}(0+,T)^\delta \bw_{1,j}(u_1,u_2)^{\beta_{1,j}-\delta} \bw_{2,j}(u_2,u_3)^{\beta_{2,j}-\delta}\\
&\leq C \epsilon \rho_2(u_1,u_3)^{\frac{\theta}{2}},
\end{split}
\end{equation}
for some constant $C$ which depends only on $M$ and $\theta$.

If instead $u_1 = s_k$ for some $k$, then we must have that $u_3 \leq v_k$, and using \eqref{eq: all controls are summably bounded for full generality sewing} then gives
\[ w_{2,i}(u_2,u_3)^{\alpha_{2,i}} \leq w_{2,i}(s_k+,v_k)^\delta w_{2,i}(u_2,u_3)^{\alpha_{2,i} - \delta} \leq \epsilon w_{2,i}(0+,T)^\delta w_{2,i}(u_2,u_3)^{\alpha_{2,i} - \delta}, \]
and similarly for $\bw_{2,j}$. Since we also have that $w_{1,i}(u_1,u_2) \leq w_{1,i}(0,T-)$ and similarly for $\bw_{1,j}$, we see that the inequalities in \eqref{eq: new sewing bound 1} and \eqref{eq: new sewing bound 2} also hold in this case. It follows that the hypotheses of Lemma~\ref{lemma: application joint w-allocation} are satisfied with the controls $C^{\frac{1}{\theta}} \epsilon^{\frac{1}{\theta}} \rho_1$ and $C^{\frac{2}{\theta}} \epsilon^{\frac{2}{\theta}} \rho_2$, and the partition $\cP^{\epsilon}$.

For any refinement $\cP' \supseteq \cP^\epsilon = \{t_n\}_{n=0}^{2K}$, we have that
\begin{equation*}
\sum_{[u,v] \in \cP'} \Xi_{u,v} - \sum_{[s,t] \in \cP^{\epsilon}} \Xi_{s,t} = \sum_{n=0}^{2K-1} \bigg( \sum_{[u,v] \in \cP'|_{[t_n,t_{n+1}]}} \Xi_{u,v} - \Xi_{t_n,t_{n+1}} \bigg),
\end{equation*}
and the estimate in \eqref{eq: bound for sum of I(cP) for disjoint intervals general stoch sewing} then implies that
\begin{equation}\label{eq: estimate for the estimates in full generality sewing}
\begin{split}
&\bigg\| \sum_{[u,v] \in \cP'} \Xi_{u,v} - \sum_{[s,t] \in \cP^{\epsilon}} \Xi_{s,t} \bigg\|_{q,r,0} \lesssim \epsilon \rho_1(0,T)^\theta + \epsilon \rho_2(0,T)^{\frac{\theta}{2}}\\
&\lesssim \epsilon \sum_{i=1}^N w_{1,i}(0,T-)^{\alpha_{1,i}} w_{2,i}(0+,T)^{\alpha_{2,i}} + \epsilon \sum_{j=1}^M \bw_{1,j}(0,T-)^{\beta_{1,j}} \bw_{2,j}(0+,T)^{\beta_{2,j}},
\end{split}
\end{equation}
and the estimate in \eqref{eq: Lqr bound for sum of E_s[I(cP)] for disjoint intervals general stoch sewing} similarly implies that
\begin{equation}\label{eq: estimate for Lqr E_0 in sewing}
\bigg\| \E_0 \bigg[ \sum_{[u,v] \in \cP'} \Xi_{u,v} - \sum_{[s,t] \in \cP^{\epsilon}} \Xi_{s,t} \bigg] \bigg\|_{L^r} \lesssim \epsilon \sum_{i=1}^N w_{1,i}(0,T-)^{\alpha_{1,i}} w_{2,i}(0+,T)^{\alpha_{2,i}}.
\end{equation}

Thus, for each $n \in \N$, there exists an $\epsilon_n > 0$ sufficiently small such that, writing $\cP_n := \cP^{\epsilon_n}$, we have for any refinement $\cP_n' \supseteq \cP_n$, that
\begin{equation}\label{eq: refinement of stoch Riemann sum less 1/n}
\bigg\|\sum_{[u,v] \in \cP_n'} \Xi_{u,v} - \sum_{[s,t] \in \cP_n} \Xi_{s,t}\bigg\|_{q,r,0} < \frac{1}{n}.
\end{equation}
By taking successive refinements, we may also assume that the sequence of partitions $(\cP_n)_{n \in \N}$ is nested. Then, for every $m \geq n$, we have that
\begin{equation}\label{eq: Cauchy bound on stoch Riemann sums}
\bigg\|\sum_{[u,v] \in \cP_m} \Xi_{u,v} - \sum_{[u,v] \in \cP_n} \Xi_{u,v}\bigg\|_{q,r,0} < \frac{1}{n},
\end{equation}
so that the sequence $(\sum_{[u,v] \in \cP_n} \Xi_{u,v} - \Xi_{0,T})_{n \in \N}$ is Cauchy, and hence convergent, in $L^{q,r}_0$. We write\footnote{This slightly awkward definition is necessary, since in general we have neither $\Xi_{0,T} \in L^{q,r}_0$, nor $\delta \cI_{0,T} \in L^{q,r}_0$.}
\begin{equation*}
\delta \cI_{0,T} := \Xi_{0,T} + \lim_{n \to \infty} \bigg(\sum_{[u,v] \in \cP_n} \Xi_{u,v} - \Xi_{0,T}\bigg).
\end{equation*}

Let $\epsilon > 0$, and choose an $n \in \N$ such that $\frac{2}{n} < \epsilon$. Taking the limit as $m \to \infty$ in \eqref{eq: Cauchy bound on stoch Riemann sums}, we see that $\|\delta \cI_{0,T} - \sum_{[s,t] \in \cP_n} \Xi_{s,t}\|_{q,r,0} \leq \frac{1}{n}$. It then follows from this and \eqref{eq: refinement of stoch Riemann sum less 1/n} that $\|\sum_{[u,v] \in \cP_n'} \Xi_{u,v} - \delta \cI_{0,T}\|_{q,r,0} < \epsilon$ for any refinement $\cP_n' \supseteq \cP_n$. This establishes RRS convergence of $\sum_{[u,v] \in \cP} \Xi_{u,v} - \Xi_{0,T}$ to $\delta \cI_{0,T} - \Xi_{0,T}$. We recall that limits in the RRS sense are unique, and in particular are independent of the choice of approximating partitions.

We presented this argument on the interval $[0,T]$, but it is equally valid on any given interval $[s,t] \subseteq [0,T]$, giving the existence of a corresponding random variable $\delta \cI_{s,t}$, along with the RRS convergence in \eqref{eq: convergence in stochastic sewing lemma}. It is clear from this convergence that $\delta \cI_{s,u} + \delta \cI_{u,t} = \delta \cI_{s,t}$ whenever $s \leq u \leq t$, so that $(\delta \cI_{s,t})_{(s,t) \in \Delta_{[0,T]}}$ are indeed the increments of a well-defined process $\cI = (\cI_t)_{t \in [0,T]}$, given by $\cI_t = \delta \cI_{0,t}$ for $t \in [0,T]$. Note, that the $L_s^{q,r}$-convergence implies that $\delta \cI_{s,t}$ is $\cF_t$-measurable for every $(s,t) \in \Delta_{[0,T]}$, so that the process $\cI$ is adapted.

The estimates in \eqref{eq: estimate for the estimates in full generality sewing} and \eqref{eq: estimate for Lqr E_0 in sewing} were derived on the interval $[0,T]$, but are equally valid on any given interval $[s,t] \subseteq [0,T]$. In particular, taking $\epsilon = 1$ and noting that in this case we may simply take $\cP^1 = \{s,t\}$, we have that
\begin{equation}\label{eq: bound on [s,t] for sewing estimate 1}
\bigg\|\sum_{[u,v] \in \cP} \Xi_{u,v} - \Xi_{s,t}\bigg\|_{q,r,s} \lesssim  \sum_{i=1}^{N} w_{1,i}(s,t-)^{\alpha_{1,i}} w_{2,i}(s+,t)^{\alpha_{2,i}} +  \sum_{j=1}^M\bw_{1,j}(s,t-)^{\beta_{1,j}} \bw_{2,j}(s+,t)^{\beta_{2,j}}
\end{equation}
and
\begin{equation}\label{eq: bound on [s,t] for sewing estimate 2}
\bigg\|\E_s \bigg[\sum_{[u,v] \in \cP} \Xi_{u,v} - \Xi_{s,t}\bigg]\bigg\|_{L^{r}} \lesssim \sum_{i=1}^{N} w_{1,i}(s,t-)^{\alpha_{1,i}} w_{2,i}(s+,t)^{\alpha_{2,i}}
\end{equation}
for every partition $\cP$ of the interval $[s,t]$.

Taking the RRS limit in \eqref{eq: bound on [s,t] for sewing estimate 1} gives the estimate in \eqref{eq: qrs bound full generality sewing}. Since, by part~(iii) of Proposition~\ref{prop: Lqrs is Banach space}, $\|\E_s [\sum_{[u,v] \in \cP} \Xi_{u,v} - \delta \cI_{s,t}]\|_{L^{r}} \leq \|\sum_{[u,v] \in \cP} \Xi_{u,v} - \delta \cI_{s,t}\|_{q,r,s} \to 0$, taking the RRS limit in \eqref{eq: bound on [s,t] for sewing estimate 2} gives the estimate in \eqref{eq: L^qr bound full generality sewing}.

Next we check that the process $\cI$ is unique. To this end, suppose that $(\cI_t)_{t \in [0,T]}$ and $(\widetilde{\cI}_t)_{t \in [0,T]}$ were two adapted processes, with $\cI_0 = \widetilde{\cI}_0 = 0$, which both satisfy the bounds in \eqref{eq: L^qr bound full generality sewing} and \eqref{eq: qrs bound full generality sewing}. It is then straightforward to see that
\begin{equation*}
\big\|\E_s [\delta \cI_{s,t} - \delta \widetilde{\cI}_{s,t}]\big\|_{L^{r}} \lesssim  \sum_{i=1}^{N} w_{1,i}(s,t-)^{\alpha_{1,i}} w_{2,i}(s+,t)^{\alpha_{2,i}}
\end{equation*}
and
\begin{equation*}
\|\delta \cI_{s,t} - \delta \widetilde{\cI}_{s,t}\|_{q,r,s} \lesssim \sum_{i=1}^{N} w_{1,i}(s,t-)^{\alpha_{1,i}} w_{2,i}(s+,t)^{\alpha_{2,i}} +  \sum_{j=1}^M\bw_{1,j}(s,t-)^{\beta_{1,j}} \bw_{2,j}(s+,t)^{\beta_{2,j}}.
\end{equation*}
We can then apply Lemma~\ref{lemma: convergence to 0 of two param processes} with $R_{s,t} = \delta \cI_{s,t} - \delta \widetilde{\cI}_{s,t}$, which implies that $\|\cI_t - \widetilde{\cI}_t\|_{L^q} = 0$ for each $t \in [0,T]$, and hence that $\cI = \widetilde{\cI}$ up to modification.

It remains to show that, under the additional continuity assumptions on the controls, the convergence in \eqref{eq: convergence in stochastic sewing lemma} also holds in the MRS sense, i.e., as the mesh size of the partition tends to zero.

Let $\epsilon > 0$. By the RRS convergence in \eqref{eq: convergence in stochastic sewing lemma}, we know that there exists a partition $\overline{\cP}^\epsilon$ of the interval $[0,T]$ such that, for any refinement $\overline{\cP} \supseteq \overline{\cP}^\epsilon$, we have that
\begin{equation*}
\bigg\| \sum_{[s,t] \in \overline{\cP}} \Xi_{s,t} - \delta \cI_{0,T} \bigg\|_{q,r,0} < \epsilon.
\end{equation*}
By \cite[Lemma~1.5]{FrizZhang2018}, combined with the additional assumptions on the controls, we may assume that the partition $\overline{\cP}^\epsilon$ is also chosen such that, for any $[s,t] \in \overline{\cP}^\epsilon$, and any $v \in (s,t)$,
\begin{equation*}
w_{1,i}(v,t) < \epsilon \qquad \text{or} \qquad w_{2,i}(s,v) < \epsilon
\end{equation*}
for every $i = 1, \ldots, N$, and
\begin{equation*}
\bw_{1,j}(v,t) < \epsilon \qquad \text{or} \qquad \bw_{2,j}(s,v) < \epsilon
\end{equation*}
for every $j = 1, \ldots, M$.

Let $\cP$ be any partition with $|\cP| < \min_{[s,t] \in \overline{\cP}^\epsilon} |t - s|$. Then, for any interval $[u,v] \in \cP$, there exists at most one point $\tau \in \overline{\cP}^\epsilon$ such that $\tau \in [u,v]$, and by the above we know that if such a point exists, then $w_{1,i}(u,\tau) < \epsilon$ or $w_{2,i}(\tau,v) < \epsilon$, and $\bw_{1,j}(u,\tau) < \epsilon$ or $\bw_{2,j}(\tau,v) < \epsilon$ for all $i, j$.

Let $\widetilde{\cP} = \cP \cup \overline{\cP}^\epsilon$ and, for each $\tau \in \overline{\cP}^\epsilon$, let $[u_\tau,v_\tau]$ be an interval in $\cP$ such that $\tau \in [u_\tau,v_\tau]$ (which is unique, unless $\tau \in \cP \cap \overline{\cP}^\epsilon$). We then have that
\begin{equation}\label{eq: bound to show MRS convergence}
\begin{split}
\bigg\|\sum_{[s,t] \in \cP} \Xi_{s,t} - \delta \cI_{0,T}\bigg\|_{q,r,0} &\leq \bigg\|\sum_{[s,t] \in \widetilde{\cP}} \Xi_{s,t} - \delta \cI_{0,T}\bigg\|_{q,r,0} + \bigg\|\sum_{[s,t] \in \widetilde{\cP}} \Xi_{s,t} - \sum_{[s,t] \in \cP} \Xi_{s,t}\bigg\|_{q,r,0}\\
&< \epsilon + \bigg\|\sum_{\tau \in \overline{\cP}^\epsilon} \delta \Xi_{u_\tau,\tau,v_\tau}\bigg\|_{q,r,0}.
\end{split}
\end{equation}

We fix a $\gamma > 0$ such that $\gamma < \min_{1 \leq i \leq N} \big(\alpha_{1,i} \wedge \alpha_{2,i})$, $\gamma < \min_{1 \leq j \leq M} 2(\beta_{1,j} \wedge \beta_{2,j})$,
\begin{equation*}
2\gamma < \min_{1 \leq i \leq N} (\alpha_{1,i} + \alpha_{2,i} - 1) \qquad \text{and} \qquad \gamma < \min_{1 \leq j \leq M} \Big( \beta_{1,j} + \beta_{2,j} - \frac{1}{2} \Big),
\end{equation*}
and define controls $\rho_3, \rho_4$ by
\begin{align*}
\rho_3(s,t) &:= \sum_{i=1}^N w_{1,i}(s,t)^{\alpha_{1,i}-\gamma} w_{2,i}(s,t)^{\alpha_{2,i} - \gamma},\\
\rho_4(s,t) &:= \sum_{j=1}^M \bw_{1,j}(s,t)^{2\beta_{1,j}-\gamma} \bw_{2,j}(s,t)^{2\beta_{2,j} - \gamma}.
\end{align*}

Writing $\overline{\cP}^\epsilon = \{0 = \tau_0 < \tau_1 < \cdots < \tau_m = T\}$, we note that $v_{\tau_{k-1}} \leq u_{\tau_k}$ for each $k = 1, \ldots, m$. We may therefore apply Lemma~\ref{lemma: application of conditional BDG} with the filtration $(\cG_k)_{0 \leq k < m}$, where $\cG_k = \cF_{v_{\tau_k}}$, and the random variables $(z_k)_{1 \leq k < m}$, where $z_k = \delta \Xi_{u_{\tau_k},\tau_k,v_{\tau_k}} - \E_{u_{\tau_k}} [\delta \Xi_{u_{\tau_k},\tau_k,v_{\tau_k}}]$, to obtain
\begin{align*}
&\bigg\|\sum_{\tau \in \overline{\cP}^\epsilon} \delta \Xi_{u_\tau,\tau,v_\tau}\bigg\|_{q,r,0} \lesssim \sum_{k=1}^{m-1} \big\|\E_{u_{\tau_k}} [\delta \Xi_{u_{\tau_k},\tau_k,v_{\tau_k}}]\big\|_{L^{r}} + \bigg(\sum_{k=1}^{m-1} \|\delta \Xi_{u_{\tau_k},\tau_k,v_{\tau_k}}\|_{q,r,u_{\tau_k}}^2\bigg)^{\hspace{-2pt}\frac{1}{2}}\\
&\lesssim \sum_{k=1}^{m-1} \sum_{i=1}^N w_{1,i}(u_{\tau_k},\tau_k)^{\alpha_{1,i}} w_{2,i}(\tau_k,v_{\tau_k})^{\alpha_{2,i}} + \bigg( \sum_{k=1}^{m-1} \sum_{j=1}^M \bw_{1,j}(u_{\tau_k},\tau_k)^{2\beta_{1,k}} \bw_{2,j}(\tau_k,v_{\tau_k})^{2\beta_{2,j}} \bigg)^{\hspace{-2pt}\frac{1}{2}}\\
&\lesssim \sum_{k=1}^{m-1} \rho_3(u_{\tau_k},v_{\tau_k}) \epsilon^\gamma + \bigg(\sum_{k=1}^{m-1} \rho_4(u_{\tau_k},v_{\tau_k})\bigg)^{\hspace{-2pt}\frac{1}{2}} \epsilon^{\frac{\gamma}{2}} \leq \rho_3(0,T) \epsilon^\gamma + \rho_4(0,T)^{\frac{1}{2}} \epsilon^{\frac{\gamma}{2}},
\end{align*}
where in the last line we used the fact that $w_{1,i}(u_{\tau_k},\tau_k) < \epsilon$ or $w_{2,i}(\tau_k,v_{\tau_k}) < \epsilon$, and $\bw_{1,j}(u_{\tau_k},\tau_k) < \epsilon$ or $\bw_{2,j}(\tau_k,v_{\tau_k}) < \epsilon$ for each $i$ and $j$. Combining this with the bound in \eqref{eq: bound to show MRS convergence}, we conclude that $\|\sum_{[u,v] \in \cP} \Xi_{u,v} - \delta \cI_{0,T}\|_{q,r,0} \to 0$ as $|\cP| \to 0$, i.e., in the sense of MRS convergence.
\end{proof}

\appendix

\section{Proof of Theorem~\ref{theorem: sample path regularity sewing}}\label{section: proof of uniform estimate}

\begin{lemma}\label{lemma: alternating midpoints bounds for sum of non continuous controls}
Let $(w_{1,i}, w_{2,i})_{i = 1, \ldots, N}$ be controls, let $w$ be the control given by $w(s,t) = t - s$ for $(s,t) \in \Delta_{[0,T]}$. For some fixed $s < t$, let $\cP^h = \{d^h_n\}_{n=0}^{2^h}$, $h_0 \in \N$, denote the $(w_{1,i}(\cdot+, \cdot-))_{i=1,\ldots,N}$-$(w_{2,i}(\cdot+, \cdot-))_{i=1,\ldots,N}$-$w$ alternating midpoints of the interval $[s,t]$, in the sense of Definition~\ref{definition: alternating midpoints}. Let $\alpha_{1,i}, \alpha_{2,i} > 0$ such that $\alpha_{1,i} + \alpha_{2,i} > 1$ for each $i = 1, \ldots, N$, and let $\eta > 0$ such that $\eta < \min_{1 \leq i \leq N} (\alpha_{1,i} \wedge \alpha_{2,i})$ and $2\eta < \min_{1 \leq i \leq N} (\alpha_{1,i} + \alpha_{2,i} - 1)$. Then
\begin{equation}\label{eq: summable bounds for the allocation w1,...,wN for sums of controls}
\sum_{n=0}^{2^h-1} \sum_{i=1}^N w_{1,i}(d^h_n, d^h_{n+1}-)^{\alpha_{1,i}} w_{2,i}(d^h_n+, d^h_{n+1})^{\alpha_{2,i}} \leq 2^{-\eta (\lfloor \frac{h}{H} \rfloor - 1)} \sum_{i=1}^N w_{1,i}(s,t-)^{\alpha_{1,i}} w_{2,i}(s+,t)^{\alpha_{2,i}}
\end{equation}
for every $h \in \N$, where $H = 2N + 1$.
\end{lemma}

\begin{proof}
We first note that if $h < 2H$ then the result holds immediately by superadditivity, so we may suppose that $h \geq 2H$.

By the inclusion of the control $w$ in the alternating midpoint construction, we note that, for every odd $n$, we have that $[d^h_n, d^h_{n+1}) \subset (d^{h-H}_{\ell(n)}, d^{h-H}_{\ell(n)+1})$ for some $\ell(n)$. Similarly, for every even $n$, we have that $(d^h_n, d^h_{n+1}] \subset (d^{h-H}_{\ell(n)}, d^{h-H}_{\ell(n)+1})$ for some $\ell(n)$. Using the bound in Lemma~\ref{lemma: alternating-midpoints may replace dyadics}, we then have, for every $k = 0, 1, \ldots, 2^{h-1}-1$, that
\[ w_{1,i}(d^h_{2k+1}, d^h_{2k+2}-) \leq w_{1,i}(d^{h-H}_{\ell(2k+1)}+, d^{h-H}_{\ell(2k+1)+1}-) \leq 2^{-(\lfloor \frac{h}{H} \rfloor - 1)} w_{1,i}(s,t-), \]
and
\[ w_{2,i}(d^h_{2k}+, d^h_{2k+1}) \leq w_{2,i}(d^{h-H}_{\ell(2k)}+, d^{h-H}_{\ell(2k)+1}-) \leq 2^{-(\lfloor \frac{h}{H} \rfloor - 1)} w_{2,i}(s+,t). \]
We also have the trivial bounds
\[ w_{1,i}(d^h_{2k}, d^h_{2k+1}-) \leq w_{1,i}(s,t-) \qquad \text{and} \qquad w_{2,i}(d^h_{2k+1}+, d^h_{2k+2}) \leq w_{2,i}(s+,t). \]
Using these bounds, we then have that
\begin{align*}
&\sum_{n=0}^{2^h-1} \sum_{i=1}^N w_{1,i}(d^h_n, d^h_{n+1}-)^{\alpha_{1,i}} w_{2,i}(d^h_n+, d^h_{n+1})^{\alpha_{2,i}}\\
&= \sum_{k=0}^{2^{h-1}-1} \sum_{i=1}^N w_{1,i}(d^h_{2k}, d^h_{2k+1}-)^{\alpha_{1,i}} w_{2,i}(d^h_{2k}+, d^h_{2k+1})^{\alpha_{2,i}}\\
&\quad + \sum_{k=0}^{2^{h-1}-1} \sum_{i=1}^N w_{1,i}(d^h_{2k+1}, d^h_{2k+2}-)^{\alpha_{1,i}} w_{2,i}(d^h_{2k+1}+, d^h_{2k+2})^{\alpha_{2,i}}\\
&\leq 2^{-\eta (\lfloor \frac{h}{H} \rfloor - 1)} \bigg( \sum_{k=0}^{2^{h-1}-1} \sum_{i=1}^N w_{1,i}(s,t-)^\eta w_{2,i}(s+,t)^\eta w_{1,i}(d^h_{2k}, d^h_{2k+1}-)^{\alpha_{1,i} - \eta} w_{2,i}(d^h_{2k}+, d^h_{2k+1})^{\alpha_{2,i} - \eta}\\
&\qquad \quad + \sum_{k=0}^{2^{h-1}-1} \sum_{i=1}^N w_{1,i}(s,t-)^\eta w_{2,i}(s+,t)^\eta w_{1,i}(d^h_{2k+1}, d^h_{2k+2}-)^{\alpha_{1,i} - \eta} w_{2,i}(d^h_{2k+1}+, d^h_{2k+2})^{\alpha_{2,i} - \eta} \bigg)\\
&\leq 2^{-\eta (\lfloor \frac{h}{H} \rfloor - 1)} \sum_{i=1}^N w_{1,i}(s,t-)^{\alpha_{1,i}} w_{2,i}(s+,t)^{\alpha_{2,i}},
\end{align*}
which is precisely the desired bound.
\end{proof}

\begin{remark}
We note that one could also use the bound in Lemma~\ref{lemma: alternating midpoints bounds for sum of non continuous controls} to prove Theorem~\ref{theorem: mild stochastic sewing lemma}, without the need for the variant of the allocation procedure presented in Lemma~\ref{lemma: alternating midpoints allocation}. However, the proof strategy adopted in Section~\ref{section: proof of sewing lemma} is slightly more general, since Lemma~\ref{lemma: application joint w-allocation} is valid for general controls, not only those of the form $\sum_{i=1}^N w_{1,i}(s,u)^{\alpha_{1,i}} w_{2,i}(u,t)^{\alpha_{2,i}}$. In particular, this allows for further generalizations of the stochastic sewing lemma, as discussed in Remark~\ref{remark: stochastic sewing can be generalised}.
\end{remark}

\begin{lemma}\label{lemma: unifrom approximation in sewing on the partition}
Recall the setting of Theorem~\ref{theorem: mild stochastic sewing lemma}, and let $w$ be the control given by $w(s,t) = t - s$ for $(s,t) \in \Delta_{[0,T]}$. For some $s < t$, let $\cP^h = \{d^h_n\}_{n=0}^{2^h}$, $h \in \N_0$, be the $(w_{1,i}(\cdot+,\cdot-), w_{2,i}(\cdot+,\cdot-))_{i=1,\ldots,N}$-$(\bw_{1,j}(\cdot+,\cdot-), \bw_{2,j}(\cdot+,\cdot-))_{j=1,\ldots,M}$-$w$-alternating midpoints of the interval $[s,t]$, in the sense of Definition~\ref{definition: alternating midpoints}. Then there exists an $\eta > 0$ such that
\begin{equation}\label{eq: uniform bound sup_t cI- Xi^Ph more specific}
\begin{split}
&\Big\| \sup_{v \in \cP^h} \big|\delta \cI_{s,v} - \delta \Xi^{\cP^h}_{s,v}\big| \Big\|_{q,r,s}\\
&\leq C 2^{-\lfloor \frac{h}{H} \rfloor \eta} \bigg( \sum_{i=1}^N w_{1,i}(s,t-)^{\alpha_{1,i}} w_{2,i}(s+,t)^{\alpha_{2,i}} + \sum_{j=1}^M \bw_{1,j}(s,t-)^{\beta_{1,j}} \bw_{2,j}(s+,t)^{\beta_{2,j}} \bigg)
\end{split}
\end{equation}
for every $h \in \N_0$, where $H = 2N + 2M + 1$, and $\Xi^{\cP^h}$ is defined as in \eqref{eq: defn Xi^cP}, and where the constants $\eta$ and $C$ depend only on $q, N, M$ and the exponents $(\alpha_{1,i}, \alpha_{2,i})_{i=1,\ldots,N}$ and $(\beta_{1,j}, \beta_{2,j})_{j=1,\ldots,M}$.
\end{lemma}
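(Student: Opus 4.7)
The plan is to decompose $\cI_{t^h_n} - \Xi^{\cP^h}_{t^h_n}$ as a telescoping sum over the intervals of $\cP^h$, split this sum into a discrete martingale part and a conditional-expectation part, and then control each via the estimates already proven in Theorem~\ref{theorem: stochastic sewing lemma}, together with the geometric decay afforded by the alternating-midpoint construction. For every $n \in \{0, 1, \ldots, 2^h\}$, since $\cI_0 = 0$,
\[
\cI_{t^h_n} - \Xi^{\cP^h}_{t^h_n} = \sum_{k=0}^{n-1} \big(\delta \cI_{t^h_k, t^h_{k+1}} - \Xi_{t^h_k, t^h_{k+1}}\big) = A_n + M_n,
\]
where $A_n := \sum_{k=0}^{n-1} \E_{t^h_k}[\delta \cI_{t^h_k, t^h_{k+1}} - \Xi_{t^h_k, t^h_{k+1}}]$ and $M_n$ is the remaining martingale-difference sum with respect to the discrete filtration $(\cF_{t^h_k})_{0 \leq k \leq 2^h}$. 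I would apply Remark~\ref{remark: the conditional BDG including sup} to the martingale part to obtain
\[
\Big\|\sup_{0 \leq n \leq 2^h} |M_n|\Big\|_{q,r,0} \lesssim \bigg(\sum_{k=0}^{2^h-1} \|\delta \cI_{t^h_k, t^h_{k+1}} - \Xi_{t^h_k, t^h_{k+1}}\|_{q,r,t^h_k}^2\bigg)^{\hspace{-2pt}\frac{1}{2}},
\]
while for the drift part, the triangle inequality combined with \eqref{eq: L^qr norm of E_s bounded by q,r,s} gives
\[
\Big\|\sup_{0 \leq n \leq 2^h} |A_n|\Big\|_{q,r,0} \leq \sum_{k=0}^{2^h-1} \big\|\E_{t^h_k}[\delta \cI_{t^h_k, t^h_{k+1}} - \Xi_{t^h_k, t^h_{k+1}}]\big\|_{L^{qr}}.
\]

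Next I would substitute the estimates \eqref{eq: L^qr bound full generality sewing} and \eqref{eq: qrs bound full generality sewing} and extract a small exponent of the left-argument controls in order to generate the geometric decay. Specifically, fix $\eta > 0$ with $\eta < \min_i \alpha_{1,i} \wedge \min_j \beta_{1,j}$, and small enough that both $(\alpha_{1,i}-\eta) + \alpha_{2,i} \geq 1$ and $2(\beta_{1,j}-\eta) + 2\beta_{2,j} \geq 1$ hold for every $i, j$. By Lemma~\ref{lemma: alternating-midpoints may replace dyadics},
\[
w_{1,i}(t^h_k, t^h_{k+1}-)^{\alpha_{1,i}} \leq \big(2^{-\lfloor h/H \rfloor} w_{1,i}(0,T-)\big)^\eta w_{1,i}(t^h_k, t^h_{k+1}-)^{\alpha_{1,i} - \eta},
\]
and analogously for $\bw_{1,j}$. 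With this choice of $\eta$, the residual products $w_{1,i}(\cdot,\cdot-)^{\alpha_{1,i}-\eta} w_{2,i}(\cdot,\cdot)^{\alpha_{2,i}}$ are superadditive in the interval by Lemma~\ref{lemma: product of controls}, so the $\ell^1$-sum over $k$ is controlled by $w_{1,i}(0,T-)^{\alpha_{1,i}-\eta} w_{2,i}(0,T)^{\alpha_{2,i}}$. This handles $A_n$ directly and yields the prefactor $2^{-\lfloor h/H \rfloor \eta}$ multiplied by the $\alpha$-sum on the right-hand side of \eqref{eq: uniform bound sup_t cI- Xi^Ph more specific}.

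The main obstacle is controlling $M_n$, since after squaring \eqref{eq: qrs bound full generality sewing} one obtains cross-terms of products of different pairs of controls which must still be summable with an explicit rate. I would apply $(x+y)^2 \leq 2(x^2 + y^2)$ to separate the $\alpha$- and $\beta$-contributions, extract the factor $2^{-\lfloor h/H \rfloor \eta}$ exactly as above, and then either invoke superadditivity of $w_{1,i}^{2(\alpha_{1,i}-\eta)} w_{2,i}^{2\alpha_{2,i}}$ and $\bw_{1,j}^{2(\beta_{1,j}-\eta)} \bw_{2,j}^{2\beta_{2,j}}$ (which is precisely why the hypothesis $\beta_{1,j} + \beta_{2,j} > \frac{1}{2}$ is essential here), or, when needed to unify different pairs into a single control, apply Lemma~\ref{lemma: bound a sum of products of controls by uniform control} followed by Lemma~\ref{lemma: summable bounds for the allocation w1,...,wN}, in the same manner as in the proof of Theorem~\ref{theorem: stochastic sewing lemma}. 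Taking the square root dissolves the squared exponents, and combining the resulting bounds for $M_n$ and $A_n$ yields \eqref{eq: uniform bound sup_t cI- Xi^Ph more specific}, with $\eta$ and the implicit constant depending only on the data listed in the statement.
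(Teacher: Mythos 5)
Your proposal is correct and follows essentially the same route as the paper's proof: telescope $\cI_{t^h_n} - \Xi^{\cP^h}_{t^h_n}$ over $\cP^h$, split into a drift part and a discrete-martingale part, apply the conditional BDG inequality (Remark~\ref{remark: the conditional BDG including sup}) to the latter, extract a factor $2^{-\lfloor h/H \rfloor \eta}$ from the left-argument controls via Lemma~\ref{lemma: alternating-midpoints may replace dyadics}, and close using superadditivity of the residual products. Your handling of the cross-terms after squaring is a bit more explicit than the paper's presentation (which compresses that step into a single display), and the fallback to Lemma~\ref{lemma: bound a sum of products of controls by uniform control}/Lemma~\ref{lemma: summable bounds for the allocation w1,...,wN} you mention is unnecessary here, but neither point affects correctness.
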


\begin{proof}
We note that, for each $h \in \N_0$ and $v \in \cP^h$,
\begin{equation*}
\delta \cI_{s,v} - \delta \Xi^{\cP^h}_{s,v} = \sum_{n \, : \, d^h_{n+1} \leq v} Z_{d^h_n, d^h_{n+1}},
\end{equation*}
where $Z_{d^h_n, d^h_{n+1}} := \delta \cI_{d^h_n, d^h_{n+1}} - \Xi_{d^h_n, d^h_{n+1}}$.

It then follows from Lemma~\ref{lemma: application of conditional BDG} (and Remark~\ref{remark: the conditional BDG including sup}) applied with the random variables $Z_{d^h_n, d^h_{n+1}} - \E_{d^h_n} [Z_{d^h_n, d^h_{n+1}}]$ for $0 \leq n < 2^h$, and the bounds in \eqref{eq: L^qr bound full generality sewing} and \eqref{eq: qrs bound full generality sewing}, that
\begin{align*}
\Big\| \sup_{v \in \cP^h} \big|\delta \cI_{s,v} - \delta \Xi^{\cP^h}_{s,v}\big| \Big\|_{q,r,s} &\lesssim \sum_{n=0}^{2^h-1} \big\| \E_{d_n^h} [Z_{d^h_n, d^h_{n+1}}] \big\|_{L^r} + \bigg( \sum_{n=0}^{2^h-1} \|Z_{d^h_n, d^h_{n+1}}\|_{q,r,d_n^h}^2 \bigg)^{\hspace{-2pt}\frac{1}{2}}\\
&\lesssim \sum_{n=0}^{2^h-1} \sum_{i=1}^N w_{1,i}(d^h_n, d^h_{n+1}-)^{\alpha_{1,i}} w_{2,i}(d^h_n+, d^h_{n+1})^{\alpha_{2,i}}\\
&\quad + \bigg( \sum_{n=0}^{2^h-1} \sum_{j=1}^M \bw_{1,j}(d^h_n, d^h_{n+1}-)^{2\beta_{1,i}} \bw_{2,j}(d^h_n+, d^h_{n+1})^{2\beta_{2,j}} \bigg)^{\hspace{-2pt}\frac{1}{2}}.
\end{align*}
By Lemma~\ref{lemma: alternating midpoints bounds for sum of non continuous controls}, there exists an $\eta > 0$ sufficiently small that, applying the bound in \eqref{eq: summable bounds for the allocation w1,...,wN for sums of controls}, we obtain the estimate in \eqref{eq: uniform bound sup_t cI- Xi^Ph more specific}.
\end{proof}

\begin{lemma}\label{lemma: major lemma for cadlag property of integrals}
Recall the setting of Theorem~\ref{theorem: mild stochastic sewing lemma}, and assume that the process $\Xi$ also satisfies the bound in \eqref{eq: uniform q,r,s bound for uniform convergence}. Let $\cP^h = \{d^h_n\}_{n=0}^{2^h}$, $h \in \N_0$, be the $(w_{1,i}(\cdot+,\cdot-), w_{2,i}(\cdot+,\cdot-))_{i=1,\ldots,N}$-$(\bw_{1,j}(\cdot+,\cdot-), \bw_{2,j}(\cdot+,\cdot-))_{j=1,\ldots,M}$-$(\hw_{1,k}(\cdot+,\cdot-), \hw_{2,k}(\cdot+,\cdot-))_{k=1,\ldots,K}$-$w$-alternating midpoints of the interval $[0,T]$ (in the sense of Definition~\ref{definition: alternating midpoints}), where $w$ is the control given by $w(s,t) = t - s$ for $(s,t) \in \Delta_{[0,T]}$. Then there exists an $\eta > 0$ such that
\begin{equation}\label{eq: uniform bound sup_i sup_t delta I - Xi}
\begin{split}
\Big\| \sup_{0 \leq n < 2^h} \, &\sup_{t \in \cP^\ell|_{[d^h_n,d^h_{n+1}]}} |\delta \cI_{d^h_n,t} - \Xi_{d^h_n,t}| \Big\|_{q,r,0} \leq C 2^{-\eta h} \bigg( \sum_{i=1}^N w_{1,i}(0,T-)^{\alpha_{1,i}} w_{2,i}(0+,T)^{\alpha_{2,i}}\\
&\qquad + \sum_{j=1}^M \bw_{1,j}(0,T-)^{\beta_{1,j}} \bw_{2,j}(0+,T)^{\beta_{2,j}} + \sum_{k=1}^K \hw_{1,k}(0,T-)^{\gamma_{1,k}} \hw_{2,k}(0+,T)^{\gamma_{2,k}} \bigg)
\end{split}
\end{equation}
for all $h, \ell \in \N_0$ with $h \leq \ell$, where the constants $\eta$ and $C$ depend only on $q, N, M, K$ and the exponents $(\alpha_{1,i}, \alpha_{2,i})_{i=1,\ldots,N}$, $(\beta_{1,j}, \beta_{2,j})_{j=1,\ldots,M}$ and $(\gamma_{1,k}, \gamma_{2,k})_{k=1,\ldots,K}$.
\end{lemma}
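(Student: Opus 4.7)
The plan is to build on the argument of Lemma~\ref{lemma: unifrom approximation in sewing on the partition} with a multi-scale telescoping that handles sups over $t$ in the interior of each interval $[t^h_i, t^h_{i+1}]$. Fix $i$ and write $s := t^h_i$. For each $m \geq h$, let $\widetilde{\cP}^m_t := (\cP^m \cap [s, t]) \cup \{t\}$, viewed as a partition of $[s, t]$, and set $R^m_t := \sum_{[u,v] \in \widetilde{\cP}^m_t} \Xi_{u,v}$. Then $R^h_t = \Xi_{s, t}$ (since $\widetilde{\cP}^h_t = \{s, t\}$ for $t \in [s, t^h_{i+1}]$), and because $|\widetilde{\cP}^m_t| \leq |\cP^m| \to 0$ as $m \to \infty$ (thanks to the time-control $w(s,t)=t-s$ being part of the alternation), the MRS convergence from Theorem~\ref{theorem: stochastic sewing lemma} gives $R^m_t \to \delta \cI_{s, t}$ in $L^{q, r}_s$. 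This yields the telescoping
\begin{equation*}
\sup_{t \in \cP^\ell|_{[s,t^h_{i+1}]}} |\delta \cI_{s, t} - \Xi_{s, t}| \leq \sum_{m=h}^{\infty} \sup_{t} |R^{m+1}_t - R^m_t|.
\end{equation*}

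Next, I would analyze each increment $R^{m+1}_t - R^m_t$ by tracking the refinement $\cP^m \to \cP^{m+1}$: every $\cP^m$-interval $[u, v]$ with $v \leq t$ contributes a bulk term $-\delta \Xi_{u, d, v}$ (where $d$ is the new midpoint), while the unique $\cP^m$-interval $[u^*, v^*]$ containing $t$ contributes a boundary term $-\delta \Xi_{u^*, d^*, t}$ when $d^* \leq t$ (and zero otherwise). Hence $\sup_t |R^{m+1}_t - R^m_t| \leq S^m + B^m$, where $S^m$ is the running max (in $t$) of partial sums $|\sum_{v \leq t} \delta \Xi_{u,d,v}|$ over $\cP^m$-intervals in $[s, t^h_{i+1}]$, and $B^m := \max_{[u,v] \in \cP^m|_{[s, t^h_{i+1}]}} \sup_{v' \in [d, v]} |\delta \Xi_{u,d,v'}|$. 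Both $S^m$ and $B^m$ are independent of $\ell$. I would bound $\|S^m\|_{q,r,s}$ by splitting each $\delta \Xi_{u,d,v}$ into its $\cF_u$-conditional expectation (controlled by \eqref{eq: stoch sewing E_s Lqr bound}) and a martingale-difference remainder (controlled via Remark~\ref{remark: the conditional BDG including sup} using \eqref{eq: stoch sewing qrs bound}); for $\|B^m\|_{q,r,s}$, I would use the trivial estimate $\|B^m\|_{q,r,s}^q \leq \sum_{[u,v] \in \cP^m|_{[s,t^h_{i+1}]}} \|\sup_{v'}|\delta \Xi_{u,d,v'}|\|_{q,r,u}^q$ (via Proposition~\ref{prop: Lqrs is Banach space}(ii)) and apply \eqref{eq: uniform q,r,s bound for uniform convergence} term by term. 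In each of these three sub-estimates, Lemma~\ref{lemma: alternating-midpoints may replace dyadics} supplies a factor $2^{-\lfloor m/H \rfloor \eta_0}$ for some $\eta_0 > 0$, giving both geometric summability in $m \geq h$ and the desired prefactor $2^{-\eta h}$.

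Finally, I would combine the per-$i$ bound over $i \in \{0, \ldots, 2^h - 1\}$ via the elementary inequality
\begin{equation*}
\Big\|\sup_{0 \leq i < 2^h} X_i\Big\|_{q,r,0}^q \leq \sum_{i=0}^{2^h - 1} \|X_i\|_{q,r,0}^q \leq \sum_{i=0}^{2^h - 1} \|X_i\|_{q,r,t^h_i}^q,
\end{equation*}
where $X_i := \sup_{t \in \cP^\ell|_{[t^h_i, t^h_{i+1}]}} |\delta \cI_{t^h_i, t} - \Xi_{t^h_i, t}|$ and the last inequality uses Proposition~\ref{prop: Lqrs is Banach space}(ii). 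After raising the $\alpha$-, $\beta$-, and $\gamma$-exponents to the $q$-th power, every product $w_{1,i}^{q\alpha_{1,i}} w_{2,i}^{q\alpha_{2,i}}$ (and its analogues for $\bw, \hw$) becomes, by Lemma~\ref{lemma: product of controls}, a superadditive control---here the conditions $\alpha_{1,i}+\alpha_{2,i} > 1$, $\beta_{1,j}+\beta_{2,j} > \tfrac{1}{2}$, and $\gamma_{1,k}+\gamma_{2,k} > \tfrac{1}{q}$ are \emph{precisely} what is needed for this---so the sum over $i$ telescopes to the full interval $[0, T]$ and produces the right-hand side of \eqref{eq: uniform bound sup_i sup_t delta I - Xi}.

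The main obstacle will be the handling of the boundary term $B^m$. Since the $\cP^m$-interval containing $t$ jumps as $t$ varies, a pointwise application of \eqref{eq: uniform q,r,s bound for uniform convergence} is insufficient, and one must absorb the max-over-intervals into an $l^q$-sum at the cost of raising the exponents to the $q$-th power. Verifying that this reorganization still yields a summable bound---and that the condition $\gamma_{1,k} + \gamma_{2,k} > \tfrac{1}{q}$ (which is strictly weaker than the $>\tfrac{1}{2}$ condition on the $\bw$-controls) is \emph{exactly} the threshold which makes $q\gamma_{1,k} + q\gamma_{2,k} > 1$ after the $q$-th power step---is the crux of the argument.
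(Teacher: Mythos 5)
Your proposal follows essentially the same approach as the paper: a telescoping decomposition along the nested alternating-midpoint partitions, with the bulk contributions controlled by the conditional Burkholder--Davis--Gundy inequality (Lemma~\ref{lemma: application of conditional BDG}/Remark~\ref{remark: the conditional BDG including sup}), the boundary contributions controlled via the uniform sup-estimate \eqref{eq: uniform q,r,s bound for uniform convergence} combined with an $\ell^q$-reorganization, and the combination over $i$ again via an $\ell^q$-argument together with superadditivity of the controls raised to the appropriate powers. The only noteworthy organizational difference is that you telescope directly along the global partitions $\cP^m$, $m \geq h$, absorbing the $2^{-\eta h}$ decay inside the per-$i$ telescope, whereas the paper first proves an inner estimate on an arbitrary interval $[s,t]$ with no decay, then specializes to $[t^h_i, t^h_{i+1}]$ and pulls out the decay in the $\ell^q$-combination step (which requires the observation that $\cP^\ell|_{[t^h_i,t^h_{i+1}]}$ is again a sequence of alternating midpoints of the subinterval, an observation your route sidesteps); both give the same rate. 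One small imprecision: your claim that $\alpha_{1,i}+\alpha_{2,i}>1$ and $\beta_{1,j}+\beta_{2,j}>\tfrac12$ are \emph{precisely} the thresholds needed for superadditivity of $w^q_3$ and $w^{q/2}_4$ is too strong (weaker thresholds would suffice there, the $>1$ and $>\tfrac12$ thresholds come from elsewhere in the sewing); the statement is exactly sharp only for the $\gamma$-exponents, which you correctly identify as the crux of the argument.
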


\begin{proof}
We argue in a similar spirit to the proof of \cite[Lemma~2.12]{FrizHocquetLe2021}. For some fixed $s < t$, we write $\cP^h = \{t^h_n\}_{n=0}^{2^h}$, $h \in \N_0$, for the $(w_{1,i}(\cdot+,\cdot-), w_{2,i}(\cdot+,\cdot-))_{i=1,\ldots,N}$-$(\bw_{1,j}(\cdot+,\cdot-), \bw_{2,j}(\cdot+,\cdot-))_{j=1,\ldots,M}$-$(\hw_{1,k}(\cdot+,\cdot-), \hw_{2,k}(\cdot+,\cdot-))_{k=1,\ldots,K}$-$w$-alternating midpoints of the interval $[s,t]$.

In the following, for $u \in [s,t]$ and $h \in \N_0$, we will write $\lfloor u \rfloor_h := \sup \{t^h_n \in \cP^h : t^h_n \leq u\}$, and write
\begin{equation*}
\Xi^h_{s,v} := \sum_{i \, : \, t^h_{n+1} \leq v} \Xi_{t^h_n,t^h_{n+1}}
\end{equation*}
for $v \in \cP^h$. In particular, we note that
\begin{equation}\label{eq: Xi^h = delta Xi^Ph}
\Xi^h_{s,v} = \delta \Xi^{\cP^h}_{s,v},
\end{equation}
for every $v \in \cP^h$, where $\Xi^{\cP^h}$ is defined as in \eqref{eq: defn Xi^cP}.

For any $u \in [s,t]$ and $\ell \in \N$, we have that
\begin{align*}
\Xi^\ell_{s,\lfloor u \rfloor_\ell} - \Xi_{s,\lfloor u \rfloor_0} &= \sum_{h=0}^{\ell-1} \big(\Xi^{h+1}_{s,\lfloor u \rfloor_{h+1}} - \Xi^h_{s,\lfloor u \rfloor_h}\big) = \sum_{h=0}^{\ell-1} \big(\Xi^{h+1}_{s,\lfloor u \rfloor_h} - \Xi^h_{s,\lfloor u \rfloor_h} + \Xi_{\lfloor u \rfloor_h,\lfloor u \rfloor_{h+1}}\big)\\
&= \sum_{h=0}^{\ell-1} \bigg(\sum_{n \, : \, t^h_{n+1} \leq \lfloor u \rfloor_h} - \delta \Xi_{t^h_n,t^{h+1}_{2n+1},t^h_{n+1}}\bigg) + \sum_{h=0}^{\ell-1} \Xi_{\lfloor u \rfloor_h,\lfloor u \rfloor_{h+1}}.
\end{align*}
If $u = t$, then $\lfloor u \rfloor_h = t$ for every $h \in \N$, and we simply have
\begin{equation*}
\Xi^\ell_{s,\lfloor u \rfloor_\ell} - \Xi_{s,\lfloor u \rfloor_\ell} = \sum_{h=0}^{\ell-1} \bigg(\sum_{n \, : \, t^h_{n+1} \leq \lfloor u \rfloor_h} - \delta \Xi_{t^h_n,t^{h+1}_{2n+1},t^h_{n+1}}\bigg).
\end{equation*}
Otherwise, we have $u \in [s,t)$, so that $\lfloor u \rfloor_0 = s$, and we obtain
\begin{equation*}
\Xi^\ell_{s,\lfloor u \rfloor_\ell} - \Xi_{s,\lfloor u \rfloor_\ell} = \sum_{h=0}^{\ell-1} \bigg(\sum_{n \, : \, t^h_{n+1} \leq \lfloor u \rfloor_h} - \delta \Xi_{t^h_n,t^{h+1}_{2n+1},t^h_{n+1}}\bigg) + \sum_{h=0}^{\ell-1} \Xi_{\lfloor u \rfloor_h,\lfloor u \rfloor_{h+1}} - \Xi_{s,\lfloor u \rfloor_\ell}.
\end{equation*}

Let $\eta > 0$ such that $\eta < \min_{i} (\alpha_{1,i} \wedge \alpha_{2,i})$, $2\eta < \min_{i} (\alpha_{1,i} + \alpha_{2,i} - 1)$, $\eta < \min_{j} (\beta_{1,j} \wedge \beta_{2,j})$, $2\eta < \min_{j} (\beta_{1,j} + \beta_{2,j} - \frac{1}{2})$, $\eta < \min_{k} (\gamma_{1,k} \wedge \gamma_{2,k})$ and $2\eta < \min_{k} (\gamma_{1,k} + \gamma_{2,k} - \frac{1}{q})$.

Let us write $Z^h_n := -\delta \Xi_{t^h_n,t^{h+1}_{2n+1},t^h_{n+1}}$. By Lemma~\ref{lemma: application of conditional BDG} (and Remark~\ref{remark: the conditional BDG including sup}), the bounds in \eqref{eq: stoch sewing E_s Lqr bound} and \eqref{eq: stoch sewing qrs bound}, and Lemma~\ref{lemma: alternating midpoints bounds for sum of non continuous controls}, we have that
\begin{align*}
&\bigg\| \sup_{u \in [s,t]} \bigg| \sum_{n \, : \, t^h_{n+1} \leq \lfloor u \rfloor_h} \big( Z^h_n - \E_{t^h_n} [Z^h_n] \big) \bigg| \bigg\|_{q,r,s} \lesssim \bigg( \sum_{n=0}^{2^h-1} \|Z^h_n\|_{q,r,t^h_n}^2 \bigg)^{\hspace{-2pt}\frac{1}{2}}\\
&\lesssim \bigg( \sum_{n=0}^{2^h-1} \sum_{j=1}^M \bw_{1,j}(t^h_n,t^h_{n+1}-)^{2\beta_{1,j}} \bw_{2,j}(t^h_n+,t^h_{n+1})^{2\beta_{2,j}} \bigg)^{\hspace{-2pt}\frac{1}{2}}\\
&\lesssim 2^{-\eta \lfloor \frac{h}{H} \rfloor} \bigg( \sum_{j=1}^M \bw_{1,j}(s,t-)^{2\beta_{1,j}} \bw_{2,j}(s+,t)^{2\beta_{2,j}} \bigg)^{\hspace{-2pt}\frac{1}{2}},
\end{align*}
where $H = 2N + 2M + 2K + 1$, and, similarly,
\begin{align*}
&\bigg\| \sup_{u \in [s,t]} \bigg| \sum_{n \, : \, t^h_{n+1} \leq \lfloor u \rfloor_h} \E_{t^h_n} [Z^h_n] \bigg| \bigg\|_{q,r,s} \leq \bigg\| \sum_{n=0}^{2^h-1} \big| \E_{t^h_n} [Z^h_n] \big| \bigg\|_{q,r,s} \leq \sum_{n=0}^{2^h-1} \big\| \E_{t^h_n} [Z^h_n] \big\|_{L^r}\\
&\leq \sum_{n=0}^{2^h-1} \sum_{i=1}^N w_{1,i}(t^h_n,t^h_{n+1}-)^{\alpha_{1,i}} w_{2,i}(t^h_n+,t^h_{n+1})^{\alpha_{2,i}} \lesssim 2^{-\eta \lfloor \frac{h}{H} \rfloor} \sum_{i=1}^N w_{1,i}(s,t-)^{\alpha_{1,i}} w_{2,i}(s+,t)^{\alpha_{2,i}}.
\end{align*}
Thus,
\begin{align*}
&\bigg\| \sup_{u \in [s,t]} \bigg|\sum_{h=0}^{\ell-1} \bigg( \sum_{n \, : \, t^h_{n+1} \leq \lfloor u \rfloor_h} - \delta \Xi_{t^h_n,t^{h+1}_{2n+1},t^h_{n+1}} \bigg) \bigg| \bigg\|_{q,r,s}\\
&\leq \sum_{h=0}^{\ell-1} \bigg\| \sup_{u \in [s,t]} \bigg| \sum_{n \, : \, t^h_{n+1} \leq \lfloor u \rfloor_h} \E_{t^h_n} [Z^h_n] \bigg| \bigg\|_{q,r,s} + \sum_{h=0}^{\ell-1} \bigg\| \sup_{u \in [s,t]} \bigg| \sum_{n \, : \, t^h_{n+1} \leq \lfloor u \rfloor_h} \big( Z^h_n - \E_{t^h_n} [Z^h_n] \big) \bigg| \bigg\|_{q,r,s}\\
&\lesssim \sum_{h=0}^\infty 2^{-\eta \lfloor \frac{h}{H} \rfloor} \sum_{i=1}^N w_{1,i}(s,t-)^{\alpha_{1,i}} w_{2,i}(s+,t)^{\alpha_{2,i}} + \sum_{h=0}^\infty 2^{-\eta \lfloor \frac{h}{H} \rfloor} \bigg( \sum_{j=1}^M \bw_{1,j}(s,t-)^{2\beta_{1,j}} \bw_{2,j}(s+,t)^{2\beta_{2,j}} \bigg)^{\hspace{-2pt}\frac{1}{2}}\\
&\lesssim \sum_{i=1}^N w_{1,i}(s,t-)^{\alpha_{1,i}} w_{2,i}(s+,t)^{\alpha_{2,i}} + \bigg( \sum_{j=1}^M \bw_{1,j}(s,t-)^{2\beta_{1,j}} \bw_{2,j}(s+,t)^{2\beta_{2,j}} \bigg)^{\hspace{-2pt}\frac{1}{2}}.
\end{align*}

Noting that $\sum_{h=0}^{\ell-1} \Xi_{\lfloor u \rfloor_h,\lfloor u \rfloor_{h+1}} - \Xi_{s,\lfloor u \rfloor_\ell} = -\sum_{h=0}^{\ell-1} \delta \Xi_{\lfloor u \rfloor_h,\lfloor u \rfloor_{h+1},\lfloor u \rfloor_\ell}$ for any $u \in [s,t)$, we have that
\begin{align*}
\sup_{u \in [s,t)} \bigg|\sum_{h=0}^{\ell-1} \Xi_{\lfloor u \rfloor_h,\lfloor u \rfloor_{h+1}} - \Xi_{s,\lfloor u \rfloor_\ell}\bigg| &\leq \sup_{u \in [s,t)} \sum_{h=0}^{\ell-1} |\delta \Xi_{\lfloor u \rfloor_h,\lfloor u \rfloor_{h+1},\lfloor u \rfloor_\ell}|\\
&\leq \sum_{h=0}^{\ell-1} \sup_{0 \leq n < 2^h} \, \sup_{u \in [t^{h+1}_{2n+1},t^h_{n+1}]} |\delta \Xi_{t^h_n,t^{h+1}_{2n+1},u}|.
\end{align*}
Using \eqref{eq: uniform q,r,s bound for uniform convergence} and Lemma~\ref{lemma: alternating midpoints bounds for sum of non continuous controls}, we have that
\begin{align*}
&\bigg\| \sup_{0 \leq n < 2^h} \, \sup_{u \in [t^{h+1}_{2n+1},t^h_{n+1}]} |\delta \Xi_{t^h_n,t^{h+1}_{2n+1},u}| \bigg\|_{q,r,s}^q \leq \bigg\| \E_s \bigg[ \sum_{n=0}^{2^h-1} \sup_{u \in [t^{h+1}_{2n+1},t^h_{n+1}]} |\delta \Xi_{t^h_n,t^{h+1}_{2n+1},u}|^q \bigg] \bigg\|_{L^{\frac{r}{q}}}\\
&\leq \sum_{n=0}^{2^h-1} \bigg\| \E_{t^h_n} \bigg[ \sup_{u \in [t^{h+1}_{2n+1},t^h_{n+1}]} |\delta \Xi_{t^h_n,t^{h+1}_{2n+1},u}|^q \bigg] \bigg\|_{L^{\frac{r}{q}}} = \sum_{n=0}^{2^h-1} \bigg\| \sup_{u \in [t^{h+1}_{2n+1},t^h_{n+1}]} |\delta \Xi_{t^h_n,t^{h+1}_{2n+1},u}| \bigg\|_{q,r,t^h_n}^q\\
&\leq \sum_{n=0}^{2^h-1} \bigg( \sum_{k=1}^K \hw_{1,k}(t^h_n,t^h_{n+1}-)^{\gamma_{1,k}} \hw_{2,k}(t^h_n+,t^h_{n+1})^{\gamma_{2,k}} \bigg)^{\hspace{-2pt}q}\\
&\lesssim \sum_{n=0}^{2^h-1} \sum_{k=1}^K \hw_{1,k}(t^h_n,t^h_{n+1}-)^{q\gamma_{1,k}} \hw_{2,k}(t^h_n+,t^h_{n+1})^{q\gamma_{2,k}}\\
&\lesssim 2^{-q \eta \lfloor \frac{h}{H} \rfloor} \sum_{k=1}^K \hw_{1,k}(s,t-)^{q\gamma_{1,k}} \hw_{2,k}(s+,t)^{q\gamma_{2,k}},
\end{align*}
so that
\begin{equation*}
\begin{split}
\bigg\| \sup_{u \in [s,t)} \bigg| \sum_{h=0}^{\ell-1} \Xi_{\lfloor u \rfloor_h,\lfloor u \rfloor_{h+1}} - \Xi_{s,\lfloor u \rfloor_\ell} \bigg| \bigg\|_{q,r,s} &\lesssim \sum_{h=0}^\infty 2^{-\eta \lfloor \frac{h}{H} \rfloor} \bigg( \sum_{k=1}^K \hw_{1,k}(s,t-)^{q\gamma_{1,k}} \hw_{2,k}(s+,t)^{q\gamma_{2,k}} \bigg)^{\hspace{-2pt}\frac{1}{q}}\\
&\lesssim \bigg( \sum_{k=1}^K \hw_{1,k}(s,t-)^{q\gamma_{1,k}} \hw_{2,k}(s+,t)^{q\gamma_{2,k}} \bigg)^{\hspace{-2pt}\frac{1}{q}}.
\end{split}
\end{equation*}

Combining the bounds derived above, we obtain
\begin{equation*}
\Big\|\sup_{u \in [s,t]} \big|\Xi^\ell_{s,\lfloor u \rfloor_\ell} - \Xi_{s,\lfloor u \rfloor_\ell}\big|\Big\|_{q,r,s} \lesssim w_1(s,t) + w_2(s,t)^{\frac{1}{2}} + w_3(s,t)^{\frac{1}{q}},
\end{equation*}
where we define
\begin{equation*}
\begin{split}
w_1(s,t) &= \sum_{i=1}^N w_{1,i}(s,t-)^{\alpha_{1,i}} w_{2,i}(s+,t)^{\alpha_{2,i}},\\
w_2(s,t) &= \sum_{j=1}^M \bw_{1,j}(s,t-)^{2\beta_{1,j}} \bw_{2,j}(s+,t)^{2\beta_{2,j}},\\
w_3(s,t) &= \sum_{k=1}^K \hw_{1,k}(s,t-)^{q\gamma_{1,k}} \hw_{2,k}(s+,t)^{q\gamma_{2,k}}.
\end{split}
\end{equation*}
Thus, for every $h \leq \ell$, since $\cP^h \subseteq \cP^\ell$, we have that
\begin{equation*}
\Big\| \sup_{v \in \cP^h} |\Xi^\ell_{s,v} - \Xi_{s,v}| \Big\|_{q,r,s} \leq \Big\| \sup_{v \in \cP^\ell} |\Xi^\ell_{s,v} - \Xi_{s,v}| \Big\|_{q,r,s} \lesssim w_1(s,t) + w_2(s,t)^{\frac{1}{2}} + w_3(s,t)^{\frac{1}{q}}.
\end{equation*}

Recalling \eqref{eq: Xi^h = delta Xi^Ph}, we see that, by Lemma~\ref{lemma: unifrom approximation in sewing on the partition},
\[ \Big\| \sup_{v \in \cP^\ell} |\delta \cI_{s,v} - \Xi^\ell_{s,v}| \Big\|_{q,r,s} = \Big\| \sup_{v \in \cP^\ell} |\delta \cI_{s,v} - \delta \Xi^{\cP^\ell}_{s,v}| \Big\|_{q,r,s} \longrightarrow 0 \]
as $\ell \to \infty$. It follows that, for every $h \in \N$,
\begin{equation}\label{eq: uniform bound delta I - Xi by w345}
\Big\| \sup_{v \in \cP^h} |\delta \cI_{s,v} - \Xi_{s,v}| \Big\|_{q,r,s} \lesssim w_1(s,t) + w_2(s,t)^{\frac{1}{2}} + w_3(s,t)^{\frac{1}{q}}.
\end{equation}

We now let $\cP^h = \{d^h_n\}_{n=0}^{2^h}$, $h \in \N_0$, be the alternating midpoints of the interval $[0,T]$, as specified in the statement of Lemma~\ref{lemma: major lemma for cadlag property of integrals}. We note that, for any $h \in \N$ and any $n = 0, 1, \ldots, 2^h - 1$, the sequence of partitions $\cP^\ell|_{[d^h_n,d^h_{n+1}]}$ for $\ell \geq h$ are actually the $(w_{1,i}(\cdot+,\cdot-), w_{2,i}(\cdot+,\cdot-))_{i=1,\ldots,N}$-$(\bw_{1,j}(\cdot+,\cdot-), \bw_{2,j}(\cdot+,\cdot-))_{j=1,\ldots,M}$-$(\hw_{1,k}(\cdot+,\cdot-), \hw_{2,k}(\cdot+,\cdot-))_{k=1,\ldots,K}$-$w$-alternating midpoints of the interval $[d^h_n,d^h_{n+1}]$. (The order in which we alternate the controls might have changed, but this is irrelevant for our estimates.) Thus, using the bound in \eqref{eq: uniform bound delta I - Xi by w345} applied on the interval $[s,t] = [d^h_n,d^h_{n+1}]$, together with Lemma~\ref{lemma: alternating midpoints bounds for sum of non continuous controls} again, we have, for any $h \leq \ell$, that
\begin{align*}
&\Big\| \sup_{0 \leq n < 2^h} \, \sup_{t \in \cP^\ell|_{[d^h_n,d^h_{n+1}]}} |\delta \cI_{d^h_n,t} - \Xi_{d^h_n,t}| \Big\|_{q,r,0}^q \leq \bigg\| \E_0 \bigg[ \sum_{n=0}^{2^h-1} \sup_{t \in \cP^\ell|_{[d^h_n,d^h_{n+1}]}} |\delta \cI_{d^h_n,t} - \Xi_{d^h_n,t}|^q \bigg] \bigg\|_{L^{\frac{r}{q}}}\\
&\leq \sum_{n=0}^{2^h-1} \bigg\| \E_{d^h_n} \bigg[ \sup_{t \in \cP^\ell|_{[d^h_n,d^h_{n+1}]}} |\delta \cI_{d^h_n,t} - \Xi_{d^h_n,t}|^q \bigg] \bigg\|_{L^{\frac{r}{q}}} = \sum_{n=0}^{2^h-1} \Big\| \sup_{t \in \cP^\ell|_{[d^h_n,d^h_{n+1}]}} |\delta \cI_{d^h_n,t} - \Xi_{d^h_n,t}| \Big\|_{q,r,d^h_n}^q\\
&\lesssim \sum_{n=0}^{2^h-1} \big( w_1(d^h_n,d^h_{n+1})^q + w_2(d^h_n,d^h_{n+1})^{\frac{q}{2}} + w_3(d^h_n,d^h_{n+1}) \big)\\
&\lesssim \bigg( \sum_{n=0}^{2^h-1} \sum_{i=1}^N w_{1,i}(d^h_n,d^h_{n+1}-)^{\alpha_{1,i}} w_{2,i}(d^h_n+,d^h_{n+1})^{\alpha_{2,i}} \bigg)^{\hspace{-2pt}q}\\
&\quad + \bigg( \sum_{n=0}^{2^h-1} \sum_{j=1}^M \bw_{1,j}(d^h_n,d^h_{n+1}-)^{2\beta_{1,j}} \bw_{2,j}(d^h_n+,d^h_{n+1})^{2\beta_{2,j}} \bigg)^{\hspace{-2pt}\frac{q}{2}}\\
&\quad + \sum_{n=0}^{2^h-1} \sum_{k=1}^K \hw_{1,k}(d^h_n,d^h_{n+1}-)^{q\gamma_{1,k}} \hw_{2,k}(d^h_n+,d^h_{n+1})^{q\gamma_{2,k}}\\
&\lesssim 2^{-q \eta \lfloor \frac{h}{H} \rfloor} \big( w_1(0,T)^q + w_2(0,T)^{\frac{q}{2}} + w_3(0,T) \big),
\end{align*}
from which it follows that the estimate in \eqref{eq: uniform bound sup_i sup_t delta I - Xi} holds for some constant $C$ and some possibly new $\eta > 0$.
\end{proof}

\begin{proof}[Proof of Theorem~\ref{theorem: sample path regularity sewing}]
Let $\eta > 0$ and $\cP^h = \{d^h_n\}_{n=0}^{2^h}$, $h \in \N_0$, be the constant and sequence of partitions specified in Lemma~\ref{lemma: major lemma for cadlag property of integrals}. For $h \in \N$ and $t \in [0,T]$, we adopt the shorthand
\begin{equation*}
\Xi^h_t := \Xi^{\cP^h}_t = \sum_{n=0}^{2^h-1} \Xi_{d^h_n \wedge t,d^h_{n+1} \wedge t}.
\end{equation*}
For every $\ell \geq h$, we have that
\begin{equation*}
\sup_{t \in \cP^\ell} |\cI_t - \Xi^h_t| \leq \sup_{v \in \cP^h} |\cI_v - \Xi^h_v| + \sup_{0 \leq n < 2^h} \, \sup_{t \in \cP^\ell|_{[d^h_n,d^h_{n+1}]}} |\delta \cI_{d^h_n,t} - \Xi_{d^h_n,t}|.
\end{equation*}
It then follows from Lemma~\ref{lemma: unifrom approximation in sewing on the partition}\footnote{The alternating midpoints we consider here involve additional controls to the ones in Lemma~\ref{lemma: unifrom approximation in sewing on the partition}, but it is easy to see that the result is nonetheless true, albeit with possibly different constants.} and the bound in \eqref{eq: uniform bound sup_i sup_t delta I - Xi} that
\begin{equation}\label{eq: combined bound for I_t - Xi^h_t}
\begin{split}
\Big\| \sup_{t \in \cP^\ell} |\cI_t &- \Xi^h_t| \Big\|_{q,r,0} \lesssim 2^{-\eta h} \bigg( \sum_{i=1}^N w_{1,i}(0,T-)^{\alpha_{1,i}} w_{2,i}(0+,T)^{\alpha_{2,i}}\\
&+ \sum_{j=1}^M \bw_{1,j}(0,T-)^{\beta_{1,j}} \bw_{2,j}(0+,T)^{\beta_{2,j}} + \sum_{k=1}^K \hw_{1,k}(0,T-)^{\gamma_{1,k}} \hw_{2,k}(0+,T)^{\gamma_{2,k}} \bigg)
\end{split}
\end{equation}
for some possibly new $\eta > 0$. We infer that there exists a subsequence $(h_m)_{m \in \N}$, such that, for each $m \in \N$ and $\ell \geq h_m$,
\begin{equation*}
\Big\| \sup_{t \in \cP^\ell} |\cI_t - \Xi^{h_m}_t| \Big\|_{q,r,0} < 2^{-\eta m}.
\end{equation*}
Let $\cP = \bigcup_{h \in \N_0} \cP^h$. Since the sequence of partitions $(\cP^\ell)_{\ell \in \N}$ is nested, it follows from part~(v) of Proposition~\ref{prop: Lqrs is Banach space} that
\begin{equation*}
\Big\| \sup_{t \in \cP} |\cI_t - \Xi^{h_m}_t| \Big\|_{q,r,0} \leq \liminf_{\ell \to \infty} \Big\| \sup_{t \in \cP^\ell} |\cI_t - \Xi^{h_m}_t| \Big\|_{q,r,0} \leq 2^{-\eta m}.
\end{equation*}
By Markov's inequality, $\P(\sup_{t \in \cP} |\cI_t - \Xi^{h_m}_t| \geq 2^{-\frac{\eta m}{2}}) \lesssim 2^{-\frac{\eta m}{2}}$, and it follows from the Borel--Cantelli lemma that, almost surely, $\Xi^{h_m} \to \cI$ uniformly on $\cP$ as $m \to \infty$. Since each $\Xi^{h_m}$ is almost surely c\`adl\`ag, the same is thus true of $\cI$ on $\cP$.

Since we included the $w$-midpoints when defining $(\cP^h)_{h \in \N_0}$, we have from Lemma~\ref{lemma: alternating-midpoints may replace dyadics} that $|\cP^h| \leq 2^{-\lfloor \frac{h}{H} \rfloor} T$ for all $h \in \N_0$, so that in particular $|\cP^h| \to 0$ as $h \to \infty$. We infer that $\cP$ is dense in $[0,T]$, and we can define
\begin{equation*}
\widetilde{\cI}_s := \lim_{t \in \cP, \, t \searrow s} \cI_t
\end{equation*}
as an almost sure limit for every $s \in [0,T)$, and let $\widetilde{\cI}_T := \cI_T$.

It follows from \eqref{eq: qrs bound full generality sewing} that $\delta \cI_{s,t} - \Xi_{s,t} \to 0$ in $L^{q,r}_s$ as $t \searrow s$, and we have by assumption that $\Xi_{s,t} \to 0$ almost surely as $t \searrow s$. Together, these imply that $(\cI_s)_{s \in [0,T]}$ is right-continuous in probability. By uniqueness of limits in probability, we deduce that $\widetilde{\cI}_s = \cI_s$ almost surely for every $s \in [0,T]$, so that $\widetilde{\cI}$ is indeed a version of $\cI$.

Finally, to see the estimate in \eqref{eq: uniform estimate in full generality sewing}, we note that
\begin{equation*}
\Big\|\sup_{t \in [0,T]} |\widetilde{\cI}_t - \Xi^h_t|\Big\|_{q,r,0} = \Big\|\sup_{t \in \cP} |\cI_t - \Xi^h_t|\Big\|_{q,r,0} \leq \liminf_{\ell \to \infty} \Big\|\sup_{t \in \cP^\ell} |\cI_t - \Xi^h_t|\Big\|_{q,r,0},
\end{equation*}
and combine this with the bound in \eqref{eq: combined bound for I_t - Xi^h_t}.
\end{proof}

\section{Additional technical results}

\begin{lemma}\label{lemma: convergence to 0 of two param processes}
Let $q \in [2,\infty)$, and let $(R_{s,t})_{(s,t) \in \Delta_{[0,T]}}$ be an $L^q$-valued two-parameter process, such that $R_{s,t}$ is $\cF_t$-measurable for every $(s,t) \in \Delta_{[0,T]}$. Let $(w_{1,i}, w_{2,i})_{i=1,\ldots,N}$ and $(\bw_{1,j}, \bw_{2, j})_{j=1,\ldots,M}$ be controls. If
\begin{align*}
\big\| \E_s [R_{s,t}] \big\|_{L^q} &\leq \sum_{i=1}^N w_{1,i}(s,t-)^{\alpha_{1,i}} w_{2,i}(s+,t)^{\alpha_{2,i}},\\
\|R_{s,t}\|_{L^q} &\leq \sum_{j=1}^M \bw_{1,j}(s,t-)^{\beta_{1,j}} \bw_{2,j}(s+,t)^{\beta_{2,j}}
\end{align*}
for every $(s,t) \in \Delta_{[0,T]}$, where $\alpha_{1,i}, \alpha_{2,i}, \beta_{1,j}, \beta_{2,j} > 0$ are such that $\alpha_{1,i} + \alpha_{2,i} > 1$ for every $i = 1, \ldots, N$, and $\beta_{1,j} + \beta_{2,j} > \frac{1}{2}$ for every $j = 1, \ldots, M$, then
\begin{equation}\label{eq: limit to zero in technical result}
\bigg\| \sum_{[s,t] \in \cP} R_{s,t} \bigg\|_{L^q} \, \longrightarrow \, 0
\end{equation}
in the sense of RRS convergence.
\end{lemma}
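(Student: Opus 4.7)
The plan is to bound $\|\sum_{[u,v]\in\cP}R_{u,v}\|_{L^q}$ by decomposing
\[
\sum_{[u,v]\in\cP}R_{u,v}=\sum_{[u,v]\in\cP}\E_u[R_{u,v}]+\sum_{[u,v]\in\cP}\bigl(R_{u,v}-\E_u[R_{u,v}]\bigr).
\]
For the first piece, the triangle inequality and the hypothesis give $\|\cdot\|_{L^q}\leq\sum_{i=1}^{N}\sum_{[u,v]}w_{1,i}(u,v-)^{\alpha_{1,i}}w_{2,i}(u,v)^{\alpha_{2,i}}$. For the second piece I would note that the summands are martingale differences with respect to the discrete filtration $(\cF_{u_k})$ indexed by the partition; applying Lemma~\ref{lemma: application of conditional BDG} with $r=1$ and then Minkowski's inequality in $\ell^2$ (which is where $q\geq 2$ is used) yields
\[
\bigg\|\sum_{[u,v]\in\cP}\bigl(R_{u,v}-\E_u[R_{u,v}]\bigr)\bigg\|_{L^q}\lesssim\sum_{j=1}^{M}\bigg(\sum_{[u,v]\in\cP}\bw_{1,j}(u,v-)^{2\beta_{1,j}}\bw_{2,j}(u,v)^{2\beta_{2,j}}\bigg)^{\!1/2}.
\]
Since $\alpha_{1,i}+\alpha_{2,i}>1$ and $2(\beta_{1,j}+\beta_{2,j})>1$, both bounds reduce to the following deterministic claim: if $w_1,w_2$ are controls right-continuous in their first argument and $\gamma_1,\gamma_2>0$ satisfy $\theta:=\gamma_1+\gamma_2>1$, then $\sum_{[u,v]\in\cP}w_1(u,v-)^{\gamma_1}w_2(u,v)^{\gamma_2}\to 0$ in the RRS sense.

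To prove the deterministic claim, I would set $\Delta(u,v):=w_2(u,v)-w_2(u,v-)\geq 0$ and use $(a+b)^{\gamma_2}\leq 2^{\gamma_2}(a^{\gamma_2}+b^{\gamma_2})$ to split the sum into a continuous and a jump part. The continuous part $\sum w_1(u,v-)^{\gamma_1}w_2(u,v-)^{\gamma_2}$ equals $\sum V(u,v)^{\theta}$, where $V(u,v):=w_1(u,v-)^{\gamma_1/\theta}w_2(u,v-)^{\gamma_2/\theta}$ is a continuous-from-inside control by Lemmas~\ref{lemma: continuity from the inside of tw} and~\ref{lemma: product of controls}; bounding $\sum V^{\theta}\leq(\max_{\cP}V)^{\theta-1}V(0,T)$ and applying \cite[Lemma~1.5]{FrizZhang2018} to $V$ makes this arbitrarily small on any sufficiently refined partition. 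For the jump part $\sum w_1(u,v-)^{\gamma_1}\Delta(u,v)^{\gamma_2}$, which is the main obstacle, two observations are crucial: by superadditivity $\sum_{[u,v]\in\cP}\Delta(u,v)\leq\sum_{[u,v]\in\cP}w_2(u,v)\leq w_2(0,T)<\infty$, so at most $w_2(0,T)/\eta$ intervals of $\cP$ can satisfy $\Delta(u,v)>\eta$; and a further application of \cite[Lemma~1.5]{FrizZhang2018} to $\tw_1(s,t):=w_1(s,t-)$ (continuous from the inside by Lemma~\ref{lemma: continuity from the inside of tw}) yields a partition on whose refinements $w_1(u,v-)<\epsilon'$ uniformly.

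The ``large-jump'' subsum is then controlled by $(w_2(0,T)/\eta)\,\epsilon'^{\gamma_1}w_2(0,T)^{\gamma_2}$. For the ``small-jump'' subsum (intervals with $\Delta\leq\eta$), in the range $\gamma_1\in(0,1)$ I would apply H\"older's inequality with conjugate exponents $(1/\gamma_1,1/(1-\gamma_1))$, producing
\[
\sum w_1(u,v-)^{\gamma_1}\Delta(u,v)^{\gamma_2}\leq\bigg(\sum w_1(u,v-)\bigg)^{\!\gamma_1}\bigg(\sum\Delta(u,v)^{\gamma_2/(1-\gamma_1)}\bigg)^{\!1-\gamma_1};
\]
since $\theta>1$ forces $\gamma_2/(1-\gamma_1)>1$, and $\Delta\leq\eta$ then gives $\sum\Delta^{\gamma_2/(1-\gamma_1)}\leq\eta^{\gamma_2/(1-\gamma_1)-1}w_2(0,T)$, the bound becomes $\lesssim w_1(0,T-)^{\gamma_1}w_2(0,T)^{1-\gamma_1}\eta^{\theta-1}$, which vanishes as $\eta\to 0$; the complementary range $\gamma_1\geq 1$ is handled directly by pulling out $w_1(0,T-)^{\gamma_1-1}$ and using $\Delta^{\gamma_2}\leq\eta^{\gamma_2}$ to obtain a factor $\eta^{\gamma_2}$. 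Balancing $\epsilon'$ and $\eta$ forces the full sum below any prescribed quantity on refinements; the hardest part will be packaging these three regimes (continuous, large-jump, small-jump) simultaneously for all of the controls $w_{1,i},w_{2,i},\bw_{1,j},\bw_{2,j}$ into a single partition $\cP^{\epsilon}$ valid for every refinement.
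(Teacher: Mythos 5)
Your overall structure (split $R_{u,v}=\E_u[R_{u,v}]+(R_{u,v}-\E_u[R_{u,v}])$, apply Lemma~\ref{lemma: application of conditional BDG} with $r=1$ to the martingale-difference part, then Minkowski in $\ell^2$) coincides with the paper's opening move, but what you do next is genuinely different. You reduce to the purely deterministic statement that $\sum_{[u,v]\in\cP}w_1(u,v-)^{\gamma_1}w_2(u,v)^{\gamma_2}\to 0$ in the RRS sense whenever $\gamma_1+\gamma_2>1$, and propose to prove it by splitting $w_2(u,v)$ into its left-limit part and its jump $\Delta(u,v)=w_2(u,v)-w_2(u,v-)$, then treating continuous, large-jump, and small-jump contributions separately with H\"older's inequality and a count on the number of large jumps. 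That argument is sound: the product $V=w_1(\cdot,\cdot-)^{\gamma_1/\theta}w_2(\cdot,\cdot-)^{\gamma_2/\theta}$ is a control continuous from the inside by Lemmas~\ref{lemma: continuity from the inside of tw} and~\ref{lemma: product of controls}, the jump-count $\#\{[u,v]:\Delta(u,v)>\eta\}\leq w_2(0,T)/\eta$ is partition-uniform by superadditivity, and the order of choosing $\eta$ first and then $\epsilon'$ (and hence $\cP^\epsilon$ via \cite[Lemma~1.5]{FrizZhang2018}) is correct.

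The paper avoids this casework entirely by reusing the alternating-midpoint machinery already built for Theorem~\ref{theorem: stochastic sewing lemma}: it picks out a small power $\eta$ of each $w_{1,i}(s,t-)$ (resp.~$\bw_{1,j}(s,t-)$), bounds that factor by $2^{-\lfloor h/H\rfloor\eta}w_{1,i}(0,T-)^\eta$ uniformly over all subintervals of the $h$-th alternating-midpoint partition (Lemma~\ref{lemma: alternating-midpoints may replace dyadics}), and notes that the leftover $w_{1,i}(s,t-)^{\alpha_{1,i}-\eta}w_{2,i}(s,t)^{\alpha_{2,i}}$ is itself a control (exponent sum $\geq 1$), so superadditivity finishes the estimate in one line with an explicit geometric rate in $h$. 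That route is shorter, requires no continuous/jump decomposition or H\"older balancing, and keeps the jump structure of the controls implicit. Your approach, by contrast, is more elementary and self-contained in not needing the alternating-midpoint construction at all, but it pays for this with three distinct regimes and nontrivial bookkeeping, which you yourself flag as the remaining work: combining the choices of $\eta$, $\epsilon'$, and the partitions produced by \cite[Lemma~1.5]{FrizZhang2018} simultaneously for every pair $(w_{1,i},w_{2,i})$ and $(\bw_{1,j},\bw_{2,j})$ into one $\cP^\epsilon$. That packaging is straightforward (take a common refinement of the finitely many partitions and balance the finitely many constants), so the proposal would go through, but it is noticeably heavier than the paper's argument.
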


\begin{proof}
Let $\epsilon > 0$. We let $\delta > 0$ and $\cP^\epsilon$ be the constant and partition as defined in the proof of Theorem~\ref{theorem: mild stochastic sewing lemma}. In particular, by the construction of the partition $\cP^\epsilon$, we have that, for any refinement $\cP \supseteq \cP^\epsilon$, and any $[s,t] \in \cP$, either $w_{1,i}(s,t-) \leq \epsilon^{\frac{1}{\delta}} w_{1,i}(0,T-)$ or $w_{2,i}(s+,t) \leq \epsilon^{\frac{1}{\delta}} w_{2,i}(0+,T)$ for each $i$, and either $\bw_{1,j}(s,t-)\leq \epsilon^{\frac{1}{\delta}}  \bw_{1,j}(0,T-)$ or $\bw_{2,j}(s+,t) \leq \epsilon^{\frac{1}{\delta}} \bw_{2,j}(0+,T)$ for each $j$.

Then, for any refinement $\cP \supseteq \cP^\epsilon$, we may apply Lemma~\ref{lemma: application of conditional BDG} with $r = q$ and the random variables $R_{s,t} - \E_s [R_{s,t}]$ for each $[s,t] \in \cP$ to obtain
\begin{align*}
&\bigg\| \sum_{[s,t] \in \cP} R_{s,t} \bigg\|_{L^q} \lesssim \sum_{[s,t] \in \cP} \big\|\E_s [R_{s,t}]\big\|_{L^q} + \bigg(\sum_{[s,t] \in \cP} \|R_{s,t}\|_{L^q}^2\bigg)^{\hspace{-2pt}\frac{1}{2}}\\
&\lesssim \epsilon \sum_{[s,t] \in \cP} \sum_{i=1}^N w_{1,i}(0,T-)^{\delta} w_{2,i}(0+,T)^{\delta} w_{1,i}(s,t-)^{\alpha_{1,i}-\delta} w_{2,i}(s+,t)^{\alpha_{2,i} - \delta}\\
&\quad + \epsilon \bigg( \sum_{[s,t] \in \cP} \sum_{j=1}^M \bw_{1,j}(0,T-)^{2\delta} \bw_{2,j}(0+,T)^{2\delta}\bw_{1,j}(s,t-)^{2\beta_{1,j}-2\delta}  \bw_{2,j}(s+,t)^{2\beta_{2,j} - 2\delta} \bigg)^{\hspace{-2pt}\frac{1}{2}}\\
&\lesssim \epsilon \sum_{i=1}^N w_{1,i}(0,T-)^{\alpha_{1,i}} w_{2,i}(0+,T)^{\alpha_{2,i}} + \epsilon \sum_{j=1}^M \bw_{1,j}(0,T-)^{\beta_{1,j}} \bw_{2,j}(0+,T)^{\beta_{2,j}},
\end{align*}
from which we deduce the RRS convergence in \eqref{eq: limit to zero in technical result}.
\end{proof}

\begin{lemma}\label{lemma: pure jump stochastic sewing lemma}
Let $q \in [2,\infty)$, let $(\Xi_{s,t})_{(s,t) \in \Delta_{[0,T]}}$ be a two-parameter process such that $\Xi_{s,t}$ is $\cF_t$-measurable for every $(s,t) \in \Delta_{[0,T]}$, and let $w_1, w_2$ be controls such that $w_i(t-,t) = 0$ for each $i = 1, 2$ and $t \in (0,T]$. Let $\delta \colon [0,T] \to [0,\infty)$ such that $\delta(t) \neq 0$ for only countably many $t \in [0,T]$, and such that $\sum_{t \in [0,T]} \delta(t) < \infty$, and suppose that
\begin{align*}
\big\|\E_s [\Xi_{s,t}]\big\|_{L^q} &\leq \delta(t)^{\alpha_1} w_1(s,t)^{\alpha_2},\\
\|\Xi_{s,t}\|_{L^q} &\leq \delta(t)^{\beta_1} w_2(s,t)^{\beta_2}
\end{align*}
for all $(s,t) \in \Delta_{[0,T]}$, where $\alpha_1, \alpha_2, \beta_1, \beta_2 > 0$ are such that $\alpha_1 + \alpha_2 > 1$ and $\beta_1 + \beta_2 > \frac{1}{2}$. Then
\begin{equation*}
\lim_{|\cP| \to 0} \bigg\|\sum_{[s,t] \in \cP} \Xi_{s,t}\bigg\|_{L^q} = 0.
\end{equation*}
\end{lemma}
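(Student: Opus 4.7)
The plan is to decompose each summand into its conditional mean $\E_s[\Xi_{s,t}]$ and the conditionally centered part $\Xi_{s,t} - \E_s[\Xi_{s,t}]$, and to estimate the two resulting sums separately. For the first piece, the triangle inequality and the first hypothesis give
\[
\bigg\|\sum_{[s,t] \in \cP} \E_s[\Xi_{s,t}]\bigg\|_{L^q} \leq \sum_{[s,t] \in \cP} \delta(t)^{\alpha_1} w_1(s,t)^{\alpha_2}.
\]
For the second piece, writing $\cP = \{0 = s_0 < \cdots < s_n = T\}$ and $z_i := \Xi_{s_{i-1},s_i} - \E_{s_{i-1}}[\Xi_{s_{i-1},s_i}]$, Lemma~\ref{lemma: application of conditional BDG} (applied with $r = 1$ and the discrete filtration $\cG_k = \cF_{s_k}$, so that $\|\cdot\|_{q,1,0} = \|\cdot\|_{L^q}$) yields
\[
\bigg\|\sum_{i=1}^n z_i\bigg\|_{L^q} \lesssim \bigg(\sum_{[s,t] \in \cP} \|\Xi_{s,t}\|_{L^q}^2\bigg)^{\hspace{-2pt}1/2} \leq \bigg(\sum_{[s,t] \in \cP} \delta(t)^{2\beta_1} w_2(s,t)^{2\beta_2}\bigg)^{\hspace{-2pt}1/2}.
\]
Since $\alpha_1 + \alpha_2 > 1$ and $2\beta_1 + 2\beta_2 > 1$, the whole statement reduces to a single claim, which will be the main technical step:
\begin{equation*}
(\star) \qquad \sum_{[s,t] \in \cP} \delta(t)^{\gamma_1} w(s,t)^{\gamma_2} \longrightarrow 0 \quad \text{as } |\cP| \to 0,
\end{equation*}
for any control $w$ with $w(t-,t) = 0$ for every $t \in (0,T]$, any nonnegative $\delta$ with countable support and $\sum_t \delta(t) < \infty$, and any exponents $\gamma_1, \gamma_2 > 0$ with $\gamma_1 + \gamma_2 > 1$.

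To prove $(\star)$, the strategy is to split the exponent pair. Choose $\lambda_1, \lambda_2 > 0$ with $\lambda_1 + \lambda_2 = 1$, $\lambda_1 < \gamma_1$, $\lambda_2 < \gamma_2$, which is possible precisely because $\gamma_1 + \gamma_2 > 1$, and factor
\[
\delta(t)^{\gamma_1} w(s,t)^{\gamma_2} = \bigl(\delta(t)^{\lambda_1} w(s,t)^{\lambda_2}\bigr) \cdot \bigl(\delta(t)^{\gamma_1-\lambda_1} w(s,t)^{\gamma_2-\lambda_2}\bigr).
\]
Young's inequality with conjugate exponents $1/\lambda_1, 1/\lambda_2$ gives $\delta(t)^{\lambda_1} w(s,t)^{\lambda_2} \leq \lambda_1 \delta(t) + \lambda_2 w(s,t)$, so by superadditivity of $w$ and summability of $\delta$,
\[
\sum_{[s,t] \in \cP} \delta(t)^{\lambda_1} w(s,t)^{\lambda_2} \leq \sum_t \delta(t) + w(0,T) =: C,
\]
uniformly in $\cP$. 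It thus suffices to show $\max_{[s,t] \in \cP} \delta(t)^{\gamma_1-\lambda_1} w(s,t)^{\gamma_2-\lambda_2} \to 0$ as $|\cP| \to 0$.

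This last step is the main obstacle, and will be handled as follows. Given $\varepsilon > 0$, summability of $\delta$ supplies a finite set $J_\varepsilon \subseteq [0,T]$ with $\delta(t) < \varepsilon$ for all $t \notin J_\varepsilon$. For each $t \in J_\varepsilon$, the hypothesis $w(t-,t) = 0$ supplies $u_t < t$ with $w(u,t) < \varepsilon$ for $u \in [u_t,t)$. For any partition $\cP$ with $|\cP| < \min_{t \in J_\varepsilon}(t - u_t)$ and any $[s,t] \in \cP$, either $t \in J_\varepsilon$ and then $s \geq u_t$, so $w(s,t) < \varepsilon$, or $t \notin J_\varepsilon$, so $\delta(t) < \varepsilon$. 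Bounding the opposite factor trivially by $\sup_t \delta(t) < \infty$ respectively $w(0,T)$, this yields
\[
\max_{[s,t] \in \cP} \delta(t)^{\gamma_1-\lambda_1} w(s,t)^{\gamma_2-\lambda_2} \leq C' \varepsilon^{\min(\gamma_1-\lambda_1,\,\gamma_2-\lambda_2)}
\]
for some constant $C'$ depending only on $\sup_t \delta(t)$, $w(0,T)$ and the exponents. Letting $|\cP| \to 0$ and then $\varepsilon \to 0$ proves $(\star)$ and completes the argument. The only subtlety is that the mesh threshold depends on $\varepsilon$ via the finite set $J_\varepsilon$, which is harmless since $J_\varepsilon$ is finite for each fixed $\varepsilon$.
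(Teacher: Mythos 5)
Your proof is correct, and the opening decomposition is the same as the paper's: split $\Xi_{s,t}$ into $\E_s[\Xi_{s,t}]$ plus a martingale-difference part, apply Lemma~\ref{lemma: application of conditional BDG} with $r=1$, and reduce the problem to showing that the two deterministic sums vanish. Where you diverge is in how you prove the resulting claim $(\star)$. The paper fixes $\epsilon > 0$ and splits the \emph{sum} by whether $\delta(t) \geq \epsilon$ or $\delta(t) < \epsilon$: the large-jump part has only finitely many terms, each vanishing since $w_i(s,t) \to 0$ as $s \nearrow t$; the small-jump part is then controlled by applying H\"older's inequality with exponents $\frac{1}{1-\alpha_2}, \frac{1}{\alpha_2}$ (resp.\ $\frac{1}{1-2\beta_2}, \frac{1}{2\beta_2}$), pulling out a factor of $\epsilon$ to some positive power and leaving $\sum_t \delta(t)$ and $w_i(0,T)$ as uniform bounds. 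Your approach instead factors the \emph{summand}: you choose $\lambda_1 + \lambda_2 = 1$ with $\lambda_1 < \gamma_1$, $\lambda_2 < \gamma_2$, use Young's inequality to bound $\sum_{[s,t]} \delta(t)^{\lambda_1} w(s,t)^{\lambda_2}$ uniformly by $\sum_t \delta(t) + w(0,T)$, and then show that $\max_{[s,t]} \delta(t)^{\gamma_1-\lambda_1} w(s,t)^{\gamma_2-\lambda_2} \to 0$ via exactly the large/small-jump dichotomy (the finite set $J_\epsilon$ forces $w(s,t) < \epsilon$ once the mesh is fine; off $J_\epsilon$, $\delta(t) < \epsilon$). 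So both proofs rely on the same two ingredients---a Young/H\"older separation of exponents and the finite-set-of-large-jumps argument---but the paper uses them in the order \emph{split then H\"older}, while you use them in the order \emph{factor then max-times-sum}. Your reformulation as the single abstract claim $(\star)$ is a clean way to package the two cases ($w=w_1$, $w=w_2$) at once and is somewhat more modular; the paper's version produces a slightly more explicit quantitative bound on $S_2(\cP)$ before passing to the limit. Both are valid.
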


\begin{proof}
We argue similarly to the proof of \cite[Theorem~2.11]{FrizZhang2018}. We may assume without loss of generality that $\alpha_2 < 1$ and $\beta_2 < \frac{1}{2}$.

Let $\epsilon > 0$, and let $\cP$ be a partition of $[0,T]$. Applying Lemma~\ref{lemma: application of conditional BDG} with $r = q$ and the random variables $\Xi_{s,t} - \E_s [\Xi_{s,t}]$ for $[s,t] \in \cP$, we have that
\begin{equation*}
\bigg\|\sum_{[s,t] \in \cP} \Xi_{s,t}\bigg\|_{L^q} \lesssim \sum_{[s,t] \in \cP} \big\|\E_s [\Xi_{s,t}]\big\|_{L^q} + \bigg(\sum_{[s,t] \in \cP} \|\Xi_{s,t}\|_{L^q}^2\bigg)^{\hspace{-2pt}\frac{1}{2}} \leq S_1(\cP) + S_2(\cP),
\end{equation*}
where
\begin{align*}
S_1(\cP) &:= \sum_{[s,t] \in \cP} \delta(t)^{\alpha_1} w_1(s,t)^{\alpha_2} \1_{\{\delta \geq \epsilon\}}(t) + \bigg(\sum_{[s,t] \in \cP} \delta(t)^{2\beta_1} w_2(s,t)^{2\beta_2} \1_{\{\delta \geq \epsilon\}}(t)\bigg)^{\hspace{-2pt}\frac{1}{2}},\\
S_2(\cP) &:= \sum_{[s,t] \in \cP} \delta(t)^{\alpha_1} w_1(s,t)^{\alpha_2} \1_{\{\delta < \epsilon\}}(t) + \bigg(\sum_{[s,t] \in \cP} \delta(t)^{2\beta_1} w_2(s,t)^{2\beta_2} \1_{\{\delta < \epsilon\}}(t)\bigg)^{\hspace{-2pt}\frac{1}{2}}.
\end{align*}
As there are only finitely many $t \in [0,T]$ with $\delta(t) \geq \epsilon$, we have that $\lim_{|\cP| \to 0} S_1(\cP) = 0$ by our assumption on the controls.

Since clearly $(1 - \alpha_2) + \alpha_2 = 1$ and $(1 - 2\beta_2) + 2\beta_2 = 1$, we can apply H\"older's inequality to obtain
\begin{align*}
S_2(\cP) &\leq \bigg(\sum_{[s,t] \in \cP} \delta(t)^{\frac{\alpha_1}{1 - \alpha_2}} \1_{\{\delta < \epsilon\}}(t)\bigg)^{\hspace{-2pt}1 - \alpha_2} \bigg(\sum_{[s,t] \in \cP} w_1(s,t)\bigg)^{\hspace{-2pt}\alpha_2}\\
&\quad + \bigg(\sum_{[s,t] \in \cP} \delta(t)^{\frac{\beta_1}{\frac{1}{2} - \beta_2}} \1_{\{\delta < \epsilon\}}(t)\bigg)^{\hspace{-2pt}\frac{1}{2} - \beta_2} \bigg(\sum_{[s,t] \in \cP} w_2(s,t)\bigg)^{\hspace{-2pt}\beta_2}\\
&\leq \epsilon^{\alpha_1 + \alpha_2 - 1} \bigg(\sum_{t \in [0,T]} \delta(t)\bigg)^{\hspace{-2pt}1 - \alpha_2} w_1(0,T)^{\alpha_2} + \epsilon^{\beta_1 + \beta_2 - \frac{1}{2}} \bigg(\sum_{t \in [0,T]} \delta(t)\bigg)^{\hspace{-2pt}\frac{1}{2} - \beta_2} w_2(0,T)^{\beta_2}.
\end{align*}
Since $\epsilon > 0$ was arbitrary, we infer the desired convergence.
\end{proof}

\begin{lemma}\label{lemma: contraction for integral against quad. variation for Ito approach}
Let $p \in [2,3)$, $q \in [2,\infty)$, and let $\sigma$ be a predictable bounded Lipschitz function. Suppose that $Y, \tY \in V^p L^q$ are adapted and almost surely c\`adl\`ag, and let $A = (A_t)_{t \in [0,T]}$ be a matrix-valued process which is almost surely c\`adl\`ag and non-decreasing, in the sense that $A_t - A_s$ is positive semi-definite whenever $s \leq t$. Then
\begin{equation}\label{eq: estimate for sigma(Y) - sigma(tY) ^2 dA}
\bigg\|\int_s^t \big(\sigma_{u}(Y_{u-}) - \sigma_{u}(\tY_{u-})\big)^{\otimes 2} \dd A_u\bigg\|_{L^{\frac{q}{2}}} \leq C \big(\|Y_s - \tY_s\|_{L^q}^2 + \|Y - \tY\|^2_{p,q,[s,t)}\big) \|A\|_{\frac{p}{2},\frac{q}{2},\infty,[s,t]}
\end{equation}
for every $(s,t) \in \Delta_{[0,T]}$, where the integral is understood in the Lebesgue--Stieltjes sense, and the constant $C$ depends only on $p$ and $\|\sigma\|_{C^1_b}$.
\end{lemma}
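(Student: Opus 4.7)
The plan is to reduce the matrix-valued integral to a scalar one via the Lipschitz property of $\sigma$, and then to apply a deterministic (Young-type) sewing lemma in the Banach space $L^{q/2}$ to a carefully chosen two-parameter process. The key observation is the telescoping identity $|Z_s|^2 - |Z_u|^2 = (Z_s - Z_u) \cdot (Z_s + Z_u)$ for $Z := Y - \tY$, which supplies the missing ``$\delta Z_{s,u}$'' factor needed to make the sewing converge, despite $\sigma$ only being Lipschitz.

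First, since $\sigma \in C^1_b$ is Lipschitz and since $A$ has PSD increments, the matrix integrand $(\sigma(Y_{u-}) - \sigma(\tY_{u-}))^{\otimes 2}$ is dominated pointwise (in PSD order, hence in operator norm up to a dimensional constant) by $\|\sigma\|_{C^1_b}^2 |Z_{u-}|^2 \cdot I$. Integrating against the non-decreasing matrix $A$ and using the trace $\tilde A := \mathrm{tr}(A)$, which is scalar non-decreasing with $\|\tilde A\|_{p/2,q/2,\infty,[s,t]} \lesssim \|A\|_{p/2,q/2,\infty,[s,t]}$, it therefore suffices to prove
\[
\bigg\|\int_s^t |Z_{u-}|^2 \dd \tilde A_u\bigg\|_{L^{q/2}} \leq C \big(\|Z_s\|_{L^q}^2 + \|Z\|_{p,q,[s,t)}^2\big) \|\tilde A\|_{p/2,q/2,\infty,[s,t]}.
\]

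Second, consider the $L^{q/2}$-valued two-parameter process $\Xi_{u,v} := |Z_u|^2 \delta \tilde A_{u,v}$. A short computation using the identity above gives $\delta \Xi_{s,u,v} = -(Z_s - Z_u)\cdot(Z_s + Z_u)\,\delta \tilde A_{u,v}$. Conditioning at $\cF_u$ (exploiting that $\delta Z_{s,u}$, $Z_s$, and $Z_u$ are all $\cF_u$-measurable), using the $L^{q/2,\infty,u}$-control on $\delta \tilde A_{u,v}$, and applying Cauchy--Schwarz on the remaining expectation, we obtain
\[
\|\delta \Xi_{s,u,v}\|_{L^{q/2}} \leq \|\delta Z_{s,u}\|_{L^q} \big(\|Z_s\|_{L^q} + \|Z_u\|_{L^q}\big) \|\delta \tilde A_{u,v}\|_{q/2,\infty,u}.
\]
Bounding the first factor by the single-increment estimate $\|\delta Z_{s,u}\|_{L^q} \leq \|Z\|_{p,q,[s,u]}$ and the second factor uniformly by $K := \|Z_s\|_{L^q} + \|Z\|_{p,q,[s,t)}$, this yields $\|\delta \Xi_{s,u,v}\|_{L^{q/2}} \lesssim K\,w_1(s,u)^{1/p}\,w_2(u,v)^{2/p}$, with the right-continuous controls $w_1(s,u) := \|Z\|^p_{p,q,[s,u]}$ and $w_2(u,v) := \|\tilde A\|^{p/2}_{p/2,q/2,\infty,[u,v]}$ (right-continuity via Lemma~\ref{lemma: right-continuity of controls}).

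Since $1/p + 2/p = 3/p > 1$ for $p \in [2,3)$, the c\`adl\`ag sewing lemma (\cite[Theorem 2.5]{FrizZhang2018}, applied in the Banach space $L^{q/2}$) provides a limit $\cI_t = \lim_{|\cP|\to 0} \sum_{[u,v]\in\cP} \Xi_{u,v}$, which by Lebesgue--Stieltjes theory is almost surely equal to $\int_s^t |Z_{u-}|^2 \dd\tilde A_u$, together with the bound $\|\cI_t - \Xi_{s,t}\|_{L^{q/2}} \lesssim K\, \|Z\|_{p,q,[s,t)} \|\tilde A\|_{p/2,q/2,\infty,[s,t]}$. Combined with the direct estimate $\|\Xi_{s,t}\|_{L^{q/2}} \leq \|Z_s\|_{L^q}^2 \|\tilde A\|_{p/2,q/2,\infty,[s,t]}$ (via conditioning at $\cF_s$) and the AM--GM bound $K\|Z\|_{p,q,[s,t)} \leq \tfrac{1}{2}\|Z_s\|_{L^q}^2 + \tfrac{3}{2}\|Z\|_{p,q,[s,t)}^2$, this yields the claim. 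The hard part will be verifying that the left-endpoint Riemann sums $\sum |Z_u|^2 \delta \tilde A_{u,v}$ identify, in the $L^{q/2}$-limit, with the Lebesgue--Stieltjes integral $\int_s^t |Z_{u-}|^2 \dd\tilde A_u$ despite possible joint jumps of $Z$ and $\tilde A$; this is handled by choosing partitions that avoid the countable set of jump times of $\tilde A$ and invoking monotone convergence for the residual jump contributions.
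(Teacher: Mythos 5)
Your proposal is correct and follows the same fundamental route as the paper --- a deterministic (Banach-valued) sewing argument carried out in $L^{q/2}$, exploiting the scaling $1/p + 2/p = 3/p > 1$ together with the conditional norm $\|\cdot\|_{q/2,\infty,u}$ on the increments of $A$ --- but the decomposition differs in an interesting way. The paper truncates the integrand to $1 \wedge |Y_{u-} - \tY_{u-}|^2$ using the \emph{boundedness} of $\sigma$, defines piecewise-constant approximations $Z^n$ of this truncated integrand, and applies the Young sewing lemma (\cite[Theorem~2.2]{FrizZhang2018}) to the pair $(1 \wedge |Y_- - \tY_-|^2, A)$; the $p$-variation of the truncated integrand in $L^{q/2}$ is bounded via the Lipschitz-plus-boundedness combination, and the sewing limit is identified with the pathwise Lebesgue--Stieltjes integral by dominated convergence plus uniqueness of $L^{q/2}$-limits. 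You instead avoid the truncation entirely and use only the Lipschitz property: your polarization identity $|Z_s|^2 - |Z_u|^2 = -\delta Z_{s,u}\,(Z_s + Z_u)$, combined with the uniform bound $\|Z_u\|_{L^q} \leq \|Z_s\|_{L^q} + \|Z\|_{p,q,[s,t)}$ for $u \in [s,t)$, yields the required estimate on $\|\delta\Xi_{s,u,v}\|_{L^{q/2}}$ directly. This is arguably cleaner, as it makes the appearance of both the $\|Y_s - \tY_s\|_{L^q}^2$ and $\|Y - \tY\|_{p,q,[s,t)}^2$ terms in the final estimate fully transparent, whereas the paper's corresponding chain (the display $\|1 \wedge |Y_- - \tY_-|^2\|_{p,q/2} \leq \||Y-\tY|^2\|_{p,q/2} = \|Y - \tY\|^2_{2p,q}$) is somewhat opaque and, read literally as an identity of one-parameter $p$-variation norms, not quite airtight. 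One minor inaccuracy in your final step: the identification of the left-endpoint Riemann sums $\sum_{[u,v] \in \cP} |Z_u|^2 \delta\tilde A_{u,v}$ with the Lebesgue--Stieltjes integral $\int_s^t |Z_{u-}|^2 \dd\tilde A_u$ requires no partition trick at all and is not a ``hard part''. For any c\`adl\`ag $Z$ and non-decreasing c\`adl\`ag $\tilde A$, the left-endpoint Riemann sums converge almost surely (along vanishing mesh) to $\int_s^t |Z_{u-}|^2 \dd\tilde A_u$, because a cell $[u,v]$ with $u < \tau \leq v$ captures the jump $\Delta \tilde A_\tau$ paired with $|Z_u|^2 \to |Z_{\tau-}|^2$ as the mesh shrinks; dominated convergence and uniqueness of limits in probability against the $L^{q/2}$-limit from the sewing lemma then finish the identification, exactly as the paper does.
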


\begin{proof}
For notational simplicity, we will suppose that $\sigma$, $Y$ and $A$ are one-dimensional, but the following argument generalizes easily to the  multidimensional case. We may also suppose that $Y_{s-} = Y_s$ and $\tY_{s-} = \tY_s$, as this does not affect the value of the Lebesgue--Stieltjes integral in \eqref{eq: estimate for sigma(Y) - sigma(tY) ^2 dA}. Since $\sigma$ is predictable bounded Lipschitz, we find by the bound in \eqref{eq: random Lipschitz constant} that
\begin{align*}
\bigg|\int_s^t \big(\sigma_u(Y_{u-}) - \sigma_u(\tY_{u-})\big)^2 \dd A_u\bigg| \lesssim \int_s^t \big(1 \wedge |Y_{u-} - \tY_{u-}|^2\big) \dd A_u.
\end{align*}

For each $n \in \N$, we let $t^n_i := s + i 2^{-n} (t - s)$ for $i = 0, 1, \ldots, 2^n$, and
\begin{equation*}
Z^n_u := \big(1 \wedge |Y_s - \tY_s|^2\big) \1_{\{s\}}(u) + \sum_{i=0}^{2^n-1} \big(1 \wedge |Y_{t^n_i-} - \tY_{t^n_i-}|^2\big) \1_{(t^n_i,t^n_{i+1}]}(u).
\end{equation*}
Since $Y_-$ and $\tY_-$ are almost surely left-continuous, we have that $Z^n_u \to 1 \wedge |Y_{u-} - \tY_{u-}|^2$ as $n \to \infty$ for each $u \in [s,t]$. By dominated convergence, we then deduce that, almost surely,
\begin{equation}\label{eq: int Z dA a.s. limit}
\int_s^t Z^n_u \dd A_u = \sum_{i=0}^{2^n-1} \big(1 \wedge |Y_{t^n_i-} - \tY_{t^n_i-}|^2\big) \delta A_{t^n_i,t^n_{i+1}} \longrightarrow \int_s^t \big(1 \wedge |Y_{u-} - \tY_{u-}|^2\big) \dd A_u
\end{equation}
as $n \to \infty$. For any $G \in V^p L^q$, by H\"older's inequality, we have that
\begin{align*}
\big\| \delta (G^2)_{u,v} \big\|_{L^{\frac{q}{2}}} &= \big\| G_v^2 - G_u^2 \big\|_{L^{\frac{q}{2}}} = \big\| \delta G_{u,v} (G_v + G_u) \big\|_{L^{\frac{q}{2}}}\\
&\leq \|\delta G_{u,v}\|_{L^q} \|G_u + G_v\|_{L^q} \lesssim \|\delta G_{u,v}\|_{L^q} \sup_{r \in [s,t]} \|G_r\|_{L^q},
\end{align*}
so that
\begin{align*}
\| G^2 \|_{p,\frac{q}{2},[s,t]} &\lesssim \| G \|_{p,q,[s,t]} \sup_{r \in [s,t]} \|G_r\|_{L^q} \leq \| G \|_{p,q,[s,t]} \big( \|G_s\|_{L^q} + \| G \|_{p,q,[s,t]} \big)\\
&\lesssim \|G_s\|_{L^q}^2 + \| G \|_{p,q,[s,t]}^2.
\end{align*}
Letting $G = Y - \tY$, we thus have that
\begin{equation*}
\big\|1 \wedge |Y_- - \tY_-|^2\big\|_{p,\frac{q}{2},[s,t]} \leq \big\||Y - \tY|^2\big\|_{p,\frac{q}{2},[s,t]} \lesssim \|Y_s - \tY_s\|_{L^q}^2 + \| Y - \tY \|_{p,q,[s,t]}^2 < \infty,
\end{equation*}
and we may assume without loss of generality that $\|A\|_{\frac{p}{2},\frac{q}{2},\infty,[s,t]} < \infty$. Thus, by \cite[Theorem~2.2]{FrizZhang2018} (Young sewing), the limit
\begin{equation}\label{eq: int Z dA Lq/2 limit}
\int_s^t \big(1 \wedge |Y_{u-} - \tY_{u-}|^2\big) \dd A_u = \lim_{n \to \infty} \int_s^t Z^n_u \dd A_u
\end{equation}
exists in $L^{\frac{q}{2}}$ as a Young integral, and comes with the estimate
\begin{align*}
&\bigg\|\int_s^t \big(1 \wedge |Y_{u-} - \tY_{u-}|^2\big) \dd A_u - \big(1 \wedge |Y_s - \tY_s|^2\big) \delta A_{s,t}\bigg\|_{L^{\frac{q}{2}}}\\
&\lesssim \big\|1 \wedge |Y_- - \tY_-|^2\big\|_{p,\frac{q}{2},[s,t)} \|A\|_{\frac{p}{2},\frac{q}{2},\infty,[s,t]} \lesssim \big( \|Y_s - \tY_s\|_{L^q}^2 + \| Y - \tY \|_{p,q,[s,t]}^2 \big) \|A\|_{\frac{p}{2},\frac{q}{2},\infty,[s,t]}.
\end{align*}
By the uniqueness of limits, the Young integral in \eqref{eq: int Z dA Lq/2 limit} coincides with the pathwise Lebesgue--Stieltjes integral in \eqref{eq: int Z dA a.s. limit}. Since
\begin{align*}
&\big\|\big(1 \wedge |Y_s - \tY_s|^2\big) \delta A_{s,t}\big\|_{L^{\frac{q}{2}}} = \E \Big[\big(1 \wedge |Y_s - \tY_s|^2\big)^{\frac{q}{2}} \E_s \big[|\delta A_{s,t}|^{\frac{q}{2}}\big]\Big]^{\frac{2}{q}}\\
&\leq \|Y_s - \tY_s\|_{L^q}^2 \|\delta A_{s,t}\|_{\frac{q}{2},\infty,s} \leq \|Y_s - \tY_s\|_{L^q}^2 \|A\|_{\frac{p}{2},\frac{q}{2},\infty,[s,t]},
\end{align*}
we deduce that
\begin{align*}
\bigg\|\int_s^t \big(1 \wedge |Y_{u-} - \tY_{u-}|^2\big) \dd A_u\bigg\|_{L^{\frac{q}{2}}} \lesssim \big(\|Y_s - \tY_s\|_{L^q}^2 + \|Y - \tY\|^2_{p,q,[s,t)}\big) \|A\|_{\frac{p}{2},\frac{q}{2},\infty,[s,t]},
\end{align*}
from which we infer the estimate in \eqref{eq: estimate for sigma(Y) - sigma(tY) ^2 dA}.
\end{proof}

\bibliographystyle{abbrv}
\bibliography{References}

\begin{thebibliography}{10}

\bibitem{AllanPieperTeichmann2025}
A.~L. Allan, J.~Pieper, and J.~Teichmann.
\newblock Rough {SDE}s and {R}obust {F}iltering for {J}ump-{D}iffusions.
\newblock {\em Preprint arXiv:2507.05930}, 2025.

\bibitem{AthreyaButkovskyLeMytnik2024}
S.~Athreya, O.~Butkovsky, K.~L{\^e}, and L.~Mytnik.
\newblock Well-posedness of stochastic heat equation with distributional drift and skew stochastic heat equation.
\newblock {\em Comm. Pure Appl. Math.}, 77:2708--2777, 2024.

\bibitem{BainCrisan2009}
A.~Bain and D.~Crisan.
\newblock {\em Fundamentals of Stochastic Filtering}.
\newblock Springer, New York, 2009.

\bibitem{BandiniCalviaColaneri2022}
E.~Bandini, A.~Calvia, and K.~Colaneri.
\newblock Stochastic filtering of a pure jump process with predictable jumps and path-dependent local characteristics.
\newblock {\em Stochastic Process. Appl.}, 151:396--435, 2022.

\bibitem{BankBayerFrizPelizzari2025}
P.~Bank, C.~Bayer, P.~K. Friz, and L.~Pelizzari.
\newblock Rough {PDEs} for local stochastic volatility models.
\newblock {\em Math. Finance}, 35:661--681, 2025.

\bibitem{BayerFrizGatheral2016}
C.~Bayer, P.~K. Friz, and J.~Gatheral.
\newblock Pricing under rough volatility.
\newblock {\em Quant. Finance}, 16(6):888--904, 2016.

\bibitem{BechtoldHarangKern2023}
F.~Bechtold, F.~A. Harang, and H.~Kern.
\newblock A multiparameter stochastic sewing lemma and the regularity of local times associated to {G}aussian sheets.
\newblock {\em Preprint arXiv:2307.11527}, 2023.

\bibitem{BrockwellSchlemm2012}
P.~J. Brockwell and E.~Schlemm.
\newblock Parametric estimation of the driving {Lévy} process of multivariate {CARMA} processes from discrete observations.
\newblock {\em J. Multivariate Anal.}, 115:217--251, 2012.

\bibitem{BuginiCoghiNilssen2024}
F.~Bugini, M.~Coghi, and T.~Nilssen.
\newblock Malliavin calculus for rough stochastic differential equations.
\newblock {\em Preprint arXiv:2402.12056}, 2024.

\bibitem{BuginiFrizLeZhang2025}
F.~Bugini, P.~K. Friz, K.~L{\^e}, and H.~Zhang.
\newblock Rough stochastic filtering.
\newblock {\em Preprint arXiv:2509.11825}, 2025.

\bibitem{ButkovskyDareiotisGerencser2021}
O.~Butkovsky, K.~Dareiotis, and M.~Gerencs{\'e}r.
\newblock Approximation of {SDEs}: a stochastic sewing approach.
\newblock {\em Probab. Theory Related Fields}, 181:975--1034, 2021.

\bibitem{ButkovskyDareiotisGerencser2024}
O.~Butkovsky, K.~Dareiotis, and M.~Gerencs{\'e}r.
\newblock Strong rate of convergence of the {E}uler scheme for {SDE}s with irregular drift driven by {Lé}vy noise.
\newblock {\em Preprint arXiv:2204.12926}, 2022.

\bibitem{ButkovskyDareiotisGerencser2023}
O.~Butkovsky, K.~Dareiotis, and M.~Gerencs{\'e}r.
\newblock Optimal rate of convergence for approximations of {SPDE}s with nonregular drift.
\newblock {\em SIAM J. Numer. Anal.}, 61(2):1103--1137, 2023.

\bibitem{ChevyrevFriz2019}
I.~Chevyrev and P.~K. Friz.
\newblock Canonical {RDEs} and general semimartingales as rough paths.
\newblock {\em Ann. Probab.}, 47(1):420--463, 2019.

\bibitem{ChistyakovGalkin1998}
V.~V. Chistyakov and O.~E. Galkin.
\newblock On maps of bounded $p$-variation with $p > 1$.
\newblock {\em Positivity}, 2:19--45, 1998.

\bibitem{CoghiFlandoli2016}
M.~Coghi and F.~Flandoli.
\newblock Propagation of chaos for interacting particles subject to environmental noise.
\newblock {\em Ann. Appl. Probab.}, 26(3):1407--1442, 2016.

\bibitem{CoghiGess2019}
M.~Coghi and B.~Gess.
\newblock Stochastic nonlinear {Fokker--Planck} equations.
\newblock {\em Nonlinear Anal.}, 187:259--278, 2019.

\bibitem{CoghiNilssen2021}
M.~Coghi and T.~Nilssen.
\newblock Rough nonlocal diffusions.
\newblock {\em Stochastic Process. Appl.}, 141:1--56, 2021.

\bibitem{CoghiNilssenNuskenReich2023}
M.~Coghi, T.~Nilssen, N.~N{\"u}sken, and S.~Reich.
\newblock Rough {McKean--Vlasov} dynamics for robust ensemble {Kalman} filtering.
\newblock {\em Ann. Appl. Probab.}, 33(6B):5693--5752, 2023.

\bibitem{CohenElliott2015}
S.~N. Cohen and R.~J. Elliott.
\newblock {\em Stochastic Calculus and Applications}.
\newblock Springer, New York, 2nd edition, 2015.

\bibitem{CrisanDiehlFrizOberhauser2013}
D.~Crisan, J.~Diehl, P.~K. Friz, and H.~Oberhauser.
\newblock Robust filtering: {C}orrelated noise and multidimensional observation.
\newblock {\em Ann. Appl. Probab.}, 23:2139--2160, 2013.

\bibitem{DareiotisGerencserLe2022}
K.~Dareiotis, M.~Gerencs{\'e}r, and K.~L{ê}.
\newblock Quantifying a convergence theorem of {G}yang and {K}rylov.
\newblock {\em Ann. Appl. Probab.}, 33(3):2291--2323, 2023.

\bibitem{DasLochowskiMatsudaPerkowski2023}
P.~Das, R.~\L{}ochowski, T.~Matsuda, and N.~Perkowski.
\newblock Level crossings of fractional {B}rownian motion.
\newblock {\em Preprint arXiv:2308.08274}, 2023.

\bibitem{DiehlOberhauserRiedel2015}
J.~Diehl, H.~Oberhauser, and S.~Riedel.
\newblock A {Lé}vy area between {B}rownian motion and rough paths with applications to robust nonlinear filtering and rough partial differential equations.
\newblock {\em Stochastic Process. Appl.}, 125(1):161--181, 2015.

\bibitem{HernandezHernandezJackal2022}
M.~{E}lena Hern{á}ndez-Hern{á}ndez and S.~D. Jackal.
\newblock A generalisation of the {B}urkholder-{D}avis-{G}undy inequalities.
\newblock {\em Electron. Commun. Probab.}, 27(50):1--8, 2022.

\bibitem{FeyelDeLaPradelle2006}
D.~Feyel and A.~de~La~Pradelle.
\newblock Curvilinear integrals along enriched paths.
\newblock {\em Electron. J. Probab.}, 11(34):860--892, 2006.

\bibitem{FontanaSchmidt2018}
C.~Fontana and T.~Schmidt.
\newblock General dynamic term structures under default risk.
\newblock {\em Stochastic Process. Appl.}, 128:3353--3386, 2018.

\bibitem{FrizHairer2020}
P.~K. Friz and M.~Hairer.
\newblock {\em A Course on Rough Paths, With an Introduction to Regularity Structures}.
\newblock Springer, 2nd edition, 2020.

\bibitem{FrizHocquetLe2021}
P.~K. Friz, A.~Hocquet, and K.~L{\^e}.
\newblock Rough stochastic differential equations.
\newblock {\em Preprint arXiv:2106.10340}, 2021.

\bibitem{FrizHocquetLe2025}
P.~K. Friz, A.~Hocquet, and K.~L{\^e}.
\newblock {McKean--Vlasov} equations with rough common noise.
\newblock {\em Preprint arXiv: 2507.13149}, 2025.

\bibitem{FrizLeZhang2024}
P.~K. Friz, K.~L{ê}, and H.~Zhang.
\newblock Controlled rough {SDEs}, pathwise stochastic control and dynamic programming principles.
\newblock {\em Preprint arXiv: 2412.05698}, 2024.

\bibitem{FrizShekhar2017}
P.~K. Friz and A.~Shekhar.
\newblock General rough integration, {L}\'evy rough paths and a {L}\'evy--{K}hintchine-type formula.
\newblock {\em Ann. Probab.}, 45(4):2707--2765, 2017.

\bibitem{FrizVictoir2010}
P.~K. Friz and N.~B. Victoir.
\newblock {\em Multidimensional Stochastic Processes as Rough Paths}.
\newblock Cambridge University Press, 2010.

\bibitem{FrizZhang2018}
P.~K. Friz and H.~Zhang.
\newblock Differential equations driven by rough paths with jumps.
\newblock {\em J. Differential Equations}, 264:6226--6301, 2018.

\bibitem{FrizZorinKranich2023}
P.~K. Friz and P.~Zorin-Kranich.
\newblock Rough semimartingales and {$p$}-variation estimates for martingale transforms.
\newblock {\em Ann. Probab.}, 51(2):397--441, 2023.

\bibitem{Gubinelli2004}
M.~Gubinelli.
\newblock Controlling rough paths.
\newblock {\em J. Funct. Anal.}, 216:86--140, 2004.

\bibitem{HairerLi2020}
M.~Hairer and X.-M. Li.
\newblock Averaging dynamics driven by fractional {Brownian} motion.
\newblock {\em Ann. Probab.}, 48(4):1826--1860, 2020.

\bibitem{HamblySojmark2019}
B.~Hambly and A.~S\o{}jmark.
\newblock An {SPDE} model for systemic risk with endogenous contagion.
\newblock {\em Finance Stoch.}, 23:535--594, 2019.

\bibitem{HarangLing2021}
F.~A. Harang and C.~Ling.
\newblock Regularity of local times associated with {V}olterra–{L}{\'e}vy processes and path-wise regularization of stochastic differential equations.
\newblock {\em J. Theoret. Probab.}, 35:1706--1735, 2022.

\bibitem{HarangPerkowski2021}
F.~A. Harang and N.~Perkowski.
\newblock {$C^{\infty}$}-regularization of {ODEs} perturbed by noise.
\newblock {\em Stoch. Dyn.}, 21(8):2140010, 2021.

\bibitem{JacodShiryaev2006}
J.~Jacod and A.~N. Shiryaev.
\newblock {\em Limit theorems for stochastic processes}.
\newblock Grundlehren der mathematischen Wissenschaften. Springer, 2nd edition, 2003.

\bibitem{JiaoLi2015}
Y.~Jiao and S.~Li.
\newblock The generalized density approach in progressive enlargement of filtrations.
\newblock {\em Electron. J. Probab.}, 20(85):1--21, 2015.

\bibitem{Kern2023}
H.~Kern.
\newblock A stochastic reconstruction theorem.
\newblock {\em Preprint arXiv:2107.03867}, 2021.

\bibitem{KrempPerkowski2023}
H.~Kremp and N.~Perkowski.
\newblock Rough weak solutions for singular {Lé}vy {SDEs}.
\newblock {\em Preprint arXiv:2309.15460}, 2023.

\bibitem{KurzXiong1999}
T.~G. Kurtz and J.~Xiong.
\newblock Particle representations for a class of nonlinear {SPDEs}.
\newblock {\em Stochastic Process. Appl.}, 83:103--126, 1999.

\bibitem{Le2020}
K.~L{\^e}.
\newblock A stochastic sewing lemma and applications.
\newblock {\em Electron. J. Probab.}, 25(38):1--55, 2020.

\bibitem{Le2023}
K.~L{\^e}.
\newblock Stochastic sewing in {B}anach spaces.
\newblock {\em Electron. J. Probab.}, 28(26):1--22, 2023.

\bibitem{LeLing2022}
K.~L{\^e} and C.~Ling.
\newblock Taming singular stochastic differential equations: a numerical method.
\newblock {\em Preprint arXiv:2110.01343}, 2021.

\bibitem{LiZhangZhang2024}
H.~Li, H.~Zhang, and K.~Zhang.
\newblock Reflected backward stochastic differential equations with rough drivers.
\newblock {\em Preprint arXiv:2407.17898}, 2024.

\bibitem{LiangTang2025}
J.~Liang and S.~Tang.
\newblock Multidimensional backward stochastic differential equations with rough drifts.
\newblock {\em Trans. Amer. Math. Soc.}, 378(1):201--257, 2025.

\bibitem{Lyons1998}
T.~J. Lyons.
\newblock Differential equations driven by rough signals.
\newblock {\em Rev. Mat. Iberoamericana}, 14:215--310, 1998.

\bibitem{MatsudaPerkowski2024}
T.~Matsuda and N.~Perkowski.
\newblock An extension of the stochastic sewing lemma and applications to fractional stochastic calculus.
\newblock {\em Forum Math. Sigma}, 12(e52):1--52, 2024.

\bibitem{PathirajaReichStannat2021}
S.~Pathiraja, S.~Reich, and W.~Stannat.
\newblock {McKean--Vlasov SDEs} in nonlinear filtering.
\newblock {\em SIAM J. Control Optim.}, 59(6):4188--4215, 2021.

\bibitem{Protter2005}
P.~E. Protter.
\newblock {\em Stochastic integration and differential equations}.
\newblock Stochastic Modelling and Applied Probability. Springer, Berlin, 2nd edition, 2005.

\bibitem{Yaskov2018}
P.~Yaskov.
\newblock Extensions of the sewing lemma with applications.
\newblock {\em Stochastic Process. Appl.}, 128:3940--3965, 2018.

\end{thebibliography}

\end{document}